\newtheorem{thm}{Theorem}[section]
\newtheorem{lemma}[thm]{Lemma}
\newtheorem{cor}[thm]{Corollary}
\newtheorem{rmk}[thm]{Remark}
\newtheorem{prop}[thm]{Proposition}
\newtheorem{definition}[thm]{Definition}
\numberwithin{equation}{section}
\newcommand\mymatrix{\left(\begin{smallmatrix}
1& 2& 3& 4\\
4& 3& 2& 1
\end{smallmatrix}\right)}
\title{QUANTITATIVE WEAK MIXING\\
FOR INTERVAL EXCHANGE TRANSFORMATIONS}
\author{Artur Avila, Giovanni Forni, Pedram Safaee}
\date{May 2021}
\begin{document}

\maketitle

\begin{abstract}

We establish a dichotomy for
the rate of the decay of the Ces\`aro averages of correlations of sufficiently regular
functions for typical interval exchange transformations (IET) which are
not rigid rotations (for which weak mixing had been previously established in the works of
Katok-Stepin, Veech, and Avila-Forni).  We show that the rate of decay is either logarithmic or
polynomial, according to whether the IET is of rotation class (i.e., it can be obtained as the
induced map of a rigid rotation) or not.  In the latter case, we also establish that the
spectral measures of Lipschitz functions have local dimension bounded away from zero (by a
constant depending only on the number of intervals).  In our approach, upper bounds are obtained
through estimates of twisted Birkhoff sums of Lipschitz functions, while the logarithmic lower
bounds are based on the slow deviation of ergodic averages that govern the relation between
rigid rotations and their induced maps.

\end{abstract}

\tableofcontents{}

\section{Introduction}

An \textit{interval exchange transformation} on $d$ intervals or a $d$-IET is a piecewise isometry that acts by rearranging the subintervals of a partition of an interval into $d$ subintervals according to a given permutation. The ergodic theory of IETs has been studied extensively since their introduction in the 1960's (Arnol'd
\cite{Arnol_d_1963}, Katok and Stepin \cite{MR0219697}, Oseledets
\cite{oseledec1968multiplicative}). 

A typical IET (with respect to the Lebesgue measure on the vector of the lengths of its subintervals) is minimal and uniquely ergodic (see \cite{MR357739} for minimality, and  \cite{10.2307/1971391}, \cite{10.2307/1971341} for unique ergodicity).  Katok \cite{MR594335} showed that IETs and 
special flows with bounded variation roof functions over IETs are never mixing.  In view of those results,
it became natural to investigate the frequency of the weak mixing property among IETs.  In what
follows, it will be convenient to keep in mind two (of many) equivalent characterizations of a weak mixing transformation.  The first is as a most direct weakening of the notion of mixing:
correlations of arbitrary $L^2$ functions must decay at least in the Ces\`aro sense.  The second is
in terms of the Lebesgue decomposition of spectral measures: they must be continuous, i.e., there can be no measurable eigenfunctions.  Naturally, for an IET to be weak mixing the associated permutation
can not be itself a rotation (i.e., preserve the natural cyclic order).

The prevalence of weak mixing was established for almost all three-IETs by Katok and Stepin (see \cite{MR0219697}). Veech \cite{10.2307/1971391} provided a sufficient condition for a special flow with constant roof function over an IET to be 
weakly mixing in terms of an accelerated version of the Rauzy--Veech renormalization cocycle (corresponding to returns to a good subset) and went on to prove that almost all IETs are 
weakly mixing for a restricted class of permutations. Nogueira and Rudolph \cite{MR1477038} proved that almost every IET is 
topologically weak mixing, i.e., almost every IET does not admit continuous eigenfunctions.
Later, Avila and Forni \cite{MR2299743} developed a novel probabilistic parameter exclusion technique to
prove weak mixing for almost all non-rotation IETs.
In addition, they proved weak mixing for almost all translation flows on higher genus surfaces
(which are special flows over certain IETs)
by a simpler linear elimination argument based on the hyperbolicity of the renormalization cocycle \cite{MR1888794}.  We note that the case of translation flows is easier to deal with
due to the presence of extra parameters (corresponding to the suspension data), but if one wishes
to consider more restricted families of translation flows (e.g., Veech surfaces, see
\cite{MR3522612}) new problems arise.

These results led to attempts at obtaining quantitative versions of the weak mixing property for
typical translation flows and IETs, for instance, through understanding the local behavior of
spectral measures.  In 2014, Bufetov and Solomyak \cite{MR3773061} proved H\"older bounds for the spectral measures of translation flows on surfaces of genus $2$. Later, generalizing the ideas of \cite{MR2299743} and using the analysis of a twisted cocycle, Forni obtained H\"older bounds for the spectral measures of almost all translation surfaces in all strata via proving a spectral gap property with respect to the Masur-Veech measures for the twisted cocycle, see \cite{2019arXiv190811040F}. This method was implemented along with a Diophantine parameter exclusion in the spirit of Salem, Erd\"os, and Kahane, by Bufetov and Solomyak \cite{2019arXiv190809347B}, who proved H\"older bounds for translation surfaces in all strata using a symbolic approach.

In this paper, we return the focus to the case of IETs.  Somewhat surprisingly, our results reveal an important quantitative difference between two types of weak mixing IETs, which does not have a counterpart for translation surfaces.  For simplicity, let us present our results first in terms of the quantification of the decay of Ces\`aro averages
\begin{equation}
    C_N(f,g)=\frac {1} {N} \sum_{n=0}^{N-1} \left |\int f \circ T^n(x) g(x) dx\right | \,,
\end{equation}
where $T$ is the IET and $f$ and $g$ are zero average Lipschitz observables (restricting considerations to a class of regular observables is necessary; for general reasons no decay rate can be established for $L^2$ functions).  Note that an estimate as $C_N=o(N^{-1})$ would already imply mixing, so the decay is certainly not super-polynomial.

An IET is said to be of rotation class if it can be obtained as an induced map of a rigid
rotation, in other terms, the Rauzy class of its associated permutation must contain a rotation.
For instance, all three-IETs are of rotation class.
We show that typical non-rotation class IETs do display polynomial decay $C_N=o(N^{-\alpha})$
for some $\alpha>0$ depending only on the number of exchanged intervals.  On the other hand,
for typical (non-rotation) rotation class IETs we provide logarithmic upper and lower bounds, that is,
while there is some $a>0$ such that $C_N=o(\log^{-a} N)$, an estimate such as $C_N=o(\log^{-b} N)$
fails in general (for some larger $b>0$).

We note that the rotation class condition for an IET is equivalent to the associated translation surfaces (obtained by the zippered rectangles construction) having genus one.  However, translation flows in genus one are never weak mixing (they are indeed quasiperiodic), so the discussion of quantitative weak mixing for typical translation flows is somewhat less interesting, since it collapses to the polynomial case.

Our estimates are based on the development of a generalized version of the nonlinear parameter exclusion technique of Avila and Forni \cite{MR2299743}, and a quantitative version of Veech criterion formulated by Bufetov and Solomyak \cite{2019arXiv190809347B}.  Our more technical results include H\"older bounds for the spectral measures, in terms of which the main distinction between rotation class and non-rotation class is the uniformity with respect to the spectral parameter.
As in the work of Avila and Leguil \cite {MR3933880}, we are able to apply large deviation methods of Avila and Delecroix \cite{MR3522612} to obtain control on the Hausdorff dimension of the set of exceptional parameters.

We now turn to the precise statements.

\begin{thm}
\label{mm}
Let $\pi$ be an irreducible permutation on $d$ symbols which is not of rotation class. Then there exist $\gamma>0, \beta>0$ (depending only on the Rauzy class of $\pi$) such that, for a set of parameters $\lambda$ in $\mathbb{P}_{+}^{d-1}$ whose complement has Hausdorff dimension strictly less than 
$d-1$, there exists a constant $C_\lambda>0$ such that for any $0<t<1$, and any Lipschitz test function $f$ on $[0,1)$, and any $\theta \in [t,1-t]$,
\begin{equation}
    \left|\sum_{n=0}^{N-1}e^{2\pi\imath  n \theta} f(T_{\lambda, \pi}^{n}(x))\right| \leq C_\lambda \|f\|_{L} \cdot t^{-\beta}N^{1-\gamma}\,, \; \;\: \quad \forall N\geq 1,
\end{equation}
for all $x\in I$. The above implies that
\begin{equation}
    \sigma_f\left([\theta -r, \theta +r]\right) \leq C'_\lambda\|f\|^{2}_{L}t^{-2\beta} \cdot r^{2\gamma}, \;\; \quad \forall r \leq 1/2,
\end{equation}
where $C'_\lambda>0$ is a constant and the Lipschitz norm $\|\cdot \|_{L}$, and the spectral measure $\sigma_f$ of the function $f$ with respect to the interval exchange transformation $T_{\lambda, \pi}$ of parameters $(\lambda, \pi)$ are defined in section \ref{sec:twist_int}.
\end{thm}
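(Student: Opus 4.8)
The plan is to reduce the bound on the twisted Birkhoff sums to an exponential contraction estimate for a twisted Rauzy--Veech cocycle, to prove that estimate uniformly in the frequency $\theta$ for all parameters outside a set of Hausdorff dimension strictly below $d-1$, and then to pass from the pointwise bound on $S_N^\theta(f)(x):=\sum_{n=0}^{N-1}e^{2\pi\imath n\theta}f(T_{\lambda,\pi}^{n}(x))$ to the H\"older bound on $\sigma_f$ by a Fej\'er-kernel argument. First I recall the renormalization picture: Rauzy--Veech induction produces induced maps $T_{\lambda^{(k)},\pi^{(k)}}$ on nested subintervals $I^{(k)}\subset I$, with combinatorics and return times recorded by the (Zorich-accelerated) Rauzy--Veech matrices, whose top Lyapunov exponent $\lambda_1>0$ governs the return-time growth, so that a Birkhoff orbit of length $N$ started in $I$ corresponds to renormalization level $k=k(N)$ with $k\asymp(\log N)/\lambda_1$. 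Twisting by $\theta$, the passage of an observable through one induction step, $f\mapsto f^{(1)}$ with $f^{(1)}(y)=\sum_{0\le j<r^{(1)}(y)}e^{2\pi\imath j\theta}f(T^{j}y)$, is governed by the \emph{twisted cocycle} $A^{(k)}(\theta)$ --- the combinatorial Rauzy--Veech matrices with each unit entry replaced by a geometric sum $\sum_j e^{2\pi\imath j\theta}$ over the corresponding orbit segment --- together with controlled contributions from the incomplete boundary orbit pieces. Projecting $f$ onto the finite-dimensional space of observables adapted to the partition and bounding the remainder by a multiple of $\|f\|_L$, one obtains a quantitative version of Veech's criterion in the spirit of Bufetov--Solomyak \cite{2019arXiv190809347B}:
\[
  \bigl|S_N^\theta(f)(x)\bigr|\;\le\;C\,\|f\|_L\;\max_{0\le j\le k(N)}\bigl\|A^{(j)}(\theta)\bigr\|\,,\qquad x\in I.
\]
Hence it suffices to establish $\bigl\|A^{(j)}(\theta)\bigr\|\le C_\lambda\,t^{-\beta}\,e^{(\lambda_1-\delta_0)j}$ for some uniform $\delta_0>0$ and all $j\ge 1$, $\theta\in[t,1-t]$; the claimed exponent is then $1-\gamma$ with $\gamma$ determined by $\delta_0$ and $\lambda_1$.

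The heart of the matter is this uniform-in-$\theta$ exponential contraction of the twisted cocycle relative to the untwisted one. The mechanism is the one isolated by Avila and Forni \cite{MR2299743}: at a renormalization step at which some branch has a large return time $r$, the geometric sum $\sum_{0\le j<r}e^{2\pi\imath j\theta}=\tfrac{1-e^{2\pi\imath r\theta}}{1-e^{2\pi\imath\theta}}$ has modulus $\lesssim|1-e^{2\pi\imath\theta}|^{-1}\lesssim t^{-1}$, much smaller than $r$, so a large return time together with a sufficiently rich combinatorial step forces a definite loss of cocycle norm compared to the untwisted matrix. The \emph{non-rotation-class} hypothesis is used precisely here: it guarantees that, once the return times have grown past $t^{-1}$, such contracting steps recur along the renormalization with a frequency bounded below \emph{uniformly in} $\theta\in[t,1-t]$, whereas for rotation-class permutations there are resonant frequencies never contracted by the renormalization (this is the source of the logarithmic lower bound in the complementary case treated elsewhere in the paper). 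The dependence on $t$ comes from the initial renormalization window of length $\asymp(\log 1/t)/\lambda_1$ during which the return times are still $\lesssim t^{-1}$ and no contraction is available; absorbing the cocycle growth over that window produces the polynomial prefactor $t^{-\beta}$. Making ``a uniform positive frequency of contracting steps'' precise at the level of cocycle \emph{norms} (rather than of Lyapunov exponents) and for a whole compact family of frequencies at once --- through an analysis of the twisted cocycle along concatenations of renormalization segments of prescribed combinatorial type, carried out by a generalized nonlinear elimination scheme and in the spirit of the twisted spectral-gap estimates of Forni \cite{2019arXiv190811040F} and Bufetov--Solomyak \cite{2019arXiv190809347B} --- is what I expect to be the main obstacle.

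Next I would carry out the parameter exclusion with control of Hausdorff dimension. The set of $\lambda$ for which the uniform contraction fails (or for which the comparison $k(N)\asymp(\log N)/\lambda_1$ breaks down) is contained in the set where the Zorich cocycle and the renormalization dynamics deviate from their typical behavior --- too few contracting steps up to a given scale, or atypical matrix growth. Such deviations are exponentially rare in renormalization time and, since at each scale the exceptional set is cut out by finitely many combinatorial constraints, the large-deviation machinery of Avila and Delecroix \cite{MR3522612} (as applied by Avila and Leguil \cite{MR3933880}) bounds its Hausdorff dimension strictly below $d-1$; the constant $C_\lambda$ absorbs the finite, $\lambda$-dependent time it takes the renormalization of a good $\lambda$ to enter the regime where the uniform estimates apply. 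Taking a countable union over scales and over a countable dense set of frequencies still leaves dimension $<d-1$, and a monotonicity and covering argument in $\theta$ (using that $\|A^{(j)}(\theta)\|$ varies continuously in $\theta$ and that $r\mapsto\sigma_f([\theta-r,\theta+r])$ is nondecreasing) upgrades the estimate from a dense set of $\theta$ to all $\theta\in[t,1-t]$. This yields the stated pointwise bound on $S_N^\theta(f)$.

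Finally, the spectral-measure bound is the routine step. For fixed $\theta$ the defining relation of $\sigma_f$ gives $\|S_N^\theta(f)\|_{L^2(I)}^2=\int D_N(\theta-\omega)\,d\sigma_f(\omega)$ with $D_N(\omega)=\bigl|\sum_{n=0}^{N-1}e^{2\pi\imath n\omega}\bigr|^2$, and $D_N(\omega)\gtrsim N^2$ for $|\omega|\le 1/(2N)$. Since the pointwise bound just established is uniform in $x\in I$, combining these two facts gives
\[
  N^{2}\,\sigma_f\!\left(\bigl[\theta-\tfrac1{2N},\,\theta+\tfrac1{2N}\bigr]\right)\;\lesssim\;\int_I\bigl|S_N^\theta(f)(x)\bigr|^2\,dx\;\le\;\bigl(C_\lambda\|f\|_L\,t^{-\beta}N^{1-\gamma}\bigr)^2,
\]
and putting $r=1/(2N)$, so that $r$ ranges over $(0,1/2]$ as $N\ge 1$, gives $\sigma_f([\theta-r,\theta+r])\le C'_\lambda\|f\|_L^2\,t^{-2\beta}\,r^{2\gamma}$, as asserted.
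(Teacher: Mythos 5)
Your overall architecture matches the paper's: a quantitative Veech criterion in the Bufetov--Solomyak style bounding twisted Birkhoff sums by contraction of a twisted (spectral) cocycle, a parameter exclusion with Hausdorff-dimension control via Avila--Forni elimination plus Avila--Delecroix large deviations, and a Fej\'er-kernel passage to the spectral measure (your last step is essentially the paper's Lemma~\ref{Twisted Birkhoff Spectral} and is fine). But the step you yourself flag as ``the main obstacle'' is precisely the heart of the paper's proof, and your sketch of it is not correct as a mechanism. The contraction per renormalization step does \emph{not} come from return times exceeding $t^{-1}$ making geometric sums saturate: the twisted matrix entries are sums of unimodular terms whose phases are $\omega$ times Birkhoff sums of return times (not consecutive integers), and a step with huge return times produces no loss at all if $\omega$ times the relevant return word length is close to an integer. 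The actual contraction factor is $1-c_1\|\omega|\zeta^{[k]}(v)|_{\vec s}\|^2_{\mathbb{R}/\mathbb{Z}}$ for good return words $v$ (Proposition~\ref{mainbound}), so what must be proved is that $\|A_i(\lambda,\pi)\cdot\theta h\|_{\mathbb{R}^d/\mathbb{Z}^d}>\epsilon$ for a positive proportion of $i$, uniformly over $\theta\in[t,1-t]$, for all $\lambda$ outside a set of dimension $<d-1$, and with an entry time of order $\log(1/t)$ (this is Theorems~\ref{parexquan} and~\ref{IETsp}, and it is exactly the $\log(1/t)$ delay that produces the $t^{-\beta}$ prefactor). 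This is not a routine large-deviation statement about typicality of the Zorich cocycle: it requires the nonlinear elimination over lines $J=A_n(x)\cdot J_h-c$, the lattice-triangle lower bound $\|J\|\ge\|A_n(x)h\|^{-1}$, the inductive estimate on $\mathbb{P}(\Gamma^m_{\delta,\delta}(J)\,|\,\hat\Delta^{\mathbf d})\le C(\mathbf d,\delta)\|J\|^{-\rho}$, and crucially the hypothesis that lines in the direction of the positive cone avoid $E^{cs}(\lambda,\pi)$. That last hypothesis is where the non-rotation-class assumption actually enters (genus $\ge 2$, hence at least two positive Lyapunov exponents, hence $E^{cs}$ has codimension $\ge 2$ and a generic affine line misses it); your attribution of the hypothesis to ``uniform recurrence of contracting steps once return times pass $t^{-1}$'' misses this, and your contrast with the rotation-class case is also off (there the criterion still applies via Veech's type-W argument, but the contraction exponent degenerates like $\|\theta\|^2_{\mathbb{R}/\mathbb{Z}}$ rather than failing at resonant frequencies).

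A second, smaller gap is your proposed upgrade from a countable dense set of frequencies to all $\theta\in[t,1-t]$ by continuity of $\|A^{(j)}(\theta)\|$ and monotonicity of $r\mapsto\sigma_f([\theta-r,\theta+r])$. Continuity for each fixed $j$ does not give you the needed uniformity, because the constants (and the admissible starting scale $N(\mathrm a,\vec s,t,1-t)$) would a priori depend on the individual frequency; taking a countable union of exceptional $\lambda$-sets does not repair this. The paper obtains uniformity structurally: the elimination is performed on the whole line $\{th:\ t\in(0,1)\}$ at once, so the conclusion of Theorem~\ref{IETsp} holds for all $t$ simultaneously with $n(t,\lambda)\le C\max\{-\ln t,-\ln(1-t)\}+K(\lambda)$, and this uniform entry time is what feeds into the quantitative Veech criterion over the whole window $[t,1-t]$. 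Without an argument of this kind, your scheme establishes the pointwise bound only frequency-by-frequency, which is not enough for the stated theorem.
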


\begin{definition}
For a Borel measure $\sigma$ defined on $\mathbb{R}$ or $\mathbb{S}^{1}$, the
{\em lower local dimension} at a point $\theta \in \mathbb{R}$ $(\mathbb{S}^{1})$ denoted by $\underline{d}(\sigma, \theta)$ is given by

\begin{equation}
\underline{d}(\sigma, \theta):= 
\liminf_{r \to 0}{\frac{\log \sigma\left((\theta-r, \theta+r)\right)}{\log r}}.
\end{equation}
\end{definition}
\begin{definition} The Hausdorff dimension of a Borel measure $\sigma$ on $\mathbb{R}$ or $\mathbb{S}^{1}$ is given by
\begin{equation}
    \text{HD}(\sigma):=\sup \left\{s \geq 0: \underline{d}(\sigma, \theta) \geq s, \text{for} \; \sigma \;\text{a.e}\; \theta \right\}
\end{equation}

\end{definition}

\begin{thm} 
\label{specdim}
Under the assumptions of Theorem~\ref{mm}, we get that for all $\theta \notin \{0,1\}$,

\begin{equation}
    \underline{d}(\sigma_f, \theta) \geq 2\gamma,
\end{equation}
and 
\begin{equation}
    \text{HD}(\sigma_f) \geq 2 \gamma.
\end{equation}
\end{thm}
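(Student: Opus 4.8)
The plan is to deduce both assertions directly from the quantitative H\"older bound of Theorem~\ref{mm}, which already carries all the analytic substance; what remains is an essentially formal unwinding of the definitions of lower local dimension and of the Hausdorff dimension of a measure.

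First I would fix $\theta\in(0,1)$ and set $t:=\min\{\theta,1-\theta\}>0$, so that $\theta\in[t,1-t]$ and Theorem~\ref{mm} applies with this choice of $t$. Since $(\theta-r,\theta+r)\subset[\theta-r,\theta+r]$, this gives, for every $r\le 1/2$,
\[
0\le \sigma_f\big((\theta-r,\theta+r)\big)\le K_\theta\, r^{2\gamma},\qquad K_\theta:=C'_\lambda\,\|f\|_L^2\,t^{-2\beta}.
\]
Taking logarithms and dividing by $\log r$ --- noting that $\log r<0$ for $r\in(0,1)$, so the inequality reverses --- yields
\[
\frac{\log\sigma_f\big((\theta-r,\theta+r)\big)}{\log r}\;\ge\;2\gamma+\frac{\log K_\theta}{\log r}.
\]
As $r\to 0^+$ the last term tends to $0$ (with $K_\theta$ held fixed), so the $\liminf$ of the left-hand side is at least $2\gamma$, which is precisely $\underline{d}(\sigma_f,\theta)\ge 2\gamma$. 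The hypothesis $\theta\notin\{0,1\}$ enters only through the strict positivity of $t$: on the circle $\{0,1\}$ is the single point carrying the trivial eigenvalue, where Theorem~\ref{mm} provides no information.

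For the Hausdorff dimension bound I would combine the pointwise estimate just obtained with the fact that $\sigma_f$ assigns no mass to the excluded point. Since $f$ is a zero-average observable, $\sigma_f(\{0\})=0$; moreover $\sigma_f$ has no other atom, because an atom at some $\theta\neq 0$ would force $\underline{d}(\sigma_f,\theta)=0$, contradicting the bound just proved (alternatively, one may invoke the known weak mixing of typical non-rotation IETs). Hence $\underline{d}(\sigma_f,\theta)\ge 2\gamma$ holds for $\sigma_f$-almost every $\theta$, and by the definition of $\text{HD}(\sigma_f)$ as the supremum of the admissible exponents $s$ we conclude $\text{HD}(\sigma_f)\ge 2\gamma$.

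The one step that is not purely mechanical --- and hardly an obstacle --- is this verification that $\sigma_f(\{0\})=0$, i.e.\ that the single parameter excluded by Theorem~\ref{mm} is $\sigma_f$-null, so that the pointwise bound on $(0,1)$ genuinely upgrades to an almost-everywhere bound. In the zero-mean Lipschitz setting this is immediate from the definition of the spectral measure (the mass at the trivial eigenvalue equals $|\int f|^{2}=0$), and everything else is a single line of elementary estimation applied to the conclusion of Theorem~\ref{mm}.
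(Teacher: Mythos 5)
Your proof is correct and follows essentially the same route as the paper, which offers no separate argument but simply remarks that Theorem~\ref{mm} ``immediately implies'' Theorem~\ref{specdim}; your write-up is exactly the definitional unwinding behind that remark (fix $\theta$, take $t=\min\{\theta,1-\theta\}$, take logarithms in the H\"older bound, then pass to the a.e.\ statement for the Hausdorff dimension). Your attention to the point $\sigma_f(\{0\})=0$ --- which requires $\int f=0$ (or at least ruling out the atom at the trivial eigenvalue) and is genuinely needed for the Hausdorff-dimension half --- is a detail the paper glosses over, so it is a welcome addition rather than a deviation.
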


\begin{thm} 
\label{poldecave}
For every irreducible permutation $\pi$ on $d$ symbols which is not of rotation class, there exists a measurable subset $\Delta_{cor}$ of $\Delta=\mathbb{PR}_{+}^{d-1}$ with positive Hausdorff codimension and there exists $\alpha'>0$, depending only on the Rauzy class of $\pi$, such that for every $\lambda \in \Delta_{cor}$ there exists a constant $C_1(\lambda)>0$ such that for every zero-average Lipschitz function $f$, and for every $L^{2}$ function $g$ on $I$ the following holds
\begin{equation}
    \frac{1}{N}\sum_{n=0}^{N-1} \left|\left\langle f\circ T_{\lambda, \pi}^{n}, g \right\rangle\right|^{2} \leq C_1(\lambda)\|f\|_{L} \|g\|_{2}^2   \|f\|_{2}N^{-\alpha'}\,,  \; \;\: \quad \forall N \geq 1\,.
\end{equation}

\end{thm}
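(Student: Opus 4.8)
The plan is to deduce the Ces\`aro decay of correlations from the twisted Birkhoff sum estimate already furnished by Theorem~\ref{mm}, combined with the spectral bound in its last display. First I would reduce to the case $g = f$ by Cauchy--Schwarz: since $\langle f \circ T^n, g\rangle = \int \widehat{f}_n(\theta)\, \overline{\widehat{g}(\theta)}\, $-type identities are cleanest through spectral measures, I would instead observe that the quantity $\frac1N\sum_{n=0}^{N-1}|\langle f\circ T^n_{\lambda,\pi}, g\rangle|^2$ is, by the spectral theorem applied to the unitary Koopman operator $U_T$, equal to $\int_{\mathbb{S}^1} F_N(\theta)\, d\sigma_{f,g}(\theta)$ where $F_N$ is the Fej\'er kernel and $\sigma_{f,g}$ is the complex spectral measure; bounding $|\sigma_{f,g}| \le \sigma_f^{1/2}\sigma_g^{1/2}$ (polarization) and using $\|g\|_2^2 = \sigma_g(\mathbb{S}^1)$, this is at most $\|g\|_2^2 \cdot \big(\int F_N\, d\sigma_f\big)$ up to the Fej\'er normalization, so matters reduce to estimating $\int_{\mathbb{S}^1} F_N(\theta)\, d\sigma_f(\theta)$.

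Next I would split the integral over $\theta$ into the bulk $|\theta| \in [t_N, 1 - t_N]$ and the two small caps near $0$ and $1$ of total length $O(t_N)$, for a threshold $t_N \to 0$ to be optimized. On the bulk, the spectral estimate $\sigma_f([\theta - r,\theta+r]) \le C'_\lambda \|f\|_L^2\, t^{-2\beta} r^{2\gamma}$ from Theorem~\ref{mm} applies uniformly with $t = t_N$; integrating the Fej\'er kernel against a measure with this $(2\gamma)$-dimensional modulus of continuity gives a contribution of order $t_N^{-2\beta} N^{-2\gamma}$ (standard: decompose dyadically in $r$ around each $\theta$, or integrate $\int F_N(\theta)\,d\sigma_f \lesssim \sup_{|I|=1/N}\sigma_f(I) + \sum_{k\ge 1} 2^{-k}\sup_{|I|=2^k/N}\sigma_f(I)$). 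On the two caps near the rational eigenvalue $0$ (where $f$ being zero-average kills the atom, but the measure need not be small), I would bound $\sigma_f$ of the cap trivially by $\|f\|_2^2$ — or better, since $f$ has zero average, $\sigma_f(\{0\}) = |\int f|^2 = 0$, so by continuity the cap contribution tends to $0$; quantitatively I would use $F_N \le N$ pointwise to get a bound $\lesssim N \cdot \sigma_f([-t_N,t_N])$, which is not obviously small, so more care is needed here. Finally, optimize $t_N$ to balance the bulk term $t_N^{-2\beta} N^{-2\gamma}$ against the cap term; choosing $t_N = N^{-\delta}$ for a small $\delta > 0$ and setting $\alpha' = 2\gamma - 2\beta\delta - (\text{loss from caps})$ yields the claimed $N^{-\alpha'}$ with $\alpha'$ depending only on the Rauzy class, and $\Delta_{cor}$ is taken to be the full-measure set of Theorem~\ref{mm}, whose complement has positive Hausdorff codimension.

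The main obstacle is controlling the contribution of the Fej\'er kernel near the rational spectral value $\theta = 0$ (and $\theta = 1$): Theorem~\ref{mm}'s H\"older bound degenerates as $t \to 0$ like $t^{-2\beta}$, so it gives no control on $\sigma_f$ of an $O(1/N)$-neighborhood of $0$ beyond the trivial $\|f\|_2^2$. The resolution I anticipate is to exploit that $f$ has zero average more quantitatively: one should estimate the \emph{untwisted} Ces\`aro average (the $\theta = 0$ contribution) directly via unique ergodicity / Rauzy--Veech estimates on Birkhoff sums of Lipschitz functions, obtaining a power saving $N^{-\eta}$ there from the balanced-ness of the renormalization dynamics on $\Delta_{cor}$, and then interpolate this with the bulk H\"older bound across the cap. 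Getting the exponents to combine into a single clean $\alpha'$ — in particular ensuring the cap loss does not swamp $2\gamma$ — is where the bookkeeping lies; I expect one must take $\delta$ small enough that $2\beta\delta$ is a small fraction of $2\gamma$, and then the cap term, handled by the zero-average Birkhoff estimate, contributes a comparable or better power, so that $\alpha' = \gamma$ (say) is safely attainable.
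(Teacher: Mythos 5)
Your overall architecture coincides with the paper's: decompose the correlation sum through the spectral measure, control the bulk $\theta\in[\epsilon,1-\epsilon]$ by the uniform twisted Birkhoff sum/H\"older bound of Theorem~\ref{mm}, control the cap near $\theta\in\{0,1\}$ by a power saving for the untwisted Birkhoff sums of the zero-average function $f$, and optimize the cut-off $\epsilon=N^{-\eta}$ (the paper takes $\eta=\gamma/(\alpha+\beta)$ and obtains $\alpha'=\alpha\gamma/(\alpha+\beta)$, so your closing guess that $\alpha'=\gamma$ is attainable is too optimistic, though harmless since any $\alpha'>0$ suffices). Two cosmetic remarks: the quantity $\frac1N\sum_n|\langle U^nf,g\rangle|^2$ is a \emph{double} integral of a Fej\'er-type kernel against $\sigma_{f,g}\times\overline{\sigma_{f,g}}$, not $\int F_N\,d\sigma_{f,g}$, and the paper sidesteps this by writing $\sum_n\langle U^nf,g\rangle\int e^{2\pi\imath n\theta}d\sigma_{f,g}(\theta)$ and using the pointwise twisted bound on the bulk together with $|\sigma_{f,g}|\leq\|f\|_2\|g\|_2$; your dyadic version is repairable and morally equivalent.

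The genuine gap is the cap ingredient itself. You assert that the needed estimate $\big|\sum_{n<N}f\circ T^n_{\lambda,\pi}\big|\lesssim \|f\|_L N^{1-\alpha}$ holds on $\Delta_{cor}$ ``via unique ergodicity / Rauzy--Veech estimates \dots from the balanced-ness of the renormalization dynamics'', but balancedness and unique ergodicity only give $o(N)$; the polynomial saving for zero-average Lipschitz observables is the Zorich--Forni deviation phenomenon, which rests on the spectral gap of the Kontsevich--Zorich cocycle, and --- crucially for this theorem --- it must be established on a set of IET parameters of positive Hausdorff \emph{codimension}, with an exponent depending only on the Rauzy class. This is exactly where the bulk of the paper's proof of Theorem~\ref{poldecave} lies: it invokes the Athreya--Forni bound \eqref{ergbound} for translation flows, its upgrade from full measure to a positive-codimension set of directions (\cite{2017arXiv171110542A}), replaces $f$ by a cohomologous Lipschitz function vanishing near the endpoints so that it lifts to an $H^1$ function on the suspension surface away from the singularities, and then transfers the codimension statement from the circle of directions to the simplex of IETs by fibering $\Delta$ into lines $se_d+(1-s)\tilde\lambda$ realized as return maps of directional flows on a fixed surface, using the Lipschitz dependence of $\lambda$ on the direction. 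None of this appears in your proposal, and without it the contribution $N\cdot|\sigma_{f,g}|\big((-\epsilon,\epsilon)\big)$ of the cap cannot be beaten; correspondingly, $\Delta_{cor}$ must be the intersection of the set of Theorem~\ref{mm} with the set where this deviation bound holds, and the codimension estimate for the latter set is precisely the missing work. Once \eqref{ergbound2} is granted, your conversion of it into a bound on $\sigma_{f,g}$ near $0$ (Lemma~\ref{Twisted Birkhoff Spectral} at $\omega=0$) and the optimization in $\epsilon$ do close the argument, as in the paper.
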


For rotation type IETs, in subsection~\ref{upper bound rotation class} we establish a logarithmic decay rate (upper bound), and in subsection~\ref{lower bound rotation class} we prove that no polynomial upper bound can be established in this case. In fact,
we construct Lipschitz test functions for which the Ces\`aro averages are bounded below by some logarithmic function. The precise results
can be stated as follows:

\begin{thm}
\label{rot_class_ub}
Let $\pi$ be a permutation on $d$ symbols of rotation class but which is not a rotation. Then for almost every $\lambda \in \mathbb{P}^{d-1}$, there exists a constant $C_2(\lambda)>0$ such that, for every zero-average Lipschitz function $f$, and every $L^{2}$ function $g$, we have the following  upper bound: for all integers $N\geq 2$, 

\begin{equation}
    \frac{1}{N}\sum_{n=0}^{N-1} \left|\big \langle f \circ T_{\lambda, \pi}^{n}, g \big \rangle\right|^{2}  \leq C_2(\lambda) \|f\|_{L} \|f\|_{2} \|g\|_{2}^{2}\frac{1}{(\log N)^{1/6}}
\end{equation}
where $T_{\lambda, \pi}$ is the IET corresponding to the parameters $(\lambda, \pi)$.
\end{thm}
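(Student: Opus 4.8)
The plan is to reduce Theorem~\ref{rot_class_ub} to a uniform (over the spectral parameter) log-H\"older bound for the spectral measure $\sigma_f$, and then to prove that bound by using that a rotation-class IET is an induced map of a rigid rotation whose return-time cocycle has only logarithmically deviating ergodic sums. For the first reduction I would write $\langle f\circ T_{\lambda,\pi}^{n},g\rangle=\int_{\mathbb{S}^{1}}e^{2\pi\imath n\theta}\,d\sigma_{f,g}(\theta)$ for the complex spectral measure $\sigma_{f,g}$ and expand the square to obtain
\begin{equation}
\frac1N\sum_{n=0}^{N-1}\bigl|\langle f\circ T_{\lambda,\pi}^{n},g\rangle\bigr|^{2}=\int\!\!\int \mathcal{F}_{N}(\theta-\theta')\,d\sigma_{f,g}(\theta)\,\overline{d\sigma_{f,g}(\theta')},
\end{equation}
where $\mathcal{F}_{N}$ is the Fej\'er-type kernel obtained by symmetrizing $\frac1N\sum_{n<N}e^{2\pi\imath n s}$; since it is nonnegative, of total mass $O(1)$, and essentially supported on scale $1/N$, while $\|\sigma_{f,g}\|\le\|f\|_{2}\|g\|_{2}$ and $|\sigma_{f,g}(A)|^{2}\le\sigma_{f}(A)\sigma_{g}(A)$, a Cauchy--Schwarz manipulation bounds the left-hand side by $C\|g\|_{2}^{2}\sup_{\theta}\sigma_{f}([\theta-1/N,\theta+1/N])$. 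So it is enough to show that for a.e.\ $\lambda$ there is $C(\lambda)>0$ with $\sigma_{f}([\theta-r,\theta+r])\le C(\lambda)\|f\|_{L}\|f\|_{2}(\log(1/r))^{-1/6}$ for all $\theta$ and all $r\le1/2$, and by the quantitative Veech-type estimate of Section~\ref{sec:twist_int} this would follow from a uniform (in $x$ and $\theta$) bound $\bigl|\sum_{n<N}e^{2\pi\imath n\theta}f(T_{\lambda,\pi}^{n}x)\bigr|\lesssim C(\lambda)(\|f\|_{L}\|f\|_{2})^{1/2}N(\log N)^{-1/12}$.

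Next I would bring in the rotation-class hypothesis. Since $\pi$ is of rotation class, for every $\lambda$ the IET $T_{\lambda,\pi}$ is conjugate to the first-return map $(R_{\alpha})_{J}$ of a rigid rotation $R_{\alpha}$ of $\mathbb{S}^{1}$ to a subinterval $J$, with $\alpha=\alpha(\lambda)$ and $J=J(\lambda)$ determined by a path in the Rauzy class of $\pi$ ending at a rotation permutation; for Lebesgue-a.e.\ $\lambda$ one gets $\alpha$ irrational with tamely growing continued fraction quotients. The return time $r\colon J\to\mathbb{Z}_{>0}$ takes at most three values on the first-return partition of $J$ (three-distance theorem, equivalently the Rauzy--Veech data), and $T_{\lambda,\pi}^{n}(x)=R_{\alpha}^{r_{n}(x)}(x)$ with $r_{n}(x)=\sum_{j<n}r(T_{\lambda,\pi}^{j}x)$. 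Extending $f$ by $0$ outside $J$ and Fourier-expanding on $\mathbb{S}^{1}$ (with $\widehat f(0)=0$ by the zero-average hypothesis, which also removes the atom at $\theta=0$) I would write
\begin{equation}
\sum_{n=0}^{N-1}e^{2\pi\imath n\theta}f\bigl(T_{\lambda,\pi}^{n}x\bigr)=\sum_{k\in\mathbb{Z}}\widehat f(k)\,e^{2\pi\imath k x}\,S_{N,k}(\theta,x),\qquad S_{N,k}(\theta,x):=\sum_{n=0}^{N-1}e^{2\pi\imath n\theta}e^{2\pi\imath k\alpha\,r_{n}(x)},
\end{equation}
with $|\widehat f(k)|\le C\|f\|_{L}/(1+|k|)$ and $\sum_{k}|\widehat f(k)|^{2}=\|f\|_{2}^{2}$, and then split the phase as $e^{2\pi\imath n(\theta+k\alpha\bar r)}e^{2\pi\imath k\alpha E_{n}(x)}$, where $\bar r=1/|J|$ and $E_{n}(x)=r_{n}(x)-n\bar r$ is the deviation of the ergodic sums of $r$ (equivalently, up to rescaling, the discrepancy of the $R_{\alpha}$-orbit relative to $J$).

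The decisive point is the behaviour of $S_{N,k}(\theta,x)$ according to how close $\theta$ is to the resonant values $\beta_{k}:=-k\alpha\bar r\bmod1$. Away from the $\beta_{k}$ one expects substantial cancellation: re-expanding the companion indicator $\chi_{J}$ in Fourier series turns $S_{N,k}$ into twisted Birkhoff sums of $\chi_{J}$ under $R_{\alpha}$, which are controlled by the continued fraction algorithm of $\alpha$ --- the renormalization of a rotation-class IET. The hard regime is $\theta\approx\beta_{k}$: then $e^{2\pi\imath n(\theta+k\alpha\bar r)}$ is almost constant and the only cancellation in $S_{N,k}$ comes from $e^{2\pi\imath k\alpha E_{n}(x)}$; because $E_{n}$ is unbounded --- this is exactly what makes $T_{\lambda,\pi}$ weakly mixing, and holds for a.e.\ $\alpha$, for which in particular $|J|$ is not a bounded-remainder length for $R_{\alpha}$ --- this phase does wind and produces decay, but $E_{n}$ grows only logarithmically (``slow deviation'', the same phenomenon that drives the matching lower bound of subsection~\ref{lower bound rotation class}), so at best a negative power of $\log N$ is gained. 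To make this quantitative I would organize the resonances $\beta_{k}$ into dyadic blocks of $k$ and dyadic scales of $\|\theta-\beta_{k}\|$, estimate each block by combining (i) the a.e.-$\alpha$ logarithmic bound on $\sup_{n<N}|E_{n}|$, (ii) near-orthogonality in $L^{2}(dx)$ of the modes $e^{2\pi\imath k x}$, and (iii) $|\widehat f(k)|\lesssim\|f\|_{L}/|k|$, then sum the resulting geometric-type series over scales and optimize; the exponent $1/6$ is the output of this optimization and is surely not sharp.

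I expect the main obstacle to be the uniformity in $\theta$. The resonant set $\{\beta_{k}\}$ is dense, so one must handle parameters $\theta$ lying arbitrarily close to --- or exactly at --- a low-order resonance, and also parameters near which infinitely many resonances accumulate at comparable distances; making the dyadic block decomposition robust enough to absorb the whole superposition of these near-resonances, while keeping a clean negative power of $\log N$, is the crux. The parameter exclusion, by contrast, is cheap: the discarded $\lambda$ correspond to a Lebesgue-null set of $\alpha$ (controlled continued fraction quotients, logarithmic discrepancy, non-bounded-remainder length), and since only an a.e.\ conclusion is claimed there is no Hausdorff-dimension bookkeeping, unlike in Theorems~\ref{mm} and~\ref{poldecave}.
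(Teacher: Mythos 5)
Your overall architecture (Cesàro averages $\to$ spectral measure $\to$ twisted Birkhoff sums) is the same framework the paper uses, but the two steps that carry all the quantitative content are not actually supplied. The decisive one is the cancellation mechanism at the resonances. The paper does not analyze exponential sums $\sum_n e^{2\pi\imath n\theta}e^{2\pi\imath k\alpha r_n(x)}$ mode by mode; it uses that a rotation-class, non-rotation permutation is of type W, i.e.\ Veech's integral vector $b\in N(\pi)$ with $\langle A_k(x)\cdot t h,b\rangle=t$ for all $k$, which gives the \emph{per-step, uniform-in-$k$} lower bound $\max_{v\in GR(\zeta)}\|\omega|\zeta^{[k]}(v)|_{\vec s}\|_{\mathbb{R}/\mathbb{Z}}\geq M_0\|\omega\|_{\mathbb{R}/\mathbb{Z}}$; fed into Propositions~\ref{mainbound}--\ref{genbound}, each of the $\asymp\log R$ renormalization steps contributes a factor $(1-c\,\|\omega\|^2)$, yielding $|S_R(f,\omega)|\lesssim\|f\|_L R^{1-M\|\omega\|^2_{\mathbb{R}/\mathbb{Z}}}$. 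In your plan this is replaced by the assertion that, near $\theta\approx\beta_k$, the phase $e^{2\pi\imath k\alpha E_n(x)}$ ``winds because $E_n$ is unbounded'' and so gains a negative power of $\log N$. Unboundedness of $E_n$ is qualitative; what is needed is a quantitative equidistribution statement (e.g.\ a positive proportion of $n\leq N$ with $\|k\alpha E_n(x)\|$ bounded below), uniform in $x$ and over the dense superposition of near-resonant frequencies $k$ — exactly the point you yourself flag as ``the crux'' and leave open. That is the theorem's real content, and no mechanism for it is given.

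There is a second, more localized problem: your reduction asks for a bound $\bigl|\sum_{n<N}e^{2\pi\imath n\theta}f(T^n x)\bigr|\lesssim N(\log N)^{-1/12}$ \emph{uniformly in $\theta$}, but in the regime $\|\theta\|_{\mathbb{R}/\mathbb{Z}}\asymp(\log N)^{-1/2}$ the type-W estimate gives only $N^{1-M\|\theta\|^2}\approx e^{-M}N$ (no logarithmic gain), and no pointwise bound of the strength you need is available there. The paper avoids this by treating a shrinking neighbourhood of $\theta=0$ by a different mechanism: the Denjoy--Koksma bound on the \emph{untwisted} Birkhoff sums of $f$ itself (not of the return-time cocycle) shows, via a variant of Lemma~\ref{Twisted Birkhoff Spectral}, that $|\sigma_{f,g}((-\epsilon,\epsilon))|\lesssim\epsilon\log(1/\epsilon)(\log\log(1/\epsilon))^{1+\delta}$, so the large twisted sums near $0$ are integrated against negligible spectral mass; optimizing the splitting scale $\epsilon$ against $N^{1-M\epsilon^2}$ is what produces the exponent $1/6$. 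Your sketch never invokes this small-mass-near-zero step (setting $\widehat f(0)=0$ only removes the atom, not the nearby mass), so even granting the resonance analysis, the uniform-in-$\theta$ target as stated would not close the argument.
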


\begin{rmk} The same ideas may be applied to self-similar IETs of type W (see definition~\ref{def:typeW} of section \ref{upper bound rotation class}) to obtain logarithmic upper bounds for the decay of the Ces\`aro averages of correlations.
\end{rmk}

\begin{thm}
\label{rot_class_lb}
For almost every rotation class interval exchange transformation $T$ there exists a constant $c_2(T)>0$ such that, for all integers $N\geq 3$ there exist Lipschitz functions $f$ and $g$ (depending on $N$) such that we have the following lower bound:
\begin{equation}
    Q_N(f, g) \geq c_2 \frac{N\|f\|_{L}^{2}\|g\|_{L}^{2}}{(\log N (\log \log N)^{1+\epsilon})^{9}},
\end{equation} 
where 
\begin{equation}
     Q_N(f, g):=\sum_{n=0}^{N-1} \left|\int_I f \circ T^{n}(y) g(y) dy - \int_I f \int_I g \right|^{2}.
\end{equation}
\end{thm}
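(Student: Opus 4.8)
The plan is to exploit the single structural feature of a rotation class IET: by definition such a $T$ is the first return map $(R_{\alpha})_{J}$ of a rigid irrational rotation $R_{\alpha}$ of $\mathbb{R}/\mathbb{Z}$ to a subinterval $J$, which after conjugating by a rotation we may take to be $[0,\beta)$. Let $\phi\colon J\to\mathbb{Z}_{\geq 1}$ be the return time, so that (by the three distance theorem) $\phi$ is of bounded variation and $T^{n}y\equiv y+(S_{n}\phi)(y)\,\alpha\pmod 1$ for $y\in J$, where $S_{n}\phi=\sum_{j=0}^{n-1}\phi\circ T^{j}$. For an integer $k\geq 1$ I will test against $f(y)=e^{2\pi\imath ky}$ and $g=\overline f$ on $J$; since $k\in\mathbb{Z}$,
\[
\int_{J}f(T^{n}y)\,g(y)\,dy=\int_{J}e^{2\pi\imath k(T^{n}y-y)}\,dy=\int_{J}e^{2\pi\imath k\alpha (S_{n}\phi)(y)}\,dy,
\]
while $\bigl|\int_{J}f\cdot\int_{J}g\bigr|=O(k^{-2})$. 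Passing from the complex pair $(f,\overline f)$ to a genuine real Lipschitz pair (one of $(\cos,\cos),(\sin,\sin),(\sin,\cos),(\cos,\sin)$ with argument $2\pi kx$) is a routine four way pigeonhole that costs only a factor $1/4$ in the number of surviving terms and otherwise harmless constants; so it suffices to bound $\bigl|\int_{J}e^{2\pi\imath k\alpha S_{n}\phi}\,dy\bigr|$ from below, for all $0\leq n<N$, for a suitable $k=k(N)$.

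The quantity that controls everything is the integer oscillation $\operatorname{osc}_{n}:=\sup_{y\in J}(S_{n}\phi)(y)-\inf_{y\in J}(S_{n}\phi)(y)$. The key claim is that, for Lebesgue a.e.\ $\alpha$, there is a constant $V=V(\alpha,\beta)$ with
\[
\operatorname{osc}_{n}\ \leq\ B_{N}:=V\,(\log N)^{2}(\log\log N)^{1+\epsilon}\qquad\text{for every }1\leq n<N.
\]
Indeed, by inverting the relation ``$m$-th step of $R_{\alpha}$ $\leftrightarrow$ $n$-th return to $J$'', the deviation of $S_{n}\phi$ from $n/\beta$ is equivalent to the deviation of the ergodic sums $N_{m}(y)=\sum_{i=1}^{m}\mathbbm 1_{J}(R_{\alpha}^{i}y)$ from $m\beta$; applying the Denjoy--Koksma inequality to $\mathbbm 1_{J}\in\mathrm{BV}$ along the denominators $q_{r}$ of $\alpha$, then the Ostrowski expansion $m=\sum_{j}c_{j}q_{j}$ with $0\leq c_{j}\leq a_{j+1}$ and $j\leq m(CN)\asymp\log N$, gives $\sup_{m<CN}\sup_{y}|N_{m}(y)-m\beta|\lesssim\sum_{j\lesssim\log N}a_{j+1}$, and the Borel--Bernstein theorem ($a_{j}\leq j(\log j)^{1+\epsilon}$ eventually, for a.e.\ $\alpha$) bounds this by $(\log N)^{2}(\log\log N)^{1+\epsilon}$. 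This is the crux of the argument: the bound is polylogarithmic precisely because we have excluded the measure zero set of Liouville like rotation numbers. It is also here that the passage from ``a.e.\ IET'' to ``metrically typical $\alpha$'' enters: via Rauzy--Veech induction the correspondence $\lambda\mapsto(\alpha(\lambda),|J(\lambda)|)$ is piecewise real analytic with nondegenerate derivative, hence preserves Lebesgue null sets in both directions, so the $\lambda$ for which $\alpha(\lambda)$ satisfies the Borel--Bernstein bound form a set of full measure.

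Granting this, let $k=k(N)$ be the smallest denominator of a convergent of $\alpha$ with $\|k\alpha\|\leq(8B_{N})^{-1}$; since by minimality the preceding denominator $q$ has $\|q\alpha\|>(8B_{N})^{-1}$ while always $\|q\alpha\|<1/k$, this forces $k< 8B_{N}$. Then for every $n<N$ and $y\in J$, writing $y_{0}$ for a point minimizing $S_{n}\phi$ on $J$, we have $\|k\alpha\bigl((S_{n}\phi)(y)-(S_{n}\phi)(y_{0})\bigr)\|\leq \operatorname{osc}_{n}\,\|k\alpha\|\leq 1/8$, so $\operatorname{Re}\!\bigl(e^{-2\pi\imath k\alpha (S_{n}\phi)(y_{0})}\int_{J}e^{2\pi\imath k\alpha S_{n}\phi}\bigr)\geq\beta\cos(\pi/4)=\beta/\sqrt2$, and the correlation has modulus $\geq\beta/\sqrt2-O(k^{-2})\geq\beta/2$ once $N$ is large. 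Summing over $n<N$ and using the real reduction, $Q_{N}(f,g)\gtrsim N\beta^{2}$, while $\|f\|_{L},\|g\|_{L}\lesssim k< 8B_{N}$; hence
\[
Q_{N}(f,g)\ \gtrsim_{T}\ \frac{N}{B_{N}^{4}}\,\|f\|_{L}^{2}\|g\|_{L}^{2}\ \geq\ c_{2}(T)\,\frac{N\,\|f\|_{L}^{2}\|g\|_{L}^{2}}{\bigl(\log N\,(\log\log N)^{1+\epsilon}\bigr)^{9}},
\]
since $B_{N}^{4}\asymp(\log N)^{8}(\log\log N)^{4(1+\epsilon)}$ leaves room to absorb the remaining logarithmic factors for $N$ large, and the finitely many remaining $N\geq 3$ are handled crudely (taking, e.g., $f=g$ a fixed bump, so $Q_{N}\geq (\operatorname{Var} f)^{2}>0$) and absorbed into $c_{2}(T)$.

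The only serious obstacle is the uniform in $n$, polylogarithmic control of $\operatorname{osc}_{n}$ in the second paragraph: it requires marrying Denjoy--Koksma, the Ostrowski numeration of $n$, a.e.\ bounds on the partial quotients, and the bookkeeping to transfer genericity through Rauzy--Veech induction. Everything downstream is then essentially forced by a density/growth tradeoff --- along sparser sets of good times such as the convergent denominators one has $\operatorname{osc}_{n}=O(1)$ and may take $k=O(1)$, but then only $\asymp\log N$ good terms survive, far below the required $N/\mathrm{polylog}$, so one is compelled to use essentially all $n<N$, which is exactly what the oscillation bound permits (at the price of a frequency $k\asymp B_{N}$, whence the power $9$).
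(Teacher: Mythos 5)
Your plan is correct in its essentials, but it takes a genuinely different route from the paper. The paper works with the dual representation of a rotation class IET as a skyscraper over a rotation $R_\theta$ with integer-valued roof $r$: using the Denjoy--Koksma/metric-theory bound on deviations of Birkhoff sums of $r$, it shows that for $m\le N$ the set $T^m(J_N)\cap I_0$ is trapped in a union of only $O(\log N(\log\log N)^{1+\epsilon})$ translates of a tiny interval $J_N$ of length $s\asymp 1/(\log N(\log\log N)^{1+\epsilon})$, finds a second interval $J_N''$ disjoint from all these images for a set $S$ of times of cardinality $\gtrsim N|J_N|$, and takes $f,g$ to be bump functions on $J_N,J_N''$; each $n\in S$ then contributes exactly $(\int f\int g)^2$, and the exponent $9$ comes from $\#S$ (one log factor) plus the $s^8$ loss in converting $L^1$ norms of narrow bumps into Lipschitz norms. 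You instead use the induced-map picture $T=(R_\alpha)_J$ and a phase-coherence argument: a single trigonometric test function at a convergent-denominator frequency $k\lesssim B_N$, with the polylogarithmic oscillation bound for $S_n\phi$ (Denjoy--Koksma plus Ostrowski plus Borel--Bernstein) forcing every correlation up to time $N$ to have modulus bounded below by a constant, the loss being only $k^4\lesssim(\log N)^8(\log\log N)^{4(1+\epsilon)}$, which sits comfortably under the required power $9$ (and indeed gives a slightly stronger exponent; the paper's sharper a.e.\ bound $\log N(\log\log N)^{1+\epsilon}$ on the deviations, which you could also invoke, would improve this further to roughly power $4$). Two steps in your plan deserve explicit care, though neither is fatal: first, for $d>3$ the identification of $T$ with the first return map of a rotation to a subinterval requires the observation that a genus-one (rotation class) $d$-IET has fake discontinuities and coincides, as a map, with the return map of the vertical flow on a flat torus with marked points to a horizontal segment inside a closed horizontal leaf (alternatively you could run the same phase argument through the paper's tower representation); second, the transfer of the full-measure Diophantine condition on $\alpha$ to full measure in the IET parameter $\lambda$ needs the (rational-linear, submersive) dependence of $(\alpha,\beta)$ on $\lambda$ spelled out, although the paper itself passes from ``almost every rotation number'' to ``almost every rotation class IET'' with the same brevity.
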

We remark that all the upper bounds for the decay of correlations stated above may be extended to the space of H\"older observables at the expense of weakening the exponent of $N$. This may be done by a standard approximation of H\"older functions by Lipschitz functions based on convolution smoothing operators, see for instance Lemma 2.3 of \cite{https://doi.org/10.1002/cpa.3160280104}.

The above Theorems \ref{rot_class_ub} and \ref{rot_class_lb} apply in particular to the case of IETs on $3$-intervals. In this case the weak mixing property was first proved by 
A.~Katok and A.~M.~Stepin \cite{MR0219697} under an explicit full measure Diophantine condition. Later weaker and more precise conditions for weak mixing were established by S.~Ferenczi, C.~Holton and L.~Zamboni~\cite{FHZ}. It would be interesting to establish sufficient Diophantine conditions for logarithmic weak mixing.  

\smallskip
Another interesting class of cases where number theoretical considerations are crucial is given by self-similar IETs in any number of intervals. In this case a criterion for weak mixing was established by Ya.~Sinai and C.~Ulcigrai \cite{BSU}, \cite{SU} (based on a general criterion of the authors and A.~Bufetov \cite{BSU} for the singularity of spectral measures of IETs). Bufetov and Solomyak
\cite{bufetov2014modulus} established  polynomial (H\"older) bound for the spectrum of almost all suspensions over (non-Pisot, non-Salem) substitution systems
(under the condition that the leading eigenvalue of the substitution matrix has at least one Galois conjugate outside of the unit disk), but only logarithmic bounds in the self-similar case. Recently, polynomial bounds on spectral measures for special spectral parameters, in the Salem case, have been found by J. Marshall-Maldonado
\cite{2020arXiv2009.13607}. 
We are not aware of any result on the quantitative weak mixing of self-similar IETs.

\subsection{Structure of the paper}

In section~\ref{sec:preliminaries}, we begin with some preliminaries on cocycles and expanding maps. Then we introduce the notion of fast decay and derive from it Hausdorff dimension estimates for sets that are defined as the inferior limit of sets of a certain form. The main theorem in this section is Theorem~\ref{HDP}. In section~\ref{sec:exclusion}, we generalize a probabilistic technique of exclusion of parameters, developed originally by Avila and Forni \cite{MR2299743} to prove prevalence of weak mixing for IETs. Combining these ideas and the large deviation estimates of \cite{MR3522612}, we obtain a quantitative parameter exclusion, which is the content of 
Theorem~\ref{parexquan}. In section~\ref{sec:IETs}, we give a brief overview of IETs and the corresponding renormalization operator.  We restrict our attention to non-rotation type IETs (genus bigger than or equal to two) for which the renormalization operator has at least two positive Lyapunov exponents (with respect to the Masur-Veech measures) and apply the elimination argument (Theorem~\ref{parexquan}) to prove that the conditions of a quantitative version of the Veech's criterion for weak mixing are satisfied. In sections \ref{sec:twist_int} and \ref{sec:eff_Veech}, we follow the S-adic approach to IETs \cite{Bufetov2013}, \cite{bufetov2014IETs}. We derive uniform bounds on twisted Birkhoff sums of Lipschitz continuous functions and obtain the above-mentioned quantitative version of Veech's criterion for weak mixing (Theorem~\ref{QVC}). These sections essentially follow~\cite{MR3773061} and~\cite{2019arXiv190809347B} with some minor modifications to improve the upper bound and to remove the extra assumption of Oseledets regularity in their statement of the quantitative version of Veech criterion. In section \ref{sec:main_results}, we conclude the paper by deriving the main results stated in the introduction.
\section*{Acknowledgements}
During work on this project, Artur Avila was supported by the SNSF.

\section{Preliminaries}
\label{sec:preliminaries}

\subsection{Strongly expanding maps} Let $(\Delta, \mu)$ be a probability space and $T:\Delta \to \Delta$ a measurable transformation preserving the measure class of $\mu$. $T$ is said to be \textit{weakly expanding} if there exists a countable measurable partition $\{\Delta^{(\textit{l})}: \textit{l} \in \mathbb{Z} \}$ of $\Delta$ into sets of positive measure (with respect to $\mu$), such that for all $\textit{l} \in \mathbb{Z}$, $T$ maps $\Delta^{(\textit{l})}$ onto $\Delta$, $T^{(\textit{l})}:=T|_{\Delta^{(\textit{l})}}: \Delta^{(\textit{l})} \to \Delta$ is invertible, and $T^{(\textit{l})}_{*}\mu$ is equivalent to $\mu$.\\
Let $\Omega$ be the set of all words of finite length with integer entries and denote by $|\textbf{l}|$ the length of $\textbf{l} \in \Omega$. For any $\textbf{l}=(\textit{l}_{1}, \dots,\textit{l}_{n}) \in \Omega$, we define $\Delta^{\textbf{l}}:=\{ x \in \Delta: T^{k-1}(x) \in \Delta^{(\textit{l}_{k})}, 1 \leq k \leq n \}$ and $T^{\textbf{l}}:=T^{n}|_{\Delta^{\textbf{l}}}$. Then $\mu(\Delta^{\textbf{l}})>0$, and we let $\mathcal{M}:=\{ \mu^{\textbf{l}}: \textbf{l} \in \Omega \}$, where 

\begin{equation}
    \mu^{\textbf{l}}  :=\frac{1}{\mu(\Delta^{\textbf{l}})} T^{\textbf{l}}_{*}\mu.
\end{equation}
We say that $T$ is \textit{strongly expanding} if there exists $K>0$ such that $\forall \:\textbf{l} \in \Omega$
\begin{equation}
    \frac{1}{K} \leq \frac{d\mu^{\textbf{l}}}{d\mu} \leq K.  
\end{equation}
\begin{rmk}
If $T$ is strongly expanding, then for $Y \subset \Delta $, a set of positive $\mu-$measure, and $\nu \in \mathcal{M}$ the following bounded distortion estimates hold
\begin{equation}
    K^{-2}\mu(Y) \leq \frac{T^{\textbf{l}}_{*}\nu(Y)}{\mu(\Delta^{\textbf{l}})} \leq K^{2}\mu(Y).
\end{equation}
\end{rmk}
\subsection{Projective transformations}

Let $\mathbb{P}_{+}^{d-1} \subset \mathbb{P}^{d-1}$ be 
the projectivization of $\mathbb{R}_{+}^{d}$ and call it 
the $\textit{standard simplex}$.
A $\textit{projective}$ $\textit{contraction}$ is by definition the projectivizaiton of a matrix $B \in GL(d, \mathbb{R})$ with non-negative entries. 
The image of $\mathbb{P}_{+}^{d-1}$ under a 
projective contraction is called a $\textit{simplex.}$

\begin{lemma}
 \label{Lemma 1.2}
 (Lemma 2.1, \cite{MR2299743}) Let $\Delta$ be a simplex compactly contained in $\mathbb{P}_{+}^{d-1}$ and $\{\Delta^{(\textit{l})}: \textit{l} \in \mathbb{Z}\}$ a partition of $\Delta $ into sets of positive Lebesgue measure. Let $T:\Delta \to \Delta$ be a measurable transformation such that, for all $\textit{l}\in \mathbb{Z}$, $T$ maps $\Delta^{(\textit{l})}$ onto $\Delta$, $T^{(\textit{l})}:=T|_{\Delta^{(\textit{l})}}$ is invertible and its inverse is the restriction of a projective contraction. Then $T$ preserves a probability measure $\mu$ which is absolutely continuous with respect to the Lebesgue measure and has a density which is continuous and positive in $\Bar{\Delta}$. Moreover, $T$ is strongly expanding with respect to $\mu$.
\end{lemma}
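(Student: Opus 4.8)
The plan is to realize the invariant measure as the normalized fixed point of the Perron--Frobenius (transfer) operator of the inverse branches of $T$, and to deduce every quantitative assertion from one bounded-distortion estimate for projective maps. For a word $\textbf{l}\in\Omega$ set $h_{\textbf{l}}:=(T^{\textbf{l}})^{-1}\colon\Delta\to\Delta^{\textbf{l}}$. Since a product of matrices with non-negative entries again has non-negative entries, $h_{\textbf{l}}$ is the restriction to $\Delta$ of a projective contraction $[B_{\textbf{l}}]$ with $B_{\textbf{l}}\in GL(d,\mathbb{R})$ non-negative, and $h_{\textbf{l}}(\Delta)=\Delta^{\textbf{l}}\subseteq\Delta$. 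I would introduce the operator $(\mathcal{L}\phi)(x)=\sum_{l\in\mathbb{Z}}\phi(h_{(l)}(x))\,|Dh_{(l)}(x)|$ on $C(\overline{\Delta})$; a change of variables along the branches gives $\int_{\Delta}\mathcal{L}\phi=\int_{\Delta}\phi$, and shows that a non-negative fixed point $\phi$ of $\mathcal{L}$ produces the $T$-invariant probability measure $\mu:=\phi\,dx/\!\int_{\Delta}\phi\,dx$, and conversely. It then remains to find a continuous, strictly positive fixed point of $\mathcal{L}$ and to control $d\mu^{\textbf{l}}/d\mu$.

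The core of the argument is the following uniform bounded-distortion estimate. In the standard simplex chart $\{v:\sum_iv_i=1\}$, a non-negative $B\in GL(d,\mathbb{R})$ acts by $x\mapsto Bx/\ell_B(x)$, where $\ell_B(x)=\sum_i\bigl(\sum_kB_{ki}\bigr)x_i$ is a linear functional whose coefficients, the column sums of $B$, are strictly positive (no column of an invertible non-negative matrix can vanish); consequently $\log|D[B]|(x)=c(B)-d\log\ell_B(x)$. Hence, using the normalized representatives,
\[
\Bigl|\log\frac{|D[B]|(x)}{|D[B]|(y)}\Bigr|=d\,\Bigl|\log\frac{\ell_B(y)}{\ell_B(x)}\Bigr|\le d\cdot d_H(x,y),
\]
where $d_H$ is the Hilbert metric: indeed $\ell_B(y)/\ell_B(x)$ is a convex combination of the ratios $y_i/x_i$, so its logarithm lies in the interval $[\min_i\log(y_i/x_i),\max_i\log(y_i/x_i)]$, which straddles $0$ because $\sum_ix_i=\sum_iy_i$, and $d_H(x,y)=\max_i\log(y_i/x_i)-\min_i\log(y_i/x_i)$. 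Since $\overline{\Delta}$ is compactly contained in $\mathbb{P}_+^{d-1}$, $d_H$ is bi-Lipschitz to the Euclidean metric on $\overline{\Delta}$ and $D_0:=\operatorname{diam}_{d_H}(\overline{\Delta})<\infty$; taking $B=B_{\textbf{l}}$ yields
\[
\Bigl|\log\frac{|Dh_{\textbf{l}}(x)|}{|Dh_{\textbf{l}}(y)|}\Bigr|\le d\,d_H(x,y)\le dD_0=:C_0\qquad\text{for all }\textbf{l}\in\Omega,\ x,y\in\overline{\Delta}.
\]

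From here everything is soft. Because $\mathcal{L}^n1(x)=\sum_{|\textbf{l}|=n}|Dh_{\textbf{l}}(x)|$ and $\int_{\Delta}\mathcal{L}^n1=|\Delta|$, the distortion bound forces $e^{-C_0}\le\mathcal{L}^n1\le e^{C_0}$ together with the modulus-of-continuity estimate $|\mathcal{L}^n1(x)-\mathcal{L}^n1(y)|\le e^{C_0}\bigl(e^{d\,d_H(x,y)}-1\bigr)$, both uniform in $n$; hence so for the Ces\`aro averages $\phi_N:=N^{-1}\sum_{n=0}^{N-1}\mathcal{L}^n1$, which by Arzel\`a--Ascoli lie in a fixed compact convex subset of $C(\overline{\Delta})$ and satisfy $\mathcal{L}\phi_N-\phi_N=N^{-1}(\mathcal{L}^N1-1)\to0$ uniformly. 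Any subsequential uniform limit $\phi$ is then a continuous fixed point of $\mathcal{L}$ with $\phi\ge e^{-C_0}>0$, which yields $\mu$ with continuous positive density on $\overline{\Delta}$. For strong expansion, a change of variables gives
\[
\frac{d\mu^{\textbf{l}}}{d\mu}(y)=\frac{1}{\mu(\Delta^{\textbf{l}})}\cdot\frac{\phi(h_{\textbf{l}}(y))\,|Dh_{\textbf{l}}(y)|}{\phi(y)},\qquad
\mu(\Delta^{\textbf{l}})=\frac{\int_{\Delta}\phi(h_{\textbf{l}}(y'))|Dh_{\textbf{l}}(y')|\,dy'}{\int_{\Delta}\phi}\,,
\]
and since $\phi$ is pinched in $[e^{-C_0},e^{C_0}]$ and $|Dh_{\textbf{l}}(y')|$ is comparable to $|Dh_{\textbf{l}}(y)|$ within a factor $e^{C_0}$, one gets $e^{-5C_0}\le d\mu^{\textbf{l}}/d\mu\le e^{5C_0}$, so $K=e^{5C_0}$ witnesses strong expansion.

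The only genuinely substantial point is the distortion inequality of the second paragraph, and specifically that its constant $C_0$ is \emph{uniform over the infinite, unbounded family} $\{B_{\textbf{l}}\}_{\textbf{l}\in\Omega}$: this is exactly why one must express the projective Jacobian through the positive linear functional $\ell_B$ and appeal to the Hilbert-metric comparison (positive linear functionals being $1$-Lipschitz for $d_H$ in logarithmic scale), rather than attempt any estimate directly on the matrices $B_{\textbf{l}}$; compact containment of $\Delta$ enters only to make $\operatorname{diam}_{d_H}(\overline{\Delta})$ finite. Everything else is a routine change of variables plus an Arzel\`a--Ascoli / Krylov--Bogolyubov argument.
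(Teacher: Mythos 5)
Your proof is correct: the uniform distortion estimate for the projective inverse branches via the Hilbert metric (with constant $dD_0$ independent of the word $\mathbf{l}$), the Ces\`aro/Arzel\`a--Ascoli construction of a continuous fixed point of the transfer operator pinched in $[e^{-C_0},e^{C_0}]$, and the resulting two-sided bound on $d\mu^{\mathbf{l}}/d\mu$ all check out, and they deliver exactly the stated conclusions. The paper itself gives no proof of this lemma (it is quoted as Lemma 2.1 of Avila--Forni), and your argument is essentially the same bounded-distortion plus averaging scheme used there, so there is nothing further to compare.
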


\subsection{Cocycles}
A \textit{cocycle} is a pair $(T,A)$ where $T:(\Delta, \mu) \to (\Delta, \mu)$ and $A:\Delta \to GL(d,\mathbb{R})$ are measurable maps, viewed as a linear skew product $(x,w) \to (T(x),A(x).w)$ on $\Delta \times \mathbb{R}^{d}$. Notice that, $(T,A)^{n}=(T^{n}, A_n)$ where 
\begin{equation}
    A_n(x)=A(T^{n-1}(x))\cdots A(x)\,, \; \; \;\quad \text{for all } \,n\geq 0.
\end{equation}
$(T,A)$ is called an \textit{integral cocycle} if $A(x) \in SL(d,\mathbb{Z})$ for $\mu-$almost every $x\in \Delta$.

$(T,A)$ is said to be $\textit{locally}$ 
$\textit{constant}$ if there exists a partition of 
$\Delta$ into sets of positive $\mu$-measure $\{ 
\Delta^{(l)}: l \in \mathbb{Z}\}$ such that $A$ attains
the constant value $A^{(l)}$ over $\Delta^{(l)}$. If there is no 
nontrivial subspace of $\mathbb{R}^{d}$ that is 
invariant under the action of the group generated by 
$(A^{(l)})_{l \in \mathbb{Z}}$ the cocycle is called $\emph{irreducible}$.

If $\mu$ is an ergodic invariant probability measure for $T$, and 
\begin{equation}
    \int_{\Delta} \ln{\|A(x)\|}d\mu(x) < \infty,
\end{equation}
then $(T,A)$ is called a \textit{measurable cocycle}.

Let
\begin{equation}
    E^{s}(x):=\left\{w \in \mathbb{R}^{d}, \underset{n\to \infty}{\lim}\|A_n(x).w\|= 0\right\},
\end{equation}
\begin{equation}
    E^{cs}(x):=\left\{w \in \mathbb{R}^{d}, \underset{n\to \infty}{\limsup}\|A_n(x).w\|^{1/n} \leq 1 \right\}.
\end{equation}

Then $E^{s}(x) \subset E^{cs}(x)$ are subspaces of
$\mathbb{R}^d$ (they are called the $\textit{stable
space}$ and the $\textit{central stable}$ $\textit{space}$,
respectively), and we have $A(x).E^{cs}(x)=E^{cs}(T(x))$,
$A(x). E^{s}(x)=E^{s}(T(x))$. If $(T,A)$ is a measurable
cocycle, then by Oseledet's theorem $\dim E^{s}(x), \dim E^{cs}(x)$ are constant almost
everywhere.

For a matrix $B \in GL(d, \mathbb{R})$ let 
\begin{equation}
    \|B\|_{0}:= \max \left\{\|B\|, \|B^{-1}\|\right\}.
\end{equation}

We say that $(T,A)$ is \textit{log-integrable} if it satisfies
\begin{equation}
    \int_{\Delta} \ln\|A(x)\|_{0}d\mu(x) < \infty.
\end{equation}
\subsection{Fast decay}

\label{sec fastdec}
Assume that $(T,A)$ is a locally constant cocycle and $T$
is weakly expanding with respect to the measurable
partition $\{\Delta^{(\textit{l})}: \textit{l} \in
\mathbb{Z}\}$. We say that $T$ is \textit{fast decaying} if there
exist $C_1>0, \alpha_1>0$ such that
\begin{equation}
    \label{1.10}
    \underset{\mu(\Delta^{(\textit{l})}) \leq
    \epsilon}{\sum} \mu(\Delta^{(\textit{l})}) \leq
    C_1\epsilon^{\alpha_1}\,, \; \; \;\quad \text{for all } \, 0<\epsilon<1\,,
\end{equation}
and we say that $A$ is fast decaying if there exist $C_2,
\alpha_2>0$ such that 
\begin{equation}
    \underset{\|A^{(\textit{l})}\|_{0} \geq n}{\sum} \mu(\Delta^{(\textit{l})}) \leq C_2n^{-\alpha_2} \,, \; \; \;\quad \text{for all } n\in \mathbb N\setminus\{0\} \,.
\end{equation}
Fast decay of $A$ implies
that $(T,A)$ its log-integrability. The \textit{cocycle}
$(T,A)$ is said to be \textit{fast decaying} whenever
both $T,A$ are fast decaying. Notice that a fast decaying cocycle is not only log-integrable but also satisfies 
\begin{equation}
    \int_{\Delta} \|A(x)\|^{\epsilon} d\mu(x)<\infty,
\end{equation}
for all $0<\epsilon< \alpha_2$ which provides the basis for the large deviation estimates of \cite{MR3522612} that we use in this paper.

\subsection{Hausdorff dimension}
In this subsection, we prove an estimate on the Hausdorff dimension of some
special sets for cocycles with fast decay which will later be used to show that the Hausdorff dimension of some exceptional sets is not full. 
\begin{thm}
 \label{Hausbound}
 (Theorem 27, \cite{MR3522612}). Let $\Delta \subset \mathbb{P}^{d-1}$ be a simplex and assume that $T:\Delta \to \Delta$ satisfies the
conditions of Lemma~\ref{Lemma 1.2} and is fast decaying. For $n\geq 1$ let $X_{n} \subset \Delta$ be a union of $\Delta^{\emph{\textbf{l}}}$ with $|\emph{\textbf{l}}|=n$, and define $X:=\underset{n\to \infty}{\liminf}{X_n}$. If
\begin{equation}
     \underset{n \to \infty}{\limsup}-\frac{1}{n}\ln \mu(X_{n})>0,
\end{equation}
then
\begin{equation}
    \emph{HD}(X)<d-1,
\end{equation}
where $\emph{HD}(X)$ stands for the Hausdorff dimension of $X$.
\end{thm}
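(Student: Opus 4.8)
\emph{The plan is as follows.} Since Hausdorff dimension is countably stable, I would first rewrite $X=\liminf_n X_n=\bigcup_{N\ge 1}Y_N$, where $Y_N:=\bigcap_{m\ge N}X_m$, and reduce the statement to producing a single exponent $s_0<d-1$, independent of $N$, with $\mathrm{HD}(Y_N)\le s_0$ for all $N$. From the hypothesis $\limsup_n -\tfrac1n\ln\mu(X_n)>0$ I would extract a constant $c>0$ and an infinite set $S\subset\mathbb N$ with $\mu(X_m)\le e^{-cm}$ for every $m\in S$. Since $Y_N\subset X_m$ for every $m\ge N$, in particular $Y_N$ is covered, for each $m\in S$ with $m\ge N$, by the (at most countably many) generation-$m$ cylinders $\Delta^{\mathbf l}$ whose union is $X_m$, and $\sum_{\Delta^{\mathbf l}\subset X_m}\mu(\Delta^{\mathbf l})=\mu(X_m)\le e^{-cm}$. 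Using that the inverse branches are projective contractions of a simplex compactly contained in $\mathbb P_+^{d-1}$, one also has uniform contraction of cylinders in the Hilbert metric, so the mesh of this cover tends to $0$ as $m\to\infty$.

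The core of the argument is to turn exponential decay of \emph{measure} into a bound on $\sum_{\Delta^{\mathbf l}\subset X_m}\mathrm{diam}(\Delta^{\mathbf l})^{s}$ for $s=(1-\theta)(d-1)$ with $\theta>0$ small. Here I would use the hypotheses of Lemma~\ref{Lemma 1.2}: $T$ is strongly expanding, so bounded distortion holds for the measures $\mu^{\mathbf l}$, and combined with the projective structure this gives a polynomial two-sided comparison between $\mu(\Delta^{\mathbf l})$ and $\|B^{(\mathbf l)}\|^{-1}$, where $B^{(\mathbf l)}$ is the product of the cocycle matrices $A^{(l_i)}$ associated with the branch. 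I would then split the cylinders composing $X_m$ into a ``balanced'' part, where $\mathrm{diam}(\Delta^{\mathbf l})$ is comparable to $\mu(\Delta^{\mathbf l})^{1/(d-1)}$ up to a factor that is at most exponential in $m$ with a small rate, and an ``eccentric'' part, consisting of cylinders with atypically large norm $\|B^{(\mathbf l)}\|\ge e^{\Lambda m}$ for a threshold $\Lambda$ just above the top Lyapunov exponent. For the balanced cylinders, $\mathrm{diam}(\Delta^{\mathbf l})^{s}=\mathrm{diam}(\Delta^{\mathbf l})^{d-1}\mathrm{diam}(\Delta^{\mathbf l})^{-\theta(d-1)}\lesssim \mu(\Delta^{\mathbf l})\,e^{C\theta m}$ for a fixed $C$, so their total contribution is $\lesssim e^{(C\theta-c)m}\to 0$ once $\theta<c/C$. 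For the eccentric cylinders I would use only $\mathrm{diam}(\Delta^{\mathbf l})\le\mathrm{diam}(\bar\Delta)$ together with fast decay of the cocycle and the associated large–deviation estimate for the norm growth of $A_m$ (available precisely because fast decay yields integrability of $\|A\|^{\epsilon}$), bounding the $\mu$-mass, hence the number, of such cylinders and showing their contribution also tends to $0$. This gives $\mathcal H^{s}(Y_N)\le\liminf_{m\in S,\,m\ge N}\sum_{\Delta^{\mathbf l}\subset X_m}\mathrm{diam}(\Delta^{\mathbf l})^{s}=0$, so $\mathrm{HD}(Y_N)\le s$ for every $N$, and therefore $\mathrm{HD}(X)\le s<d-1$; optimizing $\theta$ and $\Lambda$ against $c$, the Lyapunov data and the fast-decay exponents $\alpha_1,\alpha_2$ yields an explicit $s_0<d-1$.

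The step I expect to be the main obstacle is the geometric control in the ``eccentric'' regime: a cylinder $\Delta^{\mathbf l}$ can be a long thin sliver hugging a face of the simplex, with Euclidean diameter of order one although $\mu(\Delta^{\mathbf l})$ is tiny, so the naive passage from ``$X_m$ has small measure'' to ``$X_m$ is covered efficiently at exponent $s$'' fails outright. Making rigorous that such slivers are exactly the cylinders whose defining matrices are atypically large, and that these are rare enough \emph{uniformly in the generation} $m$, is where the fast-decay hypothesis and the large–deviation machinery of \cite{MR3522612} are essential; the reduction to $Y_N$, the extraction of the exponential subsequence, and the bookkeeping on the balanced part are comparatively soft.
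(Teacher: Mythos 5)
Your global skeleton (reduce by countable stability to $Y_N=\bigcap_{m\ge N}X_m$, extract an infinite set of generations with $\mu(X_m)\le e^{-cm}$, then split the generation-$m$ cylinders into a part controlled by its measure and a ``bad'' part) matches the structure of the argument the paper uses for its generalization Theorem~\ref{HDP}. But the step carrying the whole weight in your proposal --- the eccentric regime --- fails as stated. From an exponentially small bound on the total $\mu$-mass of the cylinders with $\|B^{(\mathbf{l})}\|\ge e^{\Lambda m}$ you cannot deduce any bound on their \emph{number}: the partition is countable, so each generation has infinitely many cylinders of arbitrarily small individual measure, and in general infinitely many of them are eccentric. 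Since you then estimate each such cylinder only by $\mathrm{diam}(\bar\Delta)^{s}=O(1)$, the eccentric contribution to $\sum\mathrm{diam}^{s}$ is (number of eccentric cylinders) times a constant, which does not tend to $0$ and is typically infinite. Moreover, the large-deviation input you invoke is not available: the theorem assumes fast decay of $T$ only (the partition masses); no cocycle $A$ with fast decay is part of the hypotheses, so Theorem~\ref{LarDev}-type estimates for norm growth cannot be used. A related soft spot is the claim of ``uniform contraction in the Hilbert metric, so the mesh tends to $0$'': the branch matrices are merely non-negative, not positive, so there is no uniform Birkhoff contraction factor, and the mesh of the generation-$m$ cylinder cover need not go to $0$ --- this is exactly the sliver phenomenon you yourself flag as the main obstacle.

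The missing idea, which is how the paper (following Avila--Delecroix) resolves the slivers with no matrix-norm information at all, is an elementary covering property of simplices: for $0<r\le r'$, any simplex of Lebesgue measure at most $r'$ can be covered by $C'r'/r^{d-1}$ balls of diameter less than $r$. One then splits the cylinders of generation $j$ by a \emph{measure} threshold $\lambda_1^{j}$, not by matrix norm: each cylinder with $\mu(\Delta^{\mathbf{l}})<\lambda_1^{j}$ is covered by about $C'\mu(\Delta^{\mathbf{l}})^{2-d}$ balls of diameter $\mu(\Delta^{\mathbf{l}})$, and the resulting $s$-sum is controlled by $\sum_{|\mathbf{l}|=j}\mu(\Delta^{\mathbf{l}})^{1-\alpha'}\le C_3^{\,j}$ (Lemma~\ref{Lemma 1.4}, which is precisely where fast decay of $T$ enters) times $(\lambda_1^{j})^{\alpha'-\delta'}$; the union of the remaining cylinders, each of measure $\ge\lambda_1^{j}$, has total mass at most $Ce^{-\delta j}$ and is covered by $C'\mu(W_j)\lambda_1^{j(1-d)}$ balls of diameter $\lambda_1^{j}$. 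Thus a sliver of diameter of order one is still covered cheaply because the number of small balls needed is proportional to its measure, not its diameter; no balanced/eccentric dichotomy, Lyapunov threshold, or large deviations are required. With your eccentric-regime counting replaced by this covering lemma and the split made by the measure threshold, your outline becomes essentially the paper's proof of Theorem~\ref{HDP}, of which the present statement is the special case $Y_{n,n}=X_n$ and $Y_{n,j}=\emptyset$ for $j>n$.
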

We will generalize this result in a way that is more appropriate for the applications in this paper. We will need the following lemma from \cite{MR3522612}.

\begin{lemma}
\label{Lemma 1.4}
(Lemma 28, \cite{MR3522612}) Assume that $T: \Delta \to \Delta$ has bounded distortion and is fast decaying with fast decay constant $\alpha_1>0$ (see. \eqref{1.10}). Then for $0<\alpha'<\alpha_1$ there exists $C_3>0$, depending on $\alpha'$, such that for every $n\geq 1$ we have,
\begin{equation}
    \underset{|\emph{\textbf{l}}|=n}{\sum} \mu(\Delta^{\emph{\textbf{l}}})^{1-\alpha'} \leq C_3^{n}.
\end{equation}
\end{lemma}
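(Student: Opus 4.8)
The plan is to reduce the multi-index sum to an iterated single-index sum using bounded distortion of $T$, and to control the single-index sum by a dyadic decomposition that exploits fast decay of $T$. Concretely, I would first establish the \textbf{one-step estimate}: $S:=\sum_{l\in\mathbb Z}\mu(\Delta^{(l)})^{1-\alpha'}<\infty$ for every $0<\alpha'<\alpha_1$. Put $a_l:=\mu(\Delta^{(l)})$ and split $\mathbb Z$ according to dyadic size, $I_j:=\{l:\ 2^{-j-1}<a_l\le 2^{-j}\}$ for $j\ge 0$ (these cover all indices since $a_l\le 1$). Applying the fast decay inequality \eqref{1.10} with $\epsilon=2^{-j}$ gives $\sum_{a_l\le 2^{-j}}a_l\le C_1 2^{-j\alpha_1}$, and since every $l\in I_j$ has $a_l>2^{-j-1}$ this forces $\#I_j< 2C_1\, 2^{j(1-\alpha_1)}$. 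Hence $\sum_{l\in I_j}a_l^{1-\alpha'}\le \#I_j\cdot 2^{-j(1-\alpha')}< 2C_1\, 2^{j(\alpha'-\alpha_1)}$, and summing over $j\ge 0$ produces a geometric series that converges precisely because $\alpha'<\alpha_1$, giving $S\le 2C_1/(1-2^{\alpha'-\alpha_1})<\infty$.

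Next I would pass from level $n-1$ to level $n$. For $\textbf{l}=(l_1,\dots,l_n)$ write $\textbf{l}'=(l_1,\dots,l_{n-1})$; since $\Delta^{\textbf{l}}=\Delta^{\textbf{l}'}\cap (T^{\textbf{l}'})^{-1}\Delta^{(l_n)}$ and $T^{\textbf{l}'}$ maps $\Delta^{\textbf{l}'}$ onto $\Delta$, the definition of $\mu^{\textbf{l}'}$ yields the pushforward identity $\mu(\Delta^{\textbf{l}})=\mu(\Delta^{\textbf{l}'})\,\mu^{\textbf{l}'}(\Delta^{(l_n)})$, while bounded distortion gives $\mu^{\textbf{l}'}(\Delta^{(l_n)})\le K\,\mu(\Delta^{(l_n)})$ with $K$ the bounded distortion constant. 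Raising to the power $1-\alpha'$ and summing first over $l_n$ and then over $\textbf{l}'$,
\[
\sum_{|\textbf{l}|=n}\mu(\Delta^{\textbf{l}})^{1-\alpha'}\ \le\ K^{1-\alpha'}\,S\,\sum_{|\textbf{l}'|=n-1}\mu(\Delta^{\textbf{l}'})^{1-\alpha'}\,,
\]
and iterating down to the empty word, where the sum equals $\mu(\Delta)^{1-\alpha'}=1$, gives $\sum_{|\textbf{l}|=n}\mu(\Delta^{\textbf{l}})^{1-\alpha'}\le (K^{1-\alpha'}S)^n$. The lemma then holds with $C_3:=\max\{1,\,K^{1-\alpha'}S\}$, a constant depending only on $\alpha'$ (through $S$) and on the fixed data $K,C_1,\alpha_1$.

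The argument is essentially bookkeeping, so there is no serious obstacle; the one place where something genuinely has to be checked is the one-step estimate, and in particular the fact that the exponent constraint $\alpha'<\alpha_1$ is exactly what makes the dyadic geometric series converge — this is the only point where fast decay of $T$ is used in an essential way (the normalization $\sum_l a_l=1$ alone would only cover the trivial case $\alpha'=0$). A secondary point requiring care is the pushforward identity $\mu(\Delta^{\textbf{l}})=\mu(\Delta^{\textbf{l}'})\mu^{\textbf{l}'}(\Delta^{(l_n)})$ together with the normalization of $\mu^{\textbf{l}'}$; once these are in place, the iteration is immediate.
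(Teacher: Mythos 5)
Your proof is correct, and it follows the same route as the source the paper relies on: the paper itself states this lemma without proof, quoting Lemma 28 of Avila--Delecroix, whose argument is exactly your two-step scheme (bounded distortion to factor the level-$n$ sum into a product of one-step sums, plus fast decay to show $\sum_{l}\mu(\Delta^{(l)})^{1-\alpha'}<\infty$). The only cosmetic point is the boundary case $j=0$ of your dyadic decomposition, where $\epsilon=1$ falls outside the stated range of \eqref{1.10}; this is harmless since at most one atom can have measure exceeding $1/2$.
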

We will now state and prove the main theorem of this subsection whose proof closely follows the ideas of the proof of \textit{Theorem 27} in \cite{MR3522612}.
\begin{thm}
\label{HDP}
Let $\Delta \subset \mathbb{P}^{d-1}$ be a simplex and assume that $T: \Delta \to \Delta$ satisfies the conditions of Lemma~\ref{Lemma 1.2} and is fast decaying. For $n\geq 1$, let $X_n=\underset{j \geq n}{\bigcup} Y_{n,j}$ where $Y_{n,j}$ is a union of $\Delta^{\emph{\textbf{l}}}$ with $|\emph{\textbf{l}}|=j$, and define $X:=\underset{n \to \infty}{\liminf}X_n.$ If there exist $ \delta>0$, and $C>0$ such that for infinitely many $n$
\begin{equation}
\mu(Y_{n,j}) \leq Ce^{-\delta j } \;\; \; \forall j \geq n,
\end{equation}
then
\begin{equation}
    \emph{HD}(X)< d-1.
\end{equation}
\end{thm}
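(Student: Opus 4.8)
The plan is to reduce this to Theorem~\ref{Hausbound} by a Borel--Cantelli-type packing of the cylinders making up each $Y_{n,j}$ into a single level. Fix an $n$ for which the hypothesis $\mu(Y_{n,j}) \le C e^{-\delta j}$ holds for all $j \ge n$; there are infinitely many such $n$, and $X = \liminf_m X_m$ only depends on the tail, so it suffices to show $\operatorname{HD}(X) < d-1$ using just these good levels. The issue is that $X_n$ is a union over all $j \ge n$ of cylinders of varying lengths $j$, whereas Theorem~\ref{Hausbound} wants, for each level $m$, a set $X_m$ that is a union of cylinders of one fixed length $m$. So first I would, for each good $n$, define $\widetilde{X}_n := \bigcup_{j \ge n} \widehat{Y}_{n,j}$, where $\widehat{Y}_{n,j}$ is the union of all cylinders $\Delta^{\mathbf{l}'}$ with $|\mathbf{l}'| = n$ such that $\Delta^{\mathbf{l}'} \subset \Delta^{\mathbf{l}}$ for some $\Delta^{\mathbf{l}} \subset Y_{n,j}$ — that is, I push every $Y_{n,j}$ back to a union of level-$n$ cylinders by taking the length-$n$ prefixes. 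Since $j \ge n$, each cylinder in $Y_{n,j}$ is contained in a unique level-$n$ cylinder, so $Y_{n,j} \subset \widehat{Y}_{n,j}$ and hence $X_n \subset \widetilde{X}_n$; therefore $X \subset \liminf_n \widetilde{X}_n$, and it is enough to bound the Hausdorff dimension of the latter.

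The key estimate is then an upper bound on $\mu(\widetilde{X}_n)$. For a single $j$, the measure of $\widehat{Y}_{n,j}$ is at most the sum of $\mu(\Delta^{\mathbf{l}'})$ over the distinct length-$n$ prefixes appearing; by bounded distortion each such prefix $\Delta^{\mathbf{l}'}$ satisfies $\mu(\Delta^{\mathbf{l}}) \gtrsim \mu(\Delta^{\mathbf{l}'}) \mu(\Delta^{\mathbf{l}''})$ for the suffix $\mathbf{l}''$ of length $j-n$ — but cruder and sufficient: I would use Lemma~\ref{Lemma 1.4}. Write $\mu(\widehat{Y}_{n,j}) = \sum_{\mathbf{l}' } \mu(\Delta^{\mathbf{l}'})$ over prefixes; by H\"older with exponents chosen from the fast-decay constant $\alpha_1$, split $\mu(\Delta^{\mathbf{l}'}) = \mu(\Delta^{\mathbf{l}'})^{1-\alpha'} \cdot \mu(\Delta^{\mathbf{l}'})^{\alpha'}$. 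The factor $\mu(\Delta^{\mathbf{l}'})^{\alpha'}$ I control using $\mu(Y_{n,j}) \le Ce^{-\delta j}$ together with bounded distortion relating $\mu(\Delta^{\mathbf{l}'})$ to the measures of the cylinders inside $Y_{n,j}$ it covers; the factor $\mu(\Delta^{\mathbf{l}'})^{1-\alpha'}$ summed over all level-$n$ cylinders is at most $C_3^{\,n}$ by Lemma~\ref{Lemma 1.4}. Carried out carefully — and here the main bookkeeping obstacle lies, in matching the exponent of the exponential gain $e^{-\delta j}$ against the exponential loss $C_3^n$ — this yields $\mu(\widehat{Y}_{n,j}) \le C' \, C_3^{\,n} e^{-\delta' j}$ for some $\delta' > 0$, and summing the geometric series over $j \ge n$ gives
\begin{equation}
\mu(\widetilde{X}_n) \le \sum_{j \ge n} \mu(\widehat{Y}_{n,j}) \le C'' \, C_3^{\,n} e^{-\delta' n} \,.
\end{equation}

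To conclude I would arrange $\delta$ large enough (which costs nothing, since a set satisfying the hypothesis with a larger $\delta$ also satisfies it with a smaller one, and the theorem is about the contrapositive direction — alternatively, one applies a fixed small $\alpha'$ and absorbs $C_3^n$): if $\delta' > \ln C_3$ then $\mu(\widetilde{X}_n) \le C'' e^{-(\delta' - \ln C_3)n}$, so $\limsup_n -\frac{1}{n} \ln \mu(\widetilde{X}_n) \ge \delta' - \ln C_3 > 0$, and since $\widetilde{X}_n$ is a union of cylinders of the fixed length $n$, Theorem~\ref{Hausbound} applies to $\widetilde{X} := \liminf_n \widetilde{X}_n$ and gives $\operatorname{HD}(\widetilde{X}) < d-1$. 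As $X \subset \widetilde{X}$, monotonicity of Hausdorff dimension finishes the proof. The only real subtlety is the exponent chase in the H\"older step: one must verify that for \emph{some} admissible $\alpha' < \alpha_1$ the exponential gain from the hypothesis beats the $C_3^n$ growth; if $\delta$ is not a priori large this is handled by first noting the hypothesis implies the same bound with any larger multiplicative constant hidden, and shrinking $\alpha'$, which shrinks $C_3$ toward its $\alpha' \to 0$ limit while preserving a fixed positive fraction of $\delta$.
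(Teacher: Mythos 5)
Your reduction to Theorem~\ref{Hausbound} breaks at the key estimate, and the problem is conceptual rather than a matter of exponent bookkeeping. When you replace $Y_{n,j}$ (a union of length-$j$ cylinders) by the union $\widehat{Y}_{n,j}$ of their length-$n$ prefixes, you lose all control of the measure: by bounded distortion $\mu(\Delta^{\mathbf{l}}) \asymp \mu(\Delta^{\mathbf{l}'})\,\mu(\Delta^{\mathbf{l}''})$, where $\mathbf{l}'$ is the length-$n$ prefix and $\mathbf{l}''$ the length-$(j-n)$ suffix, and since the partition is countable the suffix measure $\mu(\Delta^{\mathbf{l}''})$ can be arbitrarily small. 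Thus a single deep cylinder of measure, say, $e^{-100j}$ may sit inside a prefix cylinder of measure comparable to $1$, and the hypothesis $\mu(Y_{n,j})\le Ce^{-\delta j}$ gives no bound whatsoever on $\mu(\widehat{Y}_{n,j})$. Concretely, if $Y_{n,j}$ consists of one very deep, very thin sub-cylinder inside \emph{every} length-$n$ cylinder, then $\mu(Y_{n,j})$ can be made as small as you like while $\widetilde{X}_n=\Delta$ for every $n$, so $\liminf_n\widetilde{X}_n=\Delta$ and the coarsened sets carry no information. Since the theorem is still true in that example, no H\"older/interpolation trick of the kind you sketch can rescue the inequality $\mu(\widehat{Y}_{n,j})\le C'C_3^{\,n}e^{-\delta' j}$; it is simply false. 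A secondary issue: you cannot ``arrange $\delta$ large enough'' — $\delta$ is handed to you by the hypothesis — and shrinking $\alpha'$ does not drive $C_3$ below the distortion constant, so even granting the false estimate the comparison $\delta'>\ln C_3$ is not available.

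The paper does not reduce to the \emph{statement} of Theorem~\ref{Hausbound}; it redoes the covering argument behind it at the level of the individual $Y_{n,j}$. For a good $n$ and each $j\ge n$ one splits $Y_{n,j}$ into the cylinders of measure less than $\lambda_1^{j}$ (covered individually, with the total $(d-1-\delta')$-sum controlled by Lemma~\ref{Lemma 1.4} and the smallness of $\lambda_1$) and the remaining part $W_j$, which is covered by roughly $\mu(Y_{n,j})\lambda_1^{j(1-d)}$ balls of diameter $\lambda_1^{j}$ — this is exactly where the hypothesis $\mu(Y_{n,j})\le Ce^{-\delta j}$ is used, at depth $j$, without ever coarsening to depth $n$. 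Summing over $j\ge n$ gives a cover of $X_n$ by balls of diameter at most $\lambda_1^{n}$ with uniformly bounded $(d-1-\delta')$-sum, whence $\mathrm{HD}(X)\le d-1-\delta'$. If you want to keep the spirit of your argument, this is the route to follow: work with the cylinders at their native depth $j$ and cover them there, rather than pushing them back to depth $n$.
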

\begin{proof}
Note that there exists $C'>0$ such that if $0<r\leq r'$, then any 
simplex with Lebesgue measure at most $r'$ can be covered by 
$C'r'/r^{d-1}$ balls of diameter less than $r$. Let 
$\alpha':=\alpha_1/2$ and $C_3$ be as in Lemma~\ref{Lemma 1.4}. 
Let $0<\delta'<\alpha_1/4$ be small enough 
such that $C_3e^{-\delta \alpha'/4\delta'}<1$ and define 
$\lambda_1:=e^{-\delta/2\delta'}<1$. Assume $n$ is so that $\mu(Y_{n,j}) \leq Ce^{-\delta j}$ for all $j \geq n$. We will find a cover of $X_n$ with balls $B_{i}$ of diameter at most $\lambda_1^{n}$ in such a way that
\begin{equation}
    \underset{i}{\sum} \text{diam}(B_{i})^{d-1-\delta'} \leq C_0+C_1,
\end{equation}
for some constants $C_0, C_1$ which will be determined later.

We now let $V_j$ be
the union of all $\Delta^{\textbf{l}} \subset Y_{n,j}$ with 
$|\textbf{l}|=j$ such that $\mu(\Delta^{\textbf{l}})< \lambda_1^{j}$, 
and $W_j$ the complement of $V_j$ in $Y_{n,j}$. We can cover each 
$\Delta^{\textbf{l}} \subset V_j$ with at most 
$C'\mu(\Delta^{\textbf{l}})^{2-d}$ many balls of diameter $\mu(\Delta^{\textbf{l}})$. We call this cover $\{B_{i,j}^{V}\}$ and note that, by Lemma~\ref{Lemma 1.4},
\begin{equation}
\begin{split}
    \underset{j \geq n}{\sum}\underset{i}{\sum} \text{diam}(B_{i,j}^{V})^{d-1-\delta'} &\leq \underset{j\geq n}{\sum}\underset{\;|\textbf{l}|=j,\; \mu(\Delta^{\textbf{l}})<\lambda_1^{j}}{\sum}C'\mu(\Delta^{\textbf{l}})^{1-\alpha'}(\lambda_1^{j})^{\alpha'-\delta'}\\ & \leq
    C'\underset{j \geq n}{\sum}\left(C_3\lambda_1^{\alpha'/2}\right)^{j} \leq C_0,
\end{split}
\end{equation}
where $C_0:=C' (1-C_3\lambda_1^{\alpha'/2})^{-1}$. It can be readily verified that $W_j$ can be covered with at most $C'\mu(W_j)\lambda_1^{j(1-d)}$ balls of diameter $\lambda_1^{j}$. By letting this cover be $\{B_{i,j}^{W}\}$, we obtain
\begin{equation}
\begin{split}
    \underset{j\geq n}{\sum}\underset{i}{\sum} \text{diam}(B_{i,j}^{W})^{d-1-\delta'} &\leq \underset{j \geq n}{\sum}C'\mu(Y_{n,j})\lambda_1^{j(1-d)}(\lambda_1^{j})^{d-1-\delta'}\\ & \leq 
    C'C\underset{j \geq n}{\sum}\left(e^{-\delta/2}\right)^{j} \leq C_1,
\end{split}
\end{equation}
where $C_1:=\frac{CC'}{1-e^{-\delta/2}}$. It yields $\text{HD}(X) \leq d-1-\delta'<d-1$, as desired.
\end{proof}

\section{Exclusion of parameters}
\label{sec:exclusion}
 Let $(T,A)$ be a locally constant cocycle. We say that a compact set $\Theta \subset \mathbb{P}^{d-1}$ is $\textit{adapted}$ to the cocycle
$(T,A)$ if $A^{(\textit{l})}( \Theta) \subset \Theta$, for all 
 $\textit{l} \in \mathbb{Z}$, and for almost every
$x\in \Delta$, for every $w\in \mathbb{R}^{d}\setminus \{0\}$ that projectivizes to an element of $\Theta$, we have
\begin{equation}
    \|A(x)\cdot w\| \geq \|w\|,
\end{equation}
\begin{equation}
     \|A_{n}(x) \cdot w\| \to \infty.
\end{equation}
 
Let $\mathcal{J}=\mathcal{J}(\Theta)$ be the space of all
lines in $\mathbb{R}^{d}$ parallel to one of the elements
of $\Theta$ and not passing through the origin. 
The main result of this section is the following

\begin{thm}
\label{parexquan}
Assume that $(T,A)$ is a locally constant, irreducible, integral cocycle that is fast decaying. Let $\Theta$ be adapted to $(T,A)$ and assume that $J \cap E^{cs}(x)=\emptyset$, for every line $J \in \mathcal{J}(\Theta)$ and for almost every $x \in \Delta$. Let $h\in \mathbb{Z}^{d}$ be a primitive integer vector which projectivizes to an element of $\Theta$. Then, there exist $\epsilon>0$ and a subset $\Delta' \subset \Delta$, 
whose complement has Hausdorff dimension strictly less than 
full, such that the following holds. For all $ x \in \Delta'$ and $
t\in (0,1)$ there exists $n(t,x)$ so that $\forall
n \geq n(t,x)$ we have
\begin{equation}
   \frac{\#\left\{1\leq i \leq n: \|A_i(x)\cdot th\|_{\mathbb{R}^{d}/
    \mathbb{Z}^{d}}>\epsilon\right\}}{n}>\epsilon\,.
\end{equation}
Moreover $n(t,x)$ may be chosen to be of the form 
$C\max\{-\ln(t),-\ln(1-t)\}+K(x)$ for some constant $C>0$ (not depending on $x$) and a function $K(x)>0$.
\end{thm}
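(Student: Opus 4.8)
The plan is to follow the probabilistic parameter exclusion scheme of Avila–Forni (\cite{MR2299743}), but to track the quantitative dependence on the "scale" $t$ in order to extract the claimed form $n(t,x) = C\max\{-\ln t, -\ln(1-t)\} + K(x)$ for the Hausdorff-dimension exceptional set, combining it with the large deviation / Hausdorff dimension machinery of Theorem~\ref{HDP} and Lemma~\ref{Lemma 1.4}. First I would set up the basic step of the elimination: given the vector $th$ (with $h$ primitive, projectivizing into the adapted cone $\Theta$), I want to show that for most words $\mathbf{l}$ of a fixed length $N_0$ (the "block length"), the iterate $A^{\mathbf{l}} \cdot th$ moves a definite distance away from $\mathbb{Z}^d$, i.e. the fraction of indices $i$ in that block with $\|A_i \cdot th\|_{\mathbb{R}^d/\mathbb{Z}^d} > \epsilon$ is itself at least some $\epsilon_0$. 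This is the heart of the Avila–Forni argument: since $th$ lies in $\Theta$ and $\Theta$ avoids $E^{cs}$, the vectors $A_i(x)\cdot th$ grow, and irreducibility plus the adapted-cone geometry let one find, within each block, a sub-collection of words of controlled length for which the orbit cannot stay $\epsilon$-close to the lattice — otherwise one would produce an invariant rational subspace, contradicting irreducibility. The key new point is to observe that the only place where $t$ enters is the number of initial steps needed for $\|A_i(x)\cdot th\|$ to exceed a fixed threshold (say $1$): since the cone is adapted, $\|A_i(x)\cdot th\| \geq \|th\| = t\|h\|$ is nondecreasing, and the fast-decay/log-integrability of $A$ guarantees a positive lower bound on the top Lyapunov exponent along $\Theta$, so after $O(-\ln t)$ steps (uniformly in $x$ in a bounded-measure-complement set) the norm is $\geq 1$ and the elimination for a genuinely non-lattice vector kicks in; the $-\ln(1-t)$ term appears symmetrically by considering $(1-t)h$ near the other endpoint.

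Second, I would convert "most blocks are good" into "a positive density of indices is good along the full orbit," for a set of $x$ of large measure. The standard device here is a Borel–Cantelli / stopping-time argument: one defines, for each $x$ and each starting block, the event that the block fails to deliver its $\epsilon_0$-fraction of well-separated indices, shows via bounded distortion (the remark after the definition of strongly expanding) and the block estimate that this failure event has probability $\leq 1-c$ uniformly, and then, because the blocks are essentially independent under the strongly expanding / bounded-distortion structure, the density of good indices up to time $n$ exceeds $\epsilon := c\epsilon_0/2$ once $n$ is large compared to the "burn-in" time $-\ln t$. The set of $x$ for which this fails — i.e. the exceptional set — is exactly of the form $\liminf X_n$ with $X_n$ a union of cylinders $\Delta^{\mathbf{l}}$ whose total measure decays geometrically, which is precisely the hypothesis of Theorem~\ref{HDP}; hence its Hausdorff dimension is strictly less than $d-1$. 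The function $K(x)$ is the (a.e.\ finite) first time at which the density stabilizes, coming from the pointwise ergodic / Borel–Cantelli statement, and $C$ is the ratio between the number of blocks needed and their length, which depends only on the cocycle.

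Third, I would assemble the quantitative bookkeeping. For each dyadic-type scale $t_k$ one obtains, by the above, an exceptional set $\Delta^{(k)}$ of Hausdorff dimension $< d-1$, and taking $\Delta' := \Delta \setminus \bigcup_k \Delta^{(k)}$ still has complement of dimension $< d-1$ since a countable union of sets of dimension $<d-1$ has dimension $<d-1$ (one must be slightly careful: use that Theorem~\ref{HDP} gives a uniform deficit $\delta'$ depending only on $\alpha_1$, not on $k$, so the union has dimension $\leq d-1-\delta'$). Monotonicity in $t$ of the relevant quantities then lets one pass from the dyadic scales to all $t \in (0,1)$, with $n(t,x)$ obtained by interpolation, yielding the stated affine-in-$\ln$ form.

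The main obstacle I expect is the first step done \emph{quantitatively}: the Avila–Forni elimination is originally a qualitative full-measure statement, and one must rerun it so that (a) the block length and the number of blocks depend only on the cocycle and not on $t$, and (b) the $t$-dependence is isolated cleanly into the additive $-\ln t$ burn-in term, which requires a uniform (over a large-measure set of $x$) lower bound on the expansion rate of $A_i(x)$ restricted to directions in $\Theta$ — this is where the fast-decay hypothesis and the condition $\mathcal{J}(\Theta) \cap E^{cs}(x) = \emptyset$ must be used in tandem, the latter to rule out that $A_i(x)\cdot th$ stays bounded, the former to make the large deviation estimate of \cite{MR3522612} applicable so the burn-in time is uniform off a small set. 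Getting the exceptional set into exactly the $\liminf X_n$ shape with geometrically decaying cylinder measure, so that Theorem~\ref{HDP} applies verbatim, is the other delicate point.
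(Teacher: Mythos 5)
Your overall skeleton (burn-in time of order $-\ln t$ plus a positive-density statement off an exceptional set handled by Theorem~\ref{HDP}) matches the paper's, and your identification of the burn-in mechanism (the distance of $A_n(x)\cdot th$ from the endpoints grows like $\min\{t,1-t\}e^{n\theta_1/2}$, via the large deviation estimate) is essentially the paper's Corollary~\ref{cor LD} step. However, there is a genuine gap in how you treat the uniformity in $t$. You run the exclusion for a \emph{fixed} vector $th$, then propose to cover all $t\in(0,1)$ by dyadic scales $t_k$ plus ``monotonicity/interpolation.'' This cannot work: for fixed $i$ the map $t\mapsto \|A_i(x)\cdot th\|_{\mathbb{R}^d/\mathbb{Z}^d}$ is Lipschitz only with constant $\|A_i(x)\cdot h\|$, which grows exponentially in $i$, so the torus positions at $t$ and at the nearest dyadic $t_k$ decorrelate after boundedly many steps and no fixed countable net of scales controls all $t$; the relevant counting function is in no useful sense monotone in $t$. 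The paper avoids this by running the entire exclusion on \emph{lines}: the sets $\Gamma^m_{\delta,\delta}(J)$ are defined with an existential quantifier over $w\in J$, so a single excluded set of $x$'s handles every $t$ at once, and the only $t$-dependence left is the burn-in. This is also exactly where the hypotheses you never invoke enter: integrality of the cocycle and primitivity of $h$ give the lattice-triangle bound $\|J\|\geq\|A_n(x)\cdot h\|^{-1}$ and the exponential count of the possible lines $A_n(x)\cdot J_h-c$ in $\mathcal{J}(n)$, which is what makes the union over lines summable against the exponential decay $\mu(\Gamma^m_{\delta,\delta}(J))\leq Ae^{-\kappa m}\|J\|^{-\rho}$.

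A second, related weakness is your first step. The claim that each block fails with probability at most $1-c$ \emph{uniformly}, with failure otherwise producing an invariant rational subspace contradicting irreducibility, is not the actual mechanism and is not correct as stated: the probability that an orbit segment stays $\delta$-close to the lattice depends on how close the current vector already is to a lattice point, and is not bounded away from $1$ uniformly. The paper's recursive estimate on the constants $C(\mathbf{d},S)$ carries the weight $\|J\|^{-\rho}$ precisely to track this, and the uniform contraction factor $(1-\rho)$ comes from the finite good set $Z$ of words together with the adapted set $\Theta$ (Claim 3.6 of Avila--Forni), not from irreducibility (which the paper only uses to identify the expansion constant with the top exponent for the large deviation input, and remarks is removable). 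Without the $\|J\|^{-\rho}$-weighted induction, your ``essentially independent blocks'' large-deviation step has no rigorous basis, and without the line formulation your exceptional set cannot be made independent of $t$, which is what the theorem requires.
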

\begin{rmk}
The irreducibility assumption is only for simplicity and may in fact be dropped. Furthermore, the condition that $h$ is an integer vector can be removed.
\end{rmk}
We will prove this theorem by generalizing an exclusion of parameter technique developed by Avila and Forni in \cite{MR2299743} and applying some large deviation estimates for fast decaying 
cocycles introduced in the work of Avila and Delecroix in \cite{MR3522612}.
Henceforth, we will always assume that $(T,A)$ is a locally constant irreducible integral cocycle that is fast decaying.

For $J \in \mathcal{J}$, let $\|J\|$ be the distance from the origin to 
$J$. For $\delta>0$ small, let $\phi_\delta(\textbf{l},J)$ be the 
number of connected components of $A^{\textbf{l}} \cdot (J\cap B_{\delta}(0))\cap
B_{\delta}(\mathbb{Z}^{d}\setminus\{0\}).$ Let 
$J_{\textbf{l},0}:=A^{\textbf{l}}\cdot J$ and $J_{\textbf{l},k}$, $1\leq k \leq 
\phi_{\delta}(\textbf{l}, J)$, be all the lines of the form 
$A^{\textbf{l}}\cdot J-c$ where $c\in \mathbb{Z}^{d}\setminus\{0\}, 
A^{\textbf{l}}\cdot (J\cap B_{\delta}(0))\cap B_{\delta}(c)\neq \emptyset$. We 
define $\phi_{\delta}(\textbf{l}):=\underset{J\in 
\mathcal{J}}{\sup}\phi_{\delta}(\textbf{l},J)$. 

By formula $(3.6)$ of \cite{MR2299743} we have
\begin{equation}
    \label{2.4}
    \phi_{\delta}(\textbf{l}) \leq \|A^{\textbf{l}}\|_{0}.
\end{equation}

By definition $\|J_{\textbf{l},k}\| < \delta, \; k \geq 1 $ and from $(3.9)$ of \cite{MR2299743} we get the following lower bound

\begin{equation}
    \|J_{\textbf{l},k}\| \geq (1-2\delta) \|A^{\textbf{l}}\|_{0}^{-1} \geq 2^{-1} \|A^{\textbf{l}}\|_{0}^{-1}.
\end{equation}

For $k=0$ we have the following trivial bounds
\begin{equation}
    \|A^{\textbf{l}}\|_{0}\|J\| \geq \|J_{\textbf{l},0}\| \geq \|A^{\textbf{l}}\|_{0}^{-1}\|J\|.
\end{equation}

Let
\begin{equation}
    \mathbb{P_\nu}(X|Y)=\frac{\nu(X\cap Y)}{\nu(Y)}, \; \nu \in \mathcal{M},
\end{equation}
\begin{equation}
    \mathbb{P}(X|Y):=\underset{\nu \in \mathcal{M}}{\sup}\mathbb{P_\nu}(X|Y).
\end{equation}

Given $\eta>0$ small enough and $N$ large enough, by log-integrability of the cocycle one can find a finite set $Z \subset \Omega^{N}$ such that,
\begin{equation}
    \label{2.9}
    \mu\left(\bigcup_{\textbf{l}\in Z}\Delta^{\textbf{l}}\right)>1-\eta,
\end{equation}
\begin{equation}
    \label{2.10}
    \sum_{\textbf{l}\in \Omega^{N} \setminus Z}\ln\|A^{\textbf{l}}\|_{0}\mu(\Delta^{\textbf{l}}) <\frac{1}{2}.
\end{equation}

By viewing the cocycle as a random product of matrices with randomness given by $T$, the above condition assures that matrices in $Z$ occur with considerable probability. 
Since $Z$ is a finite set, we may take 
$\delta>0$ small enough such that,
\begin{equation}
\label{2.11}
\phi_{\delta}(\textbf{l})=0 \,, \;\;\; \forall
\textbf{l} \in Z.    
\end{equation}

The following proposition gives some uniform expansion for when our matrices are in $Z$.

\begin{prop}

(Claim 3.6, \cite{MR2299743}) For $N>0$ large enough there exists 
$\rho_{0}(Z)>0$ such that, for every $0<\rho 
<\rho_{0}(Z)$, every $J\in \mathcal{J}$ and 
every $Y \subset \Delta$ with $\mu(Y)>0$, we have
\begin{equation}
    \label{2.12}
    \underset{\nu \in \mathcal{M}}{\sup} 
    \underset{\emph{\textbf{l}}^{1} \in 
    Z}{\sum}\|J_{\emph{\textbf{l}}^{1},0}\|^{-\rho} 
    \mathbb{P}_{\nu} 
    \left(\Delta^{\emph{\textbf{l}}^{1}}\bigg| 
    \underset{\emph{\textbf{l}} \in Z}{\bigcup} 
    \Delta^{\emph{\textbf{l}}} \cap T^{-N}(Y)\right) 
    \leq (1-\rho) \|J\|^{-\rho}.
\end{equation}
\end{prop}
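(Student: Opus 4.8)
The plan is to follow the proof of Claim~3.6 in \cite{MR2299743}, of which \eqref{2.12} is essentially a restatement, keeping track of the fact that $\rho_0$ can be taken to depend only on $Z$ (i.e.\ on $N$ and $\eta$). The first step is a pair of reductions. Since the sets $\Delta^{\mathbf{l}^1}$, $\mathbf{l}^1\in Z$, are pairwise disjoint and the conditioning set $W:=\bigcup_{\mathbf{l}\in Z}\Delta^{\mathbf{l}}\cap T^{-N}(Y)$ is contained in their union, one has $\Delta^{\mathbf{l}^1}\cap W=\Delta^{\mathbf{l}^1}\cap T^{-N}(Y)$, hence $\sum_{\mathbf{l}^1\in Z}\mathbb{P}_\nu(\Delta^{\mathbf{l}^1}\mid W)=1$; so the left-hand side of \eqref{2.12} is a weighted average of the numbers $\|J_{\mathbf{l}^1,0}\|^{-\rho}$ and proving \eqref{2.12} amounts to bounding the corresponding average of the ratios $(\|J_{\mathbf{l}^1,0}\|/\|J\|)^{-\rho}$ by $1-\rho$. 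Moreover, as $\mathbf{l}^1\in Z\subset\Omega^N$ has length $N$, on $\Delta^{\mathbf{l}^1}$ the map $T^N$ coincides with the bijection $T^{\mathbf{l}^1}:\Delta^{\mathbf{l}^1}\to\Delta$; feeding this into the bounded distortion estimate for strongly expanding maps (the remark after the definition of strong expansion) renders all the weights $\mathbb{P}_\nu(\Delta^{\mathbf{l}^1}\mid W)$, $\nu\in\mathcal{M}$, comparable to the reference weights $\mu(\Delta^{\mathbf{l}^1})/\mu(\bigcup_{\mathbf{l}\in Z}\Delta^{\mathbf{l}})$ up to a multiplicative constant depending only on $K$. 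The passage from $(1-c\rho)$ for these reference weights to the sharp $(1-\rho)$, uniformly over $\nu$, is then handled as in \cite{MR2299743} by a self-improvement argument exploiting that $c$ can be made arbitrarily large; I would not belabour this and focus on the content.

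The content is a large deviation estimate for the ``distance stretch'' $\sigma_{\mathbf{l}^1}(J):=\|J_{\mathbf{l}^1,0}\|/\|J\|=\|A^{\mathbf{l}^1}\!\cdot J\|/\|J\|$ as $\mathbf{l}^1$ ranges over $Z$. Because $\Theta$ is adapted, $\|A^{(l)}\!\cdot w\|\geq\|w\|$ and $\|A_n(x)\cdot w\|\to\infty$ for every $w$ projectivizing into $\Theta$; by compactness of $\Theta$ this upgrades, for $N$ large, to uniform expansion $\|A^{\mathbf{l}}\!\cdot w\|\geq\kappa(N)\|w\|$ on $\Theta$-directions for $\mathbf{l}$ in a set of $\mu$-measure close to $1$, with $\kappa(N)\to\infty$. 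For such $\mathbf{l}^1$, the hypothesis \eqref{2.11} that $\phi_\delta(\mathbf{l}^1)=0$ — the image segment $A^{\mathbf{l}^1}\!\cdot(J\cap B_\delta(0))$, of length comparable to $\delta\|A^{\mathbf{l}^1}\!\cdot w\|$, avoids the $\delta$-neighbourhood of $\mathbb{Z}^d\setminus\{0\}$ — together with the integrality of $A^{\mathbf{l}^1}$ prevents $\|J_{\mathbf{l}^1,0}\|$ from collapsing toward $0$ relative to $\|J\|$, while the remaining, $\mu$-exceptional, words of $Z$ are controlled through \eqref{2.4}, the trivial bound $\|J_{\mathbf{l},0}\|\geq\|A^{\mathbf{l}}\|_0^{-1}\|J\|$, and the moment bound $\int_\Delta\|A(x)\|^{\epsilon}\,d\mu(x)<\infty$ for $0<\epsilon<\alpha_2$ supplied by fast decay. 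Finally, writing $\|J_{\mathbf{l}^1,0}\|^{-\rho}=\|J\|^{-\rho}e^{-\rho\ln\sigma_{\mathbf{l}^1}}$ and using $e^{-\rho\ln\sigma}\leq 1-\rho\ln\sigma+\tfrac12\rho^2(\ln\sigma)^2 e^{\rho|\ln\sigma|}$, the first-order term contributes $-\rho$ times the weighted mean log-stretch, which exceeds any prescribed constant once $N$ is large, while the second-order remainder is $O(\rho^2)$ uniformly for $0<\rho<\rho_0$ by the fast-decay moment bound; taking $\rho_0=\rho_0(Z)$ small yields \eqref{2.12}.

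The step I expect to be the real obstacle is the geometric part of the large deviation estimate: converting expansion of the $\Theta$-directions into a lower bound on the distance $\|J_{\mathbf{l}^1,0}\|$ from the image line to the origin. The point of $J$ nearest the origin is transverse to the direction of $J$ and so does not projectivize into $\Theta$, so the adapted hypothesis gives no direct control of it; one genuinely needs $\phi_\delta(\mathbf{l}^1)=0$ and the integrality of $A^{\mathbf{l}^1}$ to exclude the distance collapsing, together with a careful count of the total $\mu$-mass — and of the $\rho$-weighted contribution — of the words of $Z$ for which it nearly does, which is exactly where fast decay and the large deviation machinery of \cite{MR3522612} enter. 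Two bookkeeping points also need attention: (i) keeping uniformity in $\nu\in\mathcal{M}$ without degrading the constant $(1-\rho)$, and (ii) fixing the order of the parameters — $\eta$ (hence $Z$), then $N$ large (possibly larger than the $N$ of \eqref{2.9}--\eqref{2.10}), then $\delta$ small enough that $\phi_\delta\equiv 0$ on the finite set $Z$, then $\rho_0=\rho_0(Z)$ — so that no later choice feeds back into an earlier one.
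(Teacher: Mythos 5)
First, a point of comparison: the paper does not prove this proposition at all — it is quoted verbatim as Claim 3.6 of \cite{MR2299743}, so the benchmark is the proof there. Your preliminary reductions are fine and do match how that argument begins: the conditioning set lies inside $\bigcup_{\mathbf{l}\in Z}\Delta^{\mathbf{l}}$, so the conditional probabilities sum to $1$; bounded distortion makes them comparable, uniformly in $\nu\in\mathcal{M}$ and $Y$, to the normalized $\mu(\Delta^{\mathbf{l}})$; and since $Z$ is finite, a first/second order expansion in $\rho$ with $\rho_0=\rho_0(Z)$ reduces \eqref{2.12} to showing that the weighted average of $\ln\bigl(\|J_{\mathbf{l},0}\|/\|J\|\bigr)$ over $\mathbf{l}\in Z$ is uniformly larger than $1$ (in fact can be made large) once $N$ is large.

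The genuine gap is precisely the step you single out, and the mechanism you propose for it cannot work. The inequality \eqref{2.12} contains no $\delta$, and in the logical order of the section $\rho<\rho_0(Z)$ is fixed before $\delta$ is chosen (indeed $\delta$ is shrunk again afterwards to obtain \eqref{3.13}), so no proof of \eqref{2.12} may rely on \eqref{2.11}. Moreover $\phi_\delta(\mathbf{l})=0$ only asserts that $A^{\mathbf{l}}\cdot(J\cap B_\delta(0))$ avoids the $\delta$-neighbourhood of $\mathbb{Z}^{d}\setminus\{0\}$, i.e.\ it rules out the translated lines $J_{\mathbf{l},k}$, $k\geq 1$, which do not even appear in \eqref{2.12}; neither this nor integrality gives any lower bound on the distance of $A^{\mathbf{l}}\cdot J$ to the origin beyond the trivial $\|J_{\mathbf{l},0}\|\geq\|A^{\mathbf{l}}\|_{0}^{-1}\|J\|$, and a segment avoiding nonzero lattice points can still pass arbitrarily close to $0$. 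In any case ``not collapsing'' is not what is needed: one needs genuine growth of $\|A^{\mathbf{l}}\cdot J\|/\|J\|$ for words carrying most of the conditional mass, strong enough to beat both the $1$ in $(1-\rho)$ and the possible contraction of lines on the exceptional part of $Z$, whose worst case $\ln\|A^{\mathbf{l}}\|_{0}$ grows linearly in $N$. The hypothesis that actually drives this — and which your sketch never invokes — is the assumption of Theorem~\ref{parexquan} that $J\cap E^{cs}(x)=\emptyset$ for every $J\in\mathcal{J}(\Theta)$ and almost every $x$: together with adaptedness it gives $\|A_n(x)\cdot J\|\to\infty$ (exponentially) almost everywhere, and since the ratio $\|A^{\mathbf{l}}\cdot J\|/\|J\|$ is invariant under scaling of $J$, compactness of the projectivized family of lines plus a Kingman/uniform-integrability argument yields, for $N$ large, $\|A_N(x)\cdot J\|\geq M\|J\|$ for all $J$ off a set of small measure; this is the content of the proof in \cite{MR2299743}. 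Two smaller points: the fast-decay moment bound is irrelevant here (the sum runs over the finite set $Z$, and \cite{MR2299743} prove the claim assuming only log-integrability), and the parameter order you sketch is inconsistent — $Z\subset\Omega^{N}$ cannot be fixed before $N$, $N$ cannot be enlarged after $Z$ is chosen, and $\rho_0(Z)$ must be chosen before, not after, $\delta$.
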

At this stage, we fix $\eta>0$ small enough, 
$0<\rho<\rho_{0}(Z), N \in \mathbb{N}$ large 
enough, and $Z\subset \Omega^{N}$ such that 
\eqref{2.9}, \eqref{2.10}, \eqref{2.11}, and \eqref{2.12} hold. Note that, for $\delta>0$ small enough, we have
\begin{equation}
    \label{3.13}
    \underset{\textbf{l}\in \Omega ^{N} 
    \setminus Z}{\sum}\left(\rho \ln 
    \|A^{\textbf{l}}\|_{0}+ \ln 
    (1+\|A^{\textbf{l}}\|_{0}(2\delta)^{\rho})\right)
    \mu(\Delta^{\textbf{l}})- 
    \rho\mu\left(\underset{\textbf{l} \in 
    Z}{\bigcup}\Delta^{\textbf{l}}\right)=\alpha
    (\delta) < \alpha <0.
\end{equation}

For $S\subset \{0,1,\dots,m-1\}$ we define 
\begin{equation}
    \Gamma^{m}_{\delta, S}(J):= \left\{x\in \Delta\Big| 
\exists
w \in J: 0\leq i \leq m-1, \|A_{iN}(x)\cdot w\|_{\mathbb{R}^{d}/\mathbb{Z}
^{d}}
\geq \delta  \Longleftrightarrow i \in S \right\}.
\end{equation}
Let 
\begin{equation}
    P_{\delta}(m):=\Big\{ S\subset \{0,1,\dots,m-1\},  0\notin S, \#S \leq
    m\delta\Big\}, 
\end{equation}
and 
\begin{equation}
    \Gamma^{m}_{\delta, \delta}(J):=\left\{x\in \Delta\Big|
    \exists w \in J: \|w\|_{\mathbb{R}^{d}/\mathbb{Z}^{d}} < \delta,\; \#\{0\leq i \leq m-1, \|A_{iN}(x)\cdot w\|_{\mathbb{R}^{d}/\mathbb{Z}^{d}} \geq
    \delta\}\leq m\delta \right\}.
\end{equation}
Note that
\begin{equation}
    \Gamma^{m}_{\delta, \delta}(J)= 
    \underset{S\in P_{\delta}(m)}{\bigcup} 
    \Gamma^{m}_{\delta, S}(J).
\end{equation}

Let $\Omega^{N},\hat{\Omega}^{N} \subset \Omega$
be the set of words of 
length $N$ in $\Omega$ and the set of words of 
length a multiple of $N$, respectively. We 
define $\psi: \Omega^{N} \to \mathbb{Z}$ such 
that $\psi(\textbf{l})=0$ iff $\textbf{l} \in Z$
and $\psi(\textbf{l})\neq \psi(\textbf{l}')$ 
whenever $\textbf{l}, \textbf{l}' \in \Omega^{N}
\setminus Z$, and  $\textbf{l}\neq \textbf{l}'$. We let $\Psi:\hat{\Omega}^{N}\to \Omega$ be given by $\Psi(\textbf{l}^{1}\dots \textbf{l}^{m})=\psi(\textbf{l}^{1}) \dots \psi(\textbf{l}^{m}) \in \Omega$, where $\textbf{l}^{i} \in \Omega^{N}$. For $\textbf{d}\in \Omega$ we let $\hat{\Delta}^{\textbf{d}}:=\bigcup_{\textbf{l} \in \Psi^{-1}(\textbf{d})}\Delta^{\textbf{l}}$ and for $S \in P_{\delta}(m)$ we take $C(\textbf{d},S), C(\textbf{d},\delta)$ to be the smallest real numbers such that 
\begin{equation}
    \mathbb{P}\left(\Gamma^{m}_{\delta, S}(J)\Big|\hat{\Delta}^{d}\right) \leq C(\textbf{d},S) \|J\|^{-\rho},
\end{equation}
\begin{equation}
    \mathbb{P}\left(\Gamma^{m}_{\delta,\delta}(J)\Big|\hat{\Delta}^{d}\right) \leq 
    C(\textbf{d}, \delta) \|J\|^{-\rho}.
\end{equation}

Since $Z$ is a finite set, we choose $M>0$ such that $\forall \textbf{l} \in Z$
\begin{equation}
     \label{2.20}
     \|A^{\textbf{l}}\|_{0}^{\rho}\left(1+\|A^{\textbf{l}}\|_{0}(2\delta)^{\rho}\right) \leq  \|A^{\textbf{l}}\|_{0}^{\rho}\left(1+\|A^{\textbf{l}}\|_{0}\right) \leq M(1-\rho).
\end{equation}

\begin{prop} Let $J\in \mathcal{J}$ with $\|J\|<\delta$, $\emph{\textbf{d}}=(d_1,\dots,d_{m-1})$, and $S$ a subset of $\{0,1,\dots,m-1\}$ not containing $0$. Then,
\begin{equation}
    \label{2.21}
    C(\emph{\textbf{d}},S) \leq (2\delta)^{-\rho\#S}M^{\#S} \underset{d_i=0}{\prod}(1-\rho) \: \underset{d_{i}\neq 0,\psi(\emph{\textbf{l}}^{i})=d_{i}}{\prod}\|A^{\emph{\textbf{l}}^{i}}\|_{0}^{\rho}\left(1+\|A^{\emph{\textbf{l}}^{i}}\|_{0}(2\delta)^{\rho}\right). 
\end{equation}
\end{prop}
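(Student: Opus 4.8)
The plan is to prove the bound in \eqref{2.21} by induction on $m$, the number of blocks in $\textbf{d}$, carefully tracking how the constant $C(\textbf{d},S)$ transforms when we pass from a word $\textbf{d}=(d_1,\dots,d_{m-1})$ to the word $\textbf{d}'=(d_2,\dots,d_{m-1})$ obtained by deleting the first entry. The base case $m=1$ (i.e. $\textbf{d}$ empty) is immediate: $S$ must be empty because $0 \notin S$ and $S \subset \{0\}$, so the right-hand side of \eqref{2.21} is an empty product equal to $1$, and $\Gamma^1_{\delta,S}(J)$ with $S=\emptyset$ is contained in the set where $\|w\|_{\mathbb R^d/\mathbb Z^d} < \delta$ for some $w \in J$, which forces $\|J\| < \delta$; one checks $C(\textbf{d},S) \le 1 \le \|J\|^{-\rho} \cdot \|J\|^{\rho}$ trivially since $\|J\| < \delta < 1$ makes $\|J\|^{-\rho} \ge 1$.

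For the inductive step I would condition on the first block $\textbf{l}^1 \in \Omega^N$, i.e. decompose $\hat\Delta^{\textbf{d}}$ according to which $\textbf{l}^1$ with $\psi(\textbf{l}^1) = d_1$ occurs, using the tower structure of $T$ and the cocycle identity $A_{iN}(x) = A_{(i-1)N}(T^N x) \cdot A^{\textbf{l}^1}$ for $x \in \Delta^{\textbf{l}^1}$. The key point is that membership of $x$ in $\Gamma^m_{\delta,S}(J)$ is governed, for indices $i \ge 1$, by the behavior of $T^N x$ with respect to the pushed-forward line $J_{\textbf{l}^1,0} = A^{\textbf{l}^1}\cdot J$ (and its integer translates $J_{\textbf{l}^1,k}$, which only matter when $\textbf{l}^1 \notin Z$, by \eqref{2.11}), with the shifted index set $S - 1$ and word $\textbf{d}'$. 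There are two cases. If $d_1 = 0$, then $\textbf{l}^1 \in Z$, so $\phi_\delta(\textbf{l}^1) = 0$ by \eqref{2.11}, hence no integer translates appear, the condition $0 \notin S$ gives $0 \notin S$ trivially (index $0$ never contributes to $S$ when the first matrix is in $Z$ — here one uses that $\|A^{\textbf{l}^1}\|_0$-type control keeps $w$ small, or more precisely that $\Gamma$ is defined so that $i=0$ refers to $\|A_0 \cdot w\| = \|w\|$), and one gets $C(\textbf{d},S) \le (1-\rho)\,C(\textbf{d}',S-1)$ by invoking \eqref{2.12} summed over $\textbf{l}^1 \in Z$ with $\psi(\textbf{l}^1)=0$ (applied with $Y$ a suitable saturated set so that the conditional probability estimate applies, and using the bounded distortion / strongly expanding property to pass between $\mathbb P$ and $\mathbb P_\nu$). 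If $d_1 \neq 0$, then $\textbf{l}^1 \in \Omega^N \setminus Z$ is the unique word with $\psi(\textbf{l}^1) = d_1$; here one must also account for the $\phi_\delta(\textbf{l}^1) \le \|A^{\textbf{l}^1}\|_0$ integer translates (by \eqref{2.4}), each of which has norm $\ge (2\delta)\|A^{\textbf{l}^1}\|_0^{-1}/1 \ge$ (the lower bound from the displayed inequality), so summing $\|J_{\textbf{l}^1,k}\|^{-\rho}$ over $k$ contributes a factor bounded by $\|A^{\textbf{l}^1}\|_0^\rho\big(1 + \|A^{\textbf{l}^1}\|_0 (2\delta)^\rho\big)$, and the bound on $\|J_{\textbf{l}^1,0}\|$ relative to $\|J\|$ contributes $\|A^{\textbf{l}^1}\|_0^\rho$ as well — collecting these yields the factor $\|A^{\textbf{l}^1}\|_0^\rho\big(1 + \|A^{\textbf{l}^1}\|_0(2\delta)^\rho\big)$ in the product over $d_i \neq 0$, together with the $(2\delta)^{-\rho}M$ coming from \eqref{2.20} to absorb the case where index $i$ (with $i$ contributing to $S$) lands on a translate.

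The main obstacle, and the step deserving the most care, is the bookkeeping of the integer translates $J_{\textbf{l}^1,k}$: when $d_i \neq 0$ and $i \in S$, the orbit point can be close to a nonzero integer, so the relevant line after one more step is one of the translates rather than $J_{\textbf{l}^1,0}$, and one must bound $\sum_{k} C(\textbf{d}', \ldots) \|J_{\textbf{l}^1,k}\|^{-\rho}$ using the lower bound $\|J_{\textbf{l}^1,k}\| \ge 2^{-1}\|A^{\textbf{l}^1}\|_0^{-1}$ together with $\phi_\delta(\textbf{l}^1) \le \|A^{\textbf{l}^1}\|_0$, and feed the result into the definition of $C(\textbf{d},S)$ via the smallest-constant characterization — this is precisely where the factors $(2\delta)^{-\rho \# S} M^{\# S}$ are generated, one such factor per element of $S$, matching the structure of \eqref{2.20}. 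Once this case analysis is set up, the induction closes and telescoping the recursion over the $m-1$ blocks of $\textbf{d}$ produces exactly the product formula claimed in \eqref{2.21}.
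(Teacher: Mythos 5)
Your treatment of the case $1\notin S$ is essentially the paper's: a one-step recursion conditioning on the first block $\mathbf{l}^1$, giving the factor $(1-\rho)$ via \eqref{2.12} when $d_1=0$ (where \eqref{2.11} kills all integer translates) and the factor $\|A^{\mathbf{l}^1}\|_0^{\rho}\bigl(1+\|A^{\mathbf{l}^1}\|_0(2\delta)^{\rho}\bigr)$ via \eqref{2.4} and the lower bound on $\|J_{\mathbf{l}^1,k}\|$ when $d_1\neq 0$. But there is a genuine gap in how you propose to handle the case $1\in S$. Your scheme always recurses by deleting one block and replacing $S$ by $S-1$; when $1\in S$ the shifted set contains $0$, and the inductive statement (which is asserted, and can only be proved, for index sets avoiding $0$) is simply not available for $(\mathbf{d}',S-1)$. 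Nor can you reduce to a translated line of small norm in that situation: $i\in S$ means $\|A_{iN}(x)\cdot w\|_{\mathbb{R}^d/\mathbb{Z}^d}\geq\delta$, i.e.\ the orbit point is \emph{far} from every lattice point, so there is no translate $J_{\mathbf{l}^1,k}$ with $\|J_{\mathbf{l}^1,k}\|<\delta$ to recurse on. (You have this backwards in your second paragraph: the translates $J_{\mathbf{l},k}$, $k\geq 1$, arise precisely when $i\notin S$ but the nearby lattice point is nonzero, which can only happen for blocks outside $Z$.) The paper resolves this by a different case split: if $1\in S$ it takes $r$ to be the first index \emph{not} in $S$ and jumps over the whole initial run $\mathbf{l}^1\cdots\mathbf{l}^r$ in a single step, taking a supremum over all words with prescribed $\Psi$-image, using submultiplicativity of $\|\cdot\|_0$ to split the composite factor $\|A^{\mathbf{l}'}\|_0^{\rho}\bigl(1+\|A^{\mathbf{l}'}\|_0(2\delta)^{\rho}\bigr)$ into per-block factors at the cost of $(2\delta)^{-(r-1)\rho}$, and then applying \eqref{2.20} to convert the $Z$-blocks inside the run into $M(1-\rho)$; since those $r-1$ indices all lie in $S$, this is exactly what produces the $(2\delta)^{-\rho\#S}M^{\#S}$ in \eqref{2.21}. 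A separate trivial case ($\{1,\dots,m-1\}\subset S$, where $C(\mathbf{d},S)\leq 1$) closes the induction. Without this multi-step jump your recursion does not close, and the mechanism you offer for generating the $(2\delta)^{-\rho}M$ factors ("absorbing the case where the index lands on a translate") does not correspond to an actual estimate.
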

\begin{proof}
Note that $0\notin S$ means that $\|J\|<\delta$. There are four possibilities:
\begin{enumerate}
    \item $1\notin S, d_{1}=0$ : We have by \eqref{2.10}, \eqref{2.11}, and \eqref{2.12} the bound
    \begin{equation}
        \mathbb{P}_{\nu}\left(\Gamma^{m}_{\delta, S}(J)\Big|\hat{\Delta}^{\textbf{d}}\right) \leq \underset{\textbf{l}^{1} \in Z}{\sum}\mathbb{P}\left(\Gamma^{m-1}_{\delta,S'}(J_{\textbf{l}^{1},0})\Big|\hat{\Delta}^{\textbf{d}'}\right)\mathbb{P}_{\nu}\left(\Delta^
        {\textbf{l}^{1}}\Big|\hat{\Delta}^{\textbf{d}}\right) \leq (1-\rho)C(\textbf{d}',S')\|J\|^{-\rho} \,,
    \end{equation}
    where $\textbf{d}'=(d_{2},\dots,d_{m-1})$, and $S'=\{i-1: i\in S\} \subset \{0,1,\dots,m-2\}$. 
    
    \item $1 \notin S, d_{1}\neq 0$: Let $\textbf{l}^{1} \in \Omega^{N}$ be so that $\psi(\textbf{l}^{1})=d_{1},$ then by \eqref{2.4} we have
    \begin{equation}
        \begin{split}
        \mathbb{P}_{\nu}\left(\Gamma^{m}_{\delta, 
        S}(J)\Big|\hat{\Delta}^{\textbf{d}}\right) &\leq 
        \sum_{k=0}^{\phi_{\delta}(\textbf{l}^{
        1})}\mathbb{P}\left(\Gamma^{m-1}_{\delta, 
        S'}(J_{\textbf{l}^{1},k})\Big|\hat{\Delta}
        ^{\textbf{d}'}\right)  \\ & \leq 
        C(\textbf{d}',S')\left(\|J_{\textbf{l}^{1},0}\|^{-\rho}+\phi_{\delta}(\textbf{l}^{1}
        ). \underset{k\geq  1}{\sup}\|J_{\textbf{l}^{1},k}\|^{-\rho}\right)\\
        & \leq 
        C(\textbf{d}',S')\|J\|^{-\rho}\quad\left( \|A^{\textbf{l}^{1}}\|_{0}^{\rho}+\frac{2^{\rho}
        \|A^{\textbf{l}^{1}}\|_{0}^{1+\rho}}{\|J\|^{-\rho}}\right) \\
        &\leq 
        C(\textbf{d}',S')\|J\|^{-\rho}\|A^{\textbf{l}^{1}}\|_{0}^{\rho}\left(1+\|A^{\textbf{l}^{1}}\|_{0}(2\delta)^{\rho}\right) \,,
        \end{split}
    \end{equation}
    where $\textbf{d}'=(d_2,\dots,d_{m-1})$, and $S'=\{i-1: i\in S\} \subset \{0,1,\dots,m-2\}$.

    \item $1\in S, \{1,\dots,m-1\} \not\subset S:$ Let $2<r\leq m-1$ be the smallest element of $\{2,\dots,m-1\}$ which is not in $S$. Let $\textbf{d}' := (d_{r+1}, \dots, d_{m-1})$, $S':=\{i-r: r \leq i \in S\}$ and $\textbf{l}':=\textbf{l}^{1}\dots\textbf{l}^{r}$ be the word realizing the supremum in the second line below. Then by \eqref{2.4}, and \eqref{2.20} we have
    \begin{equation*}
        \begin{split}
        & 
        \mathbb{P}_{\nu}\left(\Gamma^{m}_{\delta, 
        S}\Big|\hat{\Delta}^{\textbf{d}}\right)  \leq 
        \underset{\textbf{l}:\Psi(\textbf{l})=(d_1,\dots,d_r)}{\sum}\mathbb{P}_{\nu}\left(
        \Delta^{\textbf{l}}\Big|\hat{\Delta}^{\textbf{d}}\right)\sum_{k=0}^
        {\phi_{\delta}(\textbf{l})}\mathbb{P}\left(\Gamma^{m-r}_{\delta, 
        S'}(J_{\textbf{l},k})\Big|\hat{\Delta}
        ^{\textbf{d}'}\right)   \leq \\
        &\underset{\textbf{l}, \Psi(\textbf{l})=(d_1,\dots,d_r)}{\sup} \sum_{k=0}^
        {\phi_{\delta}(\textbf{l})}\mathbb{P}\left(\Gamma^{m-r}_{\delta, 
        S'}(J_{\textbf{l},k})\Big|\hat{\Delta}
        ^{\textbf{d}'}\right)
        \leq 
        C(\textbf{d}',S')\left(\|J_{\textbf{l}',0}\|^{-\rho}+\phi_{\delta}(\textbf{l}'
        )\cdot \underset{k\geq  1}{\sup}\|J_{\textbf{l}',k}\|^{-\rho}\right)\\
        & \leq 
        C(\textbf{d}',S')\|J\|^{-\rho}\quad\left( \|A^{\textbf{l}'}\|_{0}^{\rho}+\frac{2^{\rho}
        \|A^{\textbf{l}'}\|_{0}^{1+\rho}}{\|J\|^{-\rho}}\right) 
        \leq 
        C(\textbf{d}',S')\|J\|^{-\rho}\|A^{\textbf{l}'}\|_{0}^{\rho}\left(1+\|A^{\textbf{l}'}\|_{0}(2\delta)^{\rho}\right) \\
        &\leq 
        C(\textbf{d}',S')\|J\|^{-\rho} (2\delta)^{-(r-1)\rho} \underset{1\leq i\leq 
        r}{\prod}\|A^{\textbf{l}^{i}}\|_{0}^{\rho}\left(1+\|A^{\textbf{l}^{i}}\|_{0}(2\delta)
        ^{\rho}\right) \\
        &\leq  
        C(\textbf{d}',S')\|J\|^{-\rho} 
        (2\delta)^{-(r-1)\rho}\underset{\psi(\textbf{l}^{i})=d_i=0, i\leq r}{\prod} M(1-\rho)
        \underset{\psi(\textbf{l}^{i})=d_i\neq 
        0, i \leq r}{\prod}\|A^{\textbf{l}^{i}}\|_{0}^{\rho}\left(1+\|A^{\textbf{l}^{i}}\|_{0}(2\delta)^{\rho}\right)\\ &\leq
        C(\textbf{d}',S')\|J\|^{-\rho} 
        ( (2\delta)^{-\rho}M)^{r-1}\underset{\psi(\textbf{l}^{i})=d_i=0, i\leq r}{\prod} (1-\rho) 
        \underset{\psi(\textbf{l}^{i})=d_i\neq 
        0, i \leq r}{\prod}\|A^{\textbf{l}^{i}}\|_{0}^{\rho}
        \left(1+\|A^{\textbf{l}^{i}}\|_{0}(2\delta)^{\rho}\right).
        \end{split}
    \end{equation*}
  
    \item $\{1,\dots,m-1\} \subset S$:
    \begin{equation}
        \begin{split}
            C(\textbf{d},S) \leq 1   \leq (2\delta)^{-\rho\#S}M^{\#S} \underset{d_i=0}{\prod}(1-\rho) \: \underset{d_{i}\neq 0,\psi(\textbf{l}^{i})=d_{i}}{\prod}\|A^{\textbf{l}^{i}}\|_{0}^{\rho}\left(1+\|A^{\textbf{l}^{i}}\|_{0}(2\delta)^{\rho}\right), 
        \end{split}
    \end{equation}
    as $1 \leq M(1-\rho)$. The result follows by induction.
\end{enumerate}
\end{proof}
\noindent
Let

\begin{equation}
\gamma_{\delta}(x):=
\begin{cases}
 -\rho & \text{if}\; x \in \underset{\textbf{l} \in Z}{\bigcup} \Delta ^{\textbf{l}},\\
 \rho \ln \|A^{\textbf{l}}\|_{0}+ \ln (1+\|A^{\textbf{l}}\|_{0}(2\delta)^{\rho}) & \text{if}\; x \in \Delta^{\textbf{l}}, \textbf{l}\in \Omega^{N}\setminus Z.
\end{cases}
\end{equation}
\noindent
For $\delta>0$ small, by \eqref{3.13} we get
\begin{equation}
    \int_{\Delta}\gamma_{\delta}(x)d\mu(x)= \alpha(\delta)< \alpha<0 \,,
\end{equation}
and
\begin{equation}
\begin{split}
    \mathbb{P}\left(\Gamma^{m}_{\delta,\delta}(J)\Big|\hat{\Delta}^{\textbf{d}}\right) &\leq \underset{S\in P_{\delta}(m)}{\sum} \mathbb{P}\left(\Gamma^{m}_{\delta, S}(J)\Big|\hat{\Delta}^{\textbf{d}}\right) \\ &\leq \underset{S\in P_{\delta}(m)}{\sum} C(\textbf{d},S)\|J\|^{-\rho} \leq
    \# P_{\delta}(m) \underset{S \in P_{\delta}(m)}{\sup} C(\textbf{d},S)\|J\|^{-\rho}\,.
    \end{split}
\end{equation}
Therefore by Stirling's approximation (for the size of $P_{\delta}(m)$) and \eqref{2.21} there exists $c_0>0$ so that
\begin{equation}
    \label{2.29}
    C(\textbf{d},\delta) \leq  c_0(e\delta^{-1})^{\delta m}(2\delta )^{-\rho \delta m} M^{\delta m}\underset{d_i=0}{\prod}(1-\rho) \: \underset{d_{i}\neq 0,\psi(\textbf{l}^{i})=d_{i}}{\prod}\|A^{\textbf{l}^{i}}\|_{0}^{\rho}\left(1+\|A^{\textbf{l}^{i}}\|_{0}(2\delta)^{\rho}\right). 
\end{equation}

\noindent
Letting $C_m(x):=C(\textbf{d},\delta)$ for $x \in \hat{\Delta}^{\textbf{d}}, |\textbf{d}|=m$ and using \eqref{2.29} we obtain
\begin{equation}
    \frac{\ln C_m(x)}{m} \leq \frac{\ln(c_0)}{m}+\delta
    \ln(2Me)-(1+\rho) \delta 
    \ln(2\delta)+ \frac{1}{m}\sum_{i=0}
    ^{m-1} \gamma_{\delta}(T^{i}(x)).
\end{equation}
Thus, by taking $\delta>0$ small enough, we get that, for large $m$,
\begin{equation}
    \frac{\ln(c_0)}{m}+ \delta
    \ln(2Me)-(1+\rho) \delta 
    \ln(2\delta) \leq -\frac{\alpha}{4} \leq -\frac{\alpha(\delta)}{4}.
\end{equation}
Lemma 5.4 of~\cite{MR3933880} shows that for $S_{m}(x):=\sum_{i=0}^{m-1}\gamma_{\delta}(T^{i}(x))$, there exist $C,\beta>0$ such that
\begin{equation}
    \mu\left(\left\{x\in \Delta, \frac{S_m(x)}{m}>\frac{\alpha(\delta)}{2} \right\}\right) \leq Ce^{-\beta m}\,,
\end{equation}
hence, if $D_{m}:=\Big\{x \in \Delta, C_m(x)>e^{m\alpha(\delta)/4}\Big\}$, we have
\begin{equation}
    \label{2.33}
    \mu(D_m) \leq Ce^{-\beta m}.
\end{equation}
\noindent
By the bound in formula \eqref{2.33} and the fact that $C_m(x) \leq 1$, we get
\begin{equation}
    \label{2.34}
    \begin{split}
        \int_{\Delta}C_m(x)d\mu(x) &= \int_{D_m} C_m(x)d\mu(x)+\int_{\Delta \setminus D_m}C_m(x) d\mu(x)\\
        &\leq 
        \mu(D_m)+ \int_{\Delta \setminus D_m}e^{m\alpha(\delta)/4}d\mu(x) \leq 
        Ce^{-\beta m}+e^{m\alpha(\delta)/4}
         \leq 
        Ae^{-\kappa m}\,,
    \end{split}
\end{equation}
where $A:=C+1>0, \kappa:= \min\{\beta, -\alpha(\delta)/4 \}>0.$
We have therefore proved that
\begin{equation}
    \label{2.35}
    \begin{split}
    \mu\left(\Gamma^{m}_{\delta,\delta}(J)\right) &\leq \underset{\textbf{d}\in \hat{\Omega}^{N}, |\textbf{d}|=m}{\sum} \mu(\hat{\Delta}^{\textbf{d}})\mathbb{P}\left(\Gamma^{m}_{\delta, \delta}(J)\Big|\hat{\Delta}^{\textbf{d}}\right)\\
    &\leq 
    \int_{\Delta}C_m(x)\|J\|^{-\rho}d\mu(x) \leq Ae^{-\kappa m}\|J\|^{-\rho}.
    \end{split}
\end{equation}
\begin{rmk} For an integral log-integrable cocycle the same ideas can be applied to prove that $\mu(\Gamma^{m}_{\delta,\delta}(J)) \to 0$. However, to get exponential decay we have implicitly (see Lemma 5.4 in \cite{MR3933880}) made essential use of fast decay for the cocycle $(T,A)$.
\end{rmk}
Henceforth, we will fix $\delta>0$ small enough such that \eqref{2.35} holds.

\begin{definition}
Let $(T,A)$ be a {\em locally constant log-integrable 
cocycle} over a map $T: \Delta \to \Delta$ preserving a 
measure $\mu$. The expansion constant of $(T,A)$ 
is defined as the maximal real number $c\in \mathbb{R}$ such that 
$\forall v \in \mathbb{R}^{d}\setminus\{0\}$ and for 
$\mu-$almost every $x\in \Delta$ 
\begin{equation}
    \underset{n \to \infty}{\lim} \frac{1}{n}\ln\|A_{n}(x)\cdot v\| \geq c.
\end{equation}
The limit exists by Oseledets theorem applied to $\mu$.
\end{definition}
\begin{rmk}
When $A$ is irreducible, the expansion constant of $(T,A)$ is equal to the maximal Lyapunov exponent of the cocycle (see Lemma 26 in \cite{MR3522612}).
\end{rmk}
\begin{thm}
\label{LarDev}
(Theorem 25, \cite{MR3522612}) Let $T: \Delta \to \Delta$ be a 
countable shift endowed with a measure $\mu$ with 
bounded distortion that is fast decaying. Let $A:\Delta
\to SL(d,\mathbb{R}) $ be a fast decaying cocycle. Then
for every $c'$ smaller than the expansion constant of
$A$ there exist $C_3>0, \alpha_3>0$ such that, for every
unit vector $v \in \mathbb{R}^{d},$
\begin{equation}
    \mu\left(\left\{x \in \Delta: \|A_{n}(x)\cdot v\| \leq e^{c'n}\right\}\right) \leq C_3e^{-n\alpha_3} \,.
\end{equation}
\end{thm}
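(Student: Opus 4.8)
The plan is to convert the measure estimate into a bound on negative fractional exponential moments via a Chernoff argument, and then to control those moments through a sub-multiplicative / thermodynamic analysis. For a unit vector $v\in\mathbb{R}^d$ and $s>0$,
\[
\mu\left(\left\{x:\|A_n(x)\cdot v\|\le e^{c'n}\right\}\right)
=\mu\left(\left\{x:\|A_n(x)\cdot v\|^{-s}\ge e^{-sc'n}\right\}\right)
\le e^{sc'n}\int_\Delta\|A_n(x)\cdot v\|^{-s}\,d\mu(x),
\]
so it is enough to show that $u_n(s):=\sup_{\|v\|=1}\int_\Delta\|A_n(x)\cdot v\|^{-s}\,d\mu(x)$ decays at an exponential rate $\Lambda(s)$ satisfying $\Lambda(s)<-sc'$ for some small $s>0$; then $\alpha_3:=-(sc'+\Lambda(s))>0$ (minus an arbitrarily small slack absorbed by Fekete's lemma) does the job, uniformly over $v$.

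First I would set up the rate function $\Lambda$. Since $A$ is fast decaying with exponent $\alpha_2$, the bound $\sum_{\|A^{(l)}\|_0\ge n}\mu(\Delta^{(l)})\le C_2 n^{-\alpha_2}$ gives $u_1(s)\le\int\|A(x)^{-1}\|^{s}\,d\mu\le\int\|A(x)\|_0^{s}\,d\mu<\infty$ for all $0<s<\alpha_2$. Because the cocycle is locally constant and $T$ has bounded distortion with constant $K$, conditioning on the first $n$ symbols (on a length-$n$ cylinder $\Delta^{\mathbf{l}}$ the direction $\overline{A^{\mathbf{l}}v}$ is constant, and the return measure $\mu^{\mathbf{l}}$ has density in $[K^{-1},K]$ with respect to $\mu$) yields the quasi-submultiplicativity $u_{n+m}(s)\le K\,u_n(s)\,u_m(s)$. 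Hence $Ku_n(s)$ is submultiplicative, $\Lambda(s):=\lim_n\tfrac1n\ln u_n(s)=\inf_n\tfrac1n\ln(Ku_n(s))$ exists, and since each $s\mapsto\ln\int\|A_n(x)v\|^{-s}\,d\mu$ is convex (a cumulant generating function), $\Lambda$ is convex on $(0,\alpha_2)$ with $\Lambda(0^+)=0$.

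The core of the proof is that $\Lambda$ has right derivative $-c$ at $0$, where $c$ is the expansion constant. The lower bound $\Lambda'(0^+)\ge -c$ is soft: by Jensen, $\int\|A_n(x)v\|^{-s}\,d\mu\ge\exp(-s\int\ln\|A_n(x)v\|\,d\mu)$, and since $|\ln\|A_n(x)v\||\le\sum_{k<n}\ln\|A(T^kx)\|_0$ is dominated by an $L^1$-convergent Birkhoff sum (fast decay gives $\int\ln\|A\|_0\,d\mu<\infty$), $\tfrac1n\ln\|A_n(x)v\|$ is uniformly integrable and converges a.e.\ to a limit $\ge c$; thus $\tfrac1n\int\ln\|A_n(x)v\|\,d\mu\to L_v\ge c$ and $\Lambda(s)\ge -sc$. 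The reverse inequality $\Lambda'(0^+)\le -c$ is the main obstacle. A naive second-order Taylor expansion of the moment generating function does not work: for fixed $s$ the quantity $\int\|A_n(x)v\|^{-s}\,d\mu$ is genuinely $e^{n\Lambda(s)}$ with $\Lambda$ nonlinear, and the remainder in any crude expansion grows exponentially in $n$. Instead one analyzes the weighted transfer operator $(\mathcal{L}_s\varphi)(\bar v)=\sum_l\mu(\Delta^{(l)})\|A^{(l)}\bar v\|^{-s}\varphi(\overline{A^{(l)}\bar v})$ on $C(\mathbb{P}^{d-1})$, whose iterates reproduce $u_n(s)$ up to the bounded-distortion constants: using fast decay for finiteness and continuity of the relevant weighted quantities and irreducibility to rule out invariant subbundles sitting off a fixed direction, one extracts a leading eigenvalue $\rho(s)$ with $\Lambda(s)=\ln\rho(s)$, $\rho(0)=1$, and $\rho'(0)=-c$ — this last identification being exactly the expansion-constant/top-Lyapunov-exponent comparison (Lemma 26 of \cite{MR3522612}). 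A more hands-on variant: choose $n$ so large that $\inf_{\|v\|=1}\tfrac1n\int\ln\|A_n(x)v\|\,d\mu>c-\epsilon$ — uniformity over the compact sphere is the delicate point, and is where irreducibility enters — and then pick $s$ inside the narrow window where the linear gain $s(c-\epsilon)n$ beats the bounded-distortion loss $\ln K$, forcing $Ku_n(s)<e^{-sc'n}$ for the given $c'<c$.

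Granting $\Lambda'(0^+)=-c$, convexity gives $\Lambda(s)/s\downarrow -c$ as $s\downarrow 0$, so for $c'<c$ there is $s\in(0,\alpha_2)$ with $\Lambda(s)<-sc'$. Fixing such $s$, choosing $\epsilon_0>0$ with $sc'+\Lambda(s)+\epsilon_0<0$, and using $u_n(s)\le C_0 e^{n(\Lambda(s)+\epsilon_0)}$ (submultiplicativity of $Ku_n(s)$ plus Fekete), the Chernoff bound from the first paragraph gives $\mu(\{x:\|A_n(x)v\|\le e^{c'n}\})\le C_0 e^{n(sc'+\Lambda(s)+\epsilon_0)}=C_3 e^{-\alpha_3 n}$ with $\alpha_3:=-(sc'+\Lambda(s)+\epsilon_0)>0$ and $C_3:=C_0$, uniformly in the unit vector $v$. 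To summarize, the genuinely hard step is the upper bound $\Lambda'(0^+)\le -c$: it is exactly where fast decay (finiteness and regularity of all the weighted moment functions) and irreducibility (so that a fixed vector feels the top exponent and the slow Oseledets directions are invisible) are both essential, and it is the step that resists elementary estimates.
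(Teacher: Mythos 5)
The paper itself gives no proof of this statement: it is quoted as Theorem 25 of \cite{MR3522612}, so the only meaningful comparison is with the argument given there. Your overall plan coincides with it in all essentials: a Chernoff bound reducing the claim to exponential decay of the negative fractional moments $u_n(s)=\sup_{\|v\|=1}\int_\Delta\|A_n(x)\cdot v\|^{-s}\,d\mu$, the quasi-submultiplicativity $u_{n+m}(s)\le K\,u_n(s)\,u_m(s)$ coming from the locally constant structure (the direction $A^{\mathbf{l}}v/\|A^{\mathbf{l}}v\|$ is constant on each length-$n$ cylinder) together with bounded distortion, finiteness of the moments for $s$ below the fast-decay exponent, and then the key step of producing one scale $n$ and one small $s$ at which $K u_n(s)<e^{-sc'n}$, after which iteration over blocks of length $n$ and interpolation give the exponential bound uniformly in $v$. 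Your ``hands-on variant'' is precisely the proof in \cite{MR3522612}.

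Two points in your write-up need repair. First, irreducibility is not a hypothesis of this theorem and is not where uniformity over the unit sphere comes from: the expansion constant is by definition a bound valid for \emph{every} $v\neq 0$, so for each fixed $v$ one gets $\liminf_n \frac1n\int\ln\|A_n(x)v\|\,d\mu\ge c$ by Fatou with the uniformly integrable dominating Birkhoff sums of $\ln\|A\|_0$, and uniformity in $v$ at a fixed scale follows from continuity of $v\mapsto\int\ln\|A_n(x)v\|\,d\mu$ (dominated convergence) plus compactness of the sphere; asserting that irreducibility is ``essential'' would leave the reducible case (for which the theorem is stated, and which the paper's own remark after Theorem~\ref{parexquan} wants available) unproved. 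Irreducibility only enters later, in identifying the expansion constant with the top Lyapunov exponent (Lemma 26 of \cite{MR3522612}), which is not needed here. Second, the transfer-operator alternative is not justified in this countable-alphabet, merely measurable setting (no quasi-compactness or spectral gap is available from the stated hypotheses), so it cannot substitute for the elementary estimate; and in the hands-on route the passage from $\frac1n\int\ln\|A_n(x)v\|\,d\mu>c-\epsilon$ to $u_n(s)\le e^{-s(c-2\epsilon)n}$ for small $s$ requires the differentiability of $s\mapsto\int\|A_n(x)v\|^{-s}d\mu$ at $s=0^+$, uniformly in $v$, which is exactly where fast decay must be invoked explicitly (e.g.\ via $\int\prod_{k<n}\|A(T^kx)\|_0^{s}\,d\mu<\infty$ for small $s$, using bounded distortion across the $n$ factors). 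With those two adjustments your outline reproduces the cited proof.
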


Let $(T,A)$ be as in Theorem~\ref{parexquan}. By combining Theorem~\ref{HDP}  and Theorem~\ref{LarDev}, we obtain the following corollaries.

\begin{cor}
\label{cor LD}

There exists a measurable $\Delta_0 \subset \Delta$ whose complement does not have full Hausdorff dimension, such that $\forall x \in \Delta_0,\; \exists k(x) \in \mathbb{N}$ such that $\forall k \geq k(x)$  

\begin{equation}
    e^{k\theta_1/2} \leq e^{k\theta_1 (1-1/4)} \leq \|A_{k}(x) \cdot h\| \leq e^{k\theta_1(1+1/4)} \leq e^{2k\theta_1}
\end{equation}
where $\theta_1$ is the maximal Lyapunov exponent of the cocycle $(T,A)$.
\end{cor}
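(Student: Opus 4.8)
The plan is to establish the lower and the upper bound on $\|A_k(x)\cdot h\|$ separately, in each case producing an exceptional set of non-full Hausdorff dimension by combining a large deviation estimate with Theorem~\ref{HDP}, and then to take $\Delta_0$ to be the complement of the union of the two exceptional sets. I will use throughout that, since $A$ is irreducible, the expansion constant of $(T,A)$ equals its top Lyapunov exponent $\theta_1$, and that $\theta_1>0$: were $\theta_1$ zero we would have $E^{cs}(x)=\mathbb{R}^d$ for a.e.\ $x$, contradicting the standing hypothesis that $J\cap E^{cs}(x)=\emptyset$ for every line $J$ in the nonempty family $\mathcal{J}(\Theta)$. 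For a word $\textbf{l}$ of length $j$, the matrix $A_j$ is constant on the cylinder $\Delta^{\textbf{l}}$; denote this value $A^{\textbf{l}}$.

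For the lower bound, fix $c'\in(3\theta_1/4,\theta_1)$ and apply Theorem~\ref{LarDev} to the unit vector $v=h/\|h\|$: there are $C,\alpha_3>0$ with $\mu(\{x:\|A_j(x)\cdot h\|\le\|h\|\,e^{c'j}\})\le Ce^{-\alpha_3 j}$. As $\|h\|\,e^{c'j}\ge e^{3\theta_1 j/4}$ for all large $j$, the cylinder unions $Y_j^-:=\bigcup\{\Delta^{\textbf{l}}:|\textbf{l}|=j,\ \|A^{\textbf{l}}\cdot h\|<e^{3\theta_1 j/4}\}$ satisfy $\mu(Y_j^-)\le Ce^{-\alpha_3 j}$ for $j\ge n_0$. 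Theorem~\ref{HDP}, applied with $Y_{n,j}:=Y_j^-$ (its hypothesis holds for every $n\ge n_0$, hence for infinitely many $n$), then yields $\mathrm{HD}(\limsup_j Y_j^-)<d-1$; and for $x\notin\limsup_j Y_j^-$ one has $\|A_k(x)\cdot h\|\ge e^{3\theta_1 k/4}\ge e^{\theta_1 k/2}$ for all sufficiently large $k$.

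The upper bound is the delicate step, since Theorem~\ref{LarDev} controls only the lower deviation; here I would pass through the submultiplicativity of $n\mapsto\ln\|A_n\|$ together with a Birkhoff-sum large deviation estimate. Fix $\epsilon>0$ and pick $m$ with $\frac{1}{m}\int_\Delta\ln\|A_m\|\,d\mu<\theta_1+\epsilon/2$, which is possible since this average is always $\ge\theta_1$ and converges to $\theta_1$. The map $T^m$ is again strongly expanding with bounded distortion and is fast decaying, and $\psi:=\ln\|A_m\|$ is exponentially integrable (being dominated by a finite product of exponentially integrable factors, by fast decay of $(T,A)$ and Hölder). Applying Lemma~5.4 of \cite{MR3933880} to $\psi-\int\psi\,d\mu$ over $T^m$ gives exponential decay in $q$ of $\mu(\{\frac{1}{q}\sum_{l=0}^{q-1}\psi(T^{lm}x)>(\theta_1+3\epsilon/4)\,m\})$. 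For $k=qm+r$ with $0\le r<m$, submultiplicativity gives $\ln\|A_k(x)\|\le\sum_{l=0}^{q-1}\psi(T^{lm}x)+\Phi(T^{qm}x)$ with $\Phi:=\sum_{i=0}^{m-1}\ln\|A\circ T^i\|$, and, $m$ being fixed, $\mu(\{\Phi\circ T^{qm}>(\epsilon/4)k\})=\mu(\{\Phi>(\epsilon/4)k\})$ decays exponentially in $k$ by an exponential Chebyshev bound. Combining the two estimates shows that $\mu(\{\|A_k(x)\|>e^{(\theta_1+\epsilon)k}\})$ decays exponentially in $k$. Choosing $\epsilon=\theta_1/8$ and using $\|A_k(x)\cdot h\|\le\|h\|\,\|A_k(x)\|$, one obtains $\{x:\|A_k(x)\cdot h\|>e^{5\theta_1 k/4}\}\subset\{x:\|A_k(x)\|>e^{(\theta_1+\theta_1/8)k}\}$ for all large $k$, so the cylinder unions $Y_j^+:=\bigcup\{\Delta^{\textbf{l}}:|\textbf{l}|=j,\ \|A^{\textbf{l}}\cdot h\|>e^{5\theta_1 j/4}\}$ satisfy $\mu(Y_j^+)\le Ce^{-\delta j}$ for all large $j$. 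A second application of Theorem~\ref{HDP} then gives $\mathrm{HD}(\limsup_j Y_j^+)<d-1$, and for $x\notin\limsup_j Y_j^+$ one has $\|A_k(x)\cdot h\|\le e^{5\theta_1 k/4}\le e^{2\theta_1 k}$ for all large $k$.

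Finally I would set $\Delta_0:=\Delta\setminus(\limsup_j Y_j^-\cup\limsup_j Y_j^+)$, whose complement has Hausdorff dimension $<d-1$, and let $k(x)$ be the larger of the two eventual thresholds; for $x\in\Delta_0$ and $k\ge k(x)$ all the inequalities in the statement hold. The only genuinely nontrivial point is the upper large deviation for the submultiplicative sequence $\ln\|A_n\|$ at an exponential rate arbitrarily close to $\theta_1$ — this is exactly where the blocking argument and Lemma~5.4 of \cite{MR3933880} enter, Theorem~\ref{LarDev} being insufficient for this direction. The subsidiary claims (fast decay and bounded distortion pass to $T^m$; and $\ln\|A_m\|$ and the boundary block $\Phi$ are exponentially integrable) are routine consequences of fast decay of $(T,A)$.
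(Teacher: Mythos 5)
Your proposal is correct, and its skeleton is the one the paper intends: the paper gives no explicit proof of this corollary, saying only that it follows ``by combining Theorem~\ref{HDP} and Theorem~\ref{LarDev}'', and your lower-bound half is exactly that combination (irreducibility identifies the expansion constant with $\theta_1$, Theorem~\ref{LarDev} makes the bad set $Y_j^-$ exponentially small, local constancy makes it a union of length-$j$ cylinders, and Theorem~\ref{HDP} with $Y_{n,j}$ independent of $n$ bounds the dimension of the limsup). Where you go beyond the paper is the upper inequality: you correctly observe that Theorem~\ref{LarDev} controls only the lower tail of $\|A_k(x)\cdot h\|$, and you supply the missing upper-tail large deviation by a blocking argument --- choosing $m$ with $\frac1m\int\ln\|A_m\|\,d\mu<\theta_1+\epsilon/2$ via subadditivity, invoking Lemma~5.4 of \cite{MR3933880} over $T^m$, and disposing of the boundary block by an exponential Chebyshev bound --- before feeding the resulting cylinder sets $Y_j^+$ back into Theorem~\ref{HDP}. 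This is a legitimate and, strictly speaking, necessary supplement to the paper's one-line justification, and the auxiliary facts you flag as routine (fast decay and exponential integrability pass to the $m$-block cocycle) do follow from fast decay of $(T,A)$ together with Lemma~\ref{Lemma 1.4}. One small imprecision: Lemma~5.4 needs a function of strictly negative mean, so you should apply it to $\psi-(\theta_1+3\epsilon/4)m$ (whose mean is at most $-\epsilon m/4$) rather than to the mean-zero function $\psi-\int\psi\,d\mu$; with that trivial adjustment your stated conclusion, exponential decay of $\mu\bigl(\{\frac1q\sum_{l<q}\psi(T^{lm}x)>(\theta_1+3\epsilon/4)m\}\bigr)$, is exactly what the lemma yields.
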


Now if we let $h$ run through the elements of a basis for our vector space we get the following.

\begin{cor}
There exists a measurable set $\Delta'_0\subset \Delta$, whose complement does not have full Hausdorff dimension, such that $\forall x \in \Delta'_{0}, \exists k(x) \in \mathbb{N}$, such that $\forall k \geq k(x)$,
\begin{equation}
    e^{k\theta_1/2} \leq e^{k\theta_1(1-1/4)} \leq \|A_{k}(x)\| \leq e^{k\theta_1(1+1/4)} \leq e^{2k\theta_1}
\end{equation}
\end{cor}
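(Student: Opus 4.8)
The plan is to obtain this statement from Corollary~\ref{cor LD} by letting the vector run over a basis of $\mathbb{R}^{d}$ and intersecting the finitely many good sets so produced. Fix a basis $h_{1},\dots,h_{d}$ of $\mathbb{R}^{d}$; if one wants to quote Corollary~\ref{cor LD} verbatim one takes the $h_{i}$ to be primitive integer vectors projectivizing into $\Theta$, which is possible since $\Theta$ has nonempty interior and rational directions are dense, but in fact the proof of Corollary~\ref{cor LD} --- which rests on Theorem~\ref{LarDev} (valid for \emph{every} unit vector) together with Theorem~\ref{HDP} --- never uses that $h\in\Theta$, only that $h\neq 0$, so any basis will do. Applying Corollary~\ref{cor LD} to each $h_{i}$ yields measurable sets $\Delta_{0}^{(i)}\subset\Delta$, $1\le i\le d$, each with complement of Hausdorff dimension strictly less than $d-1$, and functions $x\mapsto k_{i}(x)$ on $\Delta_{0}^{(i)}$ such that $e^{k\theta_{1}(1-1/4)}\le\|A_{k}(x)\cdot h_{i}\|\le e^{k\theta_{1}(1+1/4)}$ for all $x\in\Delta_{0}^{(i)}$ and all $k\ge k_{i}(x)$.

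Next I would set $\Delta_{0}':=\bigcap_{i=1}^{d}\Delta_{0}^{(i)}$. Its complement is the finite union $\bigcup_{i=1}^{d}\bigl(\Delta\setminus\Delta_{0}^{(i)}\bigr)$, and a finite union of sets of Hausdorff dimension strictly less than $d-1$ again has Hausdorff dimension strictly less than $d-1$; hence the complement of $\Delta_{0}'$ is not of full Hausdorff dimension. For $x\in\Delta_{0}'$ put $\widetilde{k}(x):=\max_{1\le i\le d}k_{i}(x)$.

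It remains to pass from the per-vector bounds to the operator-norm bounds, absorbing all $x$-independent constants into the threshold by exploiting the slack between the exponents $(1\pm 1/4)\theta_{1}$ of Corollary~\ref{cor LD} and the target exponents $\tfrac12\theta_{1}$ and $2\theta_{1}$ (recall $\theta_{1}>0$). For the lower bound, $\|A_{k}(x)\|\ge\|A_{k}(x)\cdot h_{1}\|/\|h_{1}\|\ge e^{3k\theta_{1}/4}/\|h_{1}\|\ge e^{k\theta_{1}/2}$ as soon as $e^{k\theta_{1}/4}\ge\|h_{1}\|$. For the upper bound, write a unit vector $v$ as $v=\sum_{i=1}^{d}c_{i}(v)h_{i}$; since $v\mapsto(c_{i}(v))_{i}$ is a fixed linear isomorphism, there is a constant $C=C(h_{1},\dots,h_{d})$ with $|c_{i}(v)|\le C$ whenever $\|v\|=1$, so $\|A_{k}(x)\cdot v\|\le\sum_{i=1}^{d}|c_{i}(v)|\,\|A_{k}(x)\cdot h_{i}\|\le Cd\,e^{5k\theta_{1}/4}\le e^{2k\theta_{1}}$ as soon as $e^{3k\theta_{1}/4}\ge Cd$. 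Both thresholds depend only on the basis, not on $x$; calling their maximum $k_{*}$ and setting $k(x):=\max\{\widetilde{k}(x),k_{*}\}$, the full chain of inequalities holds for every $x\in\Delta_{0}'$ and every $k\ge k(x)$.

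There is no serious obstacle here: the content of the statement is already in Corollary~\ref{cor LD}, and the only point requiring attention is bookkeeping --- making sure every comparison constant ($\|h_{i}\|$, the condition number $C$ of the basis, the factor $d$) is independent of $x$ so that a single universal threshold $k_{*}$ works. This is precisely what the $\pm1/4$ room in Corollary~\ref{cor LD} buys. If one insists on the literal hypothesis $h\in\Theta$ of Corollary~\ref{cor LD}, the only extra check is that $\Theta$ contains a basis of primitive integer vectors, which again follows from $\Theta$ having nonempty interior.
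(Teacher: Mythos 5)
Your argument is exactly the paper's: the authors obtain this corollary by letting $h$ run over a basis, intersecting the finitely many sets from Corollary~\ref{cor LD}, and using the slack between the exponents $(1\pm 1/4)\theta_1$ and $\theta_1/2$, $2\theta_1$ to absorb the basis-dependent constants. Your bookkeeping of the constants and of the threshold $k_*$ is correct, so the proposal is fine as written.
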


\begin{rmk}
Although we do not make any use of the above corollary in this section, we will use it to show that the set of IETs to which the quantitative version of Veech criterion can be applied (which corresponds to the set $\Omega'_{\mathbf{q}}$ in the symbolic setting of Definition~\ref{defi}) has positive Hausdorff codimension. 
\end{rmk}

\begin{proof} [Proof of Theorem~\ref{parexquan}]

For $h \in \mathbb{Z}^{d}$ a primitive lattice point and $B \in SL(d, \mathbb{Z})$, let $J_h$ be the line spanned by $h$ and $J:= B\cdot J_h-c$, where $c\in \mathbb{Z}^{d} \setminus J_h$. Let $P$ be the triangle whose vertices are $\{0,c,Bh\}$. Then, since the minimal area of a lattice triangle is $1/2$, we obtain
\begin{equation}
    \frac{\|J\|\cdot\|Bh\|}{2}=\text{area}(P) \geq \frac{1}{2}.
\end{equation}
Thus,
\begin{equation}
    \label{2.38}
    \|J\| \geq \|Bh\|^{-1}.
\end{equation}
\noindent 
Let $\Delta_{0}$ and, for all $ x\in \Delta_{0} $,
$k(x) \in {\mathbb N}$ be as in Corollary~\ref{cor LD}. Let 
\begin{equation*}
\begin{split}
\mathcal{J}(n):=\Big\{J \in \mathcal{J}|& \exists x
\in \Delta_{0}, t 
    \in (0,1), c\in \mathbb{Z}^{d},\; \text{such 
    that}\;  \\ &  n \geq k(x),
     \|A_n(x) \cdot th-c\|< \delta, J=A_n(x) \cdot J_h-c\Big\}\,.
 \end{split}
 \end{equation*}
Every $J \in \mathcal{J}(n)$ can be uniquely determined by $c \in 
\mathbb Z^d$ and $$A_{n}(x)\cdot h \in \mathbb{Z}^{d} \cap B(0, e^{2n\theta_1})=:B_{\mathbb{Z}}(0,e^{2n\theta_1})\,,$$ where $B(0,r)$ is the ball of radius $r$ around $0$ in $\mathbb{R}^{d}$. Thus, there exists 
$C_0>0$ such that
\begin{equation}
    \label{2.43}
    \#\mathcal{J}(n) \leq \left(\#B_{\mathbb{Z}}(0,e^{2n\theta_1})\right)^{2} 
    \leq C_0e^{4nd\theta_1}.
\end{equation}
 By \eqref{2.38}, for every $J \in \mathcal{J}(n)$, we have that
\begin{equation}
    \label{2.44}
    \|J\|^{-\rho} \leq \|A_n(x)\cdot h\|^{\rho} \leq e^{2n\rho\theta_1 }.
\end{equation}
We choose $r \in \mathbb{N}$ large enough so that $4N d\theta_1+2N\rho \theta_1-r\kappa=-\alpha_2<0$ (where $N$ is as in \ref{3.13}). 
Let
\begin{equation}
    E_k:= \underset{m \geq rk}{\bigcup} T^{-kN}\Big(\underset{J \in \mathcal{J}(kN)}{\bigcup}\Gamma^{m}_{\delta, \delta}(J)\Big),
\end{equation}
and $F_n=\underset{k\geq n}{\bigcup} E_{k}$. Note that $F_n$ may be viewed as $\underset{j\geq n}{\bigcup} \Tilde{Y}_{n,j}$ where
\begin{equation}
    \Tilde{Y}_{n,j}:=
    \begin{cases}
     \underset{n\leq k \leq \frac{j}{r+1}}{\bigcup}T^{-kN}\Big(\underset{J \in \mathcal{J}(kN)}{\bigcup}\Gamma^{j-k}_{\delta, \delta}(J)\Big) &  \text{if} \; j\geq n+rn \\
     \emptyset & \text{otherwise.}
    \end{cases}
\end{equation}
Note that by  \eqref{2.35}, \eqref{2.43}, \eqref{2.44} we get that there exists $C>0$ such that
\begin{equation}
\begin{split}
  \mu(\Tilde{Y}_{n,j}) \leq &\sum_{n\leq k \leq \frac{j}{r+1}}\sum_{J \in \mathcal{J}(kN)} \mu\left(\Gamma^{j-k}_{\delta, \delta}(J)\right) \leq \sum_{1\leq k \leq \frac{j}{r+1}} C_0e^{4kNd\theta_1}Ae^{-\kappa (j-k)}e^{2kN\rho \theta_1} \\
  & \leq C_0Ae^{-\kappa j} \frac{j}{r+1}e^{(r\kappa-\alpha_2+\kappa)\frac{j}{r+1}} 
   \leq Ce^{-\frac{\alpha_2}{2(r+1)}j}\,.
\end{split}   
\end{equation}
For every $j,n$, $\Tilde{Y}_{n,j}$ is a union of $\Delta^{\textbf{l}}$ with $|\textbf{l}|=jN$. Therefore Theorem~\ref{HDP}, applied to the cocyle $(T^{N},A_{N})$, yields the following
\begin{equation}
   \text{HD}\left(F_\infty:=\underset{n}{\inf}F_n\right) <d-1.
\end{equation}

Letting $\Delta'$ be the intersection of $\Delta_0$ (defined in Corollary~\ref{cor LD}) and the complement of $F_\infty$, we get that for every $x \in \Delta'$ there exists $k_1(x)$ such that $\forall k \geq k_1(x)$ we have that
\begin{equation}
    \label{2.49}
     x\notin E_k \quad \text{and} \quad  e^{k\theta_1/2} \leq \|A_{k}(x)\cdot h\| \leq e^{2k\theta_1}
\end{equation}
and that the complement of $\Delta'$ has Hausdorff dimension less than full.

Thus for $t \in (0,1),\; \forall n \geq n_{0}(t,x):=\frac{2}{\theta_1} \max\{-\ln(t), -\ln(1-t)\}+k_1(x)$ we have
\begin{equation}
    \text{dist}\left(A_n(x) \cdot th, \left\{0,A_n(x)\cdot h\right\}\right) \geq \min\{t,1-t\} e^{n\theta_1/2} \geq 1.
\end{equation}
We let $n_1(t,x)$ be the smallest positive integer such that $n_1 N \ge n_0(t,x)$ and
\begin{equation} 
    \|A_{n_1 N}(x)\cdot th\|_{\mathbb{R}^{d}/\mathbb{Z}^{d}}< \delta.
\end{equation}
Therefore there exists $c\in \mathbb{Z}^{d}$ such that $J_{n_1(t,x)N}:= 
A_{n_1(t,x)N}.J_h-c \in \mathcal{J}(n_1(t,x)N)$. We fix $\epsilon< \min 
\{\frac{\delta}{4N}, \frac{1}{4rN}\}$ and 
$n(t,x):=\min\{2r(r+1)n_0(t,x),2rn_1(t,x)N\}$. From \eqref{2.49} we get that $T^{n_1(t,x)N}(x)\notin 
\Gamma^{m}_{\delta,\delta}(J_{n_1(t,x)N})$ for all $m \geq rn_1(t,x)N$, which easily 
implies that, for all $n\geq n(t,x)$, we have
\begin{equation}
   \frac{\#\left\{1\leq i \leq n: \|A_i(x)\cdot th\|_{\mathbb{R}^{d}/
    \mathbb{Z}^{d}}>\epsilon\right\}}{n}>\epsilon.
\end{equation}
Note that for $C:=\frac{4r(r+1)}{\theta_1}$, $K(x):=2r(r+1)k_1(x)$ we have
\begin{equation}
    n(t,x) \leq C\max\left\{-\log(t),-\log(1-t)\right\}+K(x)
\end{equation} 
and Theorem~\ref{parexquan} follows.
\end{proof}

\section{Interval exchange transformations}
\label{sec:IETs}
An \textit{interval exchange transformation} (IET) is a piecewise orientation-preserving 
isometry of an interval with finitely many singularities 
(discontinuities). In other words, let $I$ be an interval that is partitioned into $d>1$ subintervals
(the subintervals are assumed to be closed on the left and open on the 
right) labelled by the letters of an alphabet $\mathcal{A}$ consisting of $d>1$ elements. Let $\lambda \in \mathbb{R}_{+}^{d}$ and $\pi=(\pi_t,\pi_b)$
($t$ stands for top and $b$ for bottom) be 
a pair of bijective maps taking $\mathcal{A}$ to $\{1,2,\dots,d\}$. For each $\alpha \in 
\mathcal{A}$ let $I^{(0)}_{\alpha}$ be an 
interval of length $\lambda_\alpha$. Then 
the interval exchange transformation $T_{\lambda, \pi}:I \to I$ rearranges the intervals

\begin{equation}
I_{\pi_{t}^{-1}(1)}, I_{\pi_{t}^{-1}(2)}, \cdot \cdot \cdot, I_{\pi_{t}^{-1}(d)}, 
\end{equation}
via translations to the intervals

\begin{equation}
I_{\pi_{b}^{-1}(1)}, I_{\pi_{b}^{-1}(2)}, \cdot \cdot \cdot I_{\pi_{b}^{-1}(d)}.
\end{equation}
More precisely, for each $\alpha \in \mathcal{A}$ there exists $w_{\alpha} \in \mathbb{R}$ such that for every $x \in I_{\alpha}, T_{\lambda, \pi}(x)=x+w_{\alpha}$. Scaling the lengths of the intervals in an IET $T_{\lambda, \pi}$ gives rise to a conjugate dynamical system and therefore we may simply assume that $|\lambda|:=\sum_{\alpha\ \in \mathcal{A}} {\lambda_\alpha}=1.$ In order to study the statistical properties of IETs we will make use of an operation called Rauzy--Veech renormalization, defined below. 

\begin{figure}
\centering
\includegraphics[scale=0.4]{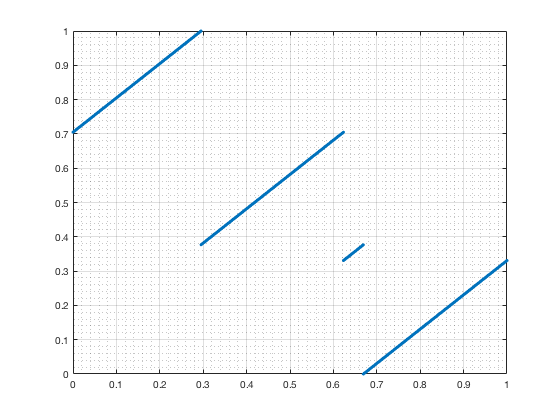}
\caption{The graph of a (pseudo) randomly generated 4-IET with  $\pi=\protect\mymatrix{}$.}
\end{figure}

\subsection{Renormalization algorithm}
By looking at the returns of an interval exchange to smaller and smaller subintervals, we may obtain information about longer and longer parts of orbits. This 
intuition is the key behind the idea of renormalization. It gives rise to a transformation on the parameter space, whose study sheds light on dynamical properties of large families (usually of full measure) of parameters.

\smallskip
Let $T_{\lambda^{(0)}, \pi^{(0)}}$ be an IET on the interval $I^{(0)}$ with parameters $(\lambda^{0}, \pi^{0})$
and $\alpha_{t}, \alpha_{b}$ be the letters with the property that $\pi_{t}(\alpha_{t})=\pi_{b}(\alpha
_{b})=d.$ We assume that $\lambda^{(0)}_{\alpha_{t}} \neq 
\lambda^{(0)}_{\alpha_{b}}$ and consider 
the first return map of $T_{\lambda^{(0)}, \pi^{(0)}}$ to the interval $I^{(1)}$ obtained by
removing the shorter of the  intervals $I_{\alpha_{t}}$ and $I_{\alpha_{b}}$ from the right end of $I^{(0)}$. The resulting transformation is an IET $T_{\lambda^{(1)}, \pi^{(1)}}$ defined as follows:
\begin{itemize}
\item If $\lambda^{(0)}_{\alpha_{t}}> \lambda^{(0)}_{\alpha_{b}}$, then $\lambda^{(1)}_{\alpha}=\lambda^{(0)}_{\alpha}$ for all $\alpha \neq \alpha_{t}, \indent \lambda^{(1)}_{\alpha_{t}}= \lambda^{(0)}_{\alpha_{t}}- \lambda^{(0)}_{\alpha_{b}}$, $\pi_{t}^{(1)}=\pi_{t}^{(0)}$, and
\begin{equation}
(\pi^{(1)}_{b})^{-1}(i)=
\begin{cases}
(\pi^{(0)}_{b})^{-1}(i) & \text{if}\; i \leq \pi^{(0)}_{b}(\alpha_t)\\
\alpha_{b} & \text{if}\;
i=\pi^{(0)}_{b}(\alpha_t)+1\\
(\pi^{(0)}_{b})^{-1}(i-1) & \text{otherwise}.
\end{cases}
\end{equation}
In this case $(\lambda^{(0)}, \pi^{(0)})$ is said to be of \textit{top type}
(since $\alpha_t$ ``wins'' against $\alpha_b$).
\item If $\lambda_{\alpha_{b}}^{(0)}>\lambda_{\alpha_{t}}^{(0)}$, then $\lambda_{\alpha}^{(1)}=\lambda_{\alpha}^{(0)}$ for all $\alpha \neq \alpha_{b}$, $\lambda_{\alpha_{b}}^{(1)}=\lambda_{\alpha_{b}}^{(0)}- \lambda_{\alpha_{t}}^{(0)}$, $\pi_{b}^{(1)}=\pi_{b}^{(0)}$, and 
\begin{equation}
(\pi^{(1)}_{t})^{-1}(i)=
\begin{cases}
(\pi^{(0)}_{t})^{-1}(i) & \text{if}\; i \leq \pi^{(0)}_{t}(\alpha_b)\\ 
\alpha_{t} & \text{if}\; i=\pi^{(0)}_{t}(\alpha_b)+1\\
(\pi^{(0)}_{t})^{-1}(i-1) & \text{otherwise}.
\end{cases}
\end{equation}
In this case $(\lambda^{(0)}, \pi^{(0)})$ is said to be of \textit{bottom type}
(since $\alpha_b$ ``wins'' against $\alpha_t$).
\end{itemize}
In the sequel, we will always assume that the combinatorial data $\pi$ is \textit{irreducible}, 
i.e., that there is no $1\leq k < d$ such that 
$\pi_{t}^{-1}(\{1,\dots,k\})=\pi_{b}^{-1}(\{1,\dots,k\})$. We denote the space of such combinatorial data by $\mathfrak{S}^{0}(\mathcal{A})$. The above operations define an equivalence relation on 
the set of combinatorial data. An 
equivalence class of this relation is called a \textit{Rauzy class} and will usually be 
denoted by $\mathfrak{R} \subset \mathfrak{S}^{0}(\mathcal{A})$. 
Therefore, we obtain a map $\mathcal{Q}_{R}: \mathbb{R}_{+}^{\mathcal{A}} \times \mathfrak{R} \to \mathbb{R}_{+}^{\mathcal{A}} \times \mathfrak{R}$ that sends $(\lambda^{(0)}, \pi^{(0)})$ to $(\lambda^{(1)}, \pi^{(1)})$, and is called the \textit{Rauzy 
induction map}. Rescaling $\lambda^{(1)}$ back to size $1$ 
(size being the $\ell^{1}$-norm) yields a map $\mathcal{R}_{R}: 
\mathbb{P}_{+}^{\mathcal{A}} \times \mathfrak{R} \to \mathbb{P}_{+}^{\mathcal{A}} \times \mathfrak{R}$, which we call the \textit{Rauzy renormalization map}. 

As defined above, an IET is not necessarily infinitely many times 
renormalizable, however, a condition that ensures infinite renormalizability of $T_{\lambda, \pi}$ is that the components of $\lambda$ satisfy no linear equation over $\mathbb{Q}$ other than the trivial one (which itself implies another condition called infinite distinct orbit condition, see \cite{MR357739}). It is easy to 
see that the set of those IETs not satisfying this condition has dimension $d-2$, and therefore it is no loss of generality for us to restrict our considerations to this set. Masur and Veech proved the following theorem independently in 1982. 
\begin{thm}(\cite{10.2307/1971341}, \cite{10.2307/1971391}) Let $\mathfrak{R} \subset \mathfrak{S}^{0}(\mathcal{A})$ be a Rauzy class. Then $\mathcal{R}_{R}\big|_{\mathbb{P}^{\mathcal{A}}_{+} \times \mathfrak{R}}$ admits an ergodic conservative infinite absolutely continuous invariant measure $\mu$, unique in its measure class up to a scalar multiple. Its density is a positive rational function.
\end{thm}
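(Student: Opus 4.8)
The plan is to reproduce Veech's argument, organized so that the machinery already set up above does most of the work. First I would \emph{linearize}: unfolding a single Rauzy move shows that every branch of the unnormalized induction $\mathcal{Q}_R$ on $\mathbb{R}_+^{\mathcal{A}}\times\mathfrak{R}$ has the form $(\lambda,\pi)\mapsto(B^{-1}\lambda,\pi')$ for an elementary matrix $B\in SL(\mathcal{A},\mathbb{Z})$ with nonnegative entries, so that along an admissible word $\mathbf{w}$ one obtains an integer matrix $B_{\mathbf{w}}$ of determinant $\pm 1$ with nonnegative entries. Projectivizing, every inverse branch of $\mathcal{R}_R$ and of all its iterates is the restriction of a projective contraction, which is exactly the hypothesis that feeds Lemma~\ref{Lemma 1.2}; moreover the $\ell^1$-norm strictly decreases along $\mathcal{Q}_R$, which (morally, via the heights in Veech's picture) is the source of the infinitude of the measure.

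Next I would pass to a well-chosen induced map so as to land in the compactly-contained situation of Lemma~\ref{Lemma 1.2}. A standard connectedness property of Rauzy diagrams yields $\pi_0\in\mathfrak{R}$ and a loop $\mathbf{w}_0$ based at $\pi_0$ whose matrix $B_{\mathbf{w}_0}$ is strictly positive; following Veech one then builds from $\mathbf{w}_0$ a subset $\Delta_0$, compactly contained in the open simplex (strict positivity of $B_{\mathbf{w}_0}$ is what pushes its projective image off the boundary), on which the first-return map $\mathcal{R}_0\colon\Delta_0\to\Delta_0$ of $\mathcal{R}_R$ is full-branch, each branch having as inverse the restriction of a projective contraction $B_{\mathbf{w}}$ attached to a returning word. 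Granting that $\mathcal{R}_0$ is defined a.e.\ (this is the Masur--Veech input: infinite renormalizability for a full-measure set of $\lambda$, plus recurrence of the algorithm through $\pi_0$), Lemma~\ref{Lemma 1.2} gives that $\mathcal{R}_0$ preserves a unique absolutely continuous probability $\mu_0$ with continuous positive density on $\overline{\Delta_0}$, that it is strongly expanding, and --- by the full-branch structure together with the bounded distortion estimate --- that it is ergodic, indeed exact.

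Then I would spread $\mu_0$ over the whole simplex by the tower formula $\mu(E)=\sum_{n\ge 0}\mu_0(\{x\in\Delta_0:\ r(x)>n,\ \mathcal{R}_R^{\,n}x\in E\})$, where $r$ is the return time to $\Delta_0$. Bounded distortion makes $\mu$ a $\sigma$-finite absolutely continuous $\mathcal{R}_R$-invariant measure, and Kac's theorem transfers conservativity and ergodicity from $\mathcal{R}_0$ to $(\mathcal{R}_R,\mu)$; uniqueness in the measure class is then automatic, since two ergodic absolutely continuous invariant measures in the same class are proportional. That $\mu$ is infinite rather than finite amounts to non-integrability of $r$: near each face $\{\lambda_\alpha=0\}$ the algorithm takes arbitrarily long to complete the positive word $\mathbf{w}_0$ again, and the linear description of the branches turns the resulting estimate into a divergent series, while ergodicity forbids a finite absolutely continuous invariant measure from coexisting with $\mu$.

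Finally, to identify the density as a positive rational function I would lift to Veech's space of zippered rectangles $(\lambda,\tau)$, on which the Teichm\"uller flow preserves Lebesgue measure $d\lambda\,d\tau$ and $\mathcal{R}_R$ appears as the return map to a codimension-one slice; disintegrating over $\lambda$ and integrating out the height parameter $\tau$ over the simplicial cone $C_\pi$ subject to a fixed value of the (linear) area functional gives $d\mu(\lambda)=\operatorname{vol}\{\tau\in C_\pi:\operatorname{Area}(\lambda,\tau)\le 1\}\,d\lambda$, and the volume of a truncated simplicial cone against a linear form is $c\big/\prod_j\ell_j(\lambda)$ for finitely many linear forms $\ell_j$ positive on the open simplex --- a positive rational function whose blow-up at $\partial\Delta$ re-confirms that $\mu$ is infinite. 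I expect the main obstacle to be precisely the construction and recurrence statement of the second step, namely producing $\Delta_0$ compactly contained in the open simplex on which the first-return map is a.e.\ defined and genuinely full-branch (this bundles Keane-type recurrence of Rauzy--Veech induction together with the combinatorics of Rauzy classes), together with the zippered-rectangle bookkeeping needed in the last step; once these and Lemma~\ref{Lemma 1.2} are in hand, the remaining steps are routine.
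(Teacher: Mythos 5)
The paper does not prove this statement at all: it is quoted verbatim from Masur and Veech (the two 1982 references attached to it), so there is no ``paper proof'' to compare yours against. Judged on its own terms, your outline is a recognizable reconstruction of Veech's original argument, and it meshes well with the machinery the paper actually does set up: the inverse branches of Rauzy induction as projective contractions, a positive loop $\mathbf{w}_0$ giving a simplex $\Delta_0=B^{*}_{\mathbf{w}_0}\cdot\mathbb{P}^{d-1}_{+}$ compactly contained in the open simplex, Lemma~\ref{Lemma 1.2} applied to the full-branch first-return map, a Kac tower to spread the induced acip to a $\sigma$-finite invariant measure, and the zippered-rectangle disintegration identifying the density as a sum of terms $c\big/\prod_j\ell_j(\lambda)$, whence positivity, rationality, and (via the non-integrable blow-up at the boundary) infinitude. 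The uniqueness step is also fine: for a conservative ergodic transformation, two equivalent $\sigma$-finite invariant measures have an invariant Radon--Nikodym derivative, hence are proportional.

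The genuine gap is the one you flag but then wave through: the a.e.\ definedness and recurrence of the first-return map to $\Delta_0$ cannot be imported as ``the Masur--Veech input,'' because conservativity of the renormalization is part of the very theorem being proved, and Poincar\'e recurrence is unavailable before the invariant measure exists. Keane's rational-independence condition gives infinite renormalizability for Lebesgue-a.e.\ $\lambda$, and even $\infty$-completeness of the path, but neither implies that a.e.\ path completes the specific word $\mathbf{w}_0$ again; one needs a quantitative statement that the set of $\lambda\in\Delta_0$ whose continuation never returns through $\mathbf{w}_0$ has zero Lebesgue measure. This is exactly where Veech's (and later Kerckhoff's) conditional-measure/distortion estimates for Rauzy cylinders enter, or alternatively Masur's Teichm\"uller-geodesic-flow recurrence; without one of these inputs your tower construction never gets off the ground. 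A second, smaller soft spot: your infinitude argument via ``divergent series for the return time'' is only heuristic as written, whereas the explicit rational density you compute in the last step already yields infinitude rigorously, so you should route that claim through the density rather than through the return-time estimate.
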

Later, Zorich defined an accelerated version of the Rauzy renormalization which is now called \textit{Zorich transformation}. For an element $(\lambda, \pi) \in \mathbb{P}_{+}^{\mathcal{A}} \times \mathfrak{R}$, $\mathcal{Q}_{Z}(\lambda, \pi)$ is $(\lambda^{(n)}, \pi^{(n)})$ where $n:=n(\lambda, \pi)$ is the smallest 
positive $n$ for which the type of $(\lambda^{(n)}, \pi^{(n)})=\mathcal{Q}_{R}^{n}(\lambda, \pi)$ is different from that of $(\lambda, \pi)$. The Zorich renormalization is defined by rescaling the intervals back to size $1$ and is denoted by $\mathcal{R}_{Z}$. Zorich showed the following.

\begin{thm} (\cite{AIF_1996__46_2_325_0}) Let $\mathfrak{R} \subset \mathfrak{S}^{0}(\mathcal{A})$ be a Rauzy class. Then $\mathcal{R}_{Z}|_{\mathbb{P}_{+}^{\mathcal{A}} \times \mathfrak{R}}$ admits a unique ergodic absolutely continuous probability measure $\mu_{Z}$. Its density is positive and analytic.
\end{thm}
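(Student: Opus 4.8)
The plan is to derive this statement from the Masur--Veech theorem above by realizing the Zorich renormalization as an inducing of the Rauzy one. Write $S:=\mathcal{R}_R|_{\mathbb{P}^{\mathcal A}_+\times\mathfrak R}$, let $\mu$ be the (infinite, conservative, ergodic) absolutely continuous $S$-invariant measure of rational density provided by Masur--Veech, and let $n(\cdot)$ be the first positive time at which the Rauzy type changes, so $\mathcal{R}_Z=S^{\,n(\cdot)}$. First I would record the elementary \emph{countdown identity}: $n(Sx)=n(x)-1$ whenever $n(x)\ge 2$, while $n$ resets to the length of the following maximal constant-type block when $n(x)=1$. After passing to the natural extension $(\hat S,\hat\mu)$ of $(S,\mu)$ --- needed only to make ``the previous Rauzy step'' well defined --- this identity says that $(\hat S,\hat\mu)$ is a Kakutani skyscraper with roof $n$ over the first-return map of $\hat S$ to the base $\hat B$ of parameters sitting at the start of a maximal constant-type run, and that this return map is a model of $\mathcal{R}_Z$. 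I would then take $\mu_Z$ to be the projection to $\mathbb{P}^{\mathcal A}_+\times\mathfrak R$ of $\hat\mu|_{\hat B}$: Kac's lemma makes it $\mathcal{R}_Z$-invariant, the inducing construction makes it absolutely continuous, and since every inverse branch of $\mathcal{R}_Z$ is the restriction of a projective map sending a whole simplex $\mathbb{P}^{\mathcal A}_+\times\{\pi\}$ onto a sub-simplex $\Delta_{\mathbf w}$, the branch decomposition $\mu_Z=(\mathcal{R}_Z)_*\mu_Z=\sum_{\mathbf w}(\mathcal{R}_Z)_*(\mu_Z|_{\Delta_{\mathbf w}})$ --- each term supported on the full simplex --- forces the density of $\mu_Z$ to be everywhere positive. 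So the whole content is the \emph{finiteness} of $\mu_Z$.

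By Kac's lemma, finiteness of $\mu_Z$ amounts to $\hat\mu(\hat B)<\infty$; and since $\hat B=\hat S(\{n=1\})$ and the condition $n=1$ involves only forward data, this equals $\mu(\{n=1\})$ --- the set of IETs whose very next Rauzy step changes the type must have finite $\mu$-mass. This is precisely the phenomenon by which the analogous acceleration turns the infinite Farey measure $dx/x$ into the finite Gauss measure $dx/(1+x)$: the divergence of $\mu(\mathbb{P}^{\mathcal A}_+\times\mathfrak R)=\sum_{k\ge 1}\mu(\{n=k\})$ is produced entirely by the cusps of the parameter simplex --- neighbourhoods of the faces $\{\lambda_\alpha=0\}$ near which Rauzy induction performs ever longer runs of a single type --- and is absorbed into the high floors of the tower, not into its base. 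I would therefore analyse $\mu$ near such a boundary face, where Rauzy induction degenerates to a lower-dimensional IET renormalization, and combine this with the explicit rational form of the Masur--Veech density to get a bound of the type $\mu(\{n=k\})=O(1/k)$; in particular $\mu(\{n=1\})<\infty$, which gives finiteness of $\mu_Z$, while $\sum_k\mu(\{n=k\})=\infty$ remains consistent, being the divergence of $\sum_k 1/k$. Making this tail estimate precise and uniform over the Rauzy class is, I expect, the main obstacle; it is essentially Zorich's key lemma, and everything else is soft.

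Finally, ergodicity and uniqueness follow from ergodicity of $(S,\mu)$: an $\mathcal{R}_Z$-invariant set lifts through the tower to an $\hat S$-invariant set and so is null or co-null, and two ergodic absolutely continuous $\mathcal{R}_Z$-invariant probability measures, each equivalent to Lebesgue on a full invariant set, are equivalent and hence equal. For the regularity of the normalized density I would write $d\mu_Z/d\mathrm{Leb}$ as the sum, over the full branches of $\mathcal{R}_Z$ indexed by the admissible products $B_{\mathbf w}\in SL(d,\mathbb Z)$ of elementary Rauzy matrices terminating in a type change, of the Masur--Veech density pulled back by the corresponding projective inverse branch times its (rational) Jacobian. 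Each summand is a rational function, the branches with $\|B_{\mathbf w}\|$ large contribute exponentially little by bounded distortion, so the series converges locally uniformly on the open simplex and its sum is a positive real-analytic function there; this is the asserted regularity, and it completes the argument.
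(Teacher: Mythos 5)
First, note that the paper does not prove this statement at all: it is quoted verbatim from Zorich \cite{AIF_1996__46_2_325_0} as an external input, so there is no internal argument to compare yours against; your proposal has to stand on its own as a proof of Zorich's theorem.

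Judged on that basis, your outline has the right architecture (natural extension, Kakutani tower over the set of parameters that have just changed type, Kac's lemma, ergodicity inherited from the Veech measure, density written as a sum over inverse branches), but it contains a genuine gap exactly at the point you flag yourself: the finiteness of $\mu(\{n=1\})$, i.e.\ that the set of $(\lambda,\pi)$ whose next Rauzy step changes type has finite mass for the infinite Masur--Veech measure. This is not a "soft" step to be absorbed into the framework --- it \emph{is} the content of Zorich's theorem. The infinite mass of $\mu$ lives precisely in the cusps along the faces $\{\lambda_{\alpha}=0\}$, and whether the type-change locus avoids enough of that mass can only be decided by an actual computation with the explicit Veech density (via zippered rectangles or the rational formula), uniformly over the Rauzy class; the heuristic analogy with the Farey-to-Gauss acceleration and the asserted bound $\mu(\{n=k\})=O(1/k)$ are plausible but nowhere established. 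Writing "it is essentially Zorich's key lemma, and everything else is soft" concedes the theorem rather than proving it. A secondary, related gap is the analyticity claim: asserting that branches with $\|B_{\mathbf w}\|$ large "contribute exponentially little by bounded distortion" requires a quantitative count of branches of given norm together with a distortion estimate (the same kind of input as the finiteness/fast-decay estimate), and without it the locally uniform convergence of your series of rational functions --- and hence positivity and real-analyticity of the density on the open simplex --- is not justified. The ergodicity and uniqueness portion of your argument is fine, but as it stands the proposal is a reduction of Zorich's theorem to its central estimate, not a proof of it.
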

\subsection{Rauzy Cocycle}
The Rauzy cocycle is a matrix cocycle that stores the visitation data of the renormalization procedure. In other words, for $(\lambda,\pi)$, $B^{R}(\lambda, \pi)$ is the matrix given by the following formula
\begin{equation}
    B^{R}(\lambda, \pi):=
    \begin{cases}
    I+ E_{\alpha_{b}\alpha_{t}} & \text{if} \; (\lambda, \pi)\; \text{is of top type}\\
    I+ E_{\alpha_{t}\alpha_{b}} & \text{if} \; (\lambda, \pi) \; \text{is of bottom type.}
    \end{cases}
\end{equation}
With the above definition, $(\mathcal{R}_{R}, B^{R})$ forms an integral
cocycle. The corresponding cocycle over the Zorich transformation is called the Zorich cocycle and is denoted by $B^{Z}$. 

Let us consider a diagram with vertices the set of all combinatorial data in the Rauzy class $\mathfrak{R}$. We will denote it by $\Pi(\mathfrak{R})$ and connect two vertices with
an arrow labeled by $t$ or $b$ specifying the type of operation needed 
to go from the initial vertex of the edge to its terminal vertex. Now if $\gamma$ is a path obtained by concatenation of labeled edges $\gamma_1, \gamma_2,\dots, \gamma_k$, for some $k\in \mathbb{N}$, we let $B_{\gamma}$ be the product of the corresponding renormalization matrices. That is,
\begin{equation}
    B_{\gamma}:=B_{\gamma_k}\cdots B_{\gamma_1}.
\end{equation}
Note that if $\lambda^{(\gamma)}$ is the length vector corresponding to the induction of $(\lambda, \pi)$ following the path $\gamma$ (we are tacitly assuming that $\gamma$ starts at $\pi$) then 
\begin{equation}
    \lambda^{(\gamma)}B_{\gamma}=\lambda.
\end{equation}
where we are viewing $\lambda^{(\gamma)}$ and $\lambda$ as row vectors.

\subsection{Veech's zippered rectangles construction}
\label{VZR}

The zippered rectangles construction is an algorithm for constructing translation 
surfaces with translation flows using interval exchange transformations and locally constant roof functions $h$ that may be viewed as vectors in the positive 
cone $H^{+}(\pi)$ of a subspace $H(\pi) \subset \mathbb{R}^{d}$ of dimension $2g$ (where $g$ is the genus of the translation surface). For $h \in H^{+}(\pi)$, as far as the translation flow in the north direction is concerned, it is measurably isomorphic with the special flow with roof function $h$ over the base IET $T_{\lambda, \pi}$. 

Consider the linear transformation $\Omega_{\pi}$ given by the following formula (it is indeed the Poincar\'e duality intersection form, see \cite{viana2008dynamics}).

\begin{equation}
    (\Omega_{\pi})_{(\alpha, \beta)}:= 
    \begin{cases}
    +1 & \text{if} \; \pi_{b}(\beta)< \pi_b(\alpha), \; \pi_t(\beta)> \pi_t(\alpha)\\
    -1 & \text{if} \; \pi_b(\beta)> \pi_b(\alpha), \; \pi_t(\beta)< \pi_t(\alpha)\\
    0 & \text{otherwise}.
    
    \end{cases}
\end{equation}

We let 
\begin{equation}
    T^{+}(\pi):=\bigg\{\tau \in \mathbb{R}^{d}: \sum_{\pi_{t}(\alpha)< \pi_{t}(\beta)} \tau_{\alpha}>0, \; \sum_{\pi_b(\alpha)> \pi_{b}(\beta)} \tau_{\alpha}<0\bigg\}
\end{equation}
and define $H^{+}(\pi) \subset \mathbb{R}_{+}^{d}$ to be the image of $T^{+}(\pi)$ under the transformation $-\Omega_{\pi}$.

For $\tau \in T^{+}(\pi)$, we let $h=\Omega_{\pi}(\tau)$. The suspension surface corresponding to these data is constructed
by considering rectangles of the form $I_{\alpha}\times [0,h_\alpha]$ and applying certain identifications. The coordinates $\tau_\alpha$'s  of $\tau\in \mathbb R^d$ are used as zippers, that is, they determine the heights up to which two adjacent rectangles are 
sewn together in the vertical direction. The top edge of the rectangle of base $I_\alpha$ is identified with the subinterval $T_{\lambda, \pi}(I_\alpha)$, that is, with the subinterval of $I$ of label $\alpha$ in the partition of $I$ into the successive subintervals of labels $\pi^{-1}_{b}(1), \dots,\pi^{-1}_{b}(d)$. The final result is a surface $M(\lambda , \pi, \tau, h)$ equipped with charts whose transition maps are translations. For a detailed explanation of this construction, we refer the reader to \cite{10.2307/1971391}, or \cite{viana2008dynamics}.

It is well known that $N(\pi)$, the kernel of the linear transformation $\Omega_{\pi}$, has dimension $\kappa-1$, where $\kappa$ is the number of singularities (including the removable ones) of the suspension surface.

Moreover
\begin{equation}
    d=2g+\kappa -1.
\end{equation}

Veech \cite{10.2307/1971391} showed that the locally constant subbundles $H(\pi), N(\pi)$ are invariant and contravariant under the Rauzy--Veech cocycle, i.e.,
\begin{align}
    B^{R}(\lambda, \pi)\cdot H(\pi)=H(\pi') \\
    B^{R*}(\lambda, \pi)\cdot N(\pi')=N(\pi)
\end{align}
whenever $\mathcal{R}_{R}(\lambda, \pi)=(\lambda', \pi').$ It is also well known that if $\gamma$ is a loop starting and ending at $\pi$ the action of $B_{\gamma}^{*}$ sends a certain basis (consisting of vectors in $N(\pi) \cap \mathbb{Z}^{d}$) of $N(\pi)$ to itself, and thus acts as identity on $N(\pi)$, see \cite{10.2307/1971391}. 

We let $\theta_1(\mathfrak{R}) \geq \theta_2(\mathfrak{R}) \geq \dots \geq \theta_{2g}(\mathfrak{R})$ be the Lyapunov exponents of the Zorich cocycle restricted to $H(\pi)$. Forni \cite{MR1888794} showed that the Zorich cocycle restricted to $H(\pi)$ is non-uniformly hyperbolic, i.e., its Lyapunov exponents are all nonzero. This was later generalized by Avila and Viana \cite{MR2350698}, where they showed that the cocycle has simple Lyapunov spectrum. 

\begin{thm} (\cite{MR1888794}, \cite{MR2350698}) For any Rauzy class $\mathfrak{R} \subset \mathfrak{S}_d$ the Zorich cocycle on $\mathbb{P}_{+}^{d-1} \times \mathfrak{R} $ is non-uniformly hyperbolic and has simple Lyapunov spectrum. Therefore,

\begin{equation}
    \theta_1(\mathfrak{R})>\theta_2(\mathfrak{R})>\dots>\theta_g
    (\mathfrak{R})>0> -\theta_{g}(\mathfrak{R})>\dots>-\theta_{1}(\mathfrak{R}).
\end{equation}
The symmetry of the exponents is due to the cocycle being symplectic. 
\end{thm}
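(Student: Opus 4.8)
The plan is to separate the statement into three pieces: the symplectic symmetry of the spectrum on $H(\pi)$, the non-vanishing of all exponents (Forni), and the simplicity of the spectrum (Avila--Viana). First I would record that on the invariant subbundle $H(\pi)$ the intersection form $\Omega_\pi$ restricts to a nondegenerate symplectic form of rank $2g$, and that by Veech's invariance relation $B^{R}(\lambda,\pi)\cdot H(\pi)=H(\pi')$ together with the contravariance $B^{R*}\cdot N(\pi')=N(\pi)$, the Rauzy--Veech cocycle (hence the Zorich cocycle) acts on $H(\pi)$ by symplectic automorphisms preserving this form. Taking Lyapunov exponents, a symplectic cocycle has spectrum symmetric about $0$, so the exponents on $H(\pi)$ come in pairs $\pm\theta_i(\mathfrak R)$, $1\le i\le g$; this already gives the central symmetry in the displayed chain of inequalities. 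It then remains to establish $\theta_g(\mathfrak R)>0$ and the strict inequalities $\theta_1(\mathfrak R)>\dots>\theta_g(\mathfrak R)$.

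For the non-vanishing I would suspend the Zorich cocycle to the Kontsevich--Zorich cocycle over the Teichm\"uller geodesic flow on the corresponding stratum of abelian differentials, using that Rauzy--Veech renormalization (with the Zorich acceleration) is a Poincar\'e section of that flow and that $H(\pi)$ is identified with $H^1(M;\mathbb R)$ carrying its variable Hodge structure. Equipping the cohomology bundle with the Hodge norm determined by the complex structure of the differential, the key analytic input is the variational formula for $\tfrac{d}{dt}\log\|c(t)\|_{\mathrm{Hodge}}$ along the flow, in which the relevant quadratic form is Forni's curvature operator $\Lambda_\omega$ built from the second fundamental form of the Hodge bundle (morally $\int_M \tfrac{\bar\partial\omega}{\omega}\cdots$). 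One shows that $\Lambda_\omega$ is not identically degenerate on a positive-measure subset of the stratum; since any isotropic subspace on which the exponents all vanish would have to be flow-invariant and therefore forced into $\ker\Lambda_\omega$, non-degeneracy of $\Lambda_\omega$ (after integrating against the invariant measure) rules this out and yields $\theta_g(\mathfrak R)>0$. This is the content of \cite{MR1888794}.

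To upgrade to strict inequalities I would invoke the simplicity criterion for locally constant cocycles over a Bernoulli/subshift base (Goldsheid--Margulis, Guivarc'h--Raugi type): it suffices to exhibit among the products $B_\gamma$ of Rauzy--Veech matrices along loops $\gamma$ in the Rauzy diagram $\Pi(\mathfrak R)$ a \emph{pinching} element, whose action on $H(\pi)$ has real positive eigenvalues of pairwise distinct absolute value (a hyperbolic symplectic matrix with simple spectrum), together with a \emph{twisting} element relative to it, so that no proper union of coordinate subspaces of the pinching eigenbasis is permuted by the monoid generated by the $B_\gamma$'s. Pinching plus twisting then forces the Zorich cocycle on $H(\pi)$ to have simple Lyapunov spectrum, which combined with the symplectic symmetry and $\theta_g>0$ gives exactly the stated chain. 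The substance of \cite{MR2350698} is the verification of pinching and twisting for \emph{every} Rauzy class, carried out by a combinatorial induction on the structure of the Rauzy diagram that reduces an arbitrary class, via explicit moves, to finitely many base cases in which the two matrices can be written down directly.

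I expect the two hard points to be precisely these: on the analytic side, proving that Forni's curvature form $\Lambda_\omega$ is non-degenerate on a positive-measure set and that a zero-exponent isotropic subspace must lie in its kernel; on the combinatorial side, the propagation step in the pinching/twisting verification (passing from one vertex of the Rauzy diagram to its neighbours and disposing of the base cases). I note that for the applications in this paper one only needs the weaker conclusion that $\theta_2(\mathfrak R)>0$ for Rauzy classes of genus at least two, which follows already from the non-uniform hyperbolicity statement without the full simplicity argument.
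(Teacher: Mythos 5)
The paper offers no proof of this statement: it is quoted as an external theorem of Forni \cite{MR1888794} (non-uniform hyperbolicity) and Avila--Viana \cite{MR2350698} (simplicity), and the authors explicitly note that they only use the weaker fact that there are at least two positive exponents. So there is no internal argument to compare with, and what you have written is best judged as a summary of the cited proofs. At that level it is accurate: the symmetry of the spectrum does come from the invariance of $H(\pi)$ and of the restriction of $\Omega_\pi$ under the Rauzy--Veech/Zorich cocycle, so the exponents on $H(\pi)$ pair up as $\pm\theta_i$; the non-vanishing $\theta_g(\mathfrak{R})>0$ is Forni's theorem, obtained by realizing the Zorich cocycle as a return map of the Kontsevich--Zorich cocycle over the Teichm\"uller flow and exploiting the first-variation formula for the Hodge norm together with the second-fundamental-form (curvature) operator; and simplicity is Avila--Viana's verification of a Guivarc'h--Raugi/Goldsheid--Margulis-type pinching-and-twisting criterion for the locally constant cocycle in every Rauzy class. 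Two caveats. First, as written this is a roadmap rather than a proof: the two ``hard points'' you flag (non-degeneracy of the curvature form and exclusion of neutral Oseledets directions; the combinatorial pinching/twisting verification) are precisely the content of the two cited papers, so nothing substantive beyond the symplectic reduction is actually established. Second, your one-line mechanism for ruling out zero exponents (a zero-exponent isotropic subspace being flow-invariant and hence forced into $\ker\Lambda_\omega$) is closer in spirit to the later Forni--Matheus--Zorich criterion than to the inductive argument on sums of exponents and the analysis of the degeneracy locus used in \cite{MR1888794}; if you were to write this up you would need to follow one of those routes carefully. Your closing remark that the present paper only needs $\theta_2(\mathfrak{R})>0$ for genus at least two agrees exactly with the authors' own comment following the theorem.
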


In the sequel, we do not need to use the full power of this theorem. Indeed, we only use a simpler result of \cite{MR1888794} that there are two positive Lyapunov exponents.

\subsection{Elimination for IETs of non-rotation type}

In this section we apply the abstract elimination 
Theorem~\ref{parexquan} to the
cocycles induced from the Rauzy--Veech cocycle.

Let $\gamma \in \Pi(\mathfrak{R})$ be a path in the Rauzy diagram that starts and 
ends at $\pi$ and such that the coefficients of 
the corresponding Rauzy matrix $B_{\gamma}$ are all positive. We then let $\Delta:= B_{\gamma}^{*}. \mathbb{P}_{+}^{d-1}$ (where
$B_{\gamma}^{*}$ acts projectively)  and consider the first return map of 
$\mathcal{R}_{R}$ to this simplex ($\Delta$ corresponds to the set of those parameters $(\lambda,\pi)\in \mathbb{P}_{+}^{d-1}$ whose renormalization follows the path $\gamma$ in the first $k$ steps, where $k$ is the length of $\gamma$).
The projection onto the first coordinate of the return map yields a map $T:\Delta \to \Delta$, whose domain of definition will be denoted by $\Delta^{1}$. By the Poincar\'e recurrence theorem, $\Delta^{1}$ has full measure inside $\Delta$. Proceeding in this manner, we may define $\Delta^{i}$ as the measurable subset of $\Delta$ consisting of all the points that return to $\Delta$ at least $i$ times. $\Delta^{\infty}$ is then the intersection of $\Delta^{i}$'s and represents the set of points that return infinitely many times. The set $\Delta^{\infty}$ has full measure in $\Delta$, but in fact, more is true. 

\begin{lemma} (Theorem 29, \cite{MR3522612})
\label{Hausret}
Let $\Delta$ be a simplex in $\mathbb{P}_{+}^{d-1}$ admitting a partition $\{\Delta^{(\ell)}\}_{\ell \in \mathbb{Z}}$, and let $T: \Delta \to \Delta$ be a map with bounded distortion such that for every $\ell \in \mathbb{Z}$, $T|_{\Delta^{(\ell)}}$ is a projective transformation. Assume that $T$ is fast decaying. Then
\begin{equation}
    \emph{HD}\left(\Delta \setminus \Delta^{\infty}\right)<d-1.
\end{equation}
\end{lemma}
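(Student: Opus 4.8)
The plan is to reduce this statement to Theorem~\ref{HDP} by showing that the complement $\Delta \setminus \Delta^\infty$ can be realized as a $\liminf$ of sets of the form required by that theorem, and then to verify the exponential decay hypothesis. First I would recall the cylinder structure: for a word $\textbf{l} = (\ell_1, \dots, \ell_n)$ the set $\Delta^{\textbf{l}}$ is (the closure of) a simplex, and since $T|_{\Delta^{(\ell)}}$ is a projective transformation with inverse a projective contraction, Lemma~\ref{Lemma 1.2} applies, so $T$ preserves a probability measure $\mu$ absolutely continuous with respect to Lebesgue with continuous positive density, and $T$ is strongly expanding with respect to $\mu$; in particular bounded distortion holds. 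Thus the hypotheses of Lemma~\ref{Lemma 1.4} are met, giving the subexponential bound $\sum_{|\textbf{l}|=n} \mu(\Delta^{\textbf{l}})^{1-\alpha'} \le C_3^n$ for $0 < \alpha' < \alpha_1$.

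Next I would set up the exceptional sets. The key observation is that $x \notin \Delta^n$ (the set of points returning to $\Delta$ at least $n$ times) exactly when the $T$-orbit of $x$ eventually falls outside all cylinders, i.e. $x$ lies in a cylinder $\Delta^{\textbf{l}}$ with $|\textbf{l}| = n-1$ such that $T^{\textbf{l}}(\Delta^{\textbf{l}})$ meets the "escape region" — here I need to be a little careful about how $\Delta^i$ is precisely defined as the set of points returning $i$ times. The clean way is this: let $W \subset \Delta$ be the set of points that do not return to $\Delta$ under the first-return dynamics (equivalently, the domain $\Delta^{(\ell)}$'s do not cover a $\mu$-null-complement set; actually they cover a full-measure set, so $W$ is $\mu$-null but possibly of positive Hausdorff dimension). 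Then $\Delta \setminus \Delta^\infty = \bigcup_{n \ge 0} (T^n)^{-1}(W) = \bigcup_{n \ge 0} \bigcup_{|\textbf{l}| = n} (T^{\textbf{l}})^{-1}(W \cap \Delta)$, so its complement in $\Delta$ equals $\bigcap_{n}\bigcup_{j \ge n}\bigcup_{|\textbf{l}|=j}(T^{\textbf{l}})^{-1}(W)$ up to the full-measure set $\Delta^\infty$; more directly I want to write $\Delta \setminus \Delta^\infty$ itself as $\liminf X_n$ where $X_n := \bigcup_{j \ge n} Y_{n,j}$ and $Y_{n,j}$ is a union of cylinders $\Delta^{\textbf{l}}$ with $|\textbf{l}| = j$ contained in $(T^{\cdot})^{-1}(W)$-type sets. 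Since $x \notin \Delta^\infty$ means the orbit escapes at some finite time, $x$ belongs to every $X_n$ with $n$ below that escape time is false — rather $x \in \Delta \setminus \Delta^\infty$ lies in only finitely many... I would instead follow the device in the proof of Theorem~\ref{HDP} literally: partition the "bad" parameters by the first time they escape and bound measures of the resulting cylinder-unions, arranging the bookkeeping so the sets telescope into a $\liminf$.

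The crucial quantitative input — and the step I expect to be the main obstacle — is establishing exponential decay $\mu(Y_{n,j}) \le C e^{-\delta j}$ for the relevant unions of level-$j$ cylinders. This does not hold cylinder-by-cylinder for free; it is where fast decay of $T$ is genuinely used, together with bounded distortion. Concretely, the measure of the set of points still alive at time $j$ but about to escape is governed by $\sum \mu(\Delta^{\textbf{l}})$ over cylinders whose image under $T^{\textbf{l}}$ lands in the small escape region $W$; using bounded distortion, $\mu(\Delta^{\textbf{l}} \cap (T^{\textbf{l}})^{-1}W) \le K^2 \mu(\Delta^{\textbf{l}})\,\mu(W)$ — but $\mu(W) = 0$, so one must instead stratify $W$ itself, or rather observe that a point fails to return precisely when it leaves $\Delta$ through the induction process, and the proportion of cylinder measure that escapes at each of the intermediate Rauzy steps is bounded away from $1$ by fast decay (the small cylinders carrying most of the escape have total measure $\le C_1 \epsilon^{\alpha_1}$). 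Combining a uniform geometric loss factor per unit of word-length with the subexponential cylinder count from Lemma~\ref{Lemma 1.4} yields the bound $\mu(Y_{n,j}) \le C e^{-\delta j}$ with $\delta > 0$ for all $j \ge n$, for every $n$. Once that is in hand, Theorem~\ref{HDP} applies directly and gives $\mathrm{HD}(\Delta \setminus \Delta^\infty) < d-1$, completing the proof. (Alternatively, since this is quoted as Theorem 29 of \cite{MR3522612}, I could simply invoke it, but the self-contained argument above via Theorem~\ref{HDP} is in the spirit of the present section.)
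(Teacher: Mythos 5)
The first thing to note is that the paper does not prove Lemma~\ref{Hausret} at all: it is imported verbatim as Theorem~29 of \cite{MR3522612}, so the only part of your proposal that matches the paper is your parenthetical last sentence. Judged as a self-contained argument, your main route has a genuine gap, and it is exactly the one you stumble over in the middle of the proposal. The set $\Delta\setminus\Delta^{\infty}$ cannot be written as (or covered by) $\liminf X_n$ with $X_n$ a union of cylinders $\Delta^{\mathbf{l}}$ of $T$ of length at least $n$: a point $x\notin\Delta^{\infty}$ escapes after some finite number $k$ of returns, hence lies in no $T$-cylinder of depth greater than $k$, hence lies outside every such $X_n$ with $n>k$; so $\Delta\setminus\Delta^{\infty}$ is essentially disjoint from the $\liminf$, rather than contained in it. No ``bookkeeping'' with $T$-cylinders can telescope the escape set into the form required by Theorem~\ref{HDP}, and you acknowledge this tension without resolving it. Likewise the crucial estimate $\mu(Y_{n,j})\leq Ce^{-\delta j}$ is asserted rather than derived: the bounded-distortion computation you begin collapses because $\mu(W)=0$ (as you notice), and the ``uniform geometric loss factor per unit of word-length'' is never produced — it cannot be, within the framework of $T$-cylinders, because the escape set is invisible to them.

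What the argument actually needs is the decomposition you write down early on and then abandon: $\Delta\setminus\Delta^{\infty}=\bigcup_{k\geq 0}\bigcup_{|\mathbf{l}|=k}(T^{\mathbf{l}})^{-1}(W)$ with $W=\Delta\setminus\Delta^{1}$ the set of points that never return. Since each inverse branch is the restriction of a projective contraction (hence Lipschitz on the compactly contained simplex) and Hausdorff dimension is countably stable, this reduces the lemma to showing $\mathrm{HD}(W)<d-1$. The set $W$ carries no $T$-cylinder structure, so one must descend to the un-induced renormalization out of which the return map $T$ is built: the set of points of $\Delta$ that have not completed a return within $n$ elementary Rauzy--Veech/Zorich steps is a union of cylinders of that finer dynamics, its measure decays exponentially because bounded distortion forces a definite proportion of each such cylinder to return at every stage, and $W$ is the decreasing intersection (in particular the $\liminf$) of these sets, so a covering estimate of the type of Theorem~\ref{Hausbound} or Theorem~\ref{HDP} applies at that level. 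This passage to the finer dynamics, together with the exponential non-return estimate it makes available, is the missing idea in your proposal; it is, in substance, how Theorem~29 is established in \cite{MR3522612}, whereas the paper under review simply cites that result.
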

Where the $\Delta^{(\ell)}$'s in this setting correspond to the sets $\Delta_{\gamma^{(\ell)}}$'s, for $\gamma^{(\ell)}$'s in $\Pi(\mathfrak{R})$ that start and end at $\pi$, which correspond to IETs whose renormalization combinatorics follows the path $\gamma \gamma^{(\ell)}$ before returning to $\Delta$.
Hence $\Delta^{\infty}$, which is where one can apply $T$ infinitely many times, has positive Hausdorff codimension.  We let $A^{(\ell)}=A|_{\Delta^{(\ell)}}:=B_{\gamma \gamma^{(\ell)}}$ and consider the cocycle $(T,A)$. Avila and Leguil showed that $(T,A)$ is a fast decaying cocycle, see \cite{MR3933880}, section $3$. We consider an IET $T_{\lambda, \pi}$ as the first return map of the suspension flow with parameters $(\lambda, \pi, h)$ for $h=(1,\dots,1) \in \mathbb{Z}^{d}$. We use the fact that there are two positive Lyapunov exponents to show that a typical line not passing through the origin whose direction lies in $\Theta:=\mathbb{P}_{+}^{d-1}$ does not intersect the center stable subspace of a typical IET $T_{\lambda, \pi}$. For non-rotation class IETs, i.e., when $g \geq 2$ we have the following
\begin{thm}
(Theorem 5.1, \cite{MR2299743})
Let $L$ be a line in the direction of $\Theta=\mathbb{P}_{+}^{d-1}$ not passing through the origin. Then for almost every $\lambda$ we have
\begin{equation}
   L \cap E^{cs}(\lambda, \pi)= \emptyset\,.
\end{equation}
\end{thm}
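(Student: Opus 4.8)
The plan is to deduce this statement from the abstract elimination machinery of Theorem~\ref{parexquan} combined with the hyperbolicity input of Forni \cite{MR1888794}, exactly as in Theorem 5.1 of \cite{MR2299743}. First I would set up the cocycle correctly: working over the simplex $\Delta = B_\gamma^* \cdot \mathbb{P}_+^{d-1}$ with the return map $T:\Delta\to\Delta$ and $A^{(\ell)} = B_{\gamma\gamma^{(\ell)}}$ as in the previous subsection, I recall that $(T,A)$ is a locally constant, integral, fast decaying cocycle (fast decay being due to Avila--Leguil), and that $A$ is irreducible — indeed, the group generated by the $B_{\gamma\gamma^{(\ell)}}$ acts irreducibly on $\mathbb{R}^d$ because the Rauzy--Veech monoid has no invariant subspace once one has passed through a positive path. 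I would then take $\Theta = \mathbb{P}_+^{d-1}$ and observe it is adapted to $(T,A)$: the matrices $B_{\gamma\gamma^{(\ell)}}$ have non-negative integer entries (and positive entries after the $\gamma$ prefix), so they map the positive cone into itself and are strictly expanding on it, and $\|A_n(x)\cdot w\|\to\infty$ for $w$ projectivizing into $\Theta$ since the coordinates grow monotonically under iteration of a positive matrix.

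The crux of the argument is the transversality claim $L\cap E^{cs}(\lambda,\pi)=\emptyset$ for a.e.\ $\lambda$, which is what actually needs Forni's theorem. Here is the mechanism I would use. The central stable space $E^{cs}$ is the sum of the Oseledets subspaces with non-positive exponents; by the symplectic symmetry and the existence of (at least) two positive Lyapunov exponents $\theta_1 > \theta_2 > 0$ of the Zorich/Rauzy--Veech cocycle on $H(\pi)$, the unstable space $E^u$ has dimension $\geq 2$, hence $\dim E^{cs} \leq d-2$. Now $E^{cs}$ is a measurable, $T$-equivariant family of proper subspaces, and a line $L$ in a \emph{fixed} rational positive direction $h=(1,\dots,1)$, translated by an integer vector, is rigid — it does not move with $\lambda$. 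The key point, which I would extract from the structure of \cite{MR2299743}, is that if the set of $\lambda$ with $L\cap E^{cs}(\lambda,\pi)\neq\emptyset$ had positive measure, then by the contraction/expansion dichotomy one could produce a nontrivial measurable section of $E^{cs}$ in the direction of $\Theta$, i.e.\ a vector $w(\lambda)\in E^{cs}(\lambda,\pi)$ whose projectivization stays in the compact set $\Theta$; but $\Theta$ is adapted, so $\|A_n(x)\cdot w\|\to\infty$, forcing $w\notin E^{cs}$ by definition of the central stable space (where vectors have subexponential, in fact here bounded-below growth rate $\leq 1$). This contradiction is precisely why $\Theta$ being adapted and $E^{cs}$ being the non-positive-exponent space are incompatible on a positive-measure set — and the role of $g\geq 2$ is to guarantee there is enough room ($\dim E^{cs} < d-1$, so the generic line misses it, and more importantly the expansion survives on $\Theta$).

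Concretely the steps are: (1) verify $(T,A)$ is locally constant, integral, irreducible, fast decaying; (2) verify $\Theta = \mathbb{P}_+^{d-1}$ is adapted, using positivity of the Rauzy matrices after a positive prefix; (3) invoke $g\geq 2$ (Forni \cite{MR1888794}) to get two positive exponents, hence $\dim E^s \leq \dim E^{cs} \leq d-2$ and in particular that the central stable bundle is a.e.\ a proper subbundle with a well-defined dimension by Oseledets; (4) run the dichotomy argument above (the zero-one law for the measure of $\{\lambda : L\cap E^{cs}(\lambda,\pi)\neq\emptyset\}$ coming from ergodicity of the Rauzy renormalization, followed by the contradiction between adaptedness and membership in $E^{cs}$), which shows this measure is $0$; (5) note that this holds for each fixed line $L$ in the direction of $\Theta$, which is the statement. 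The main obstacle is step (4): one must rule out that $E^{cs}$ meets $L$ on a positive-measure set, and the clean way is to show that any such intersection would, after renormalization, yield a vector in the positive cone lying in $E^{cs}$, contradicting the unbounded growth forced by adaptedness — the subtlety is making the ergodic "zero-one" reduction precise and tracking that the intersection point, once pushed forward by $A_n$, genuinely lands and stays in the adapted cone rather than escaping through the boundary. Since this is verbatim the content of Theorem 5.1 of \cite{MR2299743}, I would in practice simply cite it, having checked that the hypotheses (irreducibility, fast decay, $g\geq2$ so two positive exponents, $\Theta$ adapted) are all in place in our setting.
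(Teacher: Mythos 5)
Your proposal has a genuine gap at its core mechanism. You argue that if $L\cap E^{cs}(\lambda,\pi)\neq\emptyset$ on a positive measure set, one gets a vector $w\in E^{cs}$ projectivizing into $\Theta$, and that adaptedness ($\|A_n(x)\cdot w\|\to\infty$) then contradicts $w\in E^{cs}$. This fails twice. First, $E^{cs}$ is defined by $\limsup_n\|A_n(x)\cdot w\|^{1/n}\leq 1$, i.e.\ subexponential growth; this is perfectly compatible with $\|A_n(x)\cdot w\|\to\infty$, so adaptedness only rules out membership in the strictly stable space $E^{s}$, not in $E^{cs}$ --- and the whole difficulty of this theorem is precisely the possible zero-exponent directions. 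Second, the intersection point of $L$ with $E^{cs}(\lambda,\pi)$ is some $h^{(1)}+t h^{(2)}$, which has no reason to lie in (or projectivize to) the positive cone: only the \emph{direction} of $L$ is in $\Theta$. You flag exactly this ("tracking that the intersection point\dots genuinely lands and stays in the adapted cone") as the subtlety, but offer no argument, and in fact no such containment holds in general. A further, smaller issue: you frame the statement as deducible from Theorem~\ref{parexquan}, but the transversality $J\cap E^{cs}(x)=\emptyset$ is a \emph{hypothesis} of that theorem; the present statement is exactly what must be proved so that Theorem~\ref{parexquan} (and hence Theorem~\ref{IETsp}) can be applied, so the reduction is circular. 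Knowing $\dim E^{cs}\leq d-2$ alone does not help either, since $E^{cs}(\lambda)$ moves with $\lambda$ and could a priori meet the fixed line on a positive measure set.

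The paper's actual argument is concrete and quite different. Writing $L=\{h^{(1)}+th^{(2)}\}$, one first shows $E^{cs}(\lambda,\pi)\perp\lambda$ for a.e.\ $\lambda$ (from $\langle (A_n^{-1})^{*}\lambda, A_n\delta\rangle=\langle\lambda,\delta\rangle$ with the left side tending to zero), which pins down the unique candidate intersection point of any line with $E^{cs}$. One then builds explicit Rauzy paths $\gamma_\alpha$ whose matrices act by $z\mapsto z+z_\alpha(v_\alpha-v_{\alpha_t})-z_{\alpha_t}v_{\alpha_t}$, where the $v_\alpha$ are the columns of $\Omega_\pi$ spanning $H(\pi)$; applying the same orthogonality to each translated line $A_\alpha\cdot L$ and subtracting the resulting relations shows, for typical $\lambda$ in the putative bad set, that $v_\alpha\in E^{cs}(\lambda,\pi)+\langle h^{(2)}\rangle$ for every $\alpha$. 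Hence $H(\pi)\subset E^{cs}+\langle h^{(2)}\rangle$, which together with $N(\pi)\subset E^{cs}$ forces $\dim E^{cs}\geq d-1$, contradicting $\dim E^{cs}\leq d-2$ coming from the two positive exponents of Forni's theorem --- this dimension count is the only place $g\geq 2$ enters. If you do not wish to reproduce this, citing Theorem 5.1 of \cite{MR2299743} is legitimate (the paper does attribute it there), but the mechanism you sketch in its place is not a proof.
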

\begin{proof}
Note that we need to show the statement only for one pair $\pi=(\pi_t, \pi_b)$ in every Rauzy class and thus we may assume that $\pi$ is a standard pair, that is
a pair such that $\pi_b \circ \pi_t^{-1}(d)=1$ and $\pi_b \circ \pi_t^{-1}(1)=d$  (such a $\pi$ exists in every Rauzy class, see \cite{viana2008dynamics}, Prop. 1.24). Assume that the statement does not hold. Therefore there exists a positive measure set $E$ of $\lambda \in \Delta$ and a line $L$ in the direction of $\Theta$ not passing through the origin such that 
\begin{equation}
    L \cap E^{cs}(\lambda, \pi) \neq \emptyset \,, \quad \text{ for all }\lambda \in E\,.
\end{equation}
 Let $L=\{h^{(1)}+th^{(2)}| t\in \mathbb{R}\}$ with $h^{(1)} \neq 0$ and $h^{(2)} \in \mathbb{R}_{+}^{d}$ (we only need the components of $h^{(2)}$ to be nonzero). Pick $\lambda \in E$ a density point which happens to be Oseledets regular as well. Note that 
\begin{equation}
    \left(A_{n}(\lambda, \pi) \cdot L\right) \cap E^{cs}(\mathcal{R}^{n}_{R}(\lambda, \pi))=
    A_{n}(\lambda, \pi) \cdot (L \cap E^{cs}(\lambda, \pi)) \neq \emptyset
\end{equation}
and thus, if $A_\alpha$, for $\alpha \in \mathcal A$, are some renormalization matrices, we get that there exists a positive measure set of $\lambda \in \Delta$ such that, for all $\alpha \in \mathcal A$,
\begin{equation}
    A_{\alpha} \cdot L \cap E^{cs}(\lambda, \pi) \neq \emptyset.
\end{equation}
We define $v_\beta=(v^{\alpha}_\beta)_{\alpha\in \mathcal A}$, for $\beta \in \mathcal A$, by 
\begin{equation}
    v_{\beta}^{\alpha}:=\begin{cases}
    -1 & \text{if} \; \pi_b(\beta)<\pi_b(\alpha),\; \pi_t(\beta)>\pi_t(\alpha) ,\\
    1& \text{if} \; \pi_b(\beta)>\pi_b(\alpha),\; \pi_t(\beta)<\pi_t(\alpha),\\
    0 & \text{otherwise.} 
    \end{cases}
\end{equation}
These are the columns of the matrix $\Omega_{\pi}$ that we defined at the beginning of section \ref{VZR} and so their span is exactly the subspace $H(\pi)$. We now construct  the paths $\gamma_\alpha$ in the Rauzy diagram such that the corresponding renormalization matrices take certain special forms. For $\alpha \in \mathcal A\setminus \{\alpha_t\}$, we let $\gamma_\alpha$ be the path obtained in the following manner. We let $\alpha_t$ win until it competes against $\alpha$. Then we let $\alpha$ win until it competes with $\alpha_t$ again. Then we let $\alpha_t$ win until it competes with $\alpha_b$ again. We call the Rauzy path obtained in this manner $\gamma_\alpha$ and we let $A_{\alpha}:=B_{\gamma_\alpha}$. We let $\gamma_{\alpha_t}$ be the path obtained via letting $\alpha_t$ win against every other element until it competes with $\alpha_b$ again. It can be easily seen that
\begin{equation}
    A_{\alpha}\cdot (z_\beta)_{\beta\in \mathcal A}
    = (z_\beta)_{\beta\in \mathcal A}
    +z_\alpha (v_\alpha-v_{\alpha_t})-z_{\alpha_t} v_{\alpha_t} 
    \;\;\;  \alpha \in \mathcal A\setminus \{\alpha_t\},
\end{equation}
and 
\begin{equation}
 A_{\alpha_t} \cdot (z_\beta)_{\beta\in \mathcal A}
    =
    (z_\beta)_{\beta\in \mathcal A}
    -z_{\alpha_t}v_{\alpha_t}. 
\end{equation}

It is easy to see that for almost every $\lambda$, $E^{cs}(\lambda, \pi)$ is perpendicular to $\lambda$ (it follows from the fact that $\big\langle (A_n(\lambda, \pi)^{-1})^{*}\lambda, A_{n}(\lambda, \pi)\delta\big\rangle=\big\langle \lambda, \delta \big\rangle$ converges to zero as the left hand-side decays exponentially fast and $\delta \in E^{cs}$). Therefore,
\begin{equation}
    h^{(1)}-\frac{\big\langle \lambda, h^{(1)} \big\rangle }{\big\langle \lambda, h^{(2)} \big\rangle} h^{(2)} \in E^{cs}(\lambda, \pi),
\end{equation}
similarly we get, for $\alpha \in \mathcal A\setminus\{\alpha_t\}$, 
\begin{equation} 
\begin{split}
h^{(1)}&+h^{(1)}_\alpha (v_{\alpha}-v_{\alpha_t})-h^{(1)}_{\alpha_t} v_{\alpha_t} \\&- \frac{\big\langle \lambda, h^{(1)}+h^{(1)}_{\alpha} (v_{\alpha}-v_{\alpha_t}) -h^{(1)}_{\alpha_t} v_{\alpha_t} \big\rangle}{\big\langle \lambda, h^{(2)}+h^{(2)}_\alpha(v_\alpha-v_{\alpha_t})-h^{(2)}_{\alpha_t} v_{\alpha_t} \big\rangle}(h^{(2)}+h^{(2)}_\alpha (v_\alpha-v_{\alpha_t})-h^{(2)}_{\alpha_t} v_{\alpha_t}) \in E^{cs}(\lambda, \pi)
\end{split}
\end{equation}
and
\begin{equation}
    h^{(1)}-h^{(1)}_{\alpha_t} v_{\alpha_t}- \frac{\big\langle \lambda, h^{(1)}-h^{(1)}_{\alpha_t}v_{\alpha_t} \big\rangle}{\big\langle \lambda, h^{(2)}-h^{(2)}_{\alpha_t}v_{\alpha_t} \big\rangle}(h^{(2)}-h^{(2)}_{\alpha_t}v_{\alpha_t}) \in E^{cs}(\lambda, \pi).
\end{equation}

For typical $\lambda$ the coefficient of $v_\alpha$ in the $\alpha$-labeled relation is nonzero and therefore we can conclude that 
\begin{equation}
    v_\alpha \in E^{cs}(\lambda, \pi)+\big\langle h^{(2)} \big\rangle \,, \; \; \quad  \forall \alpha \in{\mathcal A}\,,
\end{equation}
for almost every $\lambda \in E$. Hence $H(\pi) \subset E^{cs}(\lambda, \pi)+\big\langle h^{(2)} \big\rangle$ and therefore $d-2 \geq \dim(E^{cs}(\lambda, \pi) \geq d-1$, which is a contradiction.
\end{proof}

Applying Theorem~\ref{parexquan} to the cocycle $(T,A)$ (the inducing of the Rauzy--Veech cocycle to $\Delta$ as explained in the beginning of this section) with $\Theta=\mathbb{P}_{+}^{d-1}$ and using Lemma~\ref{Hausret}, we obtain the following

\begin{thm} 
\label{IETsp}
There exist $\epsilon>0$ and a measurable subset $\Delta_{sp} \subset \Delta$, whose complement does not have positive Hausdorff codimension, such that the following holds. For all $\lambda \in \Delta_{sp}$ and all $t \in (0,1)$, there exists $n(t,\lambda)$ so that $\forall n \geq n(t,\lambda)$ we have 
\begin{equation}
     \frac{\#\left\{1\leq i \leq n: \|A_i(\lambda, \pi) \cdot th\|_{\mathbb{R}^{d}/
    \mathbb{Z}^{d}}>\epsilon\right\}}{n}>\epsilon\,.
\end{equation}
Moreover $n(t,\lambda)$ may be chosen to be of the form 
$C\max\{-\ln(t),-\ln(1-t)\}+K(\lambda)$ for some constant $C>0$ (not depending on $\lambda$) and a function $K(\lambda)>0$.
\end{thm}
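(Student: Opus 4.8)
The plan is to apply the abstract elimination Theorem~\ref{parexquan} to the induced cocycle $(T,A)$ constructed at the start of this subsection, taking as adapted set $\Theta:=\overline{\mathbb{P}_{+}^{d-1}}$ and as primitive integer vector $h:=(1,\dots,1)\in\mathbb{Z}^{d}$, whose projectivization lies in $\Theta$; and then to intersect the set the theorem produces with $\Delta^{\infty}$ so that the iterates of the induced cocycle are everywhere defined, using Lemma~\ref{Hausret} to keep the complement of positive Hausdorff codimension. The choice $\Theta=\overline{\mathbb{P}_{+}^{d-1}}$ is dictated by the fact that $h=(1,\dots,1)$ is the roof vector realizing $T_{\lambda,\pi}$ as the first return map of its own trivial suspension. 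Thus the entire argument reduces to verifying the hypotheses of Theorem~\ref{parexquan}, in the following order.

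First, by construction $A$ is locally constant with value $B_{\gamma\gamma^{(\ell)}}\in SL(d,\mathbb{Z})$ on $\Delta^{(\ell)}$, so $(T,A)$ is a locally constant integral cocycle. It is not irreducible — the Rauzy--Veech cocycle preserves $H(\pi)$ and $N(\pi)$ — but by the remark following Theorem~\ref{parexquan} irreducibility may be dropped, so this is harmless. Fast decay of $(T,A)$, of both $T$ and of $A$, is established in Section~3 of Avila--Leguil \cite{MR3933880}; this is exactly where it is essential that one has passed to the first return map on the compactly contained simplex $\Delta=B_{\gamma}^{*}\cdot\mathbb{P}_{+}^{d-1}$, since the ambient Rauzy renormalization on its infinite-measure space is not fast decaying. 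Next, $\Theta=\overline{\mathbb{P}_{+}^{d-1}}$ is adapted to $(T,A)$: each $A^{(\ell)}=B_{\gamma\gamma^{(\ell)}}$ has non-negative entries, hence preserves the closed positive cone and so $\Theta$; for $w$ in the positive cone one has $\|A(x)\cdot w\|\ge\|w\|$ in the $\ell^{1}$ norm, because every elementary Rauzy step adds one coordinate-row to another and therefore never decreases the coordinate sum; and $\|A_{n}(x)\cdot w\|\to\infty$ for almost every $x$, since the accumulated Rauzy matrices eventually become strictly positive with all entries tending to infinity, by minimality and unique ergodicity of $T_{\lambda,\pi}$.

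The only hypothesis carrying genuine content is the transversality requirement that $J\cap E^{cs}(x)=\emptyset$ for every line $J\in\mathcal{J}(\Theta)$ and almost every $x$ — and this is precisely Theorem~5.1 of \cite{MR2299743}, proved immediately above, whose proof is where the non-rotation hypothesis $g\ge 2$ is used through the existence of at least two positive Lyapunov exponents for the Rauzy--Veech cocycle. Granting all of this, Theorem~\ref{parexquan} applied to $(T,A)$, $\Theta$, $h$ yields $\epsilon>0$ and a measurable $\Delta'\subset\Delta$ with $\mathrm{HD}(\Delta\setminus\Delta')<d-1$, together with a threshold $n(t,\lambda)=C\max\{-\ln t,-\ln(1-t)\}+K(\lambda)$ past which the positive-density estimate for $\#\{1\le i\le n:\|A_{i}(\lambda,\pi)\cdot th\|_{\mathbb{R}^{d}/\mathbb{Z}^{d}}>\epsilon\}$ holds. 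Setting $\Delta_{sp}:=\Delta'\cap\Delta^{\infty}$ makes the iterates $A_{i}(\lambda,\pi)$ of the induced cocycle defined for all $i\ge 1$, and by Lemma~\ref{Hausret} we have $\mathrm{HD}(\Delta\setminus\Delta^{\infty})<d-1$, hence $\mathrm{HD}(\Delta\setminus\Delta_{sp})<d-1$, i.e.\ $\Delta_{sp}$ has complement of positive Hausdorff codimension, which is the assertion. The main difficulty is not any new estimate — the weight is carried by Theorem~\ref{parexquan}, by the fast-decay input of \cite{MR3933880}, and by Theorem~5.1 of \cite{MR2299743} — but rather in checking that each of these hypotheses really does survive the passage to the induced S-adic cocycle, fast decay being the delicate one, since it fails for the un-induced Rauzy system.
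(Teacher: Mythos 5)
Your proposal is correct and follows essentially the same route as the paper, which obtains Theorem~\ref{IETsp} precisely by applying Theorem~\ref{parexquan} to the induced cocycle $(T,A)$ with $\Theta=\mathbb{P}_{+}^{d-1}$ and $h=(1,\dots,1)$, invoking the fast decay result of \cite{MR3933880}, the transversality statement of Theorem~5.1 of \cite{MR2299743}, and Lemma~\ref{Hausret} for the intersection with $\Delta^{\infty}$. The only difference is that you explicitly verify the adaptedness of $\Theta$ and the non-irreducibility caveat, details the paper leaves implicit by citing its remark after Theorem~\ref{parexquan}.
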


\section{Twisted integrals}
\label{sec:twist_int}
In this section, following Bufetov and Solomyak \cite{bufetov2014modulus}, \cite{MR3773061}, \cite{2019arXiv190809347B}, we prove upper bounds on twisted ergodic integrals for suspension flows over substitution systems.

\subsection{Substitutions}
We briefly discuss the notion of substitutions to the extent that 
is relevant to this paper. For a general text on substitutions, see \cite{fogg}. Let $\mathcal{A}\equiv\{1,2,\dots,d\}$ be a 
finite alphabet and $\mathcal{A}^{+}$, $\mathcal{A}^{\mathbb{N}}$ be the set of all finite and infinite words with letters in 
$\mathcal{A}$, respectively. A substitution is a map $\zeta: \mathcal{A} \to \mathcal{A}^{+}$, which is extended to an action on $\mathcal{A}^{+}, 
\mathcal{A}^{\mathbb{N}}$ via concatenation. To each 
substitution $\zeta$, there corresponds a matrix called the \textit{substitution matrix} $S_{\zeta}$ as follows

\begin{equation}
    S_{\zeta}(\alpha,\beta):=\text{number of symbols}\; \alpha \; \text{in}\; \zeta(\beta).
\end{equation}

We denote by $\mathfrak{A}$ a set of substitutions 
$\zeta$ for which all letters appear in the set 
$\{\zeta(\alpha), \alpha \in \mathcal{A}\}$ and there exists $\alpha \in
\mathcal{A}$ with $|\zeta(\alpha)|>1$, where for a word $v\in \mathcal{A}^{+}$, $|v|$ denotes its length. 

We let $\Omega$ be the set of 1-sided infinite sequences of substitutions belonging to $\mathfrak{A}$. In the following subsection, we show that by 
choosing $\mathfrak{A}$ in an appropriate manner, dictated by 
the Rauzy--Veech induction, we can identify the space of minimal uniquely ergodic interval exchange transformations with the set 
$\Omega$. Under this identification the Rauzy--Veech 
renormalization cocycle corresponds to the renormalization cocycle in the S-adic framework.

We will denote by $\mathrm{a}=\{\zeta_j\}_{j=1}^{\infty}$ an element of $\Omega$. We define $X_{\mathrm{a}}$ the substitution space corresponding to $a$ as the 
set of all bi-infinite sequences of letters (of $\mathcal{A}$) whose every finite sub-word is a sub-word of a substitution word of
the form $\zeta_1 \circ \dots\circ \zeta_{k} (\beta)$, for some $k\in \mathbb{N}$ and $\beta \in \mathcal{A}$. The (topological) space $X_\mathrm{a}$ equipped with the natural shift $\sigma$ is a dynamical system $(X_\mathrm{a}, 
\sigma)$. Whenever this system is uniquely ergodic, its unique invariant measure will be denoted by 
$\mu_{\mathrm{a}}$. When there is no ambiguity, we may drop the subscript. The renormalization cocycle 
is then a linear cocycle on $\mathbb{R}^{d}$ over the shift map on $\Omega$ (not to be confused with the 
shift on $X_{\mathrm{a}}$), given by the transposition of the 
substitution matrix: for all $a\in \Omega$ and $n\in \mathbb N$,

\begin{equation}
    \mathbb{A}(\mathrm{a}):=S_{\zeta_1}^{t}, \; \quad  \mathbb{A}(n,\mathrm{a})=\mathbb{A}(\sigma^{n-1}(\mathrm{a}))\cdot \cdot \cdot \mathbb{A}(\mathrm{a}). 
\end{equation}

For substitutions $\zeta_{1}, \zeta_{2},\dots,\zeta_{k}$ their composition is defined as compositions of maps from $\mathcal{A}^{+}$ to itself. We will use the following notations
\begin{equation}
    \zeta^{[k]}:=\zeta_1 \circ \zeta_2 \circ \cdots\circ \zeta_{k},
\end{equation}
\begin{equation}
    S_{k}:=S_{\zeta_{k}},
\end{equation}
\begin{equation}
    S^{[k]}:=S_{\zeta^{[k]}}=S_{\zeta_1} \circ S_{\zeta_{2}} \circ \cdots\circ S_{\zeta_{k}}.
\end{equation}

For a substitution system $(X_\mathrm{a}, \sigma)$ and a vector $\vec{s} \in \mathbb{R}_{+}^{d}$ we denote the special flow with roof function $\vec{s}$ (the roof for a symbol $\alpha \in \mathcal{A}$ is the corresponding component $s_{\alpha}$ of $\vec{s}$) by $(\mathfrak{X}_{\mathrm{a}}^{\vec{s}}, h_{t})$. The first return map of this flow to the transverse section $X_\mathrm{a}$ is given by $\sigma$. 

\subsection{Symbolic representation of IETs}
It is well known that a uniquely ergodic IET can be uniquely determined via its sequence of Rauzy matrices. For parameters $(\lambda, \pi)$ of top type corresponding to a uniquely ergodic IET, we define the substitution $\zeta_{1}$ via the following formula
\begin{equation}
    \zeta_{1}(\alpha):=
    \begin{cases}
     \alpha_{b} \alpha_{t} & \text{if} \; \alpha= \alpha_b\\
     \alpha & \text{otherwise.} 
    \end{cases}
\end{equation}
For $(\lambda, \pi)$ of bottom type we let
\begin{equation}
  \zeta_{1}(\alpha):= \begin{cases}
  \alpha_{b} \cdot \alpha_{t} & \text{if} \; \alpha= \alpha_{t}\\
  \alpha & \text{otherwise,}
  \end{cases}
\end{equation}
where the position of the dot distinguishes the past and future. Proceeding in this manner, we get a sequence $\mathrm{a}=\{\zeta_{j}\}_{j=1}^{\infty}$ corresponding to $T_{\lambda, \pi}$.
Under this identification, every uniquely ergodic IET yields a sequence of substitutions on the alphabet $\mathcal{A}$, given by the set of labels of the intervals. Then the IET corresponds to the 
substitution shift space $X_\mathrm{a}$ given by this sequence of substitutions. If $T_{\lambda, \pi}$ is uniquely ergodic, then the Lebesgue measure on the interval corresponds to $\mu_{\mathrm{a}}$, the unique ergodic measure of $(X_{\mathrm{a}}, \sigma)$. It is now easy to see that in this setting the Rauzy cocycle corresponds to the renormalization cocycle of the substitution system. Below, we consider the returns of Rauzy 
induction to a certain set defined by a word $\mathbf{q}$ that satisfies certain good properties. For the existence of such a word (or correspondingly such a loop in the Rauzy diagram) we refer the reader to section 7 of \cite{2019arXiv190809347B}.

\subsection{Spectral measures}
We will briefly give the definition of spectral measures for flows and discrete-time dynamical systems. For more on spectral measures, see \cite{parry2004topics} or
\cite{KATOK2006649}.

Let $(X, T, \mu)$ be a measure preserving dynamical system and $f \in L^{2}(X,\mu)$ an observable. Denote by $U_{T}$ the Koopman unitary operator on $L^{2}(X,\mu)$
associated with post-composition with $T$. 

A theorem of Wiener then implies that there exists a unique measure $\sigma_f$, called the spectral measure of $f$, that satisfies

\begin{equation}
  \widehat{\sigma}_{f}(-n)= \int_{0}^{1} e^{2\pi\imath n\omega} d\sigma_{f}(\omega) = \big\langle U_{T}^{n}f,f \big\rangle.
\end{equation}

Similarly for a measure preserving flow $(Y,h_{t}, \mu)$ and a test function $f\in L^{2}(Y, \mu)$, $\sigma_{f}$ is the unique finite positive measure on $\mathbb{R}$ such that
\begin{equation}
    \widehat{\sigma}_{f}(-\tau)= \int_{-\infty}^{\infty} e^{2\pi\imath \tau t} d\sigma_{f}(t) = \big\langle f \circ h_{\tau}, f  \big\rangle.
\end{equation}
The total mass of $\sigma_f$ is 
$\|f\|_{2}^{2}$.

\begin{definition}
The {\em twisted Birkhoff integral} of $f$ (with respect to $\omega$, the twist factor) for a point $y \in Y$, up to time $R>0$, is given by

\begin{equation}
    S_{R}^{(y)}(f, \omega)= \int_{0}^{R} e^{-2\pi\imath  \omega t} f \circ h_{t}(y)dt.
\end{equation}
\end{definition}
\begin{rmk}
Twisted Birkhoff sums are defined similarly for discrete-time dynamical systems.
\end{rmk}

There is a close connection between polynomial bounds for twisted Birkhoff integrals (sums) and H\"older bounds for the associated spectral measures as made clear in the next standard lemma.

\begin{lemma} 
\label{Twisted Birkhoff Spectral}
 Suppose that for some $\omega \in \mathbb{R}$, $R_0 >0$, and $\alpha \in (0,1)$ we have
\begin{equation}
    \left\|S_{R}^{(y)}(f, \omega)\right\|_{L^{2}(Y,\mu)} \leq C_1 R^{\alpha}\,,  \quad \forall R \geq R_0.
\end{equation}
Then
\begin{equation}
    \sigma_f\left([\omega -r, \omega +r]\right) \leq \pi^{2} 2^{-2\alpha} C_1^{2}r^{2(1-\alpha)}\,, \;\; \quad \forall r \leq (2R_0)^{-1}.
\end{equation}
\end{lemma}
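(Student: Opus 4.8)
The statement is the classical bridge between power-law bounds on twisted Birkhoff integrals and Hölder regularity of the spectral measure, going back to the Salem--Zygmund circle of ideas and used in this exact form by Bufetov--Solomyak. The plan is to express the spectral mass of a small interval around $\omega$ as an integral of $\widehat{\sigma}_f$ against a nonnegative kernel concentrated near $0$ (a Fejér-type kernel), then recognize that the Fourier--Stieltjes transform $\widehat{\sigma}_f(-\tau) = \langle f\circ h_\tau, f\rangle$ is, by Fubini and the polarization of the $L^2$-norm, controlled by $\|S_R^{(y)}(f,\omega)\|_{L^2}^2$ after an appropriate averaging in $R$.

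\medskip
\noindent
First I would record the standard Fejér-kernel inequality: for the function $F_R(t) = R\bigl(\tfrac{\sin(\pi R t)}{\pi R t}\bigr)^2$, whose Fourier transform is the triangle function supported on $[-R,R]$, one has $F_R(t) \ge c\,R$ for $|t| \le \tfrac{1}{2R}$ with an explicit constant (here $c = (2/\pi)^2$, since $\sin x \ge \tfrac{2}{\pi}x$ on $[0,\pi/2]$). Consequently, for $r \le (2R)^{-1}$,
\begin{equation}
    \sigma_f\bigl([\omega-r,\omega+r]\bigr) \;\le\; \frac{1}{cR}\int_{\mathbb{R}} F_R(t-\omega)\, F_R\!\left(\text{shift}\right)\ \text{—}\ \text{more precisely}\ \frac{1}{cR}\int_{\mathbb{R}} F_R(\tau)\, d\sigma_f(\omega+\tau),
\end{equation}
and then I would expand $\int F_R(\tau)\,d\sigma_f(\omega+\tau)$ by writing $F_R$ as the Fourier transform of the triangle function $\Lambda_R(t) = \max\{0, 1-|t|/R\}$, interchanging the order of integration, and using $\int e^{2\pi\imath\tau t}\, d\sigma_f(\omega+\tau) = e^{-2\pi\imath\omega t}\widehat{\sigma}_f(-t) = e^{-2\pi\imath\omega t}\langle f\circ h_t, f\rangle$. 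This produces
\begin{equation}
    \int_{\mathbb{R}} F_R(\tau)\, d\sigma_f(\omega+\tau) \;=\; \int_{-R}^{R} \Lambda_R(t)\, e^{-2\pi\imath\omega t}\,\langle f\circ h_t, f\rangle\, dt.
\end{equation}

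\medskip
\noindent
The second step is to identify the right-hand side with $\tfrac{1}{R}\bigl\|S_R^{(y)}(f,\omega)\bigr\|_{L^2(Y,\mu)}^2$ up to constants. Indeed, by $h_t$-invariance of $\mu$ and Fubini,
\begin{equation}
    \bigl\|S_R^{(y)}(f,\omega)\bigr\|_{L^2}^2 = \int_0^R\!\!\int_0^R e^{-2\pi\imath\omega(s-u)} \langle f\circ h_s, f\circ h_u\rangle\, ds\, du = \int_0^R\!\!\int_0^R e^{-2\pi\imath\omega(s-u)} \langle f\circ h_{s-u}, f\rangle\, ds\, du,
\end{equation}
and changing variables $t = s-u$ turns the double integral into $\int_{-R}^{R}(R-|t|)\,e^{-2\pi\imath\omega t}\langle f\circ h_t, f\rangle\, dt = R\int_{-R}^{R}\Lambda_R(t)\,e^{-2\pi\imath\omega t}\langle f\circ h_t, f\rangle\, dt$. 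Combining with the displays above gives $\sigma_f([\omega-r,\omega+r]) \le \tfrac{1}{cR^2}\|S_R^{(y)}(f,\omega)\|_{L^2}^2$; note the left side does not depend on $R$, so I am free to choose $R = (2r)^{-1} \ge R_0$, which is legitimate exactly under the hypothesis $r \le (2R_0)^{-1}$. Plugging in $\|S_R^{(y)}(f,\omega)\|_{L^2} \le C_1 R^\alpha$ yields $\sigma_f([\omega-r,\omega+r]) \le c^{-1} C_1^2 R^{2\alpha-2} = c^{-1} C_1^2 (2r)^{2-2\alpha}$, and unwinding $c = 4/\pi^2$ produces the stated constant $\pi^2 2^{-2\alpha} C_1^2$.

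\medskip
\noindent
The only genuinely delicate point — and the step I would be most careful about — is the interchange of integration in passing from $\int F_R\, d\sigma_f$ to $\int \Lambda_R(t)\langle f\circ h_t, f\rangle\, dt$: this needs $F_R \in L^1$ (true, with $\|F_R\|_1 = 1$) and $\sigma_f$ finite (true, total mass $\|f\|_2^2$), so Fubini applies without issue, but one must also make sure the Fourier-transform conventions ($e^{2\pi\imath\cdot}$ versus $e^{-2\pi\imath\cdot}$, and the sign in $\widehat{\sigma}_f(-\tau)$) are tracked consistently so that the phase $e^{-2\pi\imath\omega t}$ lands in the right place to reconstruct $S_R^{(y)}(f,\omega)$ rather than $S_R^{(y)}(f,-\omega)$. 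Everything else is a routine substitution and an explicit constant chase; the discrete-time version is identical with sums replacing integrals and the Fejér kernel replaced by its periodic analogue.
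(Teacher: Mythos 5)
Your proposal is correct and is essentially the paper's own argument run in the reverse order of presentation: the paper expands $\|S_R^{(y)}(f,\omega)\|_{L^2}^2$ by Fubini into $\int_{-R}^{R}(R-|\ell|)e^{2\pi\imath\omega\ell}\widehat{\sigma}_f(\ell)\,d\ell$, recognizes the Fej\'er kernel, and then bounds it below on $[\omega-r,\omega+r]$ using $|\sin\xi/\xi|\geq 2/\pi$ with $r=(2R)^{-1}$, exactly the kernel identity, Fubini interchange, pointwise lower bound, and choice $R=(2r)^{-1}\geq R_0$ that you use, yielding the same constant $\pi^{2}2^{-2\alpha}C_1^{2}$. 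No genuine difference in method or content.
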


\begin{proof}
Note that
\begin{equation}
\label{birks}
\begin{split}
    \left\|S_{R}^{(y)}(f, \omega)\right\|^{2}_{L^{2}} &= \int_{Y}\int_{0}^{R}e^{-2\pi\imath  \omega t} f\circ h_{t}(y)dt \overline{\int_{0}^{R}e^{-2\pi\imath  \omega s} f \circ h_{s}(y)ds}\;d\mu(y)\\ 
    & =\int_{0}^{R}\int_{0}^{R} e^{2\pi\imath  \omega (s-t)} \big\langle f \circ h_{t}, f \circ h_s \big\rangle dt ds \\
    &= \int_{-R}^{R} (R-|\ell|) e^{2\pi\imath  \omega \ell} \widehat{\sigma}_f(\ell) d\ell= \int_{-R}^{R}(R-|\ell|)\int_{-\infty}^{\infty} e^{2\pi\imath  \ell (\omega -\theta)} d\sigma_f(\theta) d \ell\\
    &=\int_{-\infty}^{\infty}\int_{-R}^{R} (R-\left|\ell\right|) e^{2\pi\imath \ell(\omega -\theta)} d\ell d\sigma_{f}(\theta)
\end{split}
\end{equation}
We now use the Fej\'er kernel to simplify the last integral
\begin{equation}
\label{fe}
    \int_{-R}^{R}\left(R-|\ell|\right)e^{2\pi\imath \ell \xi} d\ell=\int_{0}^{R}\int_{-\ell}^{\ell} e^{2\pi\imath  s \xi } ds d\ell= \left(\frac{\sin(\pi R \xi)}{\pi \xi}\right)^{2}.
\end{equation}

Therefore substituting \eqref{fe} in \eqref{birks} for $\xi:=\omega -\theta$ we get that for $r=1/2R\leq1/2R_{0}$
\begin{equation}
C_1^{2}R^{2\alpha} \geq \int_{-\infty}^{\infty}    \left(\frac{\sin(\pi R (\omega-\theta))}{\pi (\omega-\theta)}\right)^{2} d\sigma_f(\theta) \geq \int_{\omega-r}^{\omega+r} \frac{4R^{2}}{\pi^{2}}d\sigma_f(\theta)
\end{equation}
which yields the desired result. Note that we have used the fact that $\big|\frac{\sin(\xi)}{\xi}\big| \geq \frac{2}{\pi}$
for all $|\xi|<\frac{\pi}{2}$.
\end{proof}

\subsection{Cylindrical functions}
In general, for measurable functions the rate of decay of correlations may be very slow (\cite{petersen}). 
Therefore, it is not expected that H\"older bounds hold for spectral measures associated to general measurable observables. Thus, it 
is reasonable to restrict our classes of observables to subclasses of integrable functions with some degrees of regularity. In this paper, we will work with the class of weakly Lipschitz 
functions. These functions are good observables for detecting divergence of orbits.
\\

\noindent
For $\ell \geq 1$ we have the following canonical decomposition (disjoint in measure)

\begin{equation}
    \mathfrak{X}^{\vec{s}}=\bigcup_{\alpha \in \mathcal{A}}\mathfrak{X}_{\alpha}^{(\ell)}:= \bigcup_{\alpha \in \mathcal{A}} \zeta^{[\ell]}[\alpha] \times [0,s_{\alpha}^{(\ell)}]
\end{equation}
where the $\zeta^{[\ell]}[\alpha]$'s (which are elements whose symbolic word starts with $\zeta^{[\ell]}(\alpha)$) correspond to the subintervals of the base in the $\ell$'th step of Rauzy renormalization, and $s^{\ell}_{\alpha}$'s are the heights of the corresponding towers. 
This implies that for $(x,t) \in \mathfrak{X}^{\vec{s}}_{\mathrm{a}}$ there exist $\alpha \in \mathcal{A}, x' \in \zeta^{[\ell]}[\alpha]$ and $t' \in [0,s_{\alpha}^{(\ell)}]$ such that $h_{t'}(x',0)=(x,t).$

\begin{definition}
A function $f: \mathfrak{X}^{\vec{s}}_{\mathrm{a}} \to \mathbb{C}$ is a {\em bounded cylindrical function} of level $\ell$ (for an integer $\ell \geq 0$) if, for all $(x,t) \in \mathfrak{X}^{\vec{s}}_{\mathrm{a}}$ we have
\begin{equation}
    f(x,t):= \sum_{\alpha \in \mathcal{A}} \mathbbm{1}_{\zeta^{[\ell]}[\alpha]}(x)\cdot \psi_{\alpha}^{(\ell)}(t) ,\;\quad \text{with} \quad  \;\psi_{\alpha}^{(\ell)} \in L^{\infty}[0,s_{\alpha}^{(\ell)}].
\end{equation}
\end{definition}

\begin{definition}
A bounded function $f: \mathfrak{X}^{\vec{s}}_{\mathrm{a}} \to \mathbb{C}$ is {\em weakly Lipschitz}, and we write $f \in Lip_{w}(\mathfrak{X}^{\vec{s}}_{\mathrm{a}})$, if there exists a constant $C>0$ such that, for all $\ell \geq 0$ and all $\alpha \in \mathcal{A}$,  for every $x,y \in \zeta^{[\ell]}[\alpha]$ and $t \in [0,s^{(\ell)}_\alpha]$, we have

\begin{equation}
\label{Lip}
\left|f(x,t)-f(y,t)\right| \leq C \mu(\zeta^{[\ell]}[\alpha])\,.
\end{equation}
The norm of a function in $Lip_{w}(\mathfrak{X}^{\vec{s}}_{\mathrm{a}})$ is defined as follows

\begin{equation}
\|f\|_{L}:=\|f\|_{\infty}+\tilde{C}\,,
\end{equation}
where $\|.\|_{\infty}$ denotes the $L^{\infty}$-norm and $\tilde{C}>0$ is the smallest positive real number satisfying \eqref{Lip}.

\end{definition}

\begin{definition}
\label{def:good_ret_word}
A word $v \in \mathcal{A}^{+}$ is called a {\em good return} word for a substitution $\zeta$ if $v$ starts with some letter $c$ and $vc$ occurs in the substitution $\zeta(\alpha)$ of every letter $\alpha\in \mathcal A$. We denote by $GR(\zeta)$ the set of good return words for $\zeta$.
\end{definition}

A word $\mathbf{q}=q_{1}\dots q_{m} \in \mathfrak{A}^{m}$ is called simple if no two occurrences of it can overlap, that is, for every $1 \leq i \leq m$, $q_{1}\dots q_{i} \neq q_{m-i+1}\dots q_{m}$. Then, we will compose the subsequent substitutions and write $\zeta(\mathbf{q}):= \zeta(q_{1})\dots\zeta(q_{m}).$ 
\begin{definition}
\label{def:pop_vec}
The {\em population vector} $\vec{\ell}(v) \in \mathbb{Z}^{d}$ corresponding to $v=v_{1}v_{2}\cdots v_{k}$ is the vector whose $\alpha$-th coordinate is the number of integers $1 \leq i \leq k$ for which $v_{i}=\alpha$.
\end{definition}

In the sequel, we will always assume 
that $\mathbf{q}$ is a fixed simple word with the property that $\{\vec{\ell}(v): v \in GR(\zeta) \}$, where $\zeta= \zeta(\mathbf{q})$, generates $\mathbb{Z}^{d}$ as an Abelian group. These return words will later provide us with a pseudo-norm equivalent to $\|.\|_{\mathbb{R}^{d}/\mathbb{Z}^{d}}$. 

\smallskip
We let $\mathbf{q.q}$ be the word obtained by concatenation of $\mathbf{q}$ 
to itself. For the existence of such a word $\mathbf{q}$ in the Rauzy--Veech setting we refer to \cite{2019arXiv190809347B}, Lemma~7.1. Throughout the rest of the paper, we will assume that $\theta_1$ is the largest Lyapunov exponent of the cocycle obtained by inducing on the set defined by the word $\mathbf{q.q}$.
\begin{definition}
For $\mathbf{q}$ as above we define 
\begin{equation}
    \Omega_{\mathbf{q}}:=\left\{\mathrm{a}\in \Omega: \mathrm{a} \, \,\text{starts with}\, \,\,\mathbf{q}.\mathbf{q}\right\}
\end{equation}
\end{definition}
We denote the length of $\mathbf{q}$ by $m$. If $\mathrm{a'}\in \Omega_{\mathbf{q}}$ then there exist $p_{i}\in \mathfrak{A}^{+}$, depending on $\mathrm{a'}$, such that
\begin{equation}
    \mathrm{a}:= \sigma^{m}(\mathrm{a'})=\mathbf{q}p_{1}\mathbf{q}\mathbf{q}p_{2}\mathbf{q}\mathbf{q}\dots
\end{equation}
Therefore we can simply rewrite $\mathrm{a}$ as $\{\mathrm{a}_j\}_{j=1}^{\infty}$ where
\begin{equation}
    \mathrm{a}_{j}= \mathbf{q}\mathrm{p_j}\mathbf{q},\; \quad \textit{\rm for some}\; p_{j} \in \mathfrak{A}^{+}\,.
\end{equation}

\begin{definition}
\label{defi}
We say that $\mathrm{a}\in \Omega'_{\mathbf{q}}$ if there exists $\mathrm{a'}\in \Omega_{\mathbf{q}}$ such that 
\begin{itemize}
    \item $\mathrm{a}=\sigma^{m}(\mathrm{a'})$
    \item $\exists \ell_{0}(\mathrm{a}) \in \mathbb{N}: $
    \begin{equation}
        \left|\frac{\log\|S^{[\ell]}\|_{1}}{\ell}-\theta_1 \right|\leq \theta_1/8, \quad  \forall \ell \geq \ell_{0}(\mathrm{a}).
    \end{equation}
\end{itemize}
\end{definition}
We recall that $\theta_1$ is the largest Lyapunov exponent of the cocycle induced by the Rauzy--Veech cocycle on $\Omega({\mathbf q})$.

\subsection{Exponential sums}

For every word $v\in \mathcal{A}^{+}$, let $\vec{\ell}(v)$ denote the population vector of $v$, introduced in Definition~\ref{def:pop_vec}.

\begin{definition} For $\vec{s} \in \mathbb{R}_{+}^{d}$ the {\em tiling length} of a word $v=v_{1}v_{2}\cdots  v_{k}$, where $v_{i} \in \mathcal {A}$, is denoted by $|v|_{\vec{s}}$ and defined to be $\big\langle \vec{\ell}(v), \vec{s} \big\rangle$.
\end{definition}
For a word $v=v_1\dots v_k \in \mathcal{A}^{+}$, $\vec{s} \in \mathbb{R}_{+}^{d}$, and a letter $\alpha \in \mathcal{A}$ we define
\begin{equation}
    \Phi_{\alpha}^{\vec{s}}(v, \omega):=\sum_{j=1}^{k} \delta_{v_{j}, \alpha} \exp (-2\pi \imath \omega |v_{1}\dots v_{j-1}|_{\vec{s}})\,.
\end{equation}
Following \cite{10.2307/2374396}, for a matrix A with non-negative entries we define
\begin{equation}
     col(A)= \underset{i,j,k}{\max}
     \frac{A_{ij}}{A_{kj}}
\end{equation}
and observe that for a matrix $B$ with nonnegative entries we have
\begin{equation}
\label{col}
    col(AB) \leq col(A).
\end{equation}
Following \cite{MR3773061}, we let $\Pi_{n}^{\vec{s}}(\omega)$ be the complex matrix defined as follows
\begin{equation}
    \Pi_{n}^{\vec{s}}(\omega)(\beta, \alpha):= \Phi^{\vec{s}}_{\alpha}(\zeta^{[n]}(\beta), \omega) \,, \quad \text{\rm for all } \alpha, \beta \in \mathcal A.
\end{equation}
We assume that $\zeta_{n}(\beta)=u^{n, \beta}_1\dots u_{k}^{n, \beta}$, where $k=|\zeta_{n}(\beta)|$. Then,
\begin{equation}
\label{spectcoc}
\begin{split}
      \Pi_{n}^{\vec{s}}(\omega)( \beta, \alpha)&=\Phi^{\vec{s}}_{\alpha}(\zeta^{[n-1]}(u^{n, \beta}_1\dots u^{n, \beta}_{k}), \omega)\\
      &=\sum_{\gamma \in \mathcal{A}} \bigg(\sum_{u_{j}^{n, \beta}=\gamma} \exp(-2\pi\imath \omega |\zeta^{[n-1]}(u_{1}^{n,\beta}\dots u_{j-1}^{n,\beta})|_{\vec{s}})\bigg) \Phi_{\alpha}^{\vec{s}}(\zeta^{[n-1]}(\gamma), \omega)\\
      &=\sum_{\gamma \in \mathcal{A}} \bigg(\sum_{u_{j}^{n, \beta}=\gamma} \exp(-2\pi\imath \omega|u_{1}^{n, \beta}\dots u_{j-1}^{n, \beta}|_{S^{t}_{\zeta^{[n-1]}}\vec{s}}) \bigg) \Pi_{n-1}^{\vec{s}}(\omega)(\gamma, \alpha)
\end{split}
\end{equation}
which motivates the following definition. For two substitutions $\zeta, \xi$ we let,
\begin{equation}
\begin{split}
M_{\xi, \zeta}^{\vec{s}}(\omega)(\beta, \gamma)&:= \sum_{j \leq |\zeta(\beta)|, \; u_{j}^{\beta}= \gamma} \exp(-2\pi\imath \omega |\xi(u_{1}^{\beta}\dots u^{\beta}_{j-1})|_{\vec{s}}) \\ &=\sum_{j \leq |\zeta(\beta)|, u_{j}^{\beta} =\gamma} \exp (-2 \pi \imath\omega |u_1^{\beta}\dots u_{j-1}^{\beta}|_{S^{t}_{\xi}\vec{s}})
\end{split}
\end{equation}
where we assume that $\zeta(\beta)=u^{\beta}_1\dots u^{\beta}_{|\zeta(\beta)|}$. The above formula may be simply rewritten as
\begin{equation}
\label{coc}
    M^{\vec{s}}_{\xi, \zeta}(\omega)(\beta, \gamma)= \Phi^{S^{t}_{\xi}\vec{s}}_{\gamma}(\zeta(\beta), \omega).
\end{equation}
It is then rather straightforward to see that 
\begin{equation}
\label{M_id}
    M^{\vec{s}}_{\zeta_1, \zeta_2\zeta_3}(\omega)= M^{\vec{s}}_{\zeta_1 \zeta_2, \zeta_3} (\omega) \,M^{\vec{s}}_{\zeta_1, \zeta_2}(\omega)\,.
\end{equation}
We can now rewrite \eqref{spectcoc} as
\begin{equation}
    \Pi_{n}^{\vec{s}}(\omega)(\beta, \alpha)= \sum_{\gamma \in \mathcal{A}} M^{\vec{s}}_{\zeta^{[n-1]}, \zeta_n}(\omega)(\beta, \gamma) \Pi_{n-1}^{\vec{s}}(\omega)(\gamma, \alpha)
\end{equation}
and thus, by letting $M_{n}^{\vec{s}}(\omega):=M^{\vec{s}}_{\zeta^{[n-1]}, \zeta_n}(\omega)$ and observing that by definition $\Pi_{0}^{\vec{s}}(\omega)=Id$, we derive the following
\begin{equation}
    \Pi_{n}^{\vec{s}}(\omega)=M_{n}^{\vec{s}}(\omega)\Pi_{n-1}^{\vec{s}}(\omega)=
    \dots =M^{\vec{s}}_{n}(\omega)\cdots M_{1}^{\vec{s}}(\omega).
\end{equation}
In view of \eqref{coc}, it is easy to see that $M^{\vec{s}}(\omega)$ form a cocycle over the toral Rauzy--Veech cocycle. The following proposition provides the main tool  for estimating the twisted Birkhoff sums of weakly Lipschitz functions. We reproduce below the proof from~\cite{MR3773061} for completeness.

\begin{prop} (Proposition 3.4, \cite{MR3773061})
\label{mainbound}
Let $\mathrm{a}\in \Omega$ be a one-sided substitution sequence, 
and let $\zeta_{j}$ be the corresponding sequence of substitutions. Suppose that there exists a substitution $\zeta$ with a
nonempty set of good return words, such that $Q=S_\zeta$ is strictly 
positive and $\zeta_{j} = \zeta \xi_j \zeta$ for some substitution $\xi_{j}$ for 
all $j \geq 1$. Then there exists $c_1 \in (0,1)$, depending only on the substitution $\zeta$, such that for all $a,b \in \mathcal{A}$, $N 
\in \mathbb{N}$, and $\omega \in [0,1]$ we have
\begin{equation}
    \Phi_{\alpha}^{\vec{s}}(\zeta^{[N]}(\beta), \omega) \leq \|S^{[N]}\|_{1} 
    \prod_{n \leq N-1}\left(1-c_1 \cdot \underset{v\in GR(\zeta)}{\max}\|\omega|\zeta^{[n]}(v)|_{\vec{s}} \|_{\mathbb{R}/\mathbb{Z}}^{2}\right)
\end{equation}
In fact, we can take
\begin{equation}
    c_{1}:=\left(2d \cdot (\max_{\alpha, \beta} Q_{\alpha, \beta}) col(Q^{t})\right)^{-1}.
\end{equation}
\end{prop}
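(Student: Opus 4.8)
The plan is to exploit the recursive cocycle structure $\Pi_N^{\vec s}(\omega) = M_N^{\vec s}(\omega)\cdots M_1^{\vec s}(\omega)$ together with the hypothesis $\zeta_j = \zeta\,\xi_j\,\zeta$, which forces every intermediate substitution word to contain, after applying $\zeta^{[n]}$, an occurrence of $\zeta^{[n]}(v)$ for each good return word $v$. The key point is that a good return word $v$ (starting with letter $c$, with $vc$ a subword of $\zeta(\alpha)$ for all $\alpha$) produces, inside each column $\zeta(\beta)$, a block of the form $v c$; under $\zeta^{[n]}$ this becomes $\zeta^{[n]}(v)\,\zeta^{[n]}(c)$, and the two occurrences of $\zeta^{[n]}(c)$ at the endpoints carry phases $\exp(-2\pi\imath\omega\cdot(\text{offset}))$ and $\exp(-2\pi\imath\omega\cdot(\text{offset}+|\zeta^{[n]}(v)|_{\vec s}))$. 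The difference of these two phases is controlled by $\|\omega|\zeta^{[n]}(v)|_{\vec s}\|_{\mathbb R/\mathbb Z}$, so inside one step of the cocycle two positive contributions to $\Phi$ cannot both be near-maximal in modulus; this is where the contraction factor $(1 - c_1\|\cdot\|^2)$ enters.

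First I would set up the one-step estimate: fix $n$, write $M_{n+1}^{\vec s}(\omega)(\beta,\gamma) = \Phi^{S^t_{\zeta^{[n]}}\vec s}_{\gamma}(\zeta_{n+1}(\beta),\omega)$, and bound the modulus of a row of the product $\Pi_{n+1}^{\vec s}(\omega)$ against $\|S_{\zeta_{n+1}}\|_1$ times $\|\Pi_n^{\vec s}(\omega)\|$-type quantities, losing a factor $(1 - c_1\max_v \|\omega|\zeta^{[n]}(v)|_{\vec s}\|^2)$. Concretely, one splits the sum defining $\Phi^{S^t_{\zeta^{[n]}}\vec s}_{\gamma}(\zeta_{n+1}(\beta),\omega)$ according to whether the summand index falls inside the distinguished $v c$ block (guaranteed present because $\zeta_{n+1} = \zeta\xi_{n+1}\zeta$, so $\zeta(\beta)$ and hence $\zeta_{n+1}(\beta)$ ends in $\zeta(\alpha)$ which contains $vc$) or not; on that block, the triangle inequality applied to a sum of two unit-modulus exponentials at phase-distance $\|\omega|\zeta^{[n]}(v)|_{\vec s}\|$ gives a saving. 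The combinatorial bookkeeping — counting how many such disjoint good-return blocks one is guaranteed, and how the counts propagate through $S^{[n]}$ — is what produces the explicit constant $c_1 = (2d\,(\max_{\alpha,\beta}Q_{\alpha,\beta})\,col(Q^t))^{-1}$: the factor $2d$ from the number of letters and the pairing, $\max Q_{\alpha,\beta}$ bounding the length contribution of a single $\zeta$-image, and $col(Q^t)$ (which is non-increasing under right multiplication by nonnegative matrices, by \eqref{col}) controlling the ratio between the worst and best columns so the saving in one column is not washed out by the others.

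Then I would iterate: telescoping the one-step bound from $n = N-1$ down to $n = 0$ (using $\Pi_0^{\vec s}(\omega) = \mathrm{Id}$) yields
\begin{equation*}
\Phi_\alpha^{\vec s}(\zeta^{[N]}(\beta),\omega) \le \|S^{[N]}\|_1 \prod_{n\le N-1}\Bigl(1 - c_1\max_{v\in GR(\zeta)}\|\omega|\zeta^{[n]}(v)|_{\vec s}\|_{\mathbb R/\mathbb Z}^2\Bigr),
\end{equation*}
where the prefactor $\|S^{[N]}\|_1$ absorbs the cumulative growth of the substitution matrices. The main obstacle I anticipate is the careful combinatorial argument identifying, inside $\zeta^{[n+1]}(\beta) = \zeta^{[n]}(\zeta_{n+1}(\beta))$, the precise occurrences of $\zeta^{[n]}(v)$ together with their left endpoints, so that one can honestly write the relevant partial sum of exponentials as containing a subsum of two terms whose phases differ by exactly (a multiple of) $\omega|\zeta^{[n]}(v)|_{\vec s}$ modulo $1$ — and then verifying that the resulting per-column saving survives being combined across all $d$ columns with weights governed by $col(Q^t)$. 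Since the statement says the proof is reproduced from \cite{MR3773061}, I would follow that argument's organization: establish the single-substitution contraction lemma first (the $N=1$, or rather one-level, case), then compose.
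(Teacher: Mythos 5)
Your overall strategy is the one in the paper (and in \cite{MR3773061}): write $\Pi_N^{\vec s}(\omega)=M_N^{\vec s}(\omega)\cdots M_1^{\vec s}(\omega)$, extract a two-exponential cancellation from a block $vc$ furnished by a good return word, convert the absolute saving into a multiplicative factor via column balance, and iterate. However, there is a genuine gap in how you organize the one-step estimate. You locate the distinguished $vc$ block in the \emph{trailing} copy of $\zeta$ inside $\zeta_{n+1}(\beta)$ and propose to bound a row of $\Pi_{n+1}^{\vec s}(\omega)$ against $\|S_{\zeta_{n+1}}\|_1$ times the previous quantity, losing a factor $(1-c_1\|\cdot\|^2)$. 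Carried out literally, this compares the fixed saving $\tfrac12\|\omega|\zeta^{[n]}(v)|_{\vec s}\|^2\,x_c$ with the full weighted row sum of $S^t_{\zeta_{n+1}}$, whose size is governed by $\xi_{n+1}$ and is not uniformly bounded; the resulting constant would then depend on the whole sequence $(\xi_j)$, not only on $\zeta$, contradicting the statement. The paper avoids this through the factorization \eqref{M_id}, $M_n=M^{\vec s}_{\zeta^{[n-1]}\zeta\xi_n,\zeta}\,M^{\vec s}_{\zeta^{[n-1]}\zeta,\xi_n}\,M^{\vec s}_{\zeta^{[n-1]},\zeta}$, and takes the cancellation in the rightmost factor, i.e.\ in the \emph{leading} copy of $\zeta$ in $\zeta_n=\zeta\xi_n\zeta$: there the phases are computed with $\xi=\zeta^{[n-1]}$, so the saving is $\tfrac12\|\omega|\zeta^{[n-1]}(v)|_{\vec s}\|^2$ and is compared only against $(S^t_\zeta\vec x)_\alpha\le d(\max_{\gamma,\beta}Q_{\gamma,\beta})\max_\beta x_\beta$, which is exactly what makes $c_1$ depend on $\zeta$ alone; the two remaining factors are bounded entrywise by $S^t_\zeta$ and $S^t_{\xi_n}$ and, being nonnegative, preserve the multiplicative factor. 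Extracting the trailing copy of $\zeta$ as a separate matrix factor would instead put the phase at the wrong scale ($\omega|\zeta^{[n-1]}\zeta\xi_n(v)|_{\vec s}$), so the choice of the leading copy is forced.

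Relatedly, the role of the trailing copy of $\zeta$ is not to supply the $vc$ block (that is already guaranteed in any single copy of $\zeta$ by the definition of a good return word); it only serves, through the previous block $\zeta_{n-1}=\zeta\xi_{n-1}\zeta$, to ensure that the propagated vector $\vec x=(S^{[n-1]})^t\vec 1$ lies in $Q^t\mathbb{R}_+^d$, whence $x_c\ge col(Q^t)^{-1}\max_\beta x_\beta$ — this is where $col(Q^t)$ enters $c_1$, making precise your "saving not washed out" heuristic. Two smaller points: the paper propagates the explicit vector $(S^{[n]})^t\vec 1$ through the induction rather than norms of $\Pi_n$, and only a single $vc$ block per column is used, so no counting of disjoint good-return occurrences is needed. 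Once the one-step estimate is reorganized as above — the "single-substitution contraction lemma" you defer to at the end — the rest of your plan goes through and coincides with the paper's proof.
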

\begin{proof} For a matrix $A$, we denote $\vert A \vert$ the matrix whose entries are the absolute values of the entries of $A$. For matrices $A$ and $B$ we say that $A \leq B$ if $B-A$ has nonnegative entries. It is then easy to see that 
\begin{equation}
    \left| AB \right|  \leq \left| A \right| \left| B \right|.
\end{equation}
Thus we have
\begin{equation}
    \left| \Pi_n^{\vec{s}}(\omega) e_{\alpha}\right| \leq \left| \Pi_{n}^{\vec{s}}(\omega)\right| \vec{1} \leq \left| M^{\vec{s}}_{n}(\omega)\right| \cdots \left| M^{\vec{s}}_{1}(\omega)\right| \vec{1}
\end{equation}
where $\vec{1}=(1,1,\dots,1)^{t}$. For the rest of this proof we sometimes drop the superscript $\vec{s}$ and $\omega$ for simplicity. Since by assumption $\zeta_n= \zeta\xi_n\zeta$, by
formula \eqref{M_id} we have
\begin{equation}
\begin{split}
    M_n= M^{\vec{s}}_{\zeta^{[n-1]}, \zeta_n}(\omega)&=M^{\vec{s}}_{\zeta^{[n-1]}, \zeta \xi_n \zeta}(\omega)\\
    &=M^{\vec{s}}_{\zeta^{[n-1]} \zeta \xi_n, \zeta}(\omega) M^{\vec{s}}_{\zeta^{[n-1]}, \zeta\xi_n}(\omega)\\
    &=M^{\vec{s}}_{\zeta^{[n-1]}\zeta\xi_n, \zeta}(\omega) M^{\vec{s}}_{\zeta^{[n-1]}\zeta, \xi_n}(\omega) M^{\vec{s}}_{\zeta^{[n-1]}, \zeta}(\omega) \,.
\end{split}
\end{equation}
Since by hypothesis $\zeta$ has a non-empty set of good return words, by definition~\ref{def:good_ret_word}  for all $\beta \in \mathcal{A}$ we have that 
\begin{equation}
    \zeta(\beta)= p^{\beta} vc q^\beta
\end{equation}
for some $v\in GR(\zeta)$ and some words $p^\beta, q^\beta$. Therefore for any substitution $\xi$ we have
\begin{equation}
    \left|\Phi_{c}^{S^{t}_\xi \vec{s}}(\zeta(\beta), \omega)\right| \leq S^{t}_{\zeta}(\beta, c)-2+\left|1+\exp(-2\pi\imath \omega |v|_{S^{t}_\xi\vec{s}})\right| \leq S^{t}_{\zeta}(\beta, c)-\frac{1}{2}\|\omega |\xi(v)|_{\vec{s}}\|_{\mathbb{R}/\mathbb{Z}}^{2}.
\end{equation}
since the left hand side is the some of $S^{t}_{\zeta}(\beta, c)$ many trigonometric terms and $v=p^{\beta} vc q^{\beta}$.
Thus for any substitution $\xi$ we have
\begin{equation}
    \left|M_{\xi, \zeta}^{\vec{s}}(\omega)(\beta, c)\right| \leq S^{t}_{\zeta}(\beta, c)-\frac{1}{2}\|\omega |\xi(v)|_{\vec{s}}\|_{\mathbb{R}/\mathbb{Z}}^{2}.
\end{equation}
For $\vec{x}>\vec{0}$ we have the following estimate
\begin{equation}
\begin{split}
    \left(\left| M^{\vec{s}}_{\xi, \zeta}(\omega)\right| \cdot \vec{x}\right)_{\alpha}&= \sum_{\beta \in \mathcal{A}} \left|M_{\xi, \zeta}(\omega)(\alpha, \beta)\right| \cdot x_{\beta}\\
    & \leq \sum_{\beta \in \mathcal{A}}S^{t}_{\zeta}(\alpha, \beta)\cdot x_{\beta}-\frac{1}{2} \left\|\omega |\xi(v)|_{\vec{s}}\right\|^{2}_{\mathbb{R}/\mathbb{Z}} \cdot x_c\\
    & \leq \left(1-c_2 \cdot col(\vec{x})\|\omega|\xi(v)|_{\vec{s}}\|_{\mathbb{R}/\mathbb{Z}}^{2}\right) \sum_{\beta \in \mathcal{A}}S^{t}_{\zeta}(\alpha, \beta) x_\beta\\
    &=\left(1-c_2 \cdot col(\vec{x})\|\omega|\xi(v)|_{\vec{s}}\|_{\mathbb{R}/\mathbb{Z}}^{2}\right) \left(S^{t}_{\zeta}\vec{x}\right)_\alpha
\end{split}
\end{equation}
where
\begin{equation}
    c_2=\frac{1}{2d \max_{\gamma, \beta} Q_{\gamma, \beta}}, \;\; \quad col(\vec{x})=\frac{\min_{\beta} x_\beta}{\max_\beta x_\beta}. 
\end{equation}
Thus,
\begin{equation}
    \left| M^{\vec{s}}_{\xi, \zeta}(\omega)\right| \cdot \vec{x} \leq \left(1-c_2 \cdot col(\vec{x})\|\omega|\xi(v)|_{\vec{s}}\|_{\mathbb{R}/\mathbb{Z}}^{2}\right) S^{t}_{\zeta}\vec{x}
\end{equation}
for any arbitrary return word $v\in GR(\zeta)$. Therefore applying the above to $\vec{x}=\left(S^{[n-1]}\right)^{t}{\vec{1}}\in Q^{t}\mathbb{R}_{+}^{d}$ (here we have used that $\zeta_n$ ends with $\zeta$) and $\xi=\zeta^{[n-1]}$
\begin{equation}
\begin{split}
    \left| M_n\right| \left(S^{[n-1]}\right)^{t}\vec{1} &\leq S^{t}_{\zeta} S^{t}_{\xi_n} \left(1-c_1. \max_{v\in GR(\zeta)} \|\omega|\xi(v)|_{\vec{s}}\|_{\mathbb{R}/\mathbb{Z}}^{2}\right)S^{t}_{\zeta}\left(S^{[n-1]}\right)^{t} \vec{1}\\
    &=\left(1-c_1 \cdot \max_{v\in GR(\zeta)} \|\omega|\xi(v)|_{\vec{s}}\|_{\mathbb{R}/\mathbb{Z}}^{2}\right) \left(S^{[n]}\right)^t\vec{1}
\end{split}
\end{equation}
Iterating this inequality yields the desired result.
\end{proof}
It is easy to see that for a function $f_{\alpha}^{(\ell)}(x,t)=  \mathbbm{1_{\mathfrak{X}^{(\ell)}_\alpha}}(x,t)\cdot \psi_{\alpha}^{(\ell)}(t)$ we have
\begin{equation}
    \label{expbound}
    \left|S_{R}^{(x,0)}(f,\omega)\right|= \left|\widehat{\psi_\alpha^{(\ell)}}(\omega)\right|\cdot \left|\Phi_{\alpha}^{\vec{s}^{(\ell)}}(x^{(\ell)}[0,N-1], \omega)\right| \leq \left\|f\right\|_{\infty} \cdot
    \left|\Phi_{\alpha}^{\vec{s}^{(\ell)}}(x^{(\ell)}[0,N-1], \omega)\right|, 
\end{equation}
where $R=\left|x^{(\ell)}[0,N-1]\right|_{\vec{s}}$.
Hence it is plausible that bounds for exponential sums could be converted to bounds for 
twisted Birkhoff integrals (sums) which in turn give local asymptotics for the associated spectral measures.

 To pass from bounds for exponential sums corresponding to substitution words $\zeta^{[\ell+1,k]}(\beta)$ to the ones corresponding to general words we need the following prefix-suffix lemma. Throughout, we will denote by $x^{(\ell)}$ the elements of the substitution space $X_{\sigma^{\ell}\mathrm{a}}$ and note that ($\mathfrak{X}^{\vec{s}}_{\mathrm{a}}, h_{t}), (\mathfrak{X}^{\vec{s}^{(\ell)}}_{\sigma^{\ell}\mathrm{a}}, h_{t})$ are isomorphic systems.

 \begin{lemma} (Lemma 3.6, \cite{MR3773061})  For any $x^{(\ell)}\in X_{\sigma^{\ell}\mathrm{a}}$ and $N \geq 1$ we have
 \begin{equation}
 \label{tower}
     x^{(\ell)}[0,N-1]=\zeta^{[\ell+1]}(u_{\ell+1})\zeta^{[\ell+1, \ell+2]}(u_{\ell+2})\dots\zeta^{[\ell+1,n]}(u_{n})\zeta^{[\ell+1,n]}(v_{n})\dots\zeta^{[\ell+1]}(v_{\ell+1})\,,
 \end{equation}
 where $x^{(\ell)}[0,N-1]$ denotes the word consisting in the first $N$ symbols in the symbolic representation of $x^{(\ell)}$ and $u_j, v_j$, $j=\ell+1,\dots,n$, are respectively proper suffixes and prefixes of words of the form $\zeta_{j+1}(\beta), \beta \in \mathcal{A}$. The words $u_j, v_j$ may be empty, except that at least one of $u_n,v_n$ is nonempty. Moreover, 
 \begin{equation}
     \label{suff}
     \underset{\beta \in \mathcal{A}}{\min}\, \left|\zeta^{[\ell+1,n]}(\beta)\right| \leq N \leq 2\, \underset{\beta \in \mathcal{A}}{\max} \,  \left|\zeta^{[\ell+1,n+1]}(\beta)\right|.
 \end{equation}
 \end{lemma}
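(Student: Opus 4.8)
The plan is to obtain \eqref{tower} by a greedy ``peeling'' of the finite word $x^{(\ell)}[0,N-1]$ against the nested tower of block decompositions induced by the substitutions, moving inward from the two ends. First I would reduce to $\ell=0$ by replacing $\mathrm{a}$ with $\sigma^{\ell}\mathrm{a}$ and $\vec{s}$ with $\vec{s}^{(\ell)}$, so that the super-words in play are the $\zeta^{[1,k]}(\beta)=\zeta_1\circ\cdots\circ\zeta_k(\beta)$ and the composed substitutions appearing in the statement are the $\zeta^{[1,j]}$. By the definition of $X_{\mathrm{a}}$, the word $x^{(0)}[0,N-1]$ occurs inside $\zeta^{[1,k]}(\beta)$ for every sufficiently large $k$; writing $\zeta^{[1,k]}(\beta)=\zeta^{[1,j]}\big(\zeta^{[j+1,k]}(\beta)\big)$, this occurrence carries, for each $1\le j\le k$, a decomposition into \emph{level-$j$ blocks} $\zeta^{[1,j]}(\gamma)$, with every level-$(j+1)$ cut point also a level-$j$ cut point. (In the S-adic setting at hand one may moreover desubstitute canonically, so the level-$j$ block structure is intrinsic to the point; this is what lets the outermost pieces of the decomposition bottom out cleanly.)

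Next I would pin down the level $n$: let $n$ be the largest level such that the occurrence of $x^{(0)}[0,N-1]$ contains a complete level-$n$ block (well defined for $N$ large, which is all that matters here). Since the occurrence contains such a block, $N\ge\min_{\beta}|\zeta^{[1,n]}(\beta)|$; by maximality it contains no complete level-$(n+1)$ block, hence it meets at most two consecutive level-$(n+1)$ blocks, and therefore $N\le 2\max_{\beta}|\zeta^{[1,n+1]}(\beta)|$. This is exactly \eqref{suff}.

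I would then carry out the peeling. Cutting $x^{(0)}[0,N-1]$ at its leftmost and rightmost level-$n$ cut points, the middle piece is a concatenation of complete level-$n$ blocks, $\zeta^{[1,n]}(W)$, where $W$ is a factor of $\zeta_{n+1}(\beta')\zeta_{n+1}(\beta'')$, the concatenation of the (at most two) level-$(n+1)$ blocks the occurrence meets; splitting $W$ at the intermediate level-$(n+1)$ cut point writes $W=u_nv_n$ with $u_n$ a proper suffix of $\zeta_{n+1}(\beta')$ and $v_n$ a proper prefix of $\zeta_{n+1}(\beta'')$, at least one of them nonempty by the choice of $n$. The left leftover is a proper suffix of a single level-$n$ block $\zeta^{[1,n]}(\gamma)=\zeta^{[1,n-1]}\big(\zeta_n(\gamma)\big)$, so its complete level-$(n-1)$ sub-blocks form $\zeta^{[1,n-1]}(u_{n-1})$ with $u_{n-1}$ a proper suffix of $\zeta_n(\gamma)$, and the remainder is again a proper suffix of one level-$(n-1)$ block. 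Iterating down through levels $n-1,\dots,1$ produces $u_{n-1},\dots,u_1$, each a proper suffix of a word of the form $\zeta_{j+1}(\cdot)$, and the mirror-image procedure on the right leftover produces $v_{n-1},\dots,v_1$, each a proper prefix of such a word. Reading the pieces left to right yields \eqref{tower}.

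The hard part will not be conceptual but will be the bookkeeping at the two extreme ends: I must check that the downward recursion terminates correctly at level $1$ — that the final left and right leftovers are empty, or are reabsorbed into $u_1$ and $v_1$ with $\zeta_2(\cdot)$ as the host word — which is exactly the point where the alignment of $x^{(\ell)}$ with the cut-point hierarchy (canonical desubstitution) is used, and I must track at every level which $\zeta_{j+1}(\beta)$ each $u_j$ and $v_j$ is a suffix or prefix of, so that the exponents $\zeta^{[1,j]}$ in \eqref{tower} match up. Both points are routine but notation-heavy once the nested block picture is fixed; everything else is immediate from the definitions.
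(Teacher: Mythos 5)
First, a point of comparison: the paper does not prove this lemma at all — it is imported verbatim from Bufetov--Solomyak \cite{MR3773061} — so your attempt can only be measured against the standard peeling argument, which is indeed the one you outline. Your reduction to $\ell=0$, the nestedness of the level-$j$ cut points inside a single occurrence of $x^{(\ell)}[0,N-1]$ in some $\zeta^{[\ell+1,k]}(\beta)$ (recognizability is not even needed for mere existence of a decomposition, since any one occurrence will do), the choice of $n$, and the derivation of \eqref{suff} (the window contains a complete level-$n$ block and meets at most two consecutive level-$(n+1)$ blocks) are all correct.

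The gap is exactly at the step you set aside as routine bookkeeping, and ``canonical desubstitution'' does not repair it. Recognizability makes the block hierarchy of $x^{(\ell)}$ well defined, but it does not align position $0$ with a level-$1$ cut point: for a general point, position $0$ lies strictly inside some block $\zeta_{\ell+1}(\gamma)$, and then your downward recursion terminates on the left with a nonempty leftover that is a proper suffix of $\zeta_{\ell+1}(\gamma)$ \emph{as a plain word}; in general it is not of the form $\zeta_{\ell+1}(u_{\ell+1})$ for any word $u_{\ell+1}$ (for instance, whenever all the words $\zeta_{\ell+1}(\beta)$ begin with the same letter, every nonempty piece $\zeta^{[\ell+1,j]}(u_j)$ or $\zeta^{[\ell+1,j]}(v_j)$ begins with that letter, so if $x^{(\ell)}[0]$ is a different letter the displayed identity is simply impossible). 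The same happens on the right, and no choice of occurrence or splitting reabsorbs these pieces. The honest output of your peeling is $x^{(\ell)}[0,N-1]=u_{\ell}\,\zeta^{[\ell+1]}(u_{\ell+1})\cdots\zeta^{[\ell+1,n]}(u_n)\,\zeta^{[\ell+1,n]}(v_n)\cdots\zeta^{[\ell+1]}(v_{\ell+1})\,v_{\ell}$, with two additional level-$\ell$ terms $u_\ell,v_\ell$ that are a proper suffix and prefix of words $\zeta_{\ell+1}(\beta)$; this is also the version the paper actually uses afterwards (the sum in Proposition~\ref{goodbound} starts at $j=\ell$, not $\ell+1$, exactly to accommodate such pieces), so your proof should state and prove that form rather than assert the leftovers vanish. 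Two smaller defects of the same nature: when the window meets only one level-$(n+1)$ block, the middle word is an interior factor of $\zeta_{n+1}(\beta')$, so $u_n,v_n$ need not literally be a suffix and a prefix (only the length bound $|u_n|+|v_n|\le 2\max_\beta|\zeta_{n+1}(\beta)|$ survives, which is all that is used later); and for $N$ so small that the window contains no complete level-$1$ block your $n$ is undefined and the identity degenerates to a single bare word, so the claim ``for all $N\ge 1$'' also needs the level-$\ell$ convention above.
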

 
 Note that as $u_{j}, v_{j}$ are prefixes and suffixes of the words $\zeta_{j+1}(\beta)$ for some $\beta \in \mathcal{A}$ their lengths do not exceed $\|S_{j+1}\|_{1}$ and therefore the previous lemma and proposition yield the following
 
 \begin{prop}
 \label{goodbound}
  Under the assumptions of Proposition~\ref{mainbound}, for any $\ell \geq 1$, $\alpha \in \mathcal{A}, N \in \mathbb{N}, \vec{s}> \vec{0},$ and $\omega \in \mathbb{R}$, we have,
   \begin{equation}
    \begin{split}
    \left|\Phi_{\alpha}^{\vec{s}^{(\ell)}}(x^{(\ell)}[0,N-1], \omega)\right| &\leq 
    \|S^{[\ell+1,n]}\|_{1} (\left|u_n\right|+\left|v_n\right|) \\
    &\times \prod_{\ell +1 \leq k \leq n-1} \left(1-c_{1} \cdot \underset{v\in GR(\zeta)}{\max} \| \omega |\zeta^{[k]}(v)|_{\vec{s}}\|^{2}_{\mathbb{R}/\mathbb{Z}}\right)\\
    &+2 \sum_{j=\ell}^{n-1} \|S^{[\ell+1,j]}\|_{1} \cdot \|S_{j+1}\|_{1} \\
    &\times \prod_{\ell +1 \leq k \leq j-1} \left(1-c_{1} \cdot \underset{v\in GR(\zeta)}{\max} \| \omega |\zeta^{[k]}(v)|_{\vec{s}}\|^{2}_{\mathbb{R}/\mathbb{Z}}\right) \,,
   \end{split}
  \end{equation}
 where $u_n, v_n$ are as in \ref{tower}, $c_1$ is as in Proposition \ref{mainbound}, and $n\in \mathbb{N}$ is such that \eqref{suff} holds. 
 \end{prop}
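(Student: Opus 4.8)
The plan is to deduce the estimate for a general initial word $x^{(\ell)}[0,N-1]$ from the bound for substitution words in Proposition~\ref{mainbound}, by feeding the prefix--suffix decomposition \eqref{tower} into the quasi-additivity of the exponential sums $\Phi$. The only inputs beyond Proposition~\ref{mainbound} and the prefix--suffix Lemma are an elementary concatenation identity for $\Phi$ and the fact that the hypotheses of Proposition~\ref{mainbound} are stable under the shift $\sigma^{\ell}$.

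First I would record the concatenation identity: if $w=w^{(1)}w^{(2)}\cdots w^{(r)}$ is a concatenation of finite words, then, directly from the definition of $\Phi_\alpha^{\vec{s}}$ and the additivity of the tiling length $|\cdot|_{\vec{s}}$ under concatenation,
\[
\Phi_\alpha^{\vec{s}}(w,\omega)=\sum_{i=1}^{r}\exp\!\bigl(-2\pi\imath\,\omega\,|w^{(1)}\cdots w^{(i-1)}|_{\vec{s}}\bigr)\,\Phi_\alpha^{\vec{s}}(w^{(i)},\omega),
\]
so in particular $\bigl|\Phi_\alpha^{\vec{s}}(w,\omega)\bigr|\le\sum_{i=1}^{r}\bigl|\Phi_\alpha^{\vec{s}}(w^{(i)},\omega)\bigr|$. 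Applying this to \eqref{tower} bounds $\bigl|\Phi_\alpha^{\vec{s}^{(\ell)}}(x^{(\ell)}[0,N-1],\omega)\bigr|$ by the sum over $j=\ell+1,\dots,n$ of $\bigl|\Phi_\alpha^{\vec{s}^{(\ell)}}(\zeta^{[\ell+1,j]}(u_j),\omega)\bigr|+\bigl|\Phi_\alpha^{\vec{s}^{(\ell)}}(\zeta^{[\ell+1,j]}(v_j),\omega)\bigr|$.

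Next, for each block I would split $u_j=b_1\cdots b_{|u_j|}$ into its letters and apply the concatenation identity once more, with $w^{(i)}=\zeta^{[\ell+1,j]}(b_i)$, to get $\bigl|\Phi_\alpha^{\vec{s}^{(\ell)}}(\zeta^{[\ell+1,j]}(u_j),\omega)\bigr|\le\sum_{i}\bigl|\Phi_\alpha^{\vec{s}^{(\ell)}}(\zeta^{[\ell+1,j]}(b_i),\omega)\bigr|$, and similarly for $v_j$. Since $\zeta_k=\zeta\xi_k\zeta$ for every $k\ge 1$, the shifted sequence $\sigma^{\ell}\mathrm{a}$ (with substitutions $\zeta_{\ell+1},\zeta_{\ell+2},\dots$ and roof vector $\vec{s}^{(\ell)}$) satisfies the hypotheses of Proposition~\ref{mainbound} with the same $\zeta$ and $Q$; applying that proposition to $\zeta^{[\ell+1,j]}(b)$ gives
\[
\bigl|\Phi_\alpha^{\vec{s}^{(\ell)}}(\zeta^{[\ell+1,j]}(b),\omega)\bigr|\le\|S^{[\ell+1,j]}\|_1\prod_{\ell+1\le k\le j-1}\Bigl(1-c_1\max_{v\in GR(\zeta)}\bigl\|\omega\,|\zeta^{[\ell+1,k]}(v)|_{\vec{s}^{(\ell)}}\bigr\|_{\mathbb{R}/\mathbb{Z}}^{2}\Bigr).
\]
Because $S^{[\ell]}S^{[\ell+1,k]}=S^{[k]}$ and $\vec{s}^{(\ell)}=(S^{[\ell]})^{t}\vec{s}$, one has $|\zeta^{[\ell+1,k]}(v)|_{\vec{s}^{(\ell)}}=|\zeta^{[k]}(v)|_{\vec{s}}$, so the product above is exactly the one appearing in the statement. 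Summing over the letters gives $\bigl|\Phi_\alpha^{\vec{s}^{(\ell)}}(\zeta^{[\ell+1,j]}(u_j),\omega)\bigr|\le|u_j|\,\|S^{[\ell+1,j]}\|_1\prod_{\ell+1\le k\le j-1}(\cdots)$, and likewise for $v_j$. Finally, I would single out the top level $j=n$, which contributes $(|u_n|+|v_n|)\,\|S^{[\ell+1,n]}\|_1\prod_{\ell+1\le k\le n-1}(\cdots)$ and yields the first summand; for $\ell+1\le j\le n-1$ the words $u_j,v_j$ are proper prefixes/suffixes of some $\zeta_{j+1}(\beta)$, hence $|u_j|,|v_j|\le\|S_{j+1}\|_1$, and summing over $j$ produces the second summand $2\sum_j\|S^{[\ell+1,j]}\|_1\,\|S_{j+1}\|_1\prod_{\ell+1\le k\le j-1}(\cdots)$ (extending the range to start at $j=\ell$ only enlarges the right-hand side).

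I do not expect a serious obstacle here; the content is essentially organizational. The two points that require a little care are: (i) verifying that Proposition~\ref{mainbound} may legitimately be applied to the shifted data $(\sigma^{\ell}\mathrm{a},\vec{s}^{(\ell)})$, and tracking the identity $|\zeta^{[\ell+1,k]}(v)|_{\vec{s}^{(\ell)}}=|\zeta^{[k]}(v)|_{\vec{s}}$ so that the resulting product is expressed in terms of $\vec{s}$ and $\zeta^{[k]}$ as in the statement rather than the shifted quantities; and (ii) noting that although Proposition~\ref{mainbound} is stated for $\omega\in[0,1]$, its proof only uses $\|\,\cdot\,\|_{\mathbb{R}/\mathbb{Z}}$ and elementary trigonometric inequalities and so holds for every $\omega\in\mathbb{R}$, which is what is needed here.
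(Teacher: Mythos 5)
Your proposal is correct and follows essentially the same route the paper intends (and which it inherits from Bufetov--Solomyak): decompose $x^{(\ell)}[0,N-1]$ by the prefix--suffix Lemma, use subadditivity of $\Phi$ under concatenation, bound each block letter-by-letter via Proposition~\ref{mainbound} applied to the shifted data $(\sigma^{\ell}\mathrm{a},\vec{s}^{(\ell)})$, and use $|u_j|,|v_j|\le\|S_{j+1}\|_{1}$ together with the identity $|\zeta^{[\ell+1,k]}(v)|_{\vec{s}^{(\ell)}}=|\zeta^{[k]}(v)|_{\vec{s}}$. The paper leaves these details implicit, so your write-up is simply a fuller version of the same argument.
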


\begin{prop}
\label{genbound}
Let $\vec{s}\in \mathbb{R}_{+}^{d}$. For every $\mathrm{a} \in \Omega'_{\mathbf{q}}$, and for all $\ell \geq \ell_0(\mathrm{a})$, and any bounded cylindrical function $f^{(\ell)}$ of level $\ell$, for any $(x,t) \in \mathfrak{X}^{\vec{s}}_{\mathrm{a}}$, with $x \in X_{\mathrm{a}}$, and $\omega \in \mathbb{R}$, for all $R\geq e^{4\theta_1 \ell}$, we have
 \begin{equation}
 \label{prb}
 \begin{split}
     \left|S_{R}^{(x,t)}(f^{(\ell)}, \omega)\right| &\leq C(\vec{s}, Q) \cdot \|f^{(\ell)}\|_{\infty} 
     \\ &\times \left(R^{1/2}+ R\prod_{\ell+1 \leq k \leq \frac{\log(R)}{2\theta_1}} \left(1-c_{1}.\underset{v \in GR(\zeta)}{\max}\|\omega |\zeta^{[k]}(v)|_{\vec{s}}\|^{2}_{\mathbb{R}/\mathbb{Z}} \right)\right)\,.
\end{split}
 \end{equation}
 
\end{prop}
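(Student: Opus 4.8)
The plan is to convert the twisted integral into an exponential sum, feed it into (a slightly refined form of) Proposition~\ref{goodbound}, and then trade the matrix norms appearing there for powers of $R$ using the growth estimate built into the definition of $\Omega'_{\mathbf{q}}$. Throughout I would work in the isomorphic suspension $(\mathfrak{X}^{\vec{s}^{(\ell)}}_{\sigma^{\ell}\mathrm{a}},h_{t})$, in which $f^{(\ell)}$ becomes a bounded cylindrical function of level $0$; the sequence $\sigma^{\ell}\mathrm{a}$ still has all its substitutions of the form $\zeta\xi_{j}\zeta$ with $\zeta=\zeta(\mathbf{q})$, $Q=S_{\zeta}$ strictly positive and $GR(\zeta)\neq\emptyset$ (since $\mathrm{a}$ comes from $\Omega_{\mathbf{q}}$), so Propositions~\ref{mainbound}--\ref{goodbound} apply verbatim to it.

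First I would reduce to a base point: writing $(x,t)=h_{\tau}(x',0)$ with $(x',0)$ on the transversal and $0\le\tau\le\max_{\alpha}s_{\alpha}^{(\ell)}$, the cocycle identity for twisted integrals gives $|S_{R}^{(x,t)}(f^{(\ell)},\omega)|\le|S_{R+\tau}^{(x',0)}(f^{(\ell)},\omega)|+\|f^{(\ell)}\|_{\infty}\max_{\alpha}s_{\alpha}^{(\ell)}$. Since $\mathrm{a}\in\Omega'_{\mathbf{q}}$ forces $\|S^{[\ell]}\|_{1}\le e^{(1+1/8)\theta_{1}\ell}$ and $\max_{\alpha}s_{\alpha}^{(\ell)}\le C(\vec{s})\|S^{[\ell]}\|_{1}$, the hypothesis $R\ge e^{4\theta_{1}\ell}$ makes $\max_{\alpha}s_{\alpha}^{(\ell)}\le C(\vec{s})R^{9/32}\le C(\vec{s})R^{1/2}$, so this term is absorbed into the $R^{1/2}$ in \eqref{prb}, and it remains to bound $|S_{R'}^{(x',0)}(f^{(\ell)},\omega)|$ for $R'\le 2R$. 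Choosing $N$ with $|x^{(\ell)}[0,N-1]|_{\vec{s}^{(\ell)}}\le R'<|x^{(\ell)}[0,N]|_{\vec{s}^{(\ell)}}$ and summing \eqref{expbound} over $\alpha$ (absorbing one more partial tower of the same harmless size), one gets $|S_{R'}^{(x',0)}(f^{(\ell)},\omega)|\le C(\vec{s})\|f^{(\ell)}\|_{\infty}R^{1/2}+\|f^{(\ell)}\|_{\infty}\sum_{\alpha}s_{\alpha}^{(\ell)}|\Phi_{\alpha}^{\vec{s}^{(\ell)}}(x^{(\ell)}[0,N-1],\omega)|$, using $|\widehat{\psi_{\alpha}^{(\ell)}}(\omega)|\le\|f^{(\ell)}\|_{\infty}s_{\alpha}^{(\ell)}$. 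Applying the prefix--suffix lemma together with Proposition~\ref{mainbound}, and noting that $|w|_{\vec{s}^{(\ell)}}=|\zeta^{[\ell]}(w)|_{\vec{s}}$ turns the level-$\ell$ products into products over $\ell+1\le k$ of $\rho_{k}:=1-c_{1}\max_{v\in GR(\zeta)}\|\omega|\zeta^{[k]}(v)|_{\vec{s}}\|_{\mathbb{R}/\mathbb{Z}}^{2}\in(0,1]$, I would bound $|\Phi_{\alpha}^{\vec{s}^{(\ell)}}(x^{(\ell)}[0,N-1],\omega)|$ by a sum over renormalization levels $\ell+1\le j\le n$ of terms $\le(|u_{j}|+|v_{j}|)\,\|S^{[\ell+1,j]}\|_{1}\prod_{\ell+1\le k\le j-1}\rho_{k}$, with $n$ as in \eqref{suff} (this is the variant of Proposition~\ref{goodbound} that retains $|u_{j}|+|v_{j}|$ in place of $\|S_{j+1}\|_{1}$, which follows from the same two ingredients).

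Next I would trade the matrix norms for powers of $R$. There are two ingredients: (i) \eqref{suff} with $\mathrm{a}\in\Omega'_{\mathbf{q}}$ gives $R\asymp_{Q,\vec{s}}\|S^{[n]}\|_{1}$, hence $n$ is comparable to $\log R/\theta_{1}$ and in particular $n-1\ge\log R/(2\theta_{1})$ for $R$ large, so one may replace $n-1$ by $\log R/(2\theta_{1})$ in the products; (ii) the block $\zeta^{[\ell+1,j]}(w)$ sitting inside $x^{(\ell)}[0,N-1]$ has tiling length $\asymp_{Q,\vec{s}}|w|\,\|S^{[\ell+1,j]}\|_{1}\,\|S^{[\ell]}\|_{1}$, because $S^{[\ell+1,j]}=Q(\cdots)Q$ has all columns comparable, so for any set of levels $\sum_{j}s_{\alpha}^{(\ell)}(|u_{j}|+|v_{j}|)\|S^{[\ell+1,j]}\|_{1}\lesssim_{Q,\vec{s}}\sum_{j}(\text{tiling length of the level-}j\text{ blocks})\le R'$. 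Splitting the range of $j$ at $\log R/(2\theta_{1})$: on the tail every $\prod_{\ell+1\le k\le j-1}\rho_{k}\le\prod_{\ell+1\le k\le\log R/(2\theta_{1})}\rho_{k}$ while the tiling lengths sum to $\le R'$, yielding a contribution $\lesssim_{Q,\vec{s}}\|f^{(\ell)}\|_{\infty}R\prod_{\ell+1\le k\le\log R/(2\theta_{1})}\rho_{k}$; on the head one bounds each $\prod\rho_{k}$ by $1$ and uses $s_{\alpha}^{(\ell)}\|S^{[\ell+1,j]}\|_{1}\asymp_{Q}\|S^{[j]}\|_{1}\le e^{(1+1/8)\theta_{1}j}$ together with $\ell\le\log R/(4\theta_{1})$ to sum the resulting geometric series and bound the head by $C(\vec{s},Q)\|f^{(\ell)}\|_{\infty}R^{1/2}$ — the slack in $R\ge e^{4\theta_{1}\ell}$ being precisely what keeps the exponent at $1/2$, possibly after a finer subdivision of the range or at the cost of enlarging $C(\vec{s},Q)$ (for $R$ bounded, \eqref{prb} is trivial by enlarging that constant). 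Adding the head, the tail and the three $R^{1/2}$-size errors produces \eqref{prb}.

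The main obstacle is exactly this last numerical step: one has to see that the decay factor $\prod_{\ell+1\le k\le\log R/(2\theta_{1})}\rho_{k}$ genuinely survives into the final bound, which hinges on the fact that almost all of the tiling length of $x^{(\ell)}[0,N-1]$ is carried by renormalization levels \emph{beyond} $\log R/(2\theta_{1})$ — precisely where the twisting product is at its smallest — so that even the trivial estimate on the tail already produces the claimed decay, while simultaneously the low-level remainder (together with the partial-tower errors) must be kept below $R^{1/2}$. Carrying the level shift between $\sigma^{\ell}\mathrm{a}$ and $\mathrm{a}$ through all the exponents, and checking that $S^{[\ell+1,j]}=Q(\cdots)Q$ forces all its columns to be comparable up to a constant depending only on $Q$, are the secondary, routine, bookkeeping points.
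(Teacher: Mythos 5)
Your reductions --- passing to the base point of the tower, replacing $R$ by $R'=|x^{(\ell)}[0,N-1]|_{\vec s^{(\ell)}}$, and absorbing the resulting errors of size $s^{(\ell)}_{\max}\le s_{\max}\|S^{[\ell]}\|_{1}\le s_{\max}e^{2\theta_1\ell}\le s_{\max}R^{1/2}$ thanks to $\mathrm a\in\Omega'_{\mathbf q}$ and $R\ge e^{4\theta_1\ell}$ --- coincide with the paper's, as does the use of the prefix--suffix decomposition together with Proposition~\ref{mainbound}, and your tail estimate via tiling lengths is sound. The genuine gap is exactly at the step you flag: bounding the ``head'' ($j\le\log R/(2\theta_1)$) by discarding the decay products does not give $O(R^{1/2})$. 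The head is controlled only by $\|S^{[j+1]}\|_{1}$ up to constants depending on $Q$, and on $\Omega'_{\mathbf q}$ one merely has $\|S^{[j]}\|_{1}\le e^{(1+1/8)\theta_1 j}$, so at the cut $j\approx\log R/(2\theta_1)$ the head can be as large as $R^{9/16}$; enlarging $C(\vec s,Q)$ cannot repair a wrong power of $R$, and lowering the cut to $\approx \tfrac{4}{9}\log R/\theta_1$ so that the head becomes $O(R^{1/2})$ shortens the product in the main term. Either way you prove a statement weaker than \eqref{prb} (which would still suffice, with altered constants, for Theorem~\ref{QVC}, but is not the proposition as stated).

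The missing idea is the paper's term-by-term comparison with the \emph{top} term of the prefix--suffix sum: every factor of the product (your $\rho_k$) is at least $3/4$, while \eqref{lj} gives $3\|S^{[\ell+1,j]}\|_{1}\le\|S^{[\ell+1,j+1]}\|_{1}$ and \eqref{expret} gives $\|S^{[\ell+1,j]}\|_{1}\|S_{j+1}\|_{1}\le col(Q^{t})\|S^{[\ell+1,j+1]}\|_{1}$, so the $j$-th term is at most $C(Q)\left(4/9\right)^{n-j}$ times $\|S^{[\ell+1,n]}\|_{1}\prod_{\ell+1\le k\le n-1}\rho_k$. Summing the geometric series, the full decay product (up to $n-1$, with $n\ge\log R/(2\theta_1)$ coming from $R\le 2s_{\max}col(Q^{t})\|S^{[n+1]}\|_{1}\le e^{2n\theta_1}$) multiplies the \emph{entire} exponential sum, so that piece contributes only to the $R\prod(\cdots)$ term of \eqref{prb}, and the $R^{1/2}$ is needed solely for the base-point and truncation errors that you already handled correctly. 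With that replacement of your head/tail split, your argument becomes the paper's proof.
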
  
 
 \begin{proof}
As $S_{R}^{(y)}(\cdot,\omega)$ is linear on the first coordinate, we may assume w.l.o.g.  that 
$f^{(\ell)}=f^{(\ell)}_{\alpha}=\mathbbm{1_
{\mathfrak{X}^{(\ell)}_\alpha}}(x,t) \cdot \psi_{
\alpha}^{(\ell)}(t)$. The isomorphism between 
$\mathfrak{X}^{\vec{s}}_\mathrm{a}$ and  
$\mathfrak{X}^{\vec{s}^{(\ell)}}_\mathrm{\sigma^{\ell}\mathrm{a}}$ implies the existence of $x^{(\ell)} \in X_{\sigma^{\ell}\mathrm{a}}$ and $t' \in [0,s^{(\ell)}_{\max}]$ such that $h_{t'}(x^{(\ell)},0)=(x,t)$. Then
\begin{equation}
    \left|S_{R}^{(x,t)}(f^{(\ell)}_{\alpha}, \omega)\right|= \left|\int_{0}^{R} e^{-2\pi\imath \omega \tau} f^{(\ell)}_{\alpha} \circ h_{\tau+ t'}(x^{(\ell)},0)d\tau \right|= \left|\int_{t'}^{R+t'} e^{-2\pi\imath\omega \tau} f^{(\ell)}_{\alpha} \circ h_{\tau}(x^{(\ell)},0) d\tau \right|
\end{equation}
We let $s^{(\ell)}_{\max}$ and $s^{(\ell)}_{\min}$ be the maximal and minimal components of $\vec{s}^{(\ell)}=(S^{[\ell]})^{t}\vec{s}$ and remark that $t' \leq s^{(\ell)}_{\max}$. Therefore,
\begin{equation}
    \left|S_{R}^{(x,t)}(f^{(\ell)}_{\alpha}, \omega) - S_{R}^{(x',0)}(f^{(\ell)}_{\alpha}, \omega)\right| \leq 2 \|f^{(\ell)}_{\alpha}\|_{\infty}s^{(\ell)}_{\max}.
\end{equation}
 Now we let $N$ be the biggest natural number such that $R':=|x^{(\ell)}[0,N-1]|_{\vec{s}^{(\ell)}} \leq R$. Then $R-R' \leq s^{(\ell)}_{\max}$ and we get
 \begin{equation}
     \left|S_{R}^{(x^{(\ell)},0)}(f^{(\ell)}_{\alpha}, \omega) -S_{R'}^{(x^{(\ell)},0)}(f^{(\ell)}_{\alpha}, \omega)\right| \leq \|f^{(\ell)}_{\alpha}\|_{\infty}s^{(\ell)}_{\max}.
 \end{equation}
 Since $\mathrm{a} \in \Omega'_{\mathbf{q}}$ we have
 \begin{equation}
     \left|\frac{\log\|S^{[\ell]}\|_{1}}{\ell} - \theta_{1}\right| \leq \frac{\theta_{1} }{8}, \indent \forall \ell \geq \ell_{0}(\mathrm{a})\,.
 \end{equation}
 Then since $(S^{[\ell]})^{t}\vec{s}=\vec{s}^{(\ell)}$ we derive the following bounds:
\begin{equation}
    s^{(\ell)}_{\max}\leq s_{\max}\|S^{[\ell]}\|_{1} \leq s_{\max}e^{2\theta_{1}\ell} \leq s_{\max}R^{1/2}.
\end{equation}
where we denote by $s_{\min}$ and $s_{\max}$ the smallest and the largest entry of the vector $\vec{s}$. Thus, the first term in the right-hand side of \eqref{prb} bounds the cumulative errors due to changing $R$. Therefore the proof of Proposition~\ref{genbound} is concluded by the following proposition.
 \end{proof}
\begin{prop}
 Assume that the assumptions of Proposition~\ref{genbound} are satisfied. Then for all $\ell \geq \ell_{0}= \ell_{0}(\mathrm{a})$, and any bounded cylindrical function $f^{(\ell)}$ of level $\ell$, and any $x^{(\ell)} \in X_{\sigma^{\ell}\mathrm{a}}$
\begin{equation}
\label{goobbbb}
     \left|S_{R}^{(x^{(\ell)},0)}(f^{(\ell)}, \omega)\right| \leq C(\vec{s}, Q)\cdot \|f^{(\ell)}\|_{\infty} R \prod_{\ell+1 \leq k \leq 
     \frac{\log(R)}{2\theta_1}} 
     \Big(1-c_{1} \cdot\underset{v \in 
     GR(\zeta)}{\max}\|\omega |\zeta^{[k]}(v)|_{\vec{s}}\|^{2}_{\mathbb{R}/\mathbb{Z}} \Big)
\end{equation}
 whenever
 \begin{equation}
     R=|x^{(\ell)}[0,N-1]|_{\vec{s}^{(\ell)}} \geq e^{4\theta_1 \ell}.
 \end{equation}
\end{prop}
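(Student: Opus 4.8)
The plan is to combine the reduction \eqref{expbound}, the bound of Proposition~\ref{goodbound}, and two structural inputs — the uniform $col$-balance of all partial products of the renormalization cocycle, and the Oseledets-type regularity built into membership in $\Omega'_{\mathbf q}$ — in order to collapse the (a priori complicated) right-hand side of Proposition~\ref{goodbound} into the single clean term $C(\vec{s},Q)\,\|f^{(\ell)}\|_\infty R\prod_{\ell+1\le k\le \log(R)/2\theta_1}\bigl(1-c_1\max_{v\in GR(\zeta)}\|\omega|\zeta^{[k]}(v)|_{\vec{s}}\|^2_{\mathbb{R}/\mathbb{Z}}\bigr)$. First, by linearity of $S_R^{(\cdot)}(\cdot,\omega)$ in the observable we may assume $f^{(\ell)}=f^{(\ell)}_\alpha=\mathbbm{1}_{\mathfrak{X}^{(\ell)}_\alpha}\cdot\psi^{(\ell)}_\alpha$, and then by \eqref{expbound} it suffices to bound $|\Phi^{\vec{s}^{(\ell)}}_\alpha(x^{(\ell)}[0,N-1],\omega)|$, where $R=|x^{(\ell)}[0,N-1]|_{\vec{s}^{(\ell)}}$. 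Applying Proposition~\ref{goodbound} to the word $x^{(\ell)}[0,N-1]$, writing $p_k:=1-c_1\max_{v\in GR(\zeta)}\|\omega|\zeta^{[k]}(v)|_{\vec{s}}\|^2_{\mathbb{R}/\mathbb{Z}}\in[1-c_1/4,1]$ and letting $n$ be as in \eqref{suff}, one gets $|\Phi^{\vec{s}^{(\ell)}}_\alpha(x^{(\ell)}[0,N-1],\omega)|\le T_{\mathrm{main}}+T_{\mathrm{sum}}$ with $T_{\mathrm{main}}=\|S^{[\ell+1,n]}\|_1(|u_n|+|v_n|)\prod_{\ell+1\le k\le n-1}p_k$ and $T_{\mathrm{sum}}=2\sum_{j=\ell}^{n-1}\|S^{[\ell+1,j]}\|_1\,\|S_{j+1}\|_1\prod_{\ell+1\le k\le j-1}p_k$; the whole task is to show each of $T_{\mathrm{main}},T_{\mathrm{sum}}$ is $\le C(\vec{s},Q)R\prod_{\ell+1\le k\le M}p_k$ with $M:=\lfloor\log(R)/2\theta_1\rfloor$.

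The structural input is that, since $\zeta_m=\zeta\xi_m\zeta$, every partial product $S^{[i,j]}$ ($1\le i\le j$) has $Q=S_\zeta$ as both a left and a right factor, so by \eqref{col} one has $col(S^{[i,j]})\le col(Q)$ and $col((S^{[i,j]})^t)\le col(Q^t)$; I will use the consequences (i) $\|AB\|_1\ge col(Q^t)^{-1}\|A\|_1\|B\|_1$ whenever $A$ is such a partial product, (ii) $\min_\beta|\zeta^{[i,j]}(\beta)|\ge col(Q^t)^{-1}\|S^{[i,j]}\|_1$, together with $s^{(\ell)}_{\min}\ge col(Q^t)^{-1}s_{\min}\|S^{[\ell]}\|_1$ and $s^{(\ell)}_{\max}\le s_{\max}\|S^{[\ell]}\|_1$, and (iii) $\|S_m\|_1\ge\|Q\|_1^2/col(Q^t)$ and (by sandwiching $Q$, using that $Q>0$ is an integer matrix and $d\ge2$) $\|Q\|_1\ge col(Q^t)+d-1>col(Q^t)$. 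Membership in $\Omega'_{\mathbf q}$ gives $e^{7\theta_1 m/8}\le\|S^{[m]}\|_1\le e^{9\theta_1 m/8}$ for $m\ge\ell_0$, which with (i) yields $\|S^{[\ell+1,m]}\|_1\asymp_Q e^{\theta_1(m-\ell)}$ up to a factor sub-exponential in $m,\ell$, and in particular $\|S_{n+1}\|_1\le C(Q)e^{\theta_1 n/4}$; combined with $s^{(\ell)}_{\max}\le s_{\max}\|S^{[\ell]}\|_1$ and \eqref{suff} this forces $\log R\le C(\vec{s},Q)+\tfrac{11}{8}\theta_1 n+\tfrac14\theta_1\ell$, while the hypothesis $R\ge e^{4\theta_1\ell}$ forces $n\ge\tfrac{30}{11}\ell-C(\vec{s},Q)$, and together these imply $M\le n-1$ once $\ell$ exceeds an absolute threshold (for the finitely many remaining $\ell$ the difference $M-n$ is bounded by an absolute constant, which is absorbed into $C(\vec{s},Q)$). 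Finally, $R\ge\min_\beta|\zeta^{[\ell+1,n]}(\beta)|\,s^{(\ell)}_{\min}\ge col(Q^t)^{-2}s_{\min}\|S^{[\ell+1,n]}\|_1$ gives $\|S^{[\ell+1,n]}\|_1\le (col(Q^t)^2/s_{\min})R$.

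For $T_{\mathrm{main}}$ I will argue that the block $\zeta^{[\ell+1,n]}(u_nv_n)$ occurs inside $x^{(\ell)}[0,N-1]$ (from \eqref{tower}), so $R\ge(|u_n|+|v_n|)\min_\beta|\zeta^{[\ell+1,n]}(\beta)|\,s^{(\ell)}_{\min}\ge(s_{\min}/col(Q^t)^2)(|u_n|+|v_n|)\|S^{[\ell+1,n]}\|_1$; together with $\prod_{\ell+1\le k\le n-1}p_k\le\prod_{\ell+1\le k\le M}p_k$ (valid because $M\le n-1$) this gives $T_{\mathrm{main}}\le C(\vec{s},Q)R\prod_{\ell+1\le k\le M}p_k$. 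For $T_{\mathrm{sum}}$, set $a_j:=\|S^{[\ell+1,j]}\|_1\prod_{\ell+1\le k\le j-1}p_k$; by (i) and (iii), $a_{j+1}/a_j\ge col(Q^t)^{-1}\|S_{j+1}\|_1(1-c_1/4)\ge\rho$, where $\rho:=\|Q\|_1^2\,col(Q^t)^{-2}(1-c_1/4)$, and the explicit value $c_1=(2d\max_{\alpha\beta}Q_{\alpha\beta}\,col(Q^t))^{-1}\le1/4$ together with $\|Q\|_1>col(Q^t)$ from the previous paragraph gives $\rho>1$ (a short elementary estimate); moreover each summand equals $a_j\|S_{j+1}\|_1$, and by (i) again $\|S_{j+1}\|_1\le col(Q^t)\|S^{[\ell+1,j+1]}\|_1/\|S^{[\ell+1,j]}\|_1$, whence the summand is $\le (col(Q^t)/(1-c_1/4))a_{j+1}$. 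Therefore $T_{\mathrm{sum}}\le\frac{2col(Q^t)}{1-c_1/4}\sum_{j=\ell+1}^{n}a_j\le\frac{2col(Q^t)}{1-c_1/4}\cdot\frac{\rho}{\rho-1}\,a_n$ by geometric summation, and since $a_n=\|S^{[\ell+1,n]}\|_1\prod_{\ell+1\le k\le n-1}p_k\le(col(Q^t)^2/s_{\min})R\prod_{\ell+1\le k\le M}p_k$ this is $\le C(\vec{s},Q)R\prod_{\ell+1\le k\le M}p_k$. Adding the two bounds and tracking the constant proves the proposition.

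I expect the main obstacle to be the $T_{\mathrm{sum}}$ estimate: the point is that the partial products grow geometrically along the S-adic filtration, $a_{j+1}/a_j\ge\rho>1$, which is precisely what allows the sum to be replaced by its last term $a_n\asymp R\prod p_k$ — this is the gain that removes the spurious $R^{1/2}$ still present in Proposition~\ref{genbound}. Making this rigorous requires both the uniform $col$-balance (so that submultiplicativity of the $\|\cdot\|_1$-norm is reversible up to a constant depending only on $Q$) and the somewhat delicate size bookkeeping needed to deduce $\lfloor\log(R)/2\theta_1\rfloor\le n-1$ from $R\ge e^{4\theta_1\ell}$ and the $\Omega'_{\mathbf q}$ regularity; the verification that $\rho>1$ holds unconditionally (not merely for a carefully chosen loop $\mathbf q$) rests on the explicit formula for $c_1$ and on $Q$ being strictly positive.
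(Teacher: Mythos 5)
Your argument is, in substance, the paper's own proof: the same reduction to a single cylindrical piece, the same appeal to Proposition~\ref{goodbound}, the same mechanism for controlling the prefix--suffix sum (reverse submultiplicativity of $\|\cdot\|_1$ up to $col(Q^t)$, available because every partial product begins and ends with $Q$, so the terms grow geometrically and the sum is comparable to its top term), and the same endgame relating $n$, $R$ and $\ell$ through the $\Omega'_{\mathbf q}$ regularity and $R\ge e^{4\theta_1\ell}$ so that the product up to $\lfloor \log R/2\theta_1\rfloor$ dominates the product up to $n-1$. The differences are bookkeeping rather than a different route: you extract a ratio $\rho>1$ from $\|S_m\|_1\ge \|Q\|_1^2/col(Q^t)$ and the explicit value of $c_1$, where the paper uses $3\|S^{[\ell+1,j]}\|_1\le \|S^{[\ell+1,j+1]}\|_1$ together with the factors being at least $3/4$; and you pass through $\|S_{n+1}\|_1\le C(Q)e^{\theta_1 n/4}$, where the paper bounds $R\le 2s_{\max}col(Q^t)\|S^{[n+1]}\|_1\le e^{2n\theta_1}$ directly.

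One step needs repair. You reduce via \eqref{expbound} to showing $|\Phi^{\vec s^{(\ell)}}_\alpha(x^{(\ell)}[0,N-1],\omega)|\le C(\vec s,Q)\,R\prod_k p_k$, but the last inequality in \eqref{expbound} is misstated in the paper: one only has $|\widehat{\psi}^{(\ell)}_\alpha(\omega)|\le \|\psi^{(\ell)}_\alpha\|_1\le \|f^{(\ell)}\|_\infty\, s^{(\ell)}_{\max}$, and $s^{(\ell)}_{\max}\sim \|S^{[\ell]}\|_1$ is unbounded in $\ell$, so a bound on $|\Phi_\alpha|$ alone of the form $CR\prod p_k$ does not yield the proposition. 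Your written estimates discard exactly the compensating growth: in the main-term step you lower bound $s^{(\ell)}_{\min}$ by the constant $s_{\min}/col(Q^t)$, and in the sum-term step you use $\|S^{[\ell+1,n]}\|_1\le (col(Q^t)^2/s_{\min})R$, so once the factor $s^{(\ell)}_{\max}$ is restored you are short by a factor of order $s^{(\ell)}_{\max}$. The fix is immediate and uses only inequalities you already list: since $S^{[\ell]}$ ends with $Q$, one has $s^{(\ell)}_{\max}\le col(Q^t)\,s^{(\ell)}_{\min}$, whence
\begin{equation*}
 s^{(\ell)}_{\max}\,(|u_n|+|v_n|)\,\|S^{[\ell+1,n]}\|_1\;\le\; C_5\,col(Q^t)\,N\,s^{(\ell)}_{\min}\;\le\; C_5\,col(Q^t)\,R\,,
\end{equation*}
which is precisely how the paper absorbs the $\widehat{\psi}$ factor, and the same substitution ($s^{(\ell)}_{\max}\|S^{[\ell+1,n]}\|_1\le C_5\,col(Q^t)\,R$) repairs your bound for $T_{\mathrm{sum}}$ via $a_n$. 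With that correction your proposal closes and coincides with the paper's argument.
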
 

\begin{proof}
 Again, without loss of generality, by linearity we may and do assume that the cylindrical function $f^{(\ell)}=f^{(\ell)}_{\alpha}$, where $f^{(\ell)}_{\alpha}(x,t)= \psi_{\alpha}^{(\ell)}(t) \cdot \mathbbm{1}_{\mathfrak{X}^{(\ell)}_{\alpha}}$, hence
 \begin{equation}
     S^{(x',0)}_{R}(f_{\alpha}^{(\ell)}, \omega)= \widehat{\psi}^{(\ell)}_{\alpha}(\omega)\cdot \Phi_{\alpha}^{\vec{s}^{(\ell)}}(x^{(\ell)}[0,N-1], \omega). 
 \end{equation}
Note that
\begin{equation}
    |\widehat{\psi}_{\alpha}^{(\ell)}(\omega)| \leq \|\psi_{\alpha}^{(\ell)}\|_1 \leq \|\psi_{\alpha}^{(\ell)}\|_{\infty} s_{\alpha}^{(\ell)} \leq \|f_{\alpha}^{(\ell)}\|_{\infty} s_{\max}^{(\ell)}\,,
\end{equation}
and by Proposition~\ref{goodbound} we have
\begin{equation}
\label{ingobound}
    \begin{split}
    \left|\Phi_{\alpha}^{\vec{s}^{(\ell)}}(x^{(\ell)}[0,N-1], \omega)\right| &\leq 
    \|S^{[\ell+1,n]}\|_{1} (\left|u_n\right|+\left|v_n\right|) \\
    &\times \prod_{\ell +1 \leq k \leq n-1} \left(1-c_{1} \cdot \underset{v\in GR(\zeta)}{\max} \| \omega |\zeta^{[k]}(v)|_{\vec{s}}\|^{2}_{\mathbb{R}/\mathbb{Z}}\right)\\
    &+2 \sum_{j=\ell}^{n-1} \|S^{[\ell+1,j]}\|_{1} \cdot \|S_{j+1}\|_{1} \\
    &\times \prod_{\ell +1 \leq k \leq j-1} \left(1-c_{1} \cdot \underset{v\in GR(\zeta)}{\max} \| \omega |\zeta^{[k]}(v)|_{\vec{s}}\|^{2}_{\mathbb{R}/\mathbb{Z}}\right) \,,
   \end{split}
\end{equation}
where 
\begin{equation}
    \label{Nb}
     \underset{\beta \in \mathcal{A}}{\min}\, |\zeta^{[\ell+1,n]}(\beta)| \leq N \leq 2\, \underset{\beta \in \mathcal{A}}{\max} \,  |\zeta^{[\ell+1,n+1]}(\beta)|.
\end{equation}
Since all the renormalization matrices begin and end with $Q=S_{\zeta}$ we obtain
\begin{equation}
    \label{lj}
    3\|S^{[\ell+1,j]}\|_{1} \leq\|S^{[\ell+1,j]}QS_{\xi_{j+1}}Q\|_{1} = \|S^{[\ell+1,j+1]}\|_{1}
\end{equation}
Notice that for two matrices $A,B$ with positive entries we have
\begin{equation}
\label{1b}
     col(A^{t})^{-1}\|A\|_{1}\|B\|_{1} \leq \|AB\|_{1} \leq  \|A\|_{1}\|B\|_{1}.
\end{equation}
Thus, \ref{1b} applied to $S^{[\ell+1, j]}$ and $S_{j+1}$ for $\ell \leq j \leq n-1$ gives
\begin{equation}
\label{expret}
    \|S^{[\ell+1, j]}\|_{1} \|S_{j+1}\|_{1} \leq col(Q^{t})\|S^{[\ell+1, j+1]}\|_{1} 
\end{equation}
Therefore, \ref{expret} and successive application of \ref{lj} yield the following
\begin{equation}
     \|S^{[\ell+1, j]}\|_{1} \|S_{j+1}\|_{1} \leq C_3\frac{\|S^{[\ell+1, n]}\|_{1}}{3^{n-j}}
\end{equation}
where $C_3=3 col(Q^{t}).$ Also note that the elements in the product in \ref{goobbbb} are at least $3/4$ and therefore
\begin{equation}
\begin{split}
    &\|S^{[\ell+1, j]}\|_{1} \|S_{j+1}\|_{1} \prod_{\ell +1 \leq k \leq j-1} \left(1-c_{1}.\underset{v\in GR(\zeta)}{\max} \| \omega |\zeta^{[k]}(v)|_{\vec{s}}\|^{2}_{\mathbb{R}/\mathbb{Z}}\right)\\
    & \leq C_3 (\frac{4}{9})^{(n-j)} \|S^{[\ell+1,n]}\|_{1} \prod_{\ell +1 \leq k \leq n-1} \left(1-c_{1}.\underset{v\in GR(\zeta)}{\max} \| \omega |\zeta^{[k]}(v)|_{\vec{s}}\|^{2}_{\mathbb{R}/\mathbb{Z}}\right),
\end{split}
\end{equation}
and so
\begin{equation}
\begin{split}
2 \sum_{j=\ell}^{n-1} \|S^{[\ell+1,j]}\|_{1} \cdot \|S_{j+1}\|_{1} & \prod_{\ell +1 \leq k \leq j-1} \left(1-c_{1}.\underset{v\in GR(\zeta)}{\max} \| \omega |\zeta^{[k]}(v)|_{\vec{s}}\|^{2}_{\mathbb{R}/\mathbb{Z}}\right) \\
& \leq C_4\|S^{[\ell+1,n]}\|_{1} \prod_{\ell +1 \leq k \leq n-1} \left(1-c_{1}.\underset{v\in GR(\zeta)}{\max} \| \omega |\zeta^{[k]}(v)|_{\vec{s}}\|^{2}_{\mathbb{R}/\mathbb{Z}}\right),
\end{split}
\end{equation}
where $C_4=4C_3 \sum_{i=0}^{\infty}\left(\frac{4}{9}\right)^{i}$ only depends on the word $\mathbf{q}$. So the top term in \ref{ingobound} dominates the sum up to a constant.

In order to conclude the proof we need to find appropriate bounds for $\|S^{[\ell+1,n]}\|_{1}$ and $n$ in terms of $R$.
As $R=|x^{(\ell)}[0,N-1]|_{s^{(\ell)}}$ it can be readily seen that
\begin{equation}
\label{Rbs}
  Ns^{(\ell)}_{\min} \leq R \leq Ns^{(\ell)}_{\max}.
\end{equation}
Again, from the fact that all renormalization matrices end with $Q=S_\zeta$ we derive that
\begin{equation}
    s^{(\ell)}_{\max} \leq col(Q^{t}) \cdot s^{(\ell)}_{\min}\,,
\end{equation}
hence~\eqref{Nb} implies that 
\begin{equation}
   col(Q^{t})^{-1} \cdot\|S^{[\ell+1,n]}\|_{1} \leq N \,,
\end{equation}
and therefore
\begin{equation}
\label{Rb}
col(Q^{t})^{-2} \cdot \|S^{[\ell+1,n]}\|_{1} \cdot s^{(\ell)}_{\max}\leq  N s^{(\ell)}_{\min} \leq R,
\end{equation}

\begin{equation}
    \min_{\beta \in \mathcal{A}} \left| \zeta^{[\ell+1,n]}(\beta)\right| \left(\left|u_n\right|+\left|v_n\right|\right) \leq N.
\end{equation}
As $S^{[\ell+1, n]}$ is both column-balanced and row-balanced i.e.,
\begin{equation}
    \max\left\{ col(S^{[\ell+1,n]}), col((S^{[\ell+1,n]})^{t}) \right\} \leq \max\left\{ col(Q), col(Q^{t}) \right\},
\end{equation}
there exists a constant $C_5>0$ only depending on $\mathbf{q}$ such that
\begin{equation}
    \|S^{[\ell+1, n]}\|_1 \leq C_5  \min_{\beta \in \mathcal{A}} \left| \zeta^{[\ell+1,n]}(\beta)\right|.
\end{equation}
Thus,
\begin{equation}
     \|S^{[\ell+1, n]}\|_1 s^{(\ell)}_{\max}\left(\left|u_n\right|+\left|v_n\right|\right) \leq C_5N s^{(\ell)}_{\max} \leq C_6 R
\end{equation}
for some constant $C_6>0$.  
On the other hand, \eqref{Nb}, \eqref{Rbs} and \eqref{1b}   yield
\begin{equation}
   R \leq Ns^{(\ell)}_{\max} \leq 2\|S^{[\ell+1,n+1]}\|_{1}\|S^{[\ell]}\|_{1}s_{\max} \leq 2s_{\max}col(Q^{t})\|S^{[n+1]}\|_{1}.
\end{equation}
By assumption $\mathrm{a} \in \Omega'_{\mathbf{q}}$, we get
\begin{equation}
    R \leq 2s_{\max}col(Q^{t})e^{(n+1)\theta_1(1+1/2)}\leq e^{2n\theta_1}.
\end{equation}
and therefore $\log(R)/2\theta_1 \leq n$ and the proof concludes.
\end{proof}
In the following lemma, we establish the transition from cylindrical functions to general weakly Lipschitz functions.

\begin{lemma}
\label{limbound}
Let $\vec{s} \in \mathbb{R}_{+}^{d}$, $\gamma \in (0,1)$, $f\in 
Lip_{w}(\mathfrak{X}^{\vec{s}}_{\mathrm{a}})$, and $\mathrm{a}\in \Omega'_{\mathbf{q}}$. Let $B<C$ be two positive 
reals and assume that, for all $\ell \geq \ell(\mathrm{a}, \vec{s}, 
B,C)$, and all bounded cylindrical functions $f^{(\ell)}$ of level $\ell$ we have
\begin{equation}
\label{lbound}
    \left|S_{R}^{(x,t)}(f^{(\ell)}, \omega)\right| \leq C(\mathrm{a}) \|f^{(\ell)}\|_{\infty} \cdot  R^{1-\gamma}\,,
\end{equation}
for all $\omega \in [B,C]$ and $R \geq e^{\gamma^{-1}\theta_1 \ell}$. Then, for any weakly Lipschitz function $f$ on $\mathfrak{X}_{\mathrm{a}}^{\vec{s}}$, we have
\begin{equation}
    \left|S^{(x,t)}_{R}(f, \omega)\right| \leq \Tilde{C}(\mathrm{a})\|f\|_{L} \cdot  R^{1-\gamma}\,,
\end{equation}
for all $\omega \in [B,C]$ and $R \geq R(\mathrm{a}, \vec{s}, B, C):=e^{\gamma^{-1}\theta_1 \ell_2},$ where $\ell_2:=\max\{\ell_{0}(\mathrm{a}), \ell(\mathrm{a}, \vec{s}, B, C) \}$.

\end{lemma}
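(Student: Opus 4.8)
The plan is to approximate $f$ by cylindrical functions of growing level and to exploit the uniform exponential growth of the norms $\|S^{[\ell]}\|_{1}$ built into the definition of $\Omega'_{\mathbf{q}}$. For each $\ell\ge 1$ let $f^{(\ell)}$ be the cylindrical truncation of $f$ at level $\ell$: on each tower $\mathfrak{X}^{(\ell)}_{\alpha}=\zeta^{[\ell]}[\alpha]\times[0,s^{(\ell)}_{\alpha}]$ replace the horizontal slice $x\mapsto f(x,t)$ by its $\mu$-average over $\zeta^{[\ell]}[\alpha]$, i.e. take $\psi^{(\ell)}_{\alpha}(t)=\mu(\zeta^{[\ell]}[\alpha])^{-1}\int_{\zeta^{[\ell]}[\alpha]}f(x,t)\,d\mu(x)$. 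Then $f^{(\ell)}$ is a bounded cylindrical function of level $\ell$ with $\|f^{(\ell)}\|_{\infty}\le\|f\|_{\infty}\le\|f\|_{L}$, and the defining estimate \eqref{Lip} of $Lip_{w}$ gives $\|f-f^{(\ell)}\|_{\infty}\le\tilde C\,\max_{\alpha}\mu(\zeta^{[\ell]}[\alpha])$. Since every substitution in the sequence $\mathrm{a}$ begins and ends with $\mathbf{q}$, all the matrices $S^{[\ell]}$ are uniformly column- and row-balanced (cf. \eqref{col}, \eqref{1b}), whence $\max_{\alpha}\mu(\zeta^{[\ell]}[\alpha])\le C(\mathbf{q})\,\|S^{[\ell]}\|_{1}^{-1}$; and $\mathrm{a}\in\Omega'_{\mathbf{q}}$ forces $\|S^{[\ell]}\|_{1}\ge e^{(7/8)\theta_{1}\ell}$ for all $\ell\ge\ell_{0}(\mathrm{a})$. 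Hence $\|f-f^{(\ell)}\|_{\infty}\le C_{\ast}\|f\|_{L}\,e^{-(7/8)\theta_{1}\ell}$ for $\ell\ge\ell_{0}(\mathrm{a})$, and the telescoping increments $g_{j}:=f^{(j)}-f^{(j-1)}$ are bounded cylindrical functions of level $j$ with $\|g_{j}\|_{\infty}\le 2C_{\ast}\|f\|_{L}\,e^{-(7/8)\theta_{1}j}$.

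Write $f=f^{(\ell_{2})}+\sum_{j>\ell_{2}}g_{j}$; the series converges uniformly, so by dominated convergence $S_{R}^{(x,t)}(f,\omega)=S_{R}^{(x,t)}(f^{(\ell_{2})},\omega)+\sum_{j>\ell_{2}}S_{R}^{(x,t)}(g_{j},\omega)$ for every $R$. Fix $R\ge e^{\gamma^{-1}\theta_{1}\ell_{2}}$ and put $J:=\lfloor\gamma\,\theta_{1}^{-1}\log R\rfloor\ge\ell_{2}$. For $\ell_{2}\le j\le J$ one has $R\ge e^{\gamma^{-1}\theta_{1}j}$, so hypothesis \eqref{lbound} applies verbatim to the cylindrical function $g_{j}$ (and to $f^{(\ell_{2})}$) and yields $|S_{R}^{(x,t)}(g_{j},\omega)|\le C(\mathrm{a})\|g_{j}\|_{\infty}R^{1-\gamma}\le 2C(\mathrm{a})C_{\ast}\|f\|_{L}\,e^{-(7/8)\theta_{1}j}R^{1-\gamma}$. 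Summing this geometric series and adding the $f^{(\ell_{2})}$ term, the combined contribution of all levels $\le J$ is bounded by $C_{1}(\mathrm{a})\|f\|_{L}R^{1-\gamma}$, with $C_{1}(\mathrm{a})$ absorbing the convergent sum $\sum_{j>\ell_{2}}e^{-(7/8)\theta_{1}j}$.

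There remains the tail $\sum_{j>J}S_{R}^{(x,t)}(g_{j},\omega)=S_{R}^{(x,t)}(f-f^{(J)},\omega)$, on which \eqref{lbound} is no longer available; this is the crux of the proof. Here I would estimate each $S_{R}^{(x,t)}(g_{j},\omega)$ by Proposition~\ref{genbound} applied to the level-$j$ cylindrical function $g_{j}$ whenever $R\ge e^{4\theta_{1}j}$, and, for the very high levels with $R<e^{4\theta_{1}j}$, by the tower-counting bound — the flow of time $R$ meets at most $\lesssim R\|S^{[j]}\|_{1}^{-1}+1$ towers of level $j$, so $|S_{R}^{(x,t)}(g_{j},\omega)|\lesssim(R\|S^{[j]}\|_{1}^{-1}+1)\,s^{(j)}_{\max}\,\|g_{j}\|_{\infty}$; recall all the factors $1-c_{1}\max_{v\in GR(\zeta)}\|\omega|\zeta^{[k]}(v)|_{\vec{s}}\|^{2}$ of Proposition~\ref{genbound} lie in $[3/4,1]$, so only the trivial part of that bound is needed. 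Combined with the exponential gain $\|g_{j}\|_{\infty}\lesssim\|f\|_{L}\,e^{-(7/8)\theta_{1}j}$ and with $j>J\ge\gamma\,\theta_{1}^{-1}\log R$, the tail collapses to a geometrically convergent series, and I would argue it is bounded by $C_{2}(\mathrm{a})\|f\|_{L}R^{1-\gamma}$; then $\tilde C(\mathrm{a}):=C_{1}(\mathrm{a})+C_{2}(\mathrm{a})$ finishes the proof. The delicate bookkeeping is entirely in this last step: one must balance the exponential decay of $\|g_{j}\|_{\infty}$ against the polynomial size $R$ of the orbit segment along which $f-f^{(J)}$ is integrated, and it is precisely the lower bound $\|S^{[\ell]}\|_{1}\ge e^{(7/8)\theta_{1}\ell}$ (membership in $\Omega'_{\mathbf{q}}$) together with the natural threshold $R\ge e^{\gamma^{-1}\theta_{1}\ell_{2}}$ that must be used to make the two sides match.
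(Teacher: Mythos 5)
Your first half coincides with the paper's own argument: the cylindrical approximation $f^{(\ell)}$ (averaging over the fibre rather than sampling a point, as the paper does, makes no difference), the estimate $\|f-f^{(\ell)}\|_{\infty}\le C\,\|f\|_{L}\,e^{-\frac{7}{8}\theta_{1}\ell}$ via the invariance of the area and the balancedness coming from $\mathbf{q}$, and the use of the $\Omega'_{\mathbf{q}}$ condition to get $\|S^{[\ell]}\|_{1}\ge e^{\frac{7}{8}\theta_{1}\ell}$; the telescoping over levels $j\le J$ is a harmless elaboration of what the paper does with a single level. The genuine gap is the tail, which you yourself call the crux but do not close. With $J=\lfloor\gamma\theta_{1}^{-1}\log R\rfloor$, the estimates you invoke give nothing beyond the trivial bound $|S_{R}^{(x,t)}(g_{j},\omega)|\le R\,\|g_{j}\|_{\infty}$: using ``only the trivial part'' of Proposition~\ref{genbound} means bounding the product by $1$, hence no gain, and your tower-counting expression $(R\|S^{[j]}\|_{1}^{-1}+1)\,s^{(j)}_{\max}\,\|g_{j}\|_{\infty}$ is of order $R\|g_{j}\|_{\infty}$ when $\|S^{[j]}\|_{1}\lesssim R$, while for $\|S^{[j]}\|_{1}\gg R$ it can be as large as $s^{(j)}_{\max}\|g_{j}\|_{\infty}\approx e^{\frac{9}{8}\theta_{1}j}e^{-\frac{7}{8}\theta_{1}j}\|f\|_{L}=e^{\frac{1}{4}\theta_{1}j}\|f\|_{L}$, which grows with $j$. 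Summing the usable bound $R\|g_{j}\|_{\infty}\lesssim R\|f\|_{L}e^{-\frac{7}{8}\theta_{1}j}$ over $j>J$ gives $\lesssim\|f\|_{L}R^{1-\frac{7}{8}\gamma}$, and $R^{1-\frac{7}{8}\gamma}>R^{1-\gamma}$. So the asserted tail bound $C_{2}(\mathrm{a})\|f\|_{L}R^{1-\gamma}$ does not follow, and no bookkeeping can make it follow as long as you cap the levels at $J\le\gamma\theta_{1}^{-1}\log R$: the sup-norm approximation at that level only gains $R^{-\frac{7}{8}\gamma}$ (or $R^{-\frac{3}{4}\gamma}$ with the paper's constants), while $R^{-\gamma}$ is needed.

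The paper resolves this by taking a single approximation level \emph{above} your threshold, namely $\ell:=\lfloor 2\gamma\log R/\theta_{1}\rfloor$, so that the error term alone satisfies $R\|f-f^{(\ell)}\|_{\infty}\le\|f\|_{L}R\,e^{-\frac{3}{4}\theta_{1}\ell}\le\|f\|_{L}R^{1-\gamma}$, and then applies the cylindrical bound at that level. (The paper is itself a bit cavalier about the threshold $R\ge e^{\gamma^{-1}\theta_{1}\ell}$ for this larger $\ell$; the slack exists because, in the way the hypothesis is actually produced in Theorem~\ref{QVC}, the cylindrical estimate of Proposition~\ref{genbound} is available as soon as $R\ge e^{4\theta_{1}\ell}$, which the choice $\ell\approx 2\gamma\theta_{1}^{-1}\log R$ does satisfy for the small $\gamma$ in play.) So to complete your argument you must either apply the hypothesis with this extra room, i.e.\ at level $\approx 2\gamma\theta_{1}^{-1}\log R$, or settle for a degraded exponent such as $1-\frac{3}{4}\gamma$ in the conclusion; as written, your tail step fails to deliver $R^{1-\gamma}$.
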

\begin{proof} Let $f^{(\ell)}$ be such that 
\begin{equation}
    f^{(\ell)}(x,t):= f(x_{\alpha},t) \,,
\end{equation}
whenever $x \in X_{\sigma^{\ell} \mathrm{a}}$ belongs to $\zeta^{[\ell]}[\alpha]$ and $x_\alpha \in \zeta^{[\ell]}[\alpha]$ is arbitrarily chosen. Then
\begin{equation}
    \|f(x,t)-f^{(\ell)}(x,t)\|_{\infty} \leq \|f\|_{L} \cdot \underset{\alpha \in \mathcal{A}}{\max}\, \mu_{\mathrm{a}}(\zeta^{[\ell]}[\alpha]).
\end{equation}
and $\|f^{(\ell)}\|_{\infty} \leq \|f\|_{\infty}$. Since $Area(\mathrm{a}, \vec{s}):=\sum_{\alpha \in \mathcal{A}} \mu([\alpha])\cdot s_{\alpha}$ remains fixed under induction, we have
\begin{equation}
    \max_{\alpha \in \mathcal{A}} \mu_{\mathrm{a}}(\zeta^{[\ell]}[\alpha]) \cdot  col(Q^{t})^{-1} s_{\min} \|S^{[\ell]}\|_{1} \leq \max_{\alpha \in \mathcal{A}} \mu_{\mathrm{a}}(\zeta^{[\ell]}[\alpha]) \cdot s^{(\ell)}_{\min} \leq Area(\mathrm{a}, \vec{s})\,.
\end{equation}
Since $\mathrm{a} \in \Omega'_{\mathbf{q}}$, we then obtain, for all $\ell \geq \ell_{0}(\mathrm{a})$, the inequality
\begin{equation}
    \max_{\alpha \in \mathcal{A}} \mu_{\mathrm{a}}(\zeta^{[\ell]}[\alpha]) \leq s_{\min}^{-1}s_{\max} col(Q^{t}) e^{-\ell \theta_1 (1-1/8)}\,.
\end{equation}
Thus, for all $\ell \geq \ell_{0}(\mathrm{a})$, we have
\begin{equation}
\label{approx}
    \|f-f^{(\ell)}\|_{\infty} \leq \|f\|_{L} e^{-3\ell \theta_1/4}\,,
\end{equation}
hence, if we let $\ell_{1}(\mathrm{a}, \vec{s}, B, C):=\max\{\ell_{0}(\mathrm{a}), \ell(\mathrm{a}, \vec{s}, B, C)\}$ and $R(\mathrm{a}, \vec{s}, B, C):= e^{\gamma^{-1} \theta_1 \ell_1}$, and, for $R \geq R(\mathrm{a}, \vec{s}, B, C)$, we let 
\begin{equation}
     \ell:= \Big\lfloor{\frac{2\gamma \log R}{\theta_1}}\Big\rfloor \,,
\end{equation}
it is then immediate to see that $\ell \geq \ell_1$, and therefore, by \eqref{approx}, 
\begin{equation}
\label{funcapprox}
    \left|S_{R}^{(x,t)}(f, \omega)-S_{R}^{(x,t)}(f^{(\ell)}, \omega)\right| \leq R \cdot \|f\|_{L}e^{-3\ell \theta_1/4} \leq \|f\|_{L} R^{1-\gamma}\,,
\end{equation}
which together with \eqref{lbound} gives the desired result.
\end{proof}
\section{Quantitative Veech criterion}
\label{sec:eff_Veech}
Note that, as we assumed that $\vec{\ell}(v)$ for $v \in GR(\zeta)$ generate $\mathbb{Z}^{d}$, it can be easily seen that the pseudo-norms 
obtained by $\underset{v\in GR(\zeta)}{\max} \big\langle \vec{\ell}(v), \vec{x} \big\rangle$, where $\vec{x} \in \mathbb{R}^{d}$ is comparable to the pseudo-norm $\|\vec{x}\|_{\mathbb{R}^{d}/\mathbb{Z}^{d}}$. In fact, we have the following simple lemma.
\begin{lemma} Let $\{v_{j}\}_{j=1}^{k}$ be good return words for $\zeta$, such that $\{\vec{\ell}(v_{j})\}_{j=1}^{k}$ generate $\mathbb{Z}^{d}$. Then there exists a constant $C_{\zeta}>1$ such that
\begin{equation} 
    \label{norm}
    C_{\zeta}^{-1}\|\vec{x}\|_{\mathbb{R}^{d}/\mathbb{Z}^{d}} \leq \underset{1\leq j \leq k}{\max} \big\langle \vec{\ell}(v_{j}), \vec{x} \big\rangle \leq C_{\zeta} \|\vec{x}\|_{\mathbb{R}^{d}/\mathbb{Z}^{d}}.
\end{equation}
\end{lemma}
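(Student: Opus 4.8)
The plan is to prove both inequalities by elementary lattice arguments, reading the middle quantity as $\max_{1\le j\le k}\|\langle\vec\ell(v_j),\vec x\rangle\|_{\mathbb{R}/\mathbb{Z}}$ (distance to the nearest integer), which is how it occurs in Proposition~\ref{mainbound} and which is what makes the comparison with $\|\cdot\|_{\mathbb{R}^d/\mathbb{Z}^d}$ meaningful; since each $\vec\ell(v_j)\in\mathbb{Z}^d$, this quantity descends to a well-defined pseudo-norm on $\mathbb{R}^d/\mathbb{Z}^d$. Abbreviate $w_j:=\vec\ell(v_j)$ and $N(\vec x):=\max_{1\le j\le k}\|\langle w_j,\vec x\rangle\|_{\mathbb{R}/\mathbb{Z}}$, and fix any norm on $\mathbb{R}^d$ (the choice only affects constants).

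For the upper bound, pick $n\in\mathbb{Z}^d$ with $\|\vec x-n\|=\|\vec x\|_{\mathbb{R}^d/\mathbb{Z}^d}$. Since $\langle w_j,n\rangle\in\mathbb{Z}$, we get $\|\langle w_j,\vec x\rangle\|_{\mathbb{R}/\mathbb{Z}}\le|\langle w_j,\vec x-n\rangle|\le\|w_j\|\cdot\|\vec x-n\|$, hence $N(\vec x)\le(\max_j\|w_j\|)\,\|\vec x\|_{\mathbb{R}^d/\mathbb{Z}^d}$. This yields the right-hand inequality with any $C_\zeta\ge\max_j\|w_j\|$.

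For the lower bound I would exploit the hypothesis that $\{w_j\}$ generates $\mathbb{Z}^d$ as an abelian group: for each standard basis vector $e_i$ there exist integers $c_{ij}$ with $e_i=\sum_j c_{ij}w_j$, i.e.\ there is an integer matrix $C=(c_{ij})\in M_{d\times k}(\mathbb{Z})$ with $CW=I_d$, where $W$ is the $k\times d$ matrix whose $j$-th row is $w_j^{\mathrm t}$ (so $(W\vec x)_j=\langle w_j,\vec x\rangle$). Given $\vec x$, let $m_j\in\mathbb{Z}$ be a nearest integer to $\langle w_j,\vec x\rangle$ and set $m=(m_j)_j\in\mathbb{Z}^k$, so $\|W\vec x-m\|_\infty\le N(\vec x)$. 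Then $Cm\in\mathbb{Z}^d$ and $\vec x-Cm=CW\vec x-Cm=C(W\vec x-m)$, whence $\|\vec x\|_{\mathbb{R}^d/\mathbb{Z}^d}\le\|\vec x-Cm\|\le\sqrt{k}\,\|C\|_{\mathrm{op}}\,N(\vec x)$. Taking $C_\zeta:=\max\{2,\ \max_j\|w_j\|,\ \sqrt{k}\,\|C\|_{\mathrm{op}}\}$ then gives both estimates simultaneously.

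There is essentially no obstacle here; the only point worth making explicit is the translation of the group-theoretic hypothesis ``$\{\vec\ell(v_j)\}$ generates $\mathbb{Z}^d$'' into the algebraic fact that $W$ has an integer left inverse $C$, after which the bound is a one-line norm estimate. (Alternatively one could argue non-effectively: $N$ is a norm on the compact group $\mathbb{R}^d/\Lambda$ with $\Lambda=\{\vec x:\langle w_j,\vec x\rangle\in\mathbb{Z}\ \forall j\}$, and $\Lambda=\mathbb{Z}^d$ exactly because the $w_j$ generate $\mathbb{Z}^d$, so any two norms on this torus are comparable by compactness; but the matrix version has the advantage of producing an explicit $C_\zeta$ depending only on $\zeta$.)
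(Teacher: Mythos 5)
Your proof is correct, and it supplies details the paper itself omits: the authors state this as ``a simple lemma'' right after remarking that the quantities are comparable pseudo-norms, and give no proof. Your preliminary step of reading the middle quantity as $\max_{j}\|\langle \vec\ell(v_j),\vec x\rangle\|_{\mathbb{R}/\mathbb{Z}}$ is exactly the right (indeed necessary) interpretation --- as literally written the upper bound fails already for $\vec x\in\mathbb{Z}^{d}$ --- and it matches how the lemma is actually invoked in the displays that follow it, where everything is phrased in terms of $\|\cdot\|_{\mathbb{R}/\mathbb{Z}}$ and $\|\cdot\|_{\mathbb{R}^{d}/\mathbb{Z}^{d}}$. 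The two estimates themselves are the natural ones: the upper bound via the nearest lattice point and $|\langle w_j,\vec x-n\rangle|\le\|w_j\|\,\|\vec x-n\|$, and the lower bound via the translation of ``$\{\vec\ell(v_j)\}$ generates $\mathbb{Z}^{d}$'' into an integer left inverse $C$ of the matrix $W$, which turns the nearest-integer vector $m$ into a lattice point $Cm$ close to $\vec x$. Both steps are sound, and the resulting explicit constant $C_\zeta$ depends only on $\zeta$, as required. Nothing further is needed.
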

The following equality 
\begin{equation}
    \|\omega |\zeta^{[k]}(v)|_{\vec{s}}\|_{\mathbb{R}/\mathbb{Z}} =\| \omega  \big\langle \vec{\ell}(v), (S^{[k]})^{t} \big\rangle \|_{\mathbb{R}/\mathbb{Z}}= \|\big\langle \vec{\ell}(v), \mathbb{A}(k,\mathrm{a}) \omega\vec{s}\big\rangle\|_{\mathbb{R}/\mathbb{Z}}
\end{equation}
together with \eqref{norm} imply that
\begin{equation}
    \prod_{\ell+1 \leq k \leq \frac{\log R}{2\theta_1}}  \left(1-c_{1}\cdot \underset{v\in GR(\zeta)}{\max}\|\omega| \zeta^{[k]}(v)|_{\vec{s}}\|^{2}_{\mathbb{R}/\mathbb{Z}}\right) \leq \prod_{\ell+1 \leq k \leq \frac{\log R}{2\theta_1}}  \left(1-c'_{1}\cdot \|\mathbb{A}(k,\mathrm{a})(\omega\vec{s})\|^{2}_{\mathbb{R}^{d}/\mathbb{Z}^{d}}\right),
\end{equation}
for some constant $c'_{1}$ depending on $\zeta$. 

We now state a quantitative version of Veech criterion, already apparent in the work of Forni \cite{2019arXiv190811040F}, and explicitly introduced by Bufetov and Solomyak in \cite{2019arXiv190809347B}. Our version may be seen as a slight generalization of that of
\cite{2019arXiv190809347B}, in the sense that we do not require Oseledets regularity.
\begin{thm}
\label{QVC}
(Quantitative Veech Criterion) Let $\vec{s}\in \mathbb{R}_{+}^{d}$ and $\mathrm{a}\in \Omega'_{\mathbf{q}}$. Let $B<C$ be two fixed positive real numbers. Assume that there exist  $N(\mathrm{a}, \vec{s}, B,C) \in \mathbb N$  and $\epsilon>0$ such that
\begin{equation}
    \#\left\{1 \leq i \leq N: \|\mathbb{A}(i,\mathrm{a})(\omega \vec{s})\|_{\mathbb{R}^{d}/\mathbb{Z}^{d}}> \epsilon\right\} \geq \epsilon N, \indent \forall N \geq N(\mathrm{a}, \vec{s}, B,C)\,,
\end{equation}
uniformly for all $\omega \in [B,C]$.  Let 
\begin{equation}
    \gamma= \min\left\{\frac{\epsilon}{16}, \frac{-\epsilon \log(1-c'_{1}\epsilon^{2})}{8\theta_1}\right\} 
\end{equation}
and assume that $\mathrm{a}\in \Omega'_{\mathbf{q}}$.
Then there exists $R(\mathrm{a}, \vec{s}, B, C)>0$ such that, for every weakly Lipschitz function $f$ and every $(x,t) \in \mathfrak{X}^{\vec{s}}_{\mathrm{a}}$, we have
\begin{equation}
    \label{gammab}
    \left|S^{(x,t)}_{R}(f, \omega)\right| \leq \Tilde{C}(\mathrm{a})\|f\|_{L} \cdot  R^{1-\gamma}, \indent \forall \omega \in [B,C],\; \forall R \geq R(\mathrm{a}, \vec{s}, B, C)\,,
\end{equation}
which in turn yields the following H\"older bound
\begin{equation}
\sigma_{f}([\omega-r, \omega+r]) \leq C(\mathrm{a}) \|f\|^{2}_{L} \cdot r^{2\gamma}, \indent \forall \omega \in [B,C],\; 0<r\leq (2R(\mathrm{a}, \vec{s}, B, C))^{-1}\,.
\end{equation}
Moreover, $R(\mathrm{a}, \vec{s}, B, C)$ is given by $e^{\gamma^{-1} \ell_1 \theta_1}$ where $\ell_1:=
\max \{ \ell_{0}(\mathrm{a}), \lceil{2\gamma N(\mathrm{a},\vec{s}, B, C)\rceil+1)}\}$.
\end{thm}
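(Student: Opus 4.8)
The plan is to first prove the twisted--Birkhoff bound \eqref{gammab} for bounded cylindrical functions by feeding the density hypothesis into the product estimate of Proposition~\ref{genbound}, then to deduce it for general weakly Lipschitz $f$ via Lemma~\ref{limbound}, and finally to convert it into the H\"older bound for $\sigma_f$ via Lemma~\ref{Twisted Birkhoff Spectral}. Note first that $\gamma\in(0,1/16]$: indeed $\epsilon\le 1$ (otherwise the hypothesis is vacuous for $N\ge N(\mathrm a,\vec s,B,C)$), and $c_1'\epsilon^2\in(0,1)$ since $c_1'$ is a positive multiple of $c_1<1$, so $-\log(1-c_1'\epsilon^2)>0$ and the minimum defining $\gamma$ is a strictly positive number bounded by $\epsilon/16$.

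\emph{Step 1 (cylindrical estimate).} Fix $\ell\ge\ell_1$ and $R\ge e^{\gamma^{-1}\theta_1\ell}$, and put $M:=\lfloor\log R/(2\theta_1)\rfloor$. Since $\gamma\le 1/16<1/4$ we have $e^{\gamma^{-1}\theta_1\ell}\ge e^{4\theta_1\ell}$, so Proposition~\ref{genbound} applies and bounds $|S_R^{(x,t)}(f^{(\ell)},\omega)|$ by $C(\vec s,Q)\|f^{(\ell)}\|_\infty$ times $R^{1/2}$ plus $R$ times the product over $\ell+1\le k\le M$ of $\big(1-c_1\max_{v\in GR(\zeta)}\|\omega|\zeta^{[k]}(v)|_{\vec s}\|^2_{\mathbb R/\mathbb Z}\big)$; by the pseudo-norm comparison \eqref{norm} this product is at most $\prod_{\ell+1\le k\le M}\big(1-c_1'\|\mathbb A(k,\mathrm a)(\omega\vec s)\|^2_{\mathbb R^d/\mathbb Z^d}\big)$. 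Because $\ell\ge\ell_1\ge\lceil 2\gamma N(\mathrm a,\vec s,B,C)\rceil+1$ and $R\ge e^{\gamma^{-1}\theta_1\ell}$, one checks $M\ge\ell/(2\gamma)-1\ge N(\mathrm a,\vec s,B,C)$, so the hypothesis applies with $N$ replaced by $M$: at least $\epsilon M$ of the indices $i\in\{1,\dots,M\}$ satisfy $\|\mathbb A(i,\mathrm a)(\omega\vec s)\|_{\mathbb R^d/\mathbb Z^d}>\epsilon$, hence at least $\epsilon M-\ell$ of them lie in $\{\ell+1,\dots,M\}$. As each factor of the product lies in $(0,1]$ and each of these at least $\epsilon M-\ell$ factors is at most $1-c_1'\epsilon^2\in(0,1)$, the product is at most $(1-c_1'\epsilon^2)^{\epsilon M-\ell}$. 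Using $M\ge\log R/(2\theta_1)-1$, $\ell\le\gamma\log R/\theta_1$, and $\gamma\le\epsilon/16$ gives $\epsilon M-\ell\ge\tfrac{7\epsilon}{16\theta_1}\log R-\epsilon$, and then the bound $\gamma\le-\epsilon\log(1-c_1'\epsilon^2)/(8\theta_1)$ yields $(1-c_1'\epsilon^2)^{\epsilon M-\ell}\le C_0R^{-\gamma}$ for a constant $C_0=C_0(\epsilon,c_1')$. Combining with $R^{1/2}\le R^{1-\gamma}$ (valid as $\gamma<1/2$) gives $|S_R^{(x,t)}(f^{(\ell)},\omega)|\le C(\mathrm a)\|f^{(\ell)}\|_\infty R^{1-\gamma}$, uniformly over $\omega\in[B,C]$.

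\emph{Step 2 (passage to Lipschitz functions and to $\sigma_f$).} Step~1 is exactly the hypothesis of Lemma~\ref{limbound} with its threshold $\ell(\mathrm a,\vec s,B,C)$ chosen to be $\ell_1$; that lemma then gives $|S_R^{(x,t)}(f,\omega)|\le\tilde C(\mathrm a)\|f\|_L R^{1-\gamma}$ for every weakly Lipschitz $f$, all $\omega\in[B,C]$, and all $R\ge e^{\gamma^{-1}\theta_1\ell_1}=R(\mathrm a,\vec s,B,C)$, which is the asserted formula for the threshold. Since this bound is pointwise in $(x,t)$ and the flow-invariant measure on $\mathfrak X^{\vec s}_{\mathrm a}$ is finite, it upgrades to the $L^2$-bound $\|S_R^{(\cdot)}(f,\omega)\|_{L^2}\le C'\|f\|_L R^{1-\gamma}$ required in Lemma~\ref{Twisted Birkhoff Spectral}; applying that lemma with $\alpha=1-\gamma\in(0,1)$ converts it into $\sigma_f([\omega-r,\omega+r])\le C(\mathrm a)\|f\|_L^2 r^{2\gamma}$ for all $0<r\le(2R(\mathrm a,\vec s,B,C))^{-1}$ and $\omega\in[B,C]$.

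\emph{Main obstacle.} The difficulty is not conceptual but organizational: one must fix $\ell_1$ large enough that, for every admissible $R$, the window $\{\ell+1,\dots,M\}$ over which the product of contractions runs has length $M$ exceeding $N(\mathrm a,\vec s,B,C)$ (so the density hypothesis bites) and simultaneously dominates $\ell$ (so that a definite proportion of expanding indices survives truncation to that window), while keeping $\gamma$ small enough --- precisely the two explicit bounds in its definition --- that the resulting exponent inequalities $R^{1/2}\le R^{1-\gamma}$ and $(1-c_1'\epsilon^2)^{\epsilon M-\ell}\le C_0R^{-\gamma}$ close with room to spare. A secondary point is that every constant must remain uniform in $\omega\in[B,C]$, which is inherited from the uniformity assumed in the density hypothesis and preserved by Lemmas~\ref{limbound} and~\ref{Twisted Birkhoff Spectral}.
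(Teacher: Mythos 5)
Your proposal is correct and follows essentially the same route as the paper: it feeds the density hypothesis (via the pseudo-norm comparison \eqref{norm}) into the product bound of Proposition~\ref{genbound}, checks that the window $\{\ell+1,\dots,\lfloor\log R/2\theta_1\rfloor\}$ both exceeds $N(\mathrm{a},\vec{s},B,C)$ and dominates $\ell$ thanks to the choice of $\ell_1$ and the two constraints defining $\gamma$, and then invokes Lemma~\ref{limbound} and Lemma~\ref{Twisted Birkhoff Spectral} exactly as the paper does. The only differences are cosmetic bookkeeping (your exponent count $\epsilon M-\ell\geq\tfrac{7\epsilon}{16\theta_1}\log R-\epsilon$ versus the paper's $\ell\leq\epsilon N/4$ and $(1-c_1'\epsilon^2)^{\epsilon N/2}\leq e^{-4\gamma\theta_1 N}\leq R^{-\gamma}$), which do not change the argument.
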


\begin{proof} We proceed by showing that the conditions of 
Lemma~\ref{limbound} 
hold for a suitable choice of $\ell(\mathrm{a}, \vec{s}, B, C)$ given in terms of $N_{0}:=N(\mathrm{a}, \vec{s}, B, C)$. By Proposition~\ref{genbound}, for all $\ell \geq \ell_{0}(\mathrm{a})$ and all $R \geq e^{4\theta_1 \ell}$,
we have
\begin{equation} 
\label{bb}
\left|S_{R}^{(x,t)}(f^{(\ell)}, \omega)\right| \leq C(\vec{s}, Q)\cdot \|f^{(\ell)}\|_{\infty}\left( R^{1/2}+R \prod_{\ell+1 \leq k \leq \frac{\log(R)}{2\theta_1}} \left(1-c'_{1}\cdot\|\mathbb{A}(k,\mathrm{a})(\omega \vec{s})\|^{2}_{\mathbb{R}^{d}/\mathbb{Z}^{d}} \right)\right).
\end{equation}
 We let $\ell_{1}= \max\{\ell_{0}(\mathrm{a}),\lceil 2\gamma(N_0+1) \rceil \} $. Then, for all $\ell \geq \ell_{1}$ and for all $R \geq e^{\gamma^{-1}\theta_{1} \ell} \geq  e^{4\theta_1 \ell}$, we may take $N=\big\lfloor{\frac{\log R}{2\theta_1}} \big\rfloor \geq N_{0}$. Hence $\ell \leq \gamma \log(R)/ \theta_1 \leq \epsilon \log(R) /16\theta_1 \leq \epsilon N/4$.  We therefore obtain
\begin{equation}
\begin{split}
    \prod_{\ell+1 \leq k \leq \frac{\log(R)}{2\theta_1}} \left(1-c'_{1}\cdot\| \mathbb{A}(k,\mathrm{a})(\omega \vec{s})\|_{\mathbb{R}^{d}/\mathbb{Z}^{d}}^{2} \right) &\leq (1-c'_{1}\epsilon^{2})^{\epsilon N-\ell-1} \\  
    &\leq (1-c'_{1}\epsilon^{2})^{\epsilon N/2} 
    \leq e^{-4\gamma \theta_1 N}
    \leq R^{-\gamma}.
\end{split}
\end{equation}
Together with $\eqref{bb}$, and Lemma~\ref{limbound}, we derive \eqref{gammab}, which combined with Lemma~\ref{Twisted Birkhoff Spectral}, concludes the proof of the H\"older bound. 
\end{proof}

\section{Derivation of the main results}
\label{sec:main_results}
\subsection{Upper bound for non-rotation type IETs}
\label{upper bound for non-rotation type}
 Note that for $0<t<1$ the specific form of $R(\mathrm{a}, \vec{s}, t, 1-t)$, given in \eqref{QVC}, and the bound for $N(\mathrm{a}, \vec{s}, t, 1-t)$, given in \eqref{IETsp}, yield that we may replace the bound in \eqref{gammab} with the following bound:
\begin{equation}
    \left|S_{R}^{(x,t)}(f, \omega)\right| \leq \tilde{C}(\mathrm{a}) \|f\|_{L} \cdot t^{-2C\gamma\theta_1} R^{1-\gamma}\,,
\end{equation}
for all $\omega \in [t, 1-t]$ and all $R\geq 1$,  and for some constant $C>0$ not depending on the IET.

From the above bound and from Corollary~\ref{cor LD}, 
Theorem~\ref{IETsp}, Definition~\ref{defi}, and Theorem~\ref{QVC}, we can easily derive the following theorem (we take $\vec{s}$ to be equal to $h=(1,1,\dots,1)$):

\begin{thm} 
\label{twisbirk}
For every $d>3$ and any irreducible permutation $\pi$ of $\{1, \dots,d\}$, not of rotation class, there exist $\gamma>0$, $\beta>0$ and a measurable set $\Delta_{spec} \subset \Delta=\mathbb{P}_{+}^{d-1}$, whose complement has positive Hausdorff codimension, with the property that, for all $\lambda \in \Delta_{spec}$, there exists a constant $C_\lambda>0$ such that,  for every Lipschitz continuous function $f$ on the interval $I=[0,1)$, for every $t\in (0,1)$ and for all $\theta\in [t, 1-t]$,  we have 
\begin{equation}
    \left|\sum_{n=0}^{N-1} e^{2\pi\imath n\theta}f(T_{\lambda, \pi}^{n}(x))\right| \leq C_{\lambda}\|f\|_{L}\cdot t^{-\beta} N^{1-\gamma}, \:\; \; \quad \forall N \geq 1, 
\end{equation}
for all $x\in I$. The above bound holds uniformly for all $\theta \in [t,1-t]$, which implies that 
\begin{equation}
    \sigma_f([\theta-r, \theta+r]) \leq C'_{\lambda}\|f\|^{2}_{L} t^{-2\beta}\cdot r^{\gamma} \;\; \: \quad \forall r\leq 1/2.
\end{equation}
\end{thm}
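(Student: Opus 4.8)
The plan is to specialize the machinery of Sections~\ref{sec:exclusion}--\ref{sec:eff_Veech} to the constant roof vector $\vec s=h=(1,\dots,1)\in\mathbb R^{d}$: for this choice the suspension $(\mathfrak X^{h}_{\mathrm a},h_{t})$ has roof $\equiv 1$, so its first return map to the base is the IET $T_{\lambda,\pi}$ on $I=[0,1)$ and $h_{n}(x,0)=(T_{\lambda,\pi}^{n}x,0)$ for every integer $n$. Fix throughout the simple word $\mathbf q$ and an associated positive loop $\gamma$ in the Rauzy diagram so that, under the S-adic identification of Section~\ref{sec:twist_int}, the Rauzy--Veech cocycle induced on $\Delta=B_{\gamma}^{*}\cdot\mathbb P_{+}^{d-1}$ is the renormalization cocycle $\mathbb A(\cdot,\mathrm a)$; let $\theta_{1}$ be its top Lyapunov exponent.

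First I would build the exceptional set. Since $\pi$ is not of rotation class we have genus $g\ge 2$, so the cocycle restricted to $H(\pi)$ has at least two positive Lyapunov exponents, and the theorem quoted above (Theorem~5.1 of~\cite{MR2299743}) gives $L\cap E^{cs}(\lambda,\pi)=\emptyset$ for a.e.\ $\lambda$ and every line $L$ of direction $\Theta=\mathbb P_{+}^{d-1}$; this is exactly the hypothesis behind Theorem~\ref{IETsp}. Take $\Delta_{spec}:=\Delta^{\infty}\cap\Delta_{sp}\cap\{\lambda:\text{the S-adic coding of }\lambda\text{ lies in }\Omega'_{\mathbf q}\}$, where $\Delta^{\infty}$ is the set of parameters returning to $\Delta$ infinitely often, $\Delta_{sp}$ is the set of Theorem~\ref{IETsp}, and $\Omega'_{\mathbf q}$ is as in Definition~\ref{defi}. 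The first factor has complement of Hausdorff dimension $<d-1$ by Lemma~\ref{Hausret}; the third does by Theorem~\ref{HDP} together with the large deviation corollary following Corollary~\ref{cor LD} (applied to the cocycle induced on $\mathbf q.\mathbf q$), as noted in the remark there. Since a finite union of sets of dimension $<d-1$ again has dimension $<d-1$, the complement of $\Delta_{spec}$ has positive Hausdorff codimension.

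Next, fix $\lambda\in\Delta_{spec}$ with coding $\mathrm a\in\Omega'_{\mathbf q}$, and let $0<t\le\tfrac12$ (for $t>\tfrac12$ the interval $[t,1-t]$ is empty). For $\theta\in[t,1-t]$ we have $\min\{\theta,1-\theta\}\ge t$, hence $\max\{-\ln\theta,-\ln(1-\theta)\}\le-\ln t$; since in Theorem~\ref{IETsp} the set $\Delta_{sp}$ and the function $K(\lambda)$ are independent of the twist, the bound $n(\theta,\lambda)\le C\max\{-\ln t,-\ln(1-t)\}+K(\lambda)=:N(\mathrm a,h,t,1-t)$ holds simultaneously for all $\theta\in[t,1-t]$. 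Thus the hypothesis of the Quantitative Veech Criterion (Theorem~\ref{QVC}) is satisfied with $\vec s=h$, $B=t$, $C=1-t$, this $N$, and the $\epsilon$ of Theorem~\ref{IETsp}; with $\gamma=\min\{\epsilon/16,\,-\epsilon\log(1-c_{1}'\epsilon^{2})/8\theta_{1}\}$ depending only on the Rauzy class, Theorem~\ref{QVC} yields $\bigl|S_{R}^{(x,t)}(f,\theta)\bigr|\le\tilde C(\mathrm a)\|f\|_{L}R^{1-\gamma}$ for all weakly Lipschitz $f$, all $\theta\in[t,1-t]$ and all $R\ge R(\mathrm a,h,t,1-t)=e^{\gamma^{-1}\ell_{1}\theta_{1}}$, $\ell_{1}=\max\{\ell_{0}(\mathrm a),\lceil2\gamma(N+1)\rceil\}$. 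Because $N(\mathrm a,h,t,1-t)$ is linear in $-\ln t$, this threshold satisfies $R(\mathrm a,h,t,1-t)\le C(\lambda)\,t^{-\beta_{0}}$ for some $\beta_{0}>0$ depending only on the Rauzy class; combining with the trivial estimate $|S_{R}|\le R\|f\|_{\infty}\le R^{1-\gamma}R(\mathrm a,h,t,1-t)^{\gamma}\|f\|_{L}$ in the range $1\le R<R(\mathrm a,h,t,1-t)$ extends the bound to all $R\ge1$ at the cost of a factor $t^{-\beta_{0}\gamma}$. This is precisely the ``replacement bound'' $|S_{R}^{(x,t)}(f,\theta)|\le\tilde C(\mathrm a)\|f\|_{L}t^{-\beta}R^{1-\gamma}$ recorded before the theorem, valid for all $R\ge1$, all $x$, all $\theta\in[t,1-t]$, with $\beta>0$ depending only on the Rauzy class.

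Finally I would transfer this to the IET and to the spectral measure. Given a Lipschitz $f$ on $I$, its cylindrical extension $\hat f(x,\tau):=f(x)$ to $\mathfrak X^{h}_{\mathrm a}$ is weakly Lipschitz with $\|\hat f\|_{L}\le C\|f\|_{L}$, since each cylinder $\zeta^{[\ell]}[\alpha]$ is a subinterval of $I$ of Lebesgue measure $\mu_{\mathrm a}(\zeta^{[\ell]}[\alpha])$ and the oscillation of $f$ over it is at most $\mathrm{Lip}(f)\,\mu_{\mathrm a}(\zeta^{[\ell]}[\alpha])$. Using $h_{n}(x,0)=(T_{\lambda,\pi}^{n}x,0)$ one computes
\begin{equation*}
S_{N}^{(x,0)}(\hat f,\theta)=\Bigl(\int_{0}^{1}e^{-2\pi\imath\theta\tau}\,d\tau\Bigr)\sum_{n=0}^{N-1}e^{-2\pi\imath\theta n}f(T_{\lambda,\pi}^{n}x),\qquad\Bigl|\int_{0}^{1}e^{-2\pi\imath\theta\tau}\,d\tau\Bigr|=\frac{|\sin\pi\theta|}{\pi|\theta|}\ge\frac{2t}{\pi}\quad(\theta\in[t,1-t]),
\end{equation*}
so the replacement bound (with $R=N$, $\omega=\theta$) gives $\bigl|\sum_{n=0}^{N-1}e^{-2\pi\imath\theta n}f(T_{\lambda,\pi}^{n}x)\bigr|\le C_{\lambda}\|f\|_{L}t^{-\beta-1}N^{1-\gamma}$; replacing $\theta$ by $1-\theta$ (the interval $[t,1-t]$ is invariant under this reflection) turns $e^{-2\pi\imath\theta n}$ into $e^{2\pi\imath\theta n}$ and gives the first displayed inequality after renaming $\beta$. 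The uniformity in $\theta$ then feeds into the discrete analogue of Lemma~\ref{Twisted Birkhoff Spectral}: expanding $\bigl\|\sum_{n<N}e^{-2\pi\imath\theta n}f\circ T_{\lambda,\pi}^{n}\bigr\|_{L^{2}}^{2}$ against the Fej\'er kernel on $\mathbb S^{1}$ and taking $r=1/(2N)$ yields $\sigma_{f}([\theta-r,\theta+r])\le C'_{\lambda}\|f\|_{L}^{2}t^{-2\beta}r^{2\gamma}$ for $r\le\tfrac12$, which in particular gives the stated bound. The only point requiring genuine care---and hence the main obstacle---is checking that the $t$-dependence propagates correctly: the threshold $R(\mathrm a,h,t,1-t)$ coming out of Theorem~\ref{QVC} must be \emph{polynomial}, not exponential, in $1/t$, which is exactly the upshot of the fact that the $N(\mathrm a,h,t,1-t)$ supplied by Theorem~\ref{IETsp} grows only logarithmically in $1/t$. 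Everything else is a routine assembly of the cited results.
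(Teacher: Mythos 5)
Your proposal is correct and follows essentially the same route as the paper: take the unit roof $\vec s=h=(1,\dots,1)$, intersect $\Delta^{\infty}$, $\Delta_{sp}$ and the $\Omega'_{\mathbf q}$-condition to get the exceptional set of positive Hausdorff codimension, feed the uniform-in-$\theta$ count from Theorem~\ref{IETsp} into the Quantitative Veech Criterion, use the logarithmic dependence of $N(\mathrm a,h,t,1-t)$ on $t$ to make the threshold $R(\mathrm a,h,t,1-t)$ polynomial in $1/t$ (absorbing the range $R<R(\mathrm a,h,t,1-t)$ by the trivial bound), and then pass to the IET sums and to $\sigma_f$ via the Fej\'er-kernel lemma. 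The only detail you gloss over is that the symbolic bound a priori misses the countably many points on backward orbits of the discontinuities; the paper handles this in a remark by uniform continuity of $f$ and right-continuity of $T_{\lambda,\pi}$, and your explicit factorization through $\int_0^1 e^{-2\pi\imath\theta\tau}\,d\tau$ (costing only an extra harmless power of $t$) is a clean way of making the flow-to-IET transfer that the paper leaves implicit.
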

\begin{rmk}
Note that by the results of section \ref{sec:twist_int}, we get the above theorem for all points except the countably many points in the backward orbits of singularities, however, for Lipschitz observables the upper bound extends to the singularities simply by uniform continuity of the observable and the right continuity of the IET. 
\end{rmk}
\smallskip
\noindent This is in fact the content of Theorem~\ref{mm}, which immediately implies Theorem~\ref{specdim}.

\noindent
Let $(\lambda, \pi)$ be such that the above theorem holds and assume that we have
\begin{equation}
    \left\|\sum_{i=0}^{N-1} f(T_{\lambda, \pi}^{n}(x))\right\|_{2} \leq C''_{\lambda}(f) N^{1-\alpha},
\end{equation}
where $\alpha>0$ only depends on the Rauzy class of the permutation $\pi$ and not on the individual IET. 
Letting $U$ be the Koopman operator associated with $T_{\lambda, \pi}$, we have
\begin{equation}
\begin{split}
\label{+,-}
    I:=\sum_{n=0}^{N-1} \left|\left\langle U^{n}(f), g \right\rangle\right|^{2}
    &= \sum_{n=0}^{N-1}{\big\langle U^{n}(f), g\big\rangle} \int_{\mathbb{R}/\mathbb{Z}} e^{2\pi\imath n \theta} d\sigma_{f,g}(\theta)
    \\&=\int_{\mathbb{R}/\mathbb{Z}}
 \big\langle \sum_{n=0}^{N-1}e^{2\pi\imath  n \theta} U^{n}(f), g \big\rangle d\sigma_{f,g}(\theta)
 =I_{\epsilon}^{+}+I_{\epsilon}^{-} \,,
\end{split}
\end{equation}
where by the Cauchy-Schwartz inequality and the bound $|\sigma_{f,g}(0,1)| \leq \|f\|_2 \|g\|_2$, we have
\begin{equation}
\label{int+}
    I_{\epsilon}^{+}:=\int_{\epsilon}^{1-\epsilon} 
    \big\langle \sum_{n=0}^{N-1}e^{2\pi\imath n\theta} U^{n}(f), g \big\rangle d\sigma_{f,g}(\theta) \leq C_{\lambda}\|f\|_{L}\epsilon ^{-\beta}N^{1-\gamma}\|g\|_{2}^{2} \|f\|_{2} \,,
\end{equation}
 and also
\begin{equation}
\label{int-}
    I_{\epsilon}^{-}:=\int_{-\epsilon}^{\epsilon} 
    \big\langle \sum_{n=0}^{N-1}e^{2\pi\imath n\theta} U^{n}(f), g \big\rangle d\sigma_{f,g}(\theta) \leq \sigma_{f,g}((-\epsilon, \epsilon)) N \|g\|_{2}\|f\|_2\,.
\end{equation}
 If we take $\epsilon= N^{-\eta}$ for $\eta:= \gamma/(\alpha+ \beta)$, then by Lemma~\ref{Twisted Birkhoff Spectral} we get
\begin{equation}
    \left|\sigma_{f,g}(-\epsilon, \epsilon)\right| \leq 10C''_{\lambda}(f) \|g\|_{2} N^{-\alpha \eta} \,.
\end{equation}
Therefore
\begin{equation}
    I_{\epsilon}^{+} \leq C_{\lambda}\|f\|_{L} N^{1-\gamma+\beta \eta}\|g\|_{2}^{2}\|f\|_2 \,
    \quad \text{ and } \quad  
    I_{\epsilon}^{-} \leq 10C''_{\lambda}(f)N^{1-\alpha \eta} \|g\|_{2}^{2}\|f\|_{2}.
\end{equation}
By summing the two inequalities above, we get that
\begin{equation}
    I \leq K_{\lambda}(f) N^{1-\alpha'} \|g\|_{2}^2 \|f\|_{2} \,,
\end{equation}
where $\alpha':=\alpha\gamma/(\alpha+\beta)>0$. Also note that $K_\lambda(f)$ may be taken to be some constant, only depending on the IET, times the Lipschitz norm of $f$ as we shall see below.

\smallskip
Athreya and Forni~\cite{MR2437681} showed that there exists a fixed $\alpha>0$, depending only on the stratum to which a translation surface $S$ belongs, such that for almost every direction $\theta\in \mathbb T$ there exists a constant $K(\theta)>0$ depending measurably on $\theta$ such that the following holds. For any zero-average function $f \in H^{1}(S)$ (the 
space of Sobolev regular functions on the surface $S$ with square integrable first derivatives) the translation flow $\phi_{t}^{\theta}$ on $S$ in direction $\theta$ satisfies
\begin{equation}
\label{ergbound}
    \left|\int_{0}^{T} f\circ \phi^{\theta}_{t}(x) dt \right| \leq K(\theta) \|f\|_{H^{1}(S)}\, T^{1-\alpha}\,,
\end{equation}
for all points $x\in S$ that do not belong to the backward orbits of the singularities and for all $T>1$. 

We remark in passing that they obtained this result by showing that there is a gap between the top Lyapunov exponent of the Kontsevich-Zorich cocycle over the Teichm\"uller flow and the upper second Lyapunov exponent, for almost every direction as a consequence of a first variation formula for the Hodge norm and the fact that, for almost every direction $\theta$, the Teichm\"uller orbit $g_{t}r_{\theta}S$ spends a positive proportion of time inside certain compact sets ($g_t$ denotes the Teichm\"uller geodesic flow and $r_{\theta}$ the rotation of angle $\theta$).

This result on positive density recurrence was more recently generalized in \cite{2017arXiv171110542A} from a set of directions of full measure to a set of  positive Hausdorff codimension. Thus, for every flat surface $S$, the bound \eqref{ergbound} holds for a set of directions whose complement has Hausdorff dimension less than one.

\begin{figure}
\centering
\includegraphics{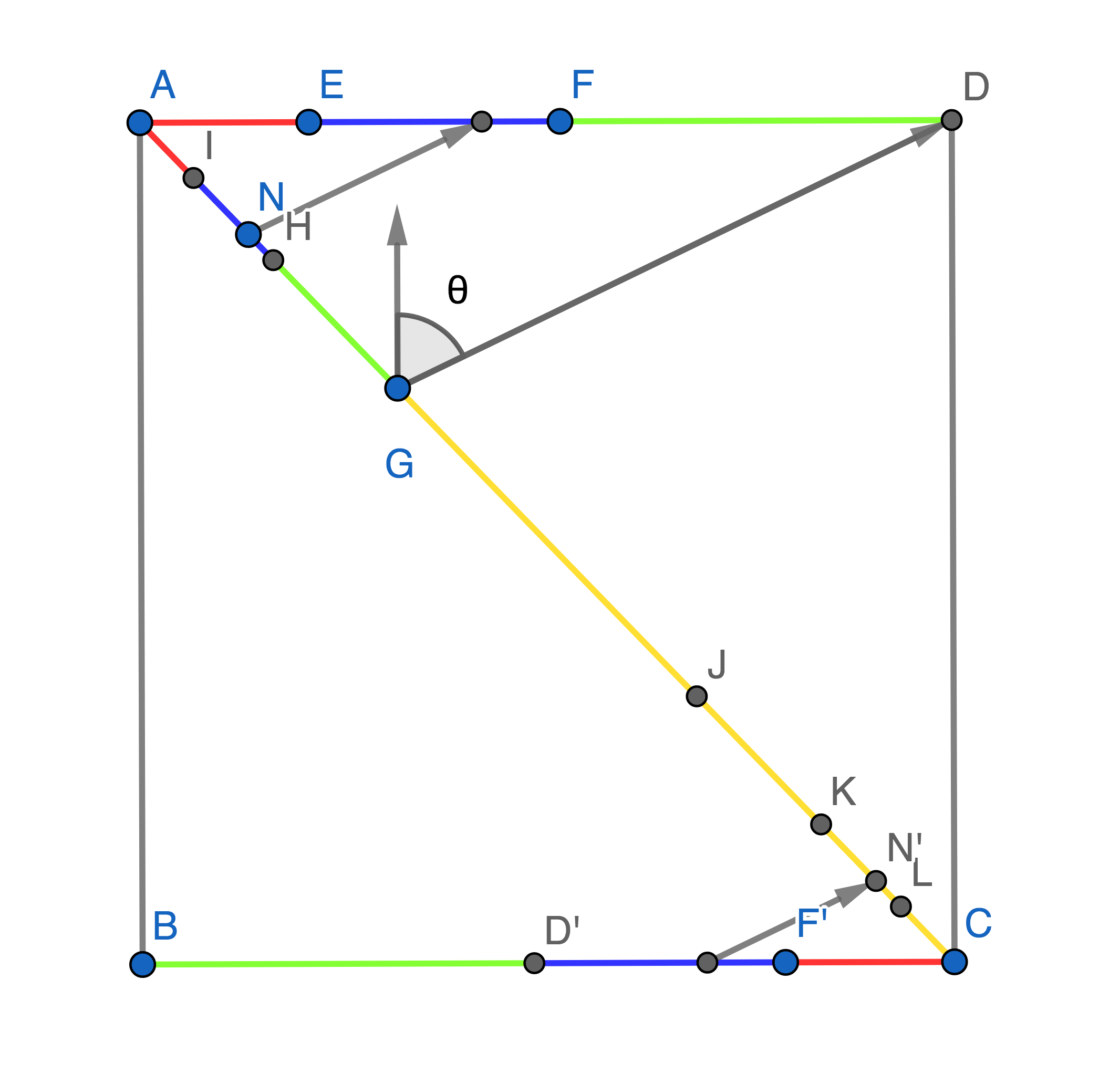}
\caption{The first return map to the diagonal of the translation flow in direction $\theta$.}
\label{figureIET}
\end{figure}

Let $f$ be a Lipschitz function with zero average. We show that $f$ is co-homologous to a Lipschitz function that is zero near the endpoints of the interval $[0,1]$ through a Lipschitz transfer function whose sup norm is bounded by $\|f\|_{\infty}$. Simply take some $\delta>0$ small enough such that the intervals $[0,\delta], [1-\delta, 1]$ and their images under $T_{\lambda, \pi}$ are all disjoint and at least a distance $\delta$ apart from each other. We may further assume that $T_{\lambda, \pi}$ is continuous on these intervals. We now define $f_1$ first on the union of these four intervals and extend it to the complement by the unique linear maps that make $f_1$ a continuous Lipschitz function on the whole interval. 
\begin{equation}
f_1(x)=
\begin{cases}
0 &\text{if} \; x \in [0,\delta]\cup [\delta, 1-\delta], \\
f(T^{-1}(x)) & \text{if}\; x\in T_{\lambda, \pi}\left([0,\delta]\cup [\delta, 1-\delta]\right).
\end{cases}
\end{equation}

Then the assumption that the intervals are at least $\delta$ apart from one another and the way $f_1$ is defined imply that
\begin{equation}
    \|f_1\|_{L} \leq  \frac{2}{\delta}\|f\|_{\infty}+\|f\|_{L}.
\end{equation}
In addition,
\begin{equation}
    f_2(x):=f(x)+f_1(x)-f_1(T_{\lambda, \pi}(x))
\end{equation}
is a Lipschitz function on $[0,1]$ that vanishes on the union of the intervals $[0, \delta]$ and $[1-\delta, 1]$ with 
\begin{equation}
    \|f_{2}\|_{L} \leq C_0(\lambda) \|f\|_{L}.
\end{equation}
Thus the Birkhoff sums of $f$ may be approximated by those of $f_2$ up to an error of $2 \|f_1\|_{\infty} \leq C_0 \|f\|_{L}$.

Let now $T_{\lambda, \pi}$ be an interval exchange transformation that is realizable as the first return map of a translation flow in direction $\theta$ (see figure \ref{figureIET}).  The translation structure in direction $\theta$ can be recovered by gluing parallelograms whose bases lie in the interval $I(\lambda, \pi)$ corresponding to $T_{\lambda, \pi}$. Let $h_\alpha$ denote the length of the edge of the polygon parallel to the flow direction (it corresponds to the return time to the interval $I$ under the translation flow for points in $I_\alpha$). 
Let $\psi:[0, \underset{\alpha \in \mathcal{A}}{\max}{h_\alpha}] \to \mathbb{R}^{\geq 0}$ be a smooth bump function supported in a compact small subinterval such that
\begin{equation}
    \int \psi dt=1,
\end{equation}
and the set
\begin{equation}
    \left\{\phi^{\theta}_{t}(x), x\in I, t \in supp(\psi) \right\},
\end{equation}
is away from the singularities of the translation surface $S$. 

We now let $\tilde{f} \in H^{1}(S)$ be the function that attains the value $f_2(x) \psi(t)$ at the point $\phi^{\theta}_t(x)$ for $x\in I, 0 \leq t \leq h(x)$ where $h(x)$ is the non-horizontal side-length of the parallelogram to which $x$ belongs. It may be readily seen that the Birkhoff sums of $f_2$ are equal (up to a bounded factor not depending on $f_2$) to the Birkhoff integrals of $\tilde{f}$. Hence the considerations regarding the shift from $f$ to the function $f_2$ and \eqref{ergbound} imply that there exists a constant $C_{\lambda}''>0$ such that

\begin{equation}
\label{ergbound2}
    \left| \sum_{n=0}^{N-1} f(T_{\lambda, \pi}(x))\right| \leq C''_\lambda \|f\|_{L} N^{1-\alpha},
\end{equation}
for all points $x$ whose orbit does not meet the singularities of 
$T_{\lambda, \pi}$ and for all $N\geq 1$.

\smallskip
We now fix a translation surface $S$ and from the polynomial ergodic property \eqref{ergbound} for  translations flows on $S$ in a set of directions with positive Hausdorff codimension, we derive that the corresponding property for IETs \eqref{ergbound2} holds for a subset of a given line in the space of IETs, whose complement has positive Hausdorff codimension. By decomposing the space of all IETs as a union of all these lines,  we get that the estimate~\eqref{ergbound2} holds for a subset of IETs whose complement has positive Hausdorff codimension. This observation, together with the discussion following Theorem~\ref{twisbirk}, yields Theorem~\ref{poldecave}. 

\begin{thm} For every irreducible permutation $\pi$ on $d>3$ symbols, which is not of rotation class, there exists a measurable subset $\Delta_{cor}$ of $\Delta=\mathbb{PR}_{+}^{d-1}$, whose complement has positive Hausdorff codimension, and there exists $\alpha'>0$, depending only on the Rauzy class of $\pi$, such that the following holds. For every $\lambda \in \Delta_{cor}$, for every zero average Lipschitz function $f$ and $L^{2}$ function $g$, there exists a constant $C_1(\lambda)>0$ such that,  for all $N\in \mathbb N\setminus\{0\}$, we have
\begin{equation}
    \sum_{n=0}^{N-1} \left|\left\langle f\circ T_{\lambda, \pi}^{n}, g \right\rangle \right|^{2} \leq C_1(\lambda)\|f\|_{L} N^{1-\alpha'} \|g\|_{2}^2 \|f\|_{2}\,.
\end{equation}
\end{thm}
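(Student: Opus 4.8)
The plan is to organize into a single argument the two ingredients that have already been established: the uniform twisted Birkhoff sum bound of Theorem~\ref{twisbirk}, valid for frequencies bounded away from $0$, and the polynomial bound \eqref{ergbound2} on the untwisted ergodic sums of a zero-average Lipschitz function, which controls the local behaviour of the spectral measure near $0$. First I would set $\Delta_{cor}:=\Delta_{spec}\cap\Delta_{erg}$, where $\Delta_{spec}$ is the set produced by Theorem~\ref{twisbirk} and $\Delta_{erg}$ is the set of parameters $\lambda$ for which \eqref{ergbound2} holds. The complement of $\Delta_{spec}$ has positive Hausdorff codimension by Theorem~\ref{twisbirk}. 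The complement of $\Delta_{erg}$ has positive Hausdorff codimension because, fixing a translation surface $S$ realizing the relevant stratum, the positive-density recurrence theorem of \cite{2017arXiv171110542A} upgrades the Athreya--Forni bound \eqref{ergbound} so that it holds for a set of directions of positive Hausdorff codimension; this transfers, after replacing $f$ by a cohomologous Lipschitz function vanishing near the endpoints of $I$ (at a cost of a $\lambda$-dependent multiple of $\|f\|_L$) and passing to an $H^1$ observable on the suspension surface whose Birkhoff integrals match the Birkhoff sums of the IET up to a bounded factor, to a full-codimension subset of a given line of IETs, and one then fibers the parameter space over such lines. Since the complement of $\Delta_{cor}$ is the union of the two complements, it has Hausdorff dimension strictly less than $d-1$; moreover $\gamma,\beta$ depend only on the Rauzy class (Theorem~\ref{twisbirk}) and $\alpha$ only on the stratum (Athreya--Forni), so the final exponent will depend only on the Rauzy class of $\pi$.

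Next, fix $\lambda\in\Delta_{cor}$, a zero-average Lipschitz function $f$, and $g\in L^2(I)$, and let $U$ denote the Koopman operator of $T_{\lambda,\pi}$. Expanding via the cross spectral measure $\sigma_{f,g}$,
\begin{equation}
I:=\sum_{n=0}^{N-1}\bigl|\langle U^n f,g\rangle\bigr|^2=\int_{\mathbb{R}/\mathbb{Z}}\Bigl\langle\,\sum_{n=0}^{N-1}e^{2\pi\imath n\theta}U^n f,\ g\,\Bigr\rangle\,d\sigma_{f,g}(\theta)=I_\epsilon^+ + I_\epsilon^-,
\end{equation}
where $I_\epsilon^+$ is the integral over $[\epsilon,1-\epsilon]$ and $I_\epsilon^-$ is the integral over $(-\epsilon,\epsilon)$. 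On $[\epsilon,1-\epsilon]$ I would use the pointwise twisted-sum bound of Theorem~\ref{twisbirk} with $t=\epsilon$, together with $\|g\|_1\le\|g\|_2$ and $|\sigma_{f,g}|(\mathbb{R}/\mathbb{Z})\le\|f\|_2\|g\|_2$, to obtain $|I_\epsilon^+|\le C_\lambda\|f\|_L\,\epsilon^{-\beta}N^{1-\gamma}\|f\|_2\|g\|_2^2$. On $(-\epsilon,\epsilon)$ I would bound the integrand trivially by $N\|f\|_2\|g\|_2$ and use $|\sigma_{f,g}|(-\epsilon,\epsilon)\le\sigma_f(-\epsilon,\epsilon)^{1/2}\|g\|_2$; applying Lemma~\ref{Twisted Birkhoff Spectral} at $\omega=0$ to \eqref{ergbound2} (so that $R_0=1$) gives $\sigma_f(-\epsilon,\epsilon)^{1/2}\le C'_\lambda\|f\|_L\,\epsilon^{\alpha}$, hence $|I_\epsilon^-|\le C''_\lambda\|f\|_L\,\epsilon^{\alpha}N\|f\|_2\|g\|_2^2$ for $\epsilon\le 1/2$.

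Finally I would optimize the split point by taking $\epsilon=N^{-\eta}$ with $\eta:=\gamma/(\alpha+\beta)$, which equalizes the two exponents: $1-\gamma+\beta\eta=1-\alpha\eta=1-\alpha'$ with $\alpha':=\alpha\gamma/(\alpha+\beta)>0$. Summing the two estimates yields $I\le C_1(\lambda)\|f\|_L N^{1-\alpha'}\|f\|_2\|g\|_2^2$ for all $N\ge N_0(\lambda)$, and the finitely many smaller values of $N$ are absorbed into $C_1(\lambda)$ via $|\langle U^n f,g\rangle|\le\|f\|_2\|g\|_2$. I expect the only genuinely delicate step to be the construction of $\Delta_{erg}$ with positive Hausdorff codimension: checking that the cohomological correction bringing $f$ to a function vanishing near the endpoints has Lipschitz norm controlled by a $\lambda$-dependent multiple of $\|f\|_L$, that the passage from IET Birkhoff sums to suspension-flow Birkhoff integrals of an $H^1$ observable costs only a bounded factor, and that fibering the IET parameter space over the lines cut out by varying the flow direction on a fixed surface preserves the codimension statement. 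Once \eqref{ergbound2} is in hand on such a set, the spectral splitting and the choice of $\epsilon$ are routine.
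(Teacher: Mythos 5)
Your proposal is correct and follows essentially the same route as the paper's own proof: the identical cross-spectral-measure splitting of $\sum_{n}|\langle U^{n}f,g\rangle|^{2}$ into $I_{\epsilon}^{+}$ and $I_{\epsilon}^{-}$, the bound from Theorem~\ref{twisbirk} on $[\epsilon,1-\epsilon]$, the control of $\sigma_{f}$ near $0$ via \eqref{ergbound2} (obtained exactly as in the paper from the Athreya--Forni estimate \eqref{ergbound}, its positive-Hausdorff-codimension upgrade, the cohomological correction of $f$, the suspension observable, and the fibering of the parameter space into lines), and the same optimization $\epsilon=N^{-\gamma/(\alpha+\beta)}$ yielding $\alpha'=\alpha\gamma/(\alpha+\beta)$. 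The only cosmetic difference is that in the paper the translation surface varies with the line (one surface $S_{\tilde\lambda}$ per line of IETs), which is harmless precisely because the directional bound of \cite{2017arXiv171110542A} holds for every flat surface, as you implicitly use.
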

\noindent We now explain in detail the construction outlined in the last paragraph.
\\

\textbf{From translation flows to IETs.} Without loss of generality we may and do assume that $\pi_t^{-1}(d)=\pi_b^{-1}(1)$. Consider the permutation pair $\tilde{\pi}:= (\tilde \pi_t, \tilde \pi_b)$, defined on the set $\mathcal A\setminus \{\pi_t^{-1}(d)\}$ with image $\{1, \dots, d-1\}$ given by 
\begin{equation}
  \tilde{\pi}_t (\beta)=\pi_t(\beta)\,, \quad    \tilde{\pi}_b(\beta)=\pi_b(\beta)-1\,, \quad \text{ for all } \beta \in 
  \mathcal A\setminus \{\pi_t^{-1}(d)\}\,.
\end{equation}
We consider the line defined by $se_{d}+(1-s)\tilde{\lambda} \subset \Delta$, where $\tilde{\lambda} \in \mathbb{PR}_{+}^{d-2}$.
Now let $M$ be a square. We identify the parallel vertical edges with each other. We divide the top edge into segments of lengths given by components of $\tilde{\lambda}$ and in the order dictated by the map $\tilde \pi_t$. We then divide the bottom edge into segments of lengths given by $\tilde{\lambda}_{\tilde{\pi}}:=(\tilde{\lambda}_{\tilde{\pi}^{-1}_b(1)},\dots, \tilde{\lambda}_{\tilde{\pi}^{-1}_b(d-1)})$. We then identify each segment on the top edge with the corresponding one on the bottom. The result is a translation surface $S_{\tilde{\lambda}}$. The return map to the diagonal of the square $M$ of the translation flow in direction $\theta$ (as illustrated in figure \ref{figureIET}) is the interval exchange $T_{\lambda, \pi}$, where 
\begin{equation}
    \lambda= \frac{\tan(\theta)}{1+\tan(\theta)}e_d+\frac{1}{1+\tan(\theta)}\tilde{\lambda}.
\end{equation}
This map is Lipschitz, and thus the Hausdorff dimension of the discarded subset of the line segment is bounded above by that of the discarded subset of the circle of directions. Therefore, the Hausdorff codimension of the set of IETs for which \eqref{ergbound2} does not hold is positive.

\begin{rmk}
An alternative approach to achieve a similar result would be to use Roth type IETs and the results of \cite{10.2307/20161260} on the solutions to the cohomological equation combined with a polynomial bound on the deviations of ergodic averages for piecewise constant roof functions (see \cite{Zorich97deviationfor}). The last ingredient to show that the exceptional set of IETs does not have full Hausdorff dimension would then be to use the results of \cite{article} and \cite{2017arXiv171110542A}.
\end{rmk}

\subsection{Upper bound for rotation class IETs}
\label{upper bound rotation class}
We recall that an IET (or rather its corresponding permutation) is said to be of rotation class if there exists a rotation permutation in its Rauzy class. Throughout this and the next subsection we will assume that $\pi$ is of rotation class but is not a rotation itself. Such IETs are in particular of type W, according to the following
\begin{definition} \label{def:typeW}  An irreducible permutation is 
of type W if the vector $h=(1,\cdots,1) \notin H(\pi) \subset \mathbb R^d$, the subspace defined in section \ref{VZR}. An Interval Exchange Transformation is of type W if its permutation is of type W. 
\end{definition}
The notion of a type W permutation and IET was introduced with a different, but equivalent, combinatorial definition in \cite{CN01}.
However, type W IETs in the sense of the above definition were considered already by Veech~\cite{10.2307/2374396}, who applied his criterion to IETs of type W and showed that they are typically weakly mixing. He proceeded by showing that there exists a vector $b$ with integral coordinates belonging to the subspace $N(\pi)$ (see 
subsection~\ref{VZR} for the definition of $N(\pi)$) so that for $h=(1, \cdots, 1)$, for all $k\in \mathbb{N}$, and all $t \in (0,1)$
\begin{equation}
    \left\langle A_k(x). th, b\right \rangle= t,
\end{equation}
where $A_k$ may be taken to be any inducing of the Rauzy-Veech cocycle. 
This implies that there exists $M_0>0$ such that for the vector $\vec{s}=(1,\cdots, 1)$ we have
\begin{equation}
    \underset{v \in GR(\zeta)}{\max}\|\omega |\zeta^{[k]}(v)|_{\vec{s}}\|^{2}_{\mathbb{R}/\mathbb{Z}}> M_0\|\omega\|_{\mathbb{R}/\mathbb{Z}}^{2}.
\end{equation}
Note that, by Proposition~\ref{genbound},

\begin{equation}
 \begin{split}
     \left|S_{R}^{(x,t)}(f^{(\ell)}, \omega)\right| &\leq C(\vec{s}, Q) \cdot \|f^{(\ell)}\|_{\infty} 
     \\ &\times \left(R^{1/2}+ R\prod_{\ell+1 \leq k \leq \frac{\log(R)}{2\theta_1}} \left(1-c_{1}.\underset{v \in GR(\zeta)}{\max}\|\omega |\zeta^{[k]}(v)|_{\vec{s}}\|^{2}_{\mathbb{R}/\mathbb{Z}} \right)\right)\,.
\end{split}
\end{equation}
For $R \geq e^{4\ell \theta_1}$
\begin{equation}
\begin{split}
    R\prod_{\ell+1 \leq k \leq \frac{\log(R)}{2\theta_1}} \left(1-c_{1}.\underset{v \in GR(\zeta)}{\max}\|\omega |\zeta^{[k]}(v)|_{\vec{s}}\|^{2}_{\mathbb{R}/\mathbb{Z}} \right) &\leq R \exp\left(-c_1M_0(\frac{\log R}{4\theta_1}) \|\omega\|^{2}_{\mathbb{R}/\mathbb{Z}}\right)\\
    & \leq R^{1-M\|\omega\|_{\mathbb{R}/\mathbb{Z}}^{2}},
    \end{split}
\end{equation}
for some small constant $M>0$. Therefore
\begin{equation}
    \left|S_{R}^{(x,t)}(f^{(\ell)}, \omega)\right| \leq 2C(\vec{s}, Q). \|f^{(\ell)}\|_{\infty} R^{1-M\|\omega\|_{\mathbb{R}/\mathbb{Z}}^{2}}.
\end{equation}
We now fix $\gamma= \frac{1}{16}$ and observe that by formula \eqref{funcapprox} for $R\geq e^{\ell_0\theta_1 \gamma^{-1}}$ we get 
\begin{equation}
     \left|S_{R}^{(x,t)}(f, \omega)-S_{R}^{(x,t)}(f^{(\ell)}, \omega)\right| \leq R \cdot \|f\|_{L}e^{-3\ell \theta_1/4} \leq \|f\|_{L} R^{1-\gamma}\,.
\end{equation}
Therefore, there exists $C(\mathrm{a})>0$ depending on $\mathrm{a} \in \Omega'_{\mathbf{q}}$ (indeed on $\ell_0(\mathrm{a})$ as defined in Definition~\ref{defi}) such that 
\begin{equation}
    \left|S_{R}^{(x,t)}(f^{(\ell)}, \omega)\right| \leq C(\mathrm{a})\|f\|_{L}R^{1-M'\|\omega\|_{\mathbb{R}/\mathbb{Z}}^{2}}.
\end{equation}
We have shown the following

\begin{thm} Let $\pi$ be a permutation on $d$ letters that is of rotation class but is not a rotation itself. Then, there exists a set $\Delta_{spec} \subset \Delta$ such that for every $\lambda \in \Delta_{spec}$, $T_{\lambda, \pi}$ satisfies the following. There exists a constant $C(\lambda)>0$ such that, for every weakly Lipschitz function $f:I \to \mathbb{R}$ and every $\omega \in (0,1)$, we have
\begin{equation}
\label{LogTwistedBound}
    \left| \sum_{n=0}^{N-1} e^{-2\pi i n\theta }f(T_{\lambda, \pi}^{n}(x))\right| \leq C(\lambda)  \|f\|_{L} N^ {  1-M\|\theta\|^{2}_{\mathbb{R}/\mathbb{Z}} } \, ,
\end{equation}
where $M>0$ is a universal constant that only depends on the Rauzy class of the permutation.

\end{thm}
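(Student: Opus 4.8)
The plan is to run the twisted‑Birkhoff‑sum machinery of Section~\ref{sec:twist_int} with the constant roof vector $\vec{s}=h=(1,\dots,1)$, exploiting that a rotation‑class permutation which is not itself a rotation is of type W (Definition~\ref{def:typeW}), so $h\notin H(\pi)$. Since $H(\pi)=\operatorname{im}\Omega_{\pi}=(\ker\Omega_{\pi})^{\perp}=N(\pi)^{\perp}$, type W means the orthogonal projection of $h$ onto $N(\pi)$ is nonzero; using that $N(\pi)$ carries an integer basis fixed by the dual Rauzy--Veech matrices of loops (Veech), one extracts a primitive $b\in N(\pi)\cap\mathbb{Z}^{d}$ with $\langle h,b\rangle=1$. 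Because any inducing $A_{k}(x)$ of the Rauzy--Veech cocycle is a finite product of loop matrices, $\langle A_{k}(x)\cdot th,b\rangle=\langle th,A_{k}(x)^{*}b\rangle=\langle th,b\rangle=t$ for every $k\ge 1$ and $t\in(0,1)$ --- this is Veech's observation recalled just above the statement.

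From this I would extract the crucial \emph{deterministic} lower bound. Since $b$ is integral, $\|\langle v,b\rangle\|_{\mathbb{R}/\mathbb{Z}}\le\|b\|_{2}\,\|v\|_{\mathbb{R}^{d}/\mathbb{Z}^{d}}$ for every $v$ (compare with the nearest lattice point), so taking $v=\mathbb{A}(k,\mathrm{a})(\omega\vec{s})=A_{k}(x)\cdot\omega h$ gives $\|\mathbb{A}(k,\mathrm{a})(\omega\vec{s})\|_{\mathbb{R}^{d}/\mathbb{Z}^{d}}\ge\|b\|_{2}^{-1}\|\omega\|_{\mathbb{R}/\mathbb{Z}}$; combining this with the comparability \eqref{norm} of $\max_{v\in GR(\zeta)}\langle\vec{\ell}(v),\cdot\rangle$ with $\|\cdot\|_{\mathbb{R}^{d}/\mathbb{Z}^{d}}$ and with $\|\omega|\zeta^{[k]}(v)|_{\vec{s}}\|_{\mathbb{R}/\mathbb{Z}}=\|\langle\vec{\ell}(v),\mathbb{A}(k,\mathrm{a})(\omega\vec{s})\rangle\|_{\mathbb{R}/\mathbb{Z}}$ produces a constant $M_{0}>0$, depending only on $\pi$ and $\mathbf{q}$, with $\max_{v\in GR(\zeta)}\|\omega|\zeta^{[k]}(v)|_{\vec{s}}\|^{2}_{\mathbb{R}/\mathbb{Z}}>M_{0}\|\omega\|^{2}_{\mathbb{R}/\mathbb{Z}}$ for \emph{all} $k$ and all $\omega$. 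Feeding this into Proposition~\ref{genbound}: for $\mathrm{a}\in\Omega'_{\mathbf{q}}$, $\ell\ge\ell_{0}(\mathrm{a})$, any level‑$\ell$ cylindrical $f^{(\ell)}$ and $R\ge e^{4\theta_{1}\ell}$, the product in the right‑hand side has at least $\tfrac{\log R}{4\theta_{1}}$ factors, each $\le 1-c_{1}M_{0}\|\omega\|^{2}_{\mathbb{R}/\mathbb{Z}}$, hence is $\le R^{-M\|\omega\|^{2}_{\mathbb{R}/\mathbb{Z}}}$ with $M=\tfrac{c_{1}M_{0}}{4\theta_{1}}$; after shrinking $M$ so that $1-M\|\omega\|^{2}\ge\tfrac12$ the $R^{1/2}$ term is absorbed, giving $|S_{R}^{(x,t)}(f^{(\ell)},\omega)|\le 2C(\vec{s},Q)\|f^{(\ell)}\|_{\infty}R^{1-M\|\omega\|^{2}_{\mathbb{R}/\mathbb{Z}}}$.

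I would then pass to general weakly Lipschitz $f$ as in Lemma~\ref{limbound}, fixing $\gamma=\tfrac1{16}$ and $\ell=\lfloor 2\gamma\log R/\theta_{1}\rfloor$: the approximation estimate \eqref{funcapprox} gives $|S_{R}^{(x,t)}(f,\omega)-S_{R}^{(x,t)}(f^{(\ell)},\omega)|\le\|f\|_{L}R^{1-\gamma}$, whence $|S_{R}^{(x,t)}(f,\omega)|\le C(\mathrm{a})\|f\|_{L}R^{1-M\|\omega\|^{2}_{\mathbb{R}/\mathbb{Z}}}$ once $M$ is further replaced by $\min\{M,4\gamma\}$ (using $\|\omega\|_{\mathbb{R}/\mathbb{Z}}\le\tfrac12$, so $R^{1-\gamma}\le R^{1-M\|\omega\|^{2}}$). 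Finally this flow estimate, for the suspension with roof $\vec{s}=(1,\dots,1)$, is converted into the stated bound for the twisted Birkhoff sums of $T_{\lambda,\pi}$ by the same reduction used to derive Theorem~\ref{twisbirk} from Theorem~\ref{QVC}: for this roof the exponential sum $\Phi_{\alpha}^{\vec{s}^{(\ell)}}$ of Proposition~\ref{goodbound} equals, up to a boundary error governed by the prefix--suffix Lemma, the twisted Birkhoff sum of the level‑$\ell$ indicator $\mathbbm{1}_{\zeta^{[\ell]}[\alpha]}$ under $T_{\lambda,\pi}$; a Lipschitz function on $I=[0,1)$ is weakly Lipschitz of comparable norm, and orbits meeting the countably many singularities are handled by uniform continuity. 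The set $\Delta_{spec}$ is then the full‑measure set of $\lambda$ whose associated substitution sequence lies in $\Omega'_{\mathbf{q}}$ and returns to $\Omega_{\mathbf{q}}$ infinitely often, its full measure following from Oseledets' theorem and ergodicity of the renormalization.

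The step demanding the most care is the uniformity of $C(\lambda)$ over all $\omega\in(0,1)$, with no blow‑up at the ramification points $\omega\in\{0,1\}$ --- the feature that distinguishes the rotation class from the non‑rotation class, where the $t^{-\beta}$ factor of Theorem~\ref{twisbirk} is unavoidable. This survives here precisely because the lower bound $\max_{v}\|\omega|\zeta^{[k]}(v)|_{\vec{s}}\|^{2}>M_{0}\|\omega\|^{2}$ holds for \emph{every} $k$ with a single constant --- requiring neither a positive‑density set of returns nor an $\omega$‑dependent threshold $N_{0}$ --- and because the $\Phi_{\alpha}$ passage from the flow to the IET introduces no $\omega$‑dependent factor. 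The accompanying bookkeeping is to verify that $M_{0}$, $c_{1}$, $\theta_{1}$, and hence the final exponent $M$, depend only on the Rauzy class (through $\mathbf{q}$, $\zeta=\zeta(\mathbf{q})$, the good‑return‑word pseudo‑norm constant $C_{\zeta}$, and $\|b\|_{2}$), so that $M$ is universal as claimed, while $C(\lambda)$ is allowed to depend on $\ell_{0}(\mathrm{a})$.
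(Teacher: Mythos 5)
Your proposal follows essentially the same route as the paper: Veech's type-W vector $b\in N(\pi)\cap\mathbb{Z}^d$ with $\langle A_k(x)\cdot th,b\rangle=t$ gives the uniform-in-$k$ lower bound $\max_{v\in GR(\zeta)}\|\omega|\zeta^{[k]}(v)|_{\vec s}\|^2_{\mathbb{R}/\mathbb{Z}}>M_0\|\omega\|^2_{\mathbb{R}/\mathbb{Z}}$, which is fed into Proposition~\ref{genbound} to get the $R^{1-M\|\omega\|^2}$ decay, and the passage from cylindrical to weakly Lipschitz observables is done exactly as in the paper with $\gamma=1/16$ via \eqref{funcapprox}. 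The only differences are cosmetic: you spell out the derivation of $M_0$ (via $\|b\|_2$ and the pseudo-norm comparability \eqref{norm}) and the final conversion back to the IET, both of which the paper asserts more tersely, so the argument is correct and matches the paper's proof.
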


By Denjoy-Koksma inequalities and the metric theory of rotations (see Khinchine~\cite{MR0161833}, \cite{PMIHES_1979__49__5_0}), for almost every rotation number and therefore for almost every IET of rotation class, for every bounded variation function $f:I \to \mathbb{R}$ of zero-average, for every $\delta>0$ and for every $x\in I$, we have
\begin{equation}
    \left| \sum_{n=0}^{N-1} f(T^{n}_{\lambda,\pi}(x)) \right| 
    \leq C_{\delta} Var(f)\log N (\log\log N)^{1+\delta},
\end{equation}
where $Var(f)$ denotes the total variation of $f$ and $C_\delta>0$ is a positive constant only depending on $\delta$ and $(\lambda, \pi)$.
As in the subsection~\ref{upper bound for non-rotation type} we may write
\begin{equation}
\begin{split}
    I:=\sum_{n=0}^{N-1} \left|\left\langle U^{n}(f), g \right\rangle\right|^{2}
    &= \sum_{n=0}^{N-1}{\big\langle U^{n}(f), g\big\rangle} \int_{\mathbb{R}/\mathbb{Z}} e^{2\pi\imath n \theta} d\sigma_{f,g}(\theta)
    \\&=\int_{\mathbb{R}/\mathbb{Z}}
 \big\langle \sum_{n=0}^{N-1}e^{2\pi\imath  n \theta} U^{n}(f), g \big\rangle d\sigma_{f,g}(\theta)
 =I_{\epsilon}^{+}+I_{\epsilon}^{-} \,,
\end{split}
\end{equation}
where $I_{\epsilon}^{+}$ and $I_{\epsilon}^{-}$ are as defined in formulas~\eqref{int+} and \eqref{int-}. 

By a variation of Lemma~\ref{Twisted Birkhoff Spectral} 
\begin{equation}
    \left|\sigma_{f,g}\left((-\epsilon, \epsilon)\right)\right| \leq \frac{\pi^2 C_{\delta}}{4} Var(f) \|g\|_2 \epsilon \log (\frac{1}{\epsilon}) (\log \log (\frac{1}{\epsilon}))^{1+\delta}.
\end{equation}

Therefore
\begin{equation}
    I_{\epsilon}^{-} \leq \widehat{C}_{\delta} Var(f) \|f\|_2 \|g\|_{2}^{2} \epsilon \log(\frac{1}{\epsilon}) (\log \log(\frac{1}{\epsilon}))^{1+\delta}N,
\end{equation}
and, by \eqref{LogTwistedBound} and the fact that $\left|\sigma_{f,g}\left((0,1)\right)\right| \leq \|f\|_2 \|g\|_2$,
\begin{equation}
    I_{\epsilon}^{+} \leq C(\lambda) \|f\|_{L}\|f\|_{2}\|g\|_{2}^{2}N^{1-M\epsilon^2}.
\end{equation} 

\noindent 
It may be easily verified that the function
\begin{equation}
\label{function_of_u}
    u \mapsto \frac{u^2\left(\log u- \log \log u- (1+\delta) \log \log \log(u)\right)}{M}
\end{equation}
is a diverging continuous function of $u>e$ and therefore it attains all positive values larger than a certain constant. For $N$ sufficiently large, we take $u$  such that the value of the function in~\eqref{function_of_u} is equal to $\log N$ and we let $\epsilon:= \frac{1}{u}$. Then the powers of $N$ in $I_{\epsilon}^{+}$ and $I_{\epsilon}^{-}$ match. Therefore
\begin{equation}
I \leq (C(\lambda)+C_{\delta}) \|f\|_{L}\|f\|_{2}\|g\|_2^{2} N^{1-M\epsilon^2}
\end{equation}
for this specific choice of $\epsilon>0$. By the above choice of $u$ we have
\begin{equation}
    \frac{u^2 \log u}{2M} <\log N< u^{3}  \Rightarrow N^{-M/u^{2}}< \frac{1}{\sqrt{u}}< \frac{1}{(\log N)^{1/6}},
\end{equation}
and therefore we have shown the following
\begin{thm}
Let $d>2$ be an integer and $\pi$ be a rotation class permutation that is not a rotation. Then for almost every $\lambda \in \mathbb{P}^{d-1}$, for every zero-average Lipschitz function $f$, and every $L^{2}$ observable $g$ we have

\begin{equation}
    \sum_{n=0}^{N-1} \left|\big \langle f \circ T_{\lambda, \pi}^{n}, g \big \rangle\right|^{2}  \leq C_2(\lambda) \|f\|_{L} \|f\|_{2} \|g\|_{2}^{2}\frac{N}{(\log N)^{1/6}} \,,
\end{equation}
where $C_2(\lambda)$ is a positive constant. 
\end{thm}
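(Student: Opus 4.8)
The plan is to mirror, in the logarithmic regime, the scheme already carried out for non-rotation type IETs in subsection~\ref{upper bound for non-rotation type}, the only difference being that the uniform-in-$\theta$ polynomial twisted bound is replaced by the $\theta$-dependent bound \eqref{LogTwistedBound} and the zero-frequency contribution is controlled not by a polynomial ergodic bound but by the classical Denjoy--Koksma estimate for rotations. First I would recall that a rotation class permutation which is not itself a rotation is of type W, so by the discussion preceding \eqref{LogTwistedBound} there is a full-measure set $\Delta_{spec}$ of parameters $\lambda$ for which $T_{\lambda,\pi}$ satisfies the twisted Birkhoff sum estimate $\bigl|\sum_{n=0}^{N-1}e^{-2\pi i n\theta}f(T_{\lambda,\pi}^n x)\bigr|\leq C(\lambda)\|f\|_L N^{1-M\|\theta\|_{\mathbb R/\mathbb Z}^2}$ with $M>0$ depending only on the Rauzy class. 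On the other hand, almost every rotation class IET is (measurably isomorphic to, via the rotation-class structure) induced from a rigid rotation with almost every rotation number, so the metric theory of rotations (Khinchine, Denjoy--Koksma, as in \cite{MR0161833}, \cite{PMIHES_1979__49__5_0}) gives, for every zero-average bounded variation $f$, every $\delta>0$, and every $x$, the bound $|\sum_{n=0}^{N-1}f(T^n_{\lambda,\pi}(x))|\leq C_\delta\,\mathrm{Var}(f)\,\log N(\log\log N)^{1+\delta}$. I would intersect the two full-measure parameter sets so that both estimates hold simultaneously.

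Next I would run the spectral splitting exactly as in \eqref{+,-}: writing $I:=\sum_{n=0}^{N-1}|\langle U^n f,g\rangle|^2=\int_{\mathbb R/\mathbb Z}\bigl\langle\sum_{n=0}^{N-1}e^{2\pi i n\theta}U^n f,g\bigr\rangle\,d\sigma_{f,g}(\theta)=I_\epsilon^++I_\epsilon^-$, where $I_\epsilon^+$ integrates over $[\epsilon,1-\epsilon]$ and $I_\epsilon^-$ over $(-\epsilon,\epsilon)$. For $I_\epsilon^+$ I would plug \eqref{LogTwistedBound} into the integrand and use $|\sigma_{f,g}((0,1))|\leq\|f\|_2\|g\|_2$ to get $I_\epsilon^+\leq C(\lambda)\|f\|_L\|f\|_2\|g\|_2^2\,N^{1-M\epsilon^2}$ (here $\|\theta\|_{\mathbb R/\mathbb Z}\geq\epsilon$ on the range of integration). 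For $I_\epsilon^-$ I would use the variant of Lemma~\ref{Twisted Birkhoff Spectral} adapted to a $\log N(\log\log N)^{1+\delta}$ Birkhoff bound: the same Fej\'er kernel computation that produced Lemma~\ref{Twisted Birkhoff Spectral} yields $|\sigma_{f,g}((-\epsilon,\epsilon))|\leq\frac{\pi^2 C_\delta}{4}\mathrm{Var}(f)\|g\|_2\,\epsilon\log(1/\epsilon)(\log\log(1/\epsilon))^{1+\delta}$, whence $I_\epsilon^-\leq\widehat C_\delta\,\mathrm{Var}(f)\|f\|_2\|g\|_2^2\,\epsilon\log(1/\epsilon)(\log\log(1/\epsilon))^{1+\delta}\,N$. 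The final step is to balance the two bounds: the function $u\mapsto u^{-2}(\log u-\log\log u-(1+\delta)\log\log\log u)/M$ is continuous and diverging on $u>e$, so for $N$ large one can choose $u=u(N)$ with this quantity equal to $\log N$ and set $\epsilon=1/u$; then the exponents of $N$ in $I_\epsilon^+$ and $I_\epsilon^-$ match, giving $I\leq(C(\lambda)+C_\delta)\|f\|_L\|f\|_2\|g\|_2^2\,N^{1-M\epsilon^2}$. Finally, from $\frac{u^2\log u}{2M}<\log N<u^3$ one reads off $N^{-M/u^2}<u^{-1/2}<(\log N)^{-1/6}$, yielding $I\leq C_2(\lambda)\|f\|_L\|f\|_2\|g\|_2^2\,N/(\log N)^{1/6}$, which is the claim (after noting $\mathrm{Var}(f)\lesssim\|f\|_L$ for Lipschitz $f$ on $[0,1)$, so $C_\delta\mathrm{Var}(f)$ can be absorbed into a constant times $\|f\|_L$).

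The main obstacle, I expect, is not the elementary calculus of the balancing step but the careful verification that the Denjoy--Koksma / metric rotation estimate is available for almost every rotation class IET with the needed uniformity, i.e.\ that "almost every rotation class IET is an induced map of a rotation with a rotation number satisfying a full-measure Diophantine (bounded partial quotients on average) condition", and that the constant $C_\delta$ depends only on $(\lambda,\pi)$ and $\delta$ and not on $f$ beyond $\mathrm{Var}(f)$. This requires invoking the identification of rotation class IETs with induced maps of rotations and checking that the Lebesgue-a.e.\ statement transfers under this correspondence; the cited works of Khinchine and the Herman-type metric estimates cover this, but one must be attentive that the discontinuities of the IET do not spoil the Denjoy--Koksma bound — for bounded variation observables the standard estimate $|S_N f - N\int f|\le \mathrm{Var}(f)$ along denominators $q_n$ of the rotation number, summed over the Ostrowski expansion of $N$, survives. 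A secondary technical point is to set up the variant of Lemma~\ref{Twisted Birkhoff Spectral} with a slowly varying (rather than polynomial) Birkhoff bound; this is routine since the Fej\'er kernel identity \eqref{fe} is unchanged and one simply optimizes the resulting inequality differently, but it should be stated explicitly for the bound on $\sigma_{f,g}((-\epsilon,\epsilon))$ to be justified.
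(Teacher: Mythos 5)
Your proposal is correct and follows essentially the same route as the paper: the type-W twisted bound \eqref{LogTwistedBound}, the Denjoy--Koksma logarithmic bound at frequency zero, the spectral splitting $I=I_\epsilon^{+}+I_\epsilon^{-}$ of \eqref{+,-}, the Fej\'er-kernel variant of Lemma~\ref{Twisted Birkhoff Spectral}, and the same balancing choice $\epsilon=1/u$ leading to $N^{-M/u^{2}}<u^{-1/2}<(\log N)^{-1/6}$. The only slip is a transcription typo in the balancing function, which should be $u\mapsto u^{2}\left(\log u-\log\log u-(1+\delta)\log\log\log u\right)/M$ rather than $u^{-2}(\cdots)/M$ (as your own subsequent inequality $\frac{u^{2}\log u}{2M}<\log N<u^{3}$ confirms).
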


\subsection{Lower bound for rotation class IETs}
\label{lower bound rotation class}

We show that a polynomial decay for the Ces\`aro averages of correlations is not possible for a typical interval exchange of rotation class. Note that a typical interval exchange of rotation class may be viewed as the time-one map of a special flow over an irrational rotation with a piecewise constant roof function with integer values. Let us denote the interval exchange transformation by $T$ and denote the base dynamics by $R_\theta$. We also assume that the base interval $I_0$ is  not of length~$1$ and we let $r:I_0 \to \mathbb{N}$ be the roof function. Let  $I_\alpha$'s for $\alpha$'s belonging to a set of indices $\mathcal{A}$ denote the intervals of continuity of $r$. 
Let $a$ denote the length of $I_0$. 
Then $R_{\theta}:I_0 \to I_0$ is defined by
\begin{equation}
    R_{\theta}(x)=x+a\theta \; \quad \text{mod} \;a \,,
\end{equation}
which is conjugate to the map $x \mapsto x+\theta$ mod $1$. 

By Denjoy-Koksma inequality, and the metric theory of rotations, for almost every $\theta \in \mathbb R$ we have that the deviation of Birkhoff sums of sufficiently regular functions (bounded variation is enough) from their average grows no faster than a logarithmic function of time, i.e., for every $x \in I_0$,
\begin{equation}
\label{DKDEV}
    \left|\sum_{i=0}^{k-1} r(R^{i}_\theta(x))- k\int_{I_{0}} r \right| \leq C_{\epsilon} Var(r) \log k (\log \log k)^{1+\epsilon} \,,
\end{equation}
for any $\epsilon>0$ and some constant $C_\epsilon>0$ depending only on $\epsilon$ (see for instance, \cite{MR0161833}). Note that in the above inequality $Var(r)$ denote the total variation of $r$. 

Note that the action of $T: [0,1) \to [0,1)$ may be viewed as follows.
There exists a piecewise smooth embedding 
$$i:[0,1)\to \{(x,t) \vert x\in I_0, 0\leq t <r(x), t \in \mathbb N\}$$ such that, for $(x,t)\in i[0,1)$, except for a finite set of points, we have 
\begin{equation}
   (i \circ T \circ i^{-1})(x,t)=
\begin{cases}
(x, t+1) & \text{if} \; t+1< r(x),\\
(R_{\theta}(x), 0) & \text{if} \; t+1=r(x).
\end{cases}
\end{equation}
In other terms, for $y \in [0,1)$ there exists a unique $x \in I_0$ and a unique integer $0 \leq t<r(x)$ such that $y$ corresponds to $(x, t)$ in the special flow setting.

Now let $J_{N} \subset I_0$ be the interval $[s, 2s)$ of size $s=\frac{c_0 a}{ \log N(\log \log N)^{1+\epsilon}}$, 
where $c_0$ is a constant that will be determined later. We now look at the iterates of this interval under the interval exchange transformation $T$. Assume that $(x,0)\in J_N$ and $T^{m}(x,0)$ also belongs to the base interval $I_0$. Then, we must have
\begin{equation}
    \sum_{i=0}^{\ell-1}r(R_\theta^{i}(x))=m,
\end{equation}
for some positive integer $\ell \leq m$ (as the left-hand side corresponds to the $\ell$-th return time of $x$ to the base interval $I_0$). Then by \eqref{DKDEV} we have  
\begin{equation}
    \left|m-\ell/a \right| \leq C\log\ell (\log \log\ell)^{1+\epsilon}\leq C \log m(\log \log m)^{1+\epsilon},
\end{equation}
as it may be seen either by Kac's lemma or simple direct computation that $\int_{I_{0}}r= \frac{1}{a}$. Therefore, there are at most $A\log m (\log \log m)^{1+\epsilon}$ (for $A=2aC+1$) different possibilities for $\ell$ as $\ell$ is an integer. Thus, 
\begin{equation}
    T^{m}(J_N) \cap I_0 \subset \bigcup_{\ell= 
    \lfloor am\rfloor -\lfloor aC\log m (\log \log m)^{1+\epsilon} \rfloor}^{ \lfloor am \rfloor+ \lceil (1+aC)\log m (\log \log m)^{1+\epsilon} \rceil } R_{\theta}^{\ell}(J_N):=H_m(J_N).
\end{equation}
Now we take 
\begin{equation}
S:=\big\{ n\in \mathbb{N}| \; 1 \leq n \leq N, (\lfloor aN \rfloor -\lfloor an \rfloor)a\theta \in [0, |J_N|/2)\; \text{mod} \; a \big\}.
\end{equation}
Note that by definition, we then have
\begin{equation}
    \forall n \in S, \; H_n(J_N) \subset H_N(J_N'),
\end{equation}
where $J_N'=[0,3s)$. Therefore, we have
\begin{equation}
\label{uni}
    \bigcup_{n\in S} H_n(J_N) \subset H_N(J_N').
\end{equation}
Now we need an estimate on the cardinality of $S$. To this end, let $S_{k}$ be the following set
\begin{equation}
    S_k:=\big \{ n \in \mathbb{N}| 1 \leq n \leq k; na\theta \in [0,J_N/2)\; \text{mod} \; a\big\}.
\end{equation}
By applying \eqref{DKDEV} to $1_{[0,J_N/2)}$ in place of $r$ we get that for large enough $N$ and for $k=\lfloor aN \rfloor $,
\begin{equation}
    \# S_k \geq \frac{k|J_N|}{2} -C \log k (\log \log k)^{1+\epsilon} \geq \frac{k|J_N|}{3} \geq \frac{aN|J_N|}{4}.
\end{equation}
Looking at the sequence $\lfloor an \rfloor $ for $1\leq n \leq N$, we observe that every integer value between $1$ and $\lfloor aN \rfloor$ occurs in this sequence at least once. Therefore, we have the following lower bound 
\begin{equation}
\label{lowcard}
    \# S \geq \#S_{\lfloor aN \rfloor} \geq \frac{aN|J_N|}{4}.
\end{equation}
It may be readily verified that the total length of the intervals contained in $H_N(J_N')$ is bounded by $3A\log N (\log \log N)^{1+\epsilon}|J_N|$. Thus, if we choose $c_0$ to be $\frac{1}{30A}$ (note that $A$ is independent of $N$ and only depends on the roof function $r$ and $\theta$) we get that
\begin{equation}
    |H_N(J_N')| \leq \frac{a}{10}. 
\end{equation}
Thus, the complement of $H_N(J_N')$ in $I_0$ will be a union of at most $A\log N (\log \log N)^{1+\epsilon}+2$ intervals whose total length is at least $\frac{9a}{10}$. Therefore, we may find an interval $J_N''$ of the same length $s$ as $J_N$ in the complement of $H_N(J_N')$. By construction, the intervals $J_N$ and $J_N''$ have the property that
\begin{equation}
    J_N'' \cap \bigcup_{m\in S} T^{m}(J_N) =\emptyset.
\end{equation}
We pick two Lipschitz functions $f$, supported in $J_N$, and $g$, supported in $J_N''$ and bound the Ces\`aro average of their correlations until time $N$ from below. By \eqref{lowcard},

\begin{equation}
\label{lowces}
\begin{aligned}
    Q_N(f, g):=\sum_{n=0}^{N-1} &\left|\int_I f \circ T^{n}(y) g(y) dy - \int_I f \int_I g \right|^{2} \\
    &\geq  \|f\|_{L^{1}}^{2} \|g\|_{L^{1}}^{2} \# S \geq c_1 \frac{N\|f\|_{L^1}^{2} \|g\|_{L^1}^{2}}{\log N (\log \log N)^{1+\epsilon}}\,,
\end{aligned}    
\end{equation}
for some constant $c_1>0$ not depending on $N$. We may now further assume that $f$ and $g$ are so that their range is $[0,1]$, and satisfy the following properties:
\begin{enumerate}
    \item They are equal to $1$ in the interval of length $s/2$ centered at the center of the intervals $J_N$ and $J_N''$, 
    \item They are equal to zero in the union of two intervals of length $s/10$ at the ends of each of the intervals,
    \item Their Lipschitz norms are bounded above by $15/s$.
\end{enumerate}
The above assumptions on $f$ and $g$ will then yield that 
\begin{equation}
    \|f\|_{L^{1}}^{2}\|g\|_{L^{1}}^{2} \geq (\frac{s}{2})^{4}\geq \|f\|_{L}^{2}\|g\|_{L}^{2} \frac{s^{8}}{10^{6}} 
\end{equation}
Then, the lower bound~\eqref{lowces} implies the following
\begin{thm}
There exists a constant $c_2>0$ such that, for all sufficiently large $N\in \mathbb N$, there exist Lipschitz observables $f$ and $g$
(depending on $N$) such that 
\begin{equation}
    Q_N(f, g) \geq c_2 \frac{N\|f\|_{L}^{2}\|g\|_{L}^{2}}{(\log N (\log \log N)^{1+\epsilon})^{9}}.
\end{equation}
\end{thm}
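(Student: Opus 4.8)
The plan is to exploit the special structure available in the rotation class: for the typical (a.e.) such $T$, after a measurable conjugacy, $T$ is the time-one map of a suspension of an irrational rotation $R_\theta$ of a base interval $I_0$ of length $a$ by a piecewise-constant, $\mathbb Z$-valued roof function $r$, the pieces being the continuity intervals of $r$. The only quantitative input I would use is the classical consequence of Denjoy--Koksma together with the metric theory of continued fractions: for a.e.\ $\theta$, Birkhoff sums over $R_\theta$ of any zero-average bounded-variation $\phi$ stay within $C_\epsilon\,\mathrm{Var}(\phi)\log k(\log\log k)^{1+\epsilon}$ of their mean, for every $k$ and every point. Applied to $r$, this shows that a base point which returns to the base under $T$ at time $m$ does so after $\ell$ rotation steps with $|m-\ell/a|\le C\log m(\log\log m)^{1+\epsilon}$; hence $m$ pins down $\ell$ up to $O(\log m(\log\log m)^{1+\epsilon})$ possibilities.

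Fix the scale $s:=c_0a/(\log N(\log\log N)^{1+\epsilon})$ and a subinterval $J_N\subset I_0$ of length $s$. The first step is to deduce from the previous paragraph that, for every $n\le N$, the set $T^n(J_N)\cap I_0$ is contained in a union $H_n(J_N)$ of $O(\log N(\log\log N)^{1+\epsilon})$ rotated copies $R_\theta^\ell(J_N)$ with $\ell$ near $aN$. The second step is to select the good-time set
\[
S:=\bigl\{\,1\le n\le N:\ (\lfloor aN\rfloor-\lfloor an\rfloor)a\theta\in[0,s/2)\ (\mathrm{mod}\ a)\,\bigr\},
\]
which is arranged so that $H_n(J_N)\subset H_N(J_N')$ for all $n\in S$, where $J_N'=[0,3s)$; a second application of Denjoy--Koksma, now to $\mathbbm{1}_{[0,s/2)}$ with $k=\lfloor aN\rfloor$, yields $\#S\gtrsim Ns$, i.e.\ $S$ has density $\gtrsim 1/(\log N(\log\log N)^{1+\epsilon})$ in $\{1,\dots,N\}$. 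On the other hand $H_N(J_N')$, being a union of $O(\log N(\log\log N)^{1+\epsilon})$ intervals each of length $3s$, has total measure $O(\log N(\log\log N)^{1+\epsilon}\cdot s)=O(c_0a)$, hence $\le a/10$ once $c_0$ is fixed small enough; its complement in $I_0$ therefore has measure $\ge 9a/10$ and is again a union of $O(\log N(\log\log N)^{1+\epsilon})$ intervals, so it contains an interval $J_N''$ of length $s$.

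With the disjointness $J_N''\cap\bigcup_{n\in S}T^n(J_N)=\emptyset$ in hand, I would take nonnegative Lipschitz bump functions $f$ supported in $J_N$ and $g$ supported in $J_N''$. For each $n\in S$ the iterate separates the supports, so $\langle f\circ T^n,g\rangle=0$, and the $n$-th summand of $Q_N(f,g)$ equals $(\int f)^2(\int g)^2=\|f\|_{L^1}^2\|g\|_{L^1}^2$, giving
\[
Q_N(f,g)\ \ge\ \#S\cdot\|f\|_{L^1}^2\|g\|_{L^1}^2\ \gtrsim\ \frac{N\,\|f\|_{L^1}^2\|g\|_{L^1}^2}{\log N(\log\log N)^{1+\epsilon}}.
\]
Finally I would normalize: choose $f,g$ with range $[0,1]$, equal to $1$ on the middle half of their supporting intervals and to $0$ on two end-pieces of length $s/10$, so that $\|f\|_{L^1},\|g\|_{L^1}\gtrsim s$ while $\|f\|_L,\|g\|_L\lesssim 1/s$. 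Then $\|f\|_{L^1}^2\|g\|_{L^1}^2\gtrsim s^4\gtrsim s^8\,\|f\|_L^2\|g\|_L^2$, and substituting $s\asymp 1/(\log N(\log\log N)^{1+\epsilon})$ converts the single logarithmic factor into the claimed ninth power.

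The crux I expect is the combinatorial trapping in the second paragraph: simultaneously keeping each $T^n(J_N)$ spread over only logarithmically many rotates of $J_N$, forcing a positive-density family of times $n$ to land inside one small set $H_N(J_N')$, and keeping that set both of small measure ($\le a/10$) and a union of few intervals so that room remains for $J_N''$. All three rest on the logarithmic Denjoy--Koksma estimate and on matching the scale $s$ to it; once the geometry is arranged, the bookkeeping of constants and the passage to normalized test functions are routine.
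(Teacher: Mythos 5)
Your proposal is correct and reproduces the paper's own argument essentially step for step: the same suspension-over-rotation picture, the same Denjoy--Koksma logarithmic deviation input applied first to the roof function and then to the indicator $\mathbbm{1}_{[0,s/2)}$, the same choice of scale $s\asymp 1/(\log N(\log\log N)^{1+\epsilon})$, the same trapping sets $H_n(J_N)\subset H_N(J_N')$, good-time set $S$ of density $\gtrsim 1/(\log N(\log\log N)^{1+\epsilon})$, disjoint interval $J_N''$, and the same normalization of the bump functions converting $\|f\|_{L^1}^2\|g\|_{L^1}^2\gtrsim s^4$ into the ninth logarithmic power. No meaningful differences from the paper's proof.
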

Therefore the norm of the quadratic form $Q_N/N$ has a logarithmic lower bound and in particular it cannot satisfy any polynomial upper bound. 
\begin{rmk}
Although, the above example is provided only for a full measure subset of irrational rotation numbers, using exactly the same construction one can show that as the deviation of ergodic averages are subpolynomial no polynomial upper bound can hold for any IET of rotation class.  
\end{rmk}

\nocite{*}
\printbibliography
\end{document}